\documentclass[11pt,a4paper,twoside,titlepage=false]{scrartcl}

\RequirePackage[utf8]{inputenc}
\RequirePackage[T1]{fontenc}
\RequirePackage[english]{babel}
\RequirePackage{amsthm,amsmath,amsfonts,amssymb}
\RequirePackage{dsfont}
\RequirePackage{stmaryrd}
\RequirePackage{changepage}
\RequirePackage{graphicx}
\RequirePackage{pdfpages}
\RequirePackage[automark,headsepline=0.6pt:]{scrlayer-scrpage} 
\RequirePackage[top=3cm,left=2.55cm,right=2.55cm,bottom=3.5cm]{geometry} 
\RequirePackage[authoryear]{natbib}
\RequirePackage[colorlinks,citecolor=blue,urlcolor=blue,linkcolor=blue,linktocpage=true]{hyperref}
\RequirePackage{dsfont}
\RequirePackage{tikz}
\RequirePackage{pgfplots}
\RequirePackage{float}
\RequirePackage[font=small,margin=\parindent,tableposition=top]{caption}
\pgfplotsset{compat=1.18}
\usetikzlibrary{decorations.markings, shapes.geometric, arrows.meta, positioning, fit}
\RequirePackage{times} 
\RequirePackage{sectsty}
\RequirePackage{caption}
\RequirePackage{newtxtext,newtxmath}

\addtokomafont{disposition}{\rmfamily}

\usetikzlibrary{decorations.markings, shapes.geometric, arrows.meta, positioning, fit}

\tikzset{
  mynode/.style={ellipse, draw, minimum width=2.8cm, minimum height=1cm, align=center},
  myarrow/.style={->, thick},
  sectionbox/.style={draw, inner sep=0.3cm},
}

\numberwithin{equation}{section}
\theoremstyle{plain}

\newtheorem{theorem}{Theorem}[section]
\newtheorem*{theorem*}{Theorem}
\newtheorem{lemma}[theorem]{Lemma}
\newtheorem{proposition}[theorem]{Proposition}
\newtheorem{corollary}[theorem]{Corollary}
\theoremstyle{definition}
\newtheorem*{acknowledgement}{Acknowledgement}
\newtheorem*{supplement}{Supplementary Material}

\theoremstyle{remark}

\newtheorem*{remark}{Remark}

\newcommand{\bE}{\mathbb E}

\newcommand{\bN}{\mathbb N}

\newcommand{\bP}{\mathbb P}

\newcommand{\bR}{\mathbb R}
\newcommand{\bS}{\mathbb S}

\newcommand{\bU}{\mathbb U}

\newcommand{\cA}{\mathcal A}

\newcommand{\cC}{\mathcal C}

\newcommand{\cF}{\mathcal F}
\newcommand{\cG}{\mathcal G}
\newcommand{\cH}{\mathcal H}

\newcommand{\cJ}{\mathcal J}
\newcommand{\cK}{\mathcal K}
\newcommand{\cL}{\mathcal L}

\newcommand{\cN}{\mathcal N}
\newcommand{\cO}{\mathcal O}

\newcommand{\cU}{\mathcal U}

\newcommand{\cX}{\mathcal X}
\newcommand{\cY}{\mathcal Y}

\newcommand{\fA}{\mathfrak A}

\newcommand{\fW}{\mathfrak W}

\newcommand{\fZ}{\mathfrak Z}

\newcommand*{\defeq}{\mathrel{\vcenter{\baselineskip0.5ex \lineskiplimit0pt 
			\hbox{\scriptsize.}\hbox{\scriptsize.}}}=}
\newcommand*{\eqdef}{=\mathrel{\vcenter{\baselineskip0.5ex \lineskiplimit0pt 
			\hbox{\scriptsize.}\hbox{\scriptsize.}}}}

\newcommand{\argmin}{\operatorname*{arg min}}
\newcommand{\argminp}{\operatorname*{arg min^{+}}}
\newcommand{\argmax}{\operatorname*{arg max}}

\newcommand{\Argmax}{\operatorname*{Arg max}}

\newcommand{\Var}{\operatorname{Var}}
\newcommand{\Cov}{\operatorname{Cov}}

\newcommand{\boldcdot}{\vcenter{\hbox{\tiny$\bullet$}}}

\captionsetup[figure]{%
  labelfont=bf,
  labelsep=period,
  format=plain,
  justification=justified,
  singlelinecheck=false,
  indention=0pt,
  width=\textwidth
}

\begin{document} 
\title{The weak-feature-impact phase transition of the NPMLE in monotone binary regression}
\pagestyle{scrheadings}
\clearpairofpagestyles

\lehead{\pagemark}
\cohead{The weak-feature-impact phase transition}
\cehead{}
\rohead{\pagemark}

\lefoot[\pagemark]{}
\rofoot[\pagemark]{}

\KOMAoptions{headsepline=false}
\author{Dario Kieffer and Angelika Rohde}
\date{}
\publishers{\small Albert-Ludwigs-Universität Freiburg}
\maketitle
\thispagestyle{empty}
\vspace{-0.5cm}
\textbf{Abstract. }Statistical literature provides pointwise limiting distributions of the nonparametric maximum likelihood estimator (NPMLE) in monotone binary regression for the two extremal cases: If the feature-label relation is strictly monotone and sufficiently smooth, it converges at a cube-root-$n$ rate with scaled Chernoff-type limiting distribution, and it converges at the parametric $\sqrt{n}$-rate if the underlying relation is flat. In this article, we provide the complete picture of the distributional metamorphosis of the NPMLE, revealing a new limiting distribution which provides a significantly better distributional approximation for small samples in case of a weak feature-label relationship. It is shown to continuously interpolate between the two extremal cases. The innovative way to determine this distribution is to generate it as a limit of the NPMLE in the newly introduced weak-feature-impact triangular array for a particular parameter-sample-size constellation. Moreover, the phase transition is likewise observed for the suitably rescaled $L^{1}$-error in this weak-feature-impact scenario. As a by-product, its limiting distribution for flat regression functions is obtained, which was unknown before. The proof develops a completely new strategy, notably not based on the switch relation. A novel type of local minimax lower bounds accompanies these results.
\addtocontents{toc}{\protect\setcounter{tocdepth}{-1}} 
\section{Introduction} \label{sec:setting}
Statistical literature provides the pointwise limiting distributions of the nonparametric maximum likelihood estimator (NPMLE) in monotone binary regression for the two extremal cases: If the feature-label relation is strictly monotone and sufficiently smooth, it converges pointwise at a cube-root-$n$ rate with scaled Chernoff-type limiting distribution, while it converges at the parametric $\sqrt{n}$-rate if the underlying relation is flat. Figure~\ref{fig:simulation} indicates that the flatter the slope of the strictly monotone regression function, the later the established Chernoff approximation kicks in (lightblue versus blue line compared to the Chernoff distribution function in black).
{
\setlength{\abovecaptionskip}{-8pt}
\setlength{\belowcaptionskip}{-8pt}
\setlength{\intextsep}{15pt}
\begin{figure}[H]
\centering
\includegraphics[width=0.995\linewidth]{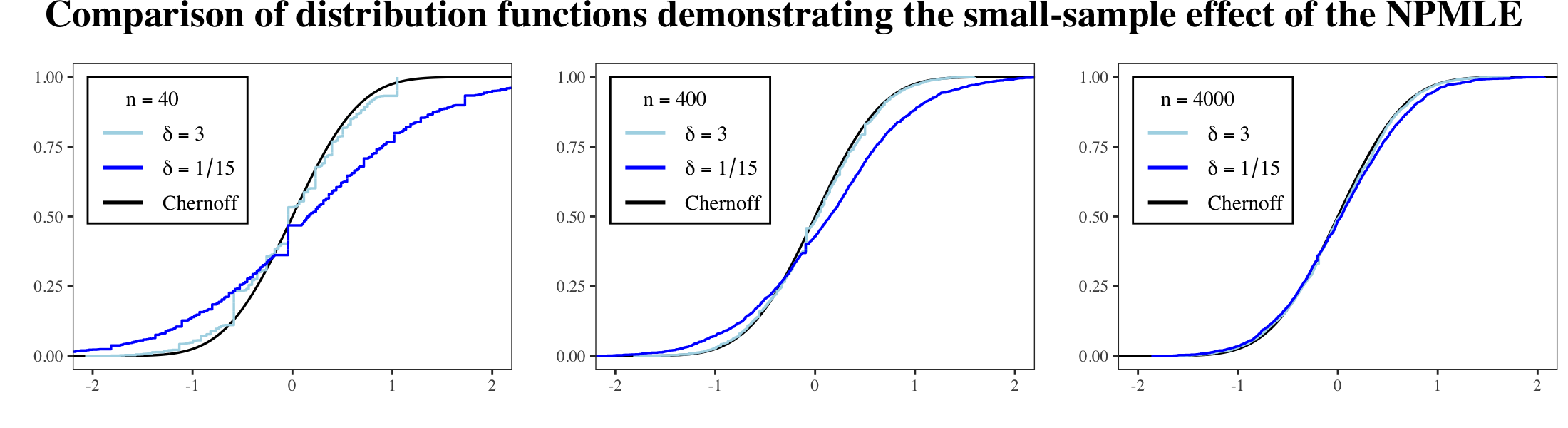}
\caption{{\small Let $X\sim \cU([-1,1])$, $\Phi(x) = \textrm{logit}^{-1}(\delta x)$ the regression function, $\hat{\Phi}_{n}$ the NPMLE for $\Phi$ and $x_{0} = 1/4$. Note that the regression function's derivative scales as $\delta$. The black line shows the Chernoff distribution function. The distribution function of the random variable $(2n)^{1/3}(4\Phi(x_{0})(1-\Phi(x_{0}))\Phi'(x_{0}))^{-1/3}(\hat{\Phi}_{n}(x_{0})-\Phi(x_{0}))$ is simulated for $\delta = 3$ (lightblue) and $\delta = 1/15$ (blue) based on $2000$ iterations of i.i.d.~samples of size $n$.}}\label{fig:simulation}
\end{figure}
}
\par
The goal of this article is to explain this small-sample phenomenon on a rigorous mathematical basis and to develop more appropriate distributional approximations for small sample sizes, both pointwise and in $L^{1}$, as well as to specify in particular to which extent these depend on unknown oracle information. Based on our new insights on the distribution of the NPMLE, we shall discuss their statistical implications and open problems deduced from them. Note that the situation of a weakly increasing regression function corresponds to a weak feature-label relationship, which occurs frequently in applications. For example, privacy preserving requirements may diminish the isolated effect of an explanatory variable $X$ on the response variable $Y$ considerably.

\subsection{State of the art}\label{sec:state of the art}
As the problem of estimating a monotone function arises naturally in many real world tasks and also builds the foundation 
for multiple statistical models, it has been studied extensively over the last decades, with \cite{Grenander1956mortality} 
being the first to consider the NPMLE for monotone densities, lending it the name \textit{Grenander estimator}. It was 
shown first in \cite{Rao1969unimodal} that this estimator is $n^{1/3}$-consistent with respect to the pointwise distance 
and asymptotically Chernoff-distributed if the density's first derivative does not vanish. This was then proven again in 
\cite{Groeneboom1985monotone} by a different technique utilizing inverse expressions based on the \textit{switch relation}, 
which became the most important tool for deriving limits of the NPMLE under various 
shape constraints. In that article, the $L^{1}$-limiting behavior was considered for the first time and a rigorous 
proof of the $L^{1}$-limit appeared in \cite{Groeneboom1999L1}, showing that the expectation of the $L^{1}$-distance 
converges with rate $n^{1/3}$ to zero and that the stabilized $L^{1}$-distance itself fluctuates with rate $n^{1/6}$ and is asymptotically normal. A generalization to the $L^{p}$-distance was given in \cite{Kulikov2005Lk}. 
Similar results regarding the pointwise distance appeared in the context of isotonic regression and least 
squares estimation (LSE) in \cite{Brunk1970isotonic} and for current status data in \cite{GroeneboomWellner1992}, utilizing 
that NPMLE and LSE coincide here. A unified study of various estimators, including the monotone NPMLE, was 
introduced in \cite{Kim1990cube}. The $L^{1}$-limit for isotonic regression with fixed design was derived in 
\cite{Durot2002sharp} and was later generalized to the $L^{p}$-distance in \cite{Durot2007Lp} and to the random design 
setting in \cite{Durot2008monotone}.\par 
Many more properties of the NPMLE under monotonicity constraints were derived, e.g.~the pointwise limiting behavior for functions with vanishing derivative up to some order 
$\beta$ in \cite{Wright1981asymptotic}, resulting in convergence rates $n^{\beta/(2\beta+1)}$, and for 
locally flat densities in \cite{Carolan1999flat}, yielding $\sqrt{n}$-consistency. Non-asymptotic 
properties were discovered in \cite{Birge1989nonasymptotic}, local minimax-optimality was derived in \cite{Cator2011optimality} and \cite{ChaGunSen2015} for the local and global estimation problem, respectively and \cite{Bellec2018oracle} derived sharp oracle inequalities in Euclidean norm for the LSE of isotonic vectors in $\bR^n$. The limiting behavior under the uniform distance was derived in 
\cite{Durot2012uniform} and the misspecified case was studied in \cite{Patilea2001misspecified} and \cite{Jankowski2014misspecified}. More information can be found in the overview articles \cite{Groeneboom2018overview}, 
\cite{Durot2018overview} and \cite{Groeneboom2014shape}. \par 
In \cite{Westling2020unified}, a unified approach to study generalized Grenander 
estimators was introduced. \cite{Mallick2023asymptotic} generated new pointwise limiting distributions in 
the nonparametric regime for $n$-dependent monotone functions with possibly locally changing shape, not reaching the parametric regime, however. Based on this, asymptotic confidence intervals that are uniformly valid over a large class of distributions are constructed. Using a new localization technique in 
isotonic regression and an anti-concentration inequality for the supremum of a Brownian motion with a Lipschitz drift, 
\cite{Han2022bounds} derived Berry-Esseen bounds for Chernoff-type limiting distributions. 
\cite{Cattaneo2024bootstrap} proposed a bootstrap adapting to the unknown order of the first non-zero derivative.

\subsection{The weak-feature-impact scenario}\label{subsec: 1.2} 
As it becomes apparent from Section~\ref{sec:state of the art}, the literature on the NPMLE does not adequately explain the small-sample effect observed in Figure~\ref{fig:simulation}. In this section, we introduce the so-called weak-feature-impact scenario along which we will rigorously discover and characterize the full distributional spectrum of the NPMLE. In order to motivate our setting, it is advisable to recall classical logistic regression, where the feature-label relation is given by $\bP(Y=1|X=x) = \textrm{logit}^{-1}(c+\delta^{\top}x)$. A natural generalization is to replace the logistic function by an arbitrary isotonic $[0,1]$-valued function $\Phi_{0}$, i.e.~$\bP(Y=1|X=x)=\Phi_{0}(\delta^{\top}x)$, where likewise the magnitude of the components of $\delta$ control the predictive power of the corresponding feature components in a global sense. Clearly, the extremal case of no impact is present if $\delta = 0$. The model reduces to monotone binary regression in the univariate case. 
As the distribution of the NPMLE in monotone binary regression is essentially accessible subject to asymptotics, the magnitude of the global feature impact has to scale with the sample size $n$ to preserve the small-sample effect observed in Figure~\ref{fig:simulation} in an asymptotic sense (see Figure~\ref{fig:freeze}). \par
{
\setlength{\belowcaptionskip}{-8pt}
\setlength{\abovecaptionskip}{-8pt}
\setlength{\intextsep}{15pt}
\begin{figure}[H]
\centering
\includegraphics[width=\linewidth]{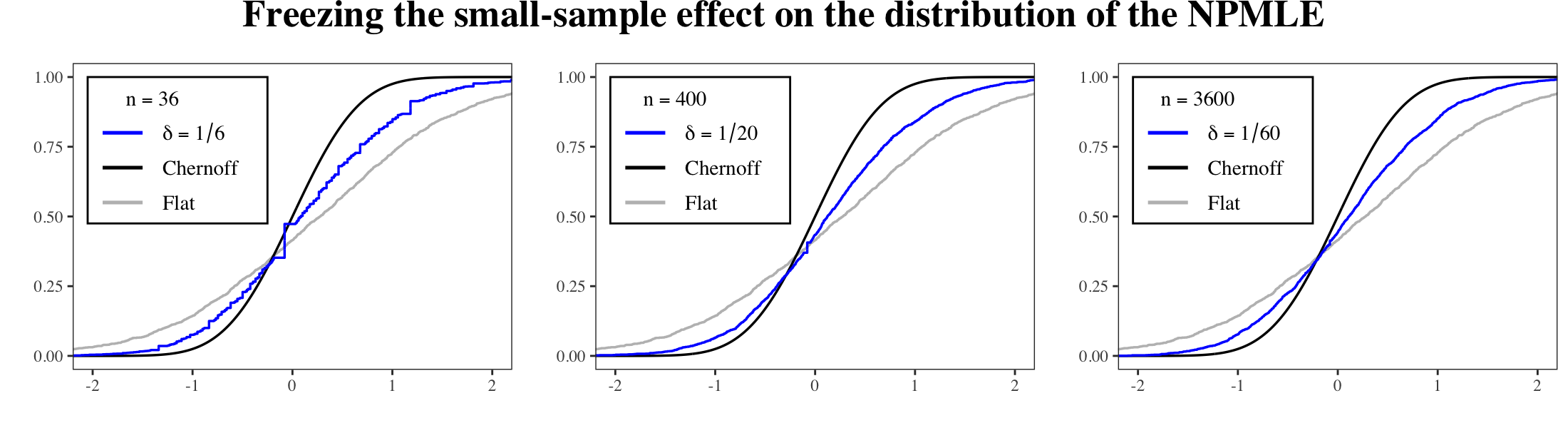}
\caption{\small In the same setting as in Figure~\ref{fig:simulation} with the additional grey line visualizing the distribution function of the limiting distribution for flat functions and again with regression function $\Phi(x) = \textrm{logit}^{-1}(\delta x)$, the small-sample effect on the distribution of the NPMLE freezes when $n\delta^{2} = \text{const.}$}\label{fig:freeze}
\end{figure}
} 
Let $(\Omega,\cA,\bP)$ denote a probability space and consider the triangular array $(X_{1},Y_{1}^{n}),\dots,(X_{n},Y_{n}^{n})$ of respective i.i.d.~copies of a random vector 
$(X,Y^{n}) \colon \Omega \to \bR \times \{0,1\}$, related via
\begin{equation}\label{eq: WFI1}
\bP\big(Y^n=1\big| X\big)=\Phi_0(\delta_n X) \eqdef \Phi_n(X)
\end{equation}
for some isotonic function $\Phi_0$ and a stretching sequence $(\delta_n)_{n\in\bN}$ with $\delta_n\searrow 0$. We call the 
sequence $(\delta_n)_{n\in\bN}$ the \textit{level of feature impact}, as it represents --- in the style of the logistic regression model --- the rate at which the signal strength decreases and thus, controls the predictive power of the feature. 
Subsequently, this asymptotic scheme is referred to as \textit{weak-feature-impact scenario}. As Figure~\ref{fig:freeze} illustrates, it allows to strikingly describe the small-sample effect in an asymptotic setting. Indeed, an appropriate distributional approximation for the small-sample phenomenon observed in Figure~\ref{fig:simulation} appears to be an intermediate state in the metamorphosis from the strictly isotonic to the flat case. This requires us to fully understand this metamorphosis in mathematical terms. If $\Phi_0$ is continuously differentiable, the derivative of \eqref{eq: WFI1} with respect to the feature variable satisfies
\[
\Phi_n'(x)=\delta_{n}\Phi_{0}'(\delta_{n}x) =\delta_n\big(\Phi_0'(0)+o(1)\big)\longrightarrow 0 \quad \text{as } n \longrightarrow \infty. 
\]
Thus, if $\Phi_0'>0$, the level of feature impact characterizes the speed at which the derivative of the function 
$x\mapsto \bP(Y^n=1| X=x)$ approaches zero, uniformly on compacts. Note that a weak feature-label relation is a global property and hence, cannot be modeled locally solely.

\subsection{Overview of the results} \label{subsec:results}
In this article, we study the pointwise error $\hat\Phi_n(x)-\Phi_n(x)$ of the NPMLE (Section~\ref{sec:pointwise limit}) as well as its appropriately normalized $L^1$-error (Section \ref{sec:L1 limit}) in the weak-feature impact triangular array, which will be shown to bring different limiting distributions of the NPMLE into being. Our first finding is that pointwise and in $L^{1}$, the full picture  is governed by the quantity $n\delta_{n}^{2}$: We identify three regimes, the slow regime $n\delta_{n}^{2} \longrightarrow \infty$, the fast regime $n\delta_{n}^{2} \longrightarrow 0$ and the intermediate regime $n\delta_{n}^{2} \longrightarrow c \in (0,\infty)$ together with the rate of consistency 
$$
\Big(\frac{n}{\delta_n}\Big)^{1/3}\wedge \sqrt{n} 
$$
both pointwise and in $L^{1}$, which is shown to coincide with a new type of minimax lower bounds (Theorem \ref{thm:lower bound pointwise} and Theorem \ref{thm:lower bound L1}). In the slow regime, the limiting distribution coincides with the limit for fixed strictly isotonic functions ($\delta_{n} = \text{const.}$), while it coincides with the limit for flat functions ($\delta_{n} = 0)$ in the fast regime.

\smallskip
\noindent
Our main contributions for the pointwise asymptotics (Section~\ref{sec:pointwise limit}) are as follows:
\begin{itemize}
\item We find a new distribution at the phase transition in the intermediate regime, which adequately fits the NPMLE for small sample sizes in case of a weak feature-label relationship (orange line in Figure~\ref{fig:new limit}). This distribution is different from both, the Chernoff distribution and the distribution known for flat regression functions. The innovative way to determine this distribution is to generate it as a limit of the NPMLE in the newly introduced weak-feature-impact triangular array for a particular parameter-sample-size constellation. 
\item Although valuable local adaptivity properties of the NPMLE have been documented in the literature, the question about the corresponding picture in terms of approximating distributions has been totally open. Theorem~\ref{thm:pointwise rate} answers this question by providing the complete picture of the distributional metamorphosis, while Theorem~\ref{thm:intermediate} confirms the continuity of the transformation between the two extremal cases (see Figure~\ref{fig:new limit}). 
\item These results are accompanied by a new type of local minimax lower bounds.
\end{itemize}
{
\setlength{\belowcaptionskip}{0pt}
\setlength{\abovecaptionskip}{-8pt}
\setlength{\intextsep}{8pt}
\begin{figure}[H]
\centering
\includegraphics[width=\linewidth]{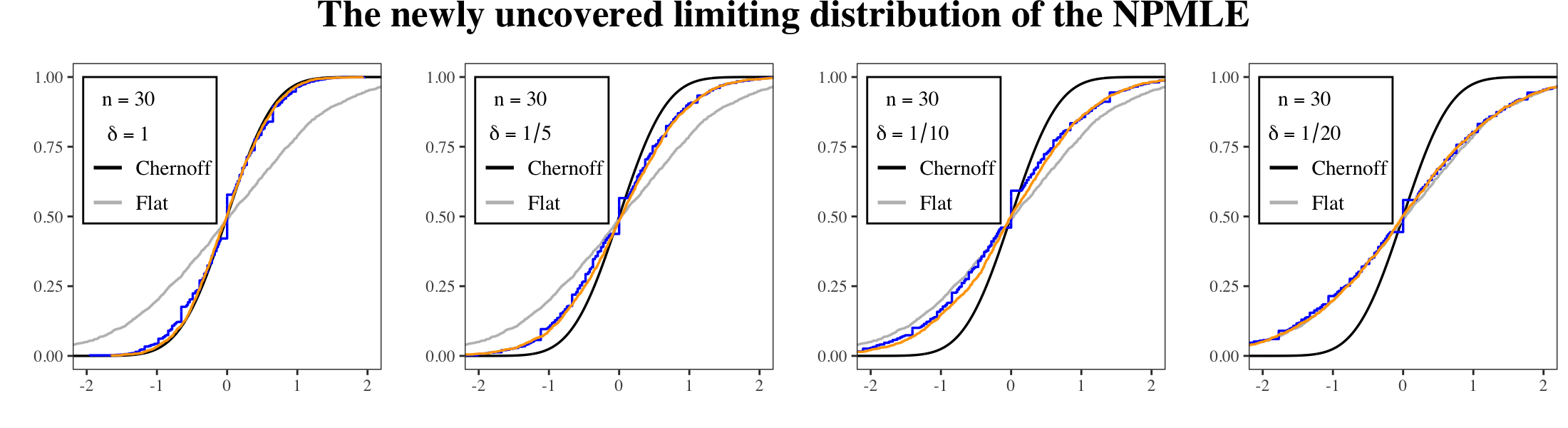}
\caption{{\small Let $X\sim \cU([-1,1])$, $\Phi(x) = \textrm{logit}^{-1}(\delta x)$ and $x_{0} = 0$. The black line shows the Chernoff distribution function, while the grey and the orange line respectively show a simulated distribution function of the limit for flat functions and the newly uncovered limit. The blue line shows a simulated distribution function of the appropriately scaled NPMLE $\{(n/\Phi'(x_{0}))^{1/3} \wedge \sqrt{n}\}(\hat{\Phi}_{n}(x_{0})-\Phi(x_{0}))$. Exclusively if $n\delta^{2}$ is large, the approximation with the scaled Chernoff distribution is appropriate. When $n\delta^{2}$ is small, the limiting distribution for flat functions is a better approximation, even in the non-flat case. In between, however, both fail to be a good approximation. All simulations are calculated based on $2000$ iterations of i.i.d.~samples of size $n = 30$.}}\label{fig:new limit}
\end{figure}
}
\noindent
Our main contributions for the $L^{1}$-asymptotics (Section~\ref{sec:L1 limit}) are as follows:
\begin{itemize}
\item In the fast regime (Theorem~\ref{thm:l1 rate} (ii)), the established limiting distribution is new and has not even been discovered in classical asymptotics for flat functions. 
Our proof in here develops a completely new strategy, notably not based on the switch relation.
\item The proof for the slow regime (Theorem~\ref{thm:l1 rate} (i)) uncovers a previously unknown interplay of the convergence rate $(n/\delta_{n})^{1/3}$ of the NPMLE and the newly derived convergence rate $(n\delta_{n}^{2})^{1/3}$ of the inverse process, which cannot be read off from classical asymptotics where these two rates coincide. 
\item Again, the results are accompanied by a new type of minimax lower bounds in $L^{1}$ (Theorem~\ref{thm:lower bound L1}) with the proof based on a non-standard choice of hypotheses in Assouad's lemma.
\end{itemize}
Moreover, our results across Section~\ref{sec:pointwise limit} and Section~\ref{sec:L1 limit} reveal that the limit in the slow regime coincides with the limit known from classical asymptotics of strictly increasing regression functions, while the scalings stand out from the classical cases and are different for local and global asymptotics. Whereas the rate of convergence is getting faster in the pointwise case, the rate of convergence $(n\delta_{n}^{2})^{1/6}$ of the stabilized $L^{1}$-distance towards the limiting distribution slows down and collapses at the phase transition.

\smallskip
The remaining part of the article is organized as follows. In Section~\ref{sec:NPMLE}, we introduce notation and present some basics of the NPMLE. In particular, a uniform version of Hellinger consistency is stated and 
uniform convergence on compacts in the weak-feature-impact scenario is deduced.
In Sections~\ref{sec:pointwise limit} and \ref{sec:L1 limit}, we state convergence 
rates and limiting distributions for the pointwise and the $L^{1}$-distance, respectively, as outlined in 
Section~\ref{subsec:results}, together with matching minimax lower bounds that are uniform over a family of appropriate subclasses of monotone functions. In Section~\ref{sec:statistical implications}, we discuss further statistical implications of our results, as well as some open problems.
Remaining proofs and auxiliary results are deferred to the supplement.

\section{Notation and preliminaries on the NPMLE} \label{sec:NPMLE} 
Let $P_{\Phi}$ denote the joint distribution of $(X,Y)$ with $\bP(Y=1| X)=\Phi(X)$ and feature-marginal $P_X$, and let $P_{\Phi}^{\otimes n}$ denote the $n$-fold product measure with expectation operator $\bE^{\otimes n}_{\Phi}$. For the remainder of the article, we write $F_X$ for the distribution function of $P_X$ and 
$\cX \subset \bR$ for its support. It is assumed that $P_X$ is Lebesgue-continuous and we write $p_{X}$ for the continuous version of the Lebesgue density on $\cX$ if it 
exists. For $F_n$ denoting the empirical distribution function of $X_{1},\dots,X_{n}$, we define 
$F_{n}^{-1} \colon [0,1] \to \bR$, $F_{n}^{-1}(a) \defeq \inf\{x \in \bR \mid F_{n}(x) \geq a\}$
as usual. Moreover, we write
\[
\cF \defeq \{ \Phi \colon \bR \to [0,1] \mid \Phi \text{ monotonically increasing}\}
\]
for the set of monotonically increasing functions from $\bR$ into the unit interval. For $\Phi\in\cF$, 
\[
p_{\Phi} \colon \bR \times \{0,1\} \to [0,1], \quad 
p_{\Phi}(x,y) \defeq \Phi(x)^{y}(1-\Phi(x))^{1-y}
\]
is the conditional probability mass function of $Y$ given $X$ if $(X,Y)\sim P_{\Phi}$. In the product experiment, the NPMLE for feature-label realizations 
$(x_{1},y_{1}),\dots,(x_{n},y_{n})$ is defined as 
\[
\hat{\Phi}_{n} 
\in \Argmax_{\Phi \in \cF} \prod_{i=1}^{n}p_{\Phi}(x_{i},y_{i}) 
= \Argmax_{\Phi \in \cF} \frac{1}{n}\sum_{i=1}^{n}\log p_{\Phi}(x_{i},y_{i}).
\]
Note that in the weak-feature-impact scenario, as introduced in Section \ref{subsec: 1.2}, the $n$ observations are realized according to $P_{\Phi_n}^{\otimes n}$ and the 
resulting NPMLE is an estimator for $\Phi_{n}$. Its existence and uniqueness at the sample points (in case the $x_i$ are pairwise different) can be proven as in Part~II Prop.~1.1 \& Prop.~1.2 of 
\cite{GroeneboomWellner1992}. 
As usual in the literature, we agree on $\hat{\Phi}_{n}$ being right-continuous and piecewise constant with jumping points 
being a subset of the sample points, i.e.~for the order statistic $x_{(1)},\dots,x_{(n)}$ of $x_{1},\dots,x_{n}$, 
\begin{align}\label{eq:Phi}
\hat{\Phi}_{n}|_{(-\infty,x_{(1)})} \defeq 0, \qquad 
\hat{\Phi}_{n}|_{[x_{(i)},x_{(i+1)})} \defeq \hat{\Phi}_{n}(x_{(i)}), \qquad 
\hat{\Phi}_{n}|_{[x_{(n)},\infty)} \defeq \hat{\Phi}_{n}(x_{(n)}) 
\end{align}
for $i=1,\dots,n-1$. Although there is no closed-form expression for $\hat{\Phi}_{n}$, it is possible to characterize the 
NPMLE under monotonicity constraints as follows: Let $y_{(1)},\dots,y_{(n)}$ be the corresponding ordering of the labels 
according to $x_{(1)},\dots,x_{(n)}$ (i.e.~if $x_{(i)} = x_{j}$ for some $1 \leq j \leq n$, then $y_{(i)} = y_{j}$), 
let 
\[
\cY_{n} 
\defeq \bigg\{\bigg(\frac{i}{n},\frac{1}{n}\sum_{j=1}^{i}y_{(j)}\bigg) \bigg| \ \, i \in \{1,\dots,n\}\bigg\} 
					\cup \big\{(0,0)\big\} 
\]
and let $G_{n} \colon [0,1] \to \bR$ denote the greatest convex minorant of $\cY_{n}$. Then, $\hat{\Phi}_{n}(x_{(i)})$ is 
given by the left-hand derivative of $G_{n}$ in the point $i/n$, i.e.
\begin{equation}\label{eq:char_phi}
\hat{\Phi}_{n}(x_{(i)}) 
= \sup_{s < \frac{i}{n}}\inf_{t \geq \frac{i}{n}}\frac{G_{n}(t)-G_{n}(s)}{t-s}. 
\end{equation}
In particular, $\hat{\Phi}_{n}$ coincides with the local average of the labels between two jumping points. \par
Generally, we write $g^{*}$ for the greatest convex minorant of a continuous function $g \colon I \to \bR$ for some 
interval $I \subset \bR$ and denote its left-hand derivative by $g^{*,\ell}$, which is given as in 
\eqref{eq:char_phi}, 
but with $G_{n}$ replaced by $g$. We refer to Chapter~3.3 of \cite{Groeneboom2014shape} for more details on this. 
From Lemma~3.2 of \cite{Groeneboom2014shape}, we obtain the \textit{switch relation}, giving an 
expression for the generalized inverse of $g^{*,\ell}$. Let
$\argminp$ denote the supremum of all minimizers.
\begin{lemma}[Switch relation]\label{lem:general switch relation}
For every $x$ in the interior of $I$ and any $a \in \bR$, we have 
\[
g^{*,\ell}(x) > a 
\quad \iff \quad 
\argminp_{u \in I}\{g(u) - au\} < x. 
\]
\end{lemma}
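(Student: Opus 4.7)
My plan is to reduce the claim to the case $a = 0$ and then analyze the greatest convex minorant of $h \defeq g - a\cdot\mathrm{id}$ directly. The key observation is that subtracting a linear function commutes with taking the greatest convex minorant, so $h^{*} = g^{*} - a\cdot\mathrm{id}$ and hence $h^{*,\ell}(x) = g^{*,\ell}(x) - a$. Likewise, $\argminp_{u \in I}\{g(u) - au\} = \argminp_{u \in I} h(u)$. Thus the lemma is equivalent to: for every $x$ in the interior of $I$,
\[
h^{*,\ell}(x) > 0 \iff \argminp_{u \in I}h(u) < x.
\]

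I would then exploit convexity of $h^{*}$: its left derivative $h^{*,\ell}$ is non-decreasing, and its set of minimizers is a (possibly degenerate) interval $[m_{1},m_{2}] \subset I$ with common value $\inf_{I} h^{*} = \inf_{I} h$. By convexity, $h^{*,\ell}(x) \le 0$ for $x \le m_{2}$ and $h^{*,\ell}(x) > 0$ for $x > m_{2}$. It therefore suffices to identify $m_{2} = \argminp_{u \in I}h(u)$. The inclusion ``$\ge$'' is the standard touching property of the greatest convex minorant: at the right endpoint $m_{2}$ of a maximal interval of constancy of $h^{*}$, one has $h(m_{2}) = h^{*}(m_{2}) = \inf h$, so $m_{2}$ is itself a minimizer of $h$. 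Conversely, every $u > m_{2}$ satisfies $h(u) \ge h^{*}(u) > \inf h^{*} = \inf h$, since $h^{*}$ is strictly greater than its minimum past $m_{2}$; hence no such $u$ is a minimizer of $h$, forcing $\argminp_{u \in I}h(u) \le m_{2}$.

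I expect the main obstacle to be the touching identity $h(m_{2}) = h^{*}(m_{2})$, which presupposes that the supremum defining $\argminp$ is attained. This is ensured by the standing continuity of $g$ together with compactness of the relevant subinterval of $I$ (or a coercivity-type assumption on $g$); in the concrete application to $G_{n}$ from \eqref{eq:char_phi} this is automatic since $G_{n}$ is piecewise linear on a bounded interval. Beyond this point, the argument reduces to an immediate consequence of convexity of $h^{*}$ and the defining minorant property.
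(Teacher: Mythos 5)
The paper does not actually prove this lemma: it is imported verbatim as Lemma~3.2 of \cite{Groeneboom2014shape}, so there is no in-paper argument to compare against. Judged on its own, your proof is correct and follows what is essentially the standard route behind the switch relation: tilt by the linear map, i.e.\ pass to $h=g-a\,\mathrm{id}$ (using that the greatest convex minorant commutes with subtracting an affine function, so $h^{*,\ell}=g^{*,\ell}-a$), and then identify $\{h^{*,\ell}>0\}=\{x>m_2\}$ with $m_2$ the last minimizer of $h^{*}$, which you then show equals $\argminp_{u\in I}h(u)$. The two directions are handled correctly: $u>m_2$ gives $h(u)\ge h^{*}(u)>\inf h^{*}=\inf h$, so no minimizer of $h$ lies to the right of $m_2$; and $m_2$ itself is a minimizer of $h$ by the touching property. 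The only step you assert rather than prove is that touching property, $h(m_2)=h^{*}(m_2)$; it would be worth one more line, e.g.\ via the standard fact that if $h^{*}(x_0)<h(x_0)$ at a point $x_0$ then $h^{*}$ is affine on a neighbourhood of $x_0$ (or can be pushed up by $\max\{h^{*},\ell\}$ with a small-slope affine $\ell$, contradicting maximality), which is incompatible with $m_2$ being the right endpoint of the flat piece $\{h^{*}=\inf h\}$ while $h^{*}>\inf h$ immediately to its right. You also correctly flag the attainment issue (existence of $m_2$ and of $\argminp$), which is indeed the only place where continuity of $g$ plus compactness (or coercivity, as for $Z(s)+s^{2}$ on $\bR$) is needed, and which holds in every application in the paper. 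So: correct, self-contained, and in spirit the same argument as the cited textbook proof, with one standard fact left as a black box.
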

Similarly, two different characterizations of the generalized inverse of $\hat{\Phi}_{n}$ have been established in the 
literature, with \cite{Groeneboom1985monotone} being the first to introduce such an inverse process. Following 
Section~4.1 in \cite{Durot2008monotone}, let $\Upsilon_{n} \colon [0,1] \to \bR$ denote the polygonal chain with 
$(i/n,\Upsilon_{n}(i/n)) \in \cY_{n}$ for $i=1,\dots,n$ and let $g_{n} \colon [0,1] \to \bR$ denote the left-hand derivative 
of $G_{n}$. Then, by definition, we have $\hat{\Phi}_{n}(X_{(i)}) = g_{n}(i/n) = g_{n} \circ F_{n}(X_{(i)})$ for $i=1,\dots,n$. Define 
\begin{align}
U_{n} \colon [0,1] \to \bR, \quad
U_{n}(a) 
&\defeq \argminp_{x \in \cX}\bigg\{\frac{1}{n}\sum_{i=1}^{n}Y_{i}^{n}\mathds{1}_{\{X_{i} \leq x\}} - aF_{n}(x)\bigg\}, \nonumber \\
\tilde{U}_{n} \colon [0,1] \to \bR, \quad
\tilde{U}_{n}(a) 
&\defeq \argminp_{t \in [0,1]}\{\Upsilon_{n}(t) - at\} \label{eq:Un}
\end{align}
and note that $F_{n}^{-1} \circ \tilde{U}_{n}(a) = U_{n}(a)$, as $\tilde{U}_{n}$ maps into the set 
$\{i/n \mid i=0,\dots,n\}$ and the process inside the $\argminp_{x \in \cX}$ in the definition of $U_{n}$ changes its values only at the observation points. For completeness, the proof of the subsequent formulation of the switch relation is given in Section~\ref{proof:switch relation}.

\begin{lemma}\label{lem:switch relation}
For every $x \in \cX$ and any $a \in [0,1]$, we have 
\[
\hat{\Phi}_{n}(x) > a 
\quad \iff \quad 
U_{n}(a) < F_{n}^{-1}(F_{n}(x))
\quad \iff \quad 
F_{n}^{-1} \circ \tilde{U}_{n}(a) < F_{n}^{-1}(F_{n}(x))
\qquad P_{\Phi}^{\otimes n}-\text{a.s.}
\]
\end{lemma}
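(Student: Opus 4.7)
The strategy is to apply the general switch relation (Lemma~\ref{lem:general switch relation}) to the continuous piecewise-linear function $\Upsilon_{n}$ and to translate the resulting equivalence into one about $\hat\Phi_{n}$ and $U_{n}$ via the identifications $\hat\Phi_{n}(x_{(i)})=g_{n}(i/n)$ and $F_{n}^{-1}\circ\tilde U_{n}=U_{n}$. The proof decomposes into three pieces.

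First, I would verify the identification $F_{n}^{-1}\circ\tilde U_{n}(a)=U_{n}(a)$ announced just before the lemma. The objective $M_{n}(x)\defeq\frac{1}{n}\sum_{i=1}^{n}Y_{i}^{n}\mathds{1}_{\{X_{i}\le x\}}-aF_{n}(x)$ is a right-continuous step function taking the value $\Upsilon_{n}(i/n)-a\,(i/n)$ on $[x_{(i)},x_{(i+1)})$, while $\Upsilon_{n}(t)-at$ is piecewise linear on $[0,1]$ with breakpoints $\{i/n:0\le i\le n\}$. Both $\argminp$'s therefore share a common rightmost index $i^{*}$, and $F_{n}^{-1}(i^{*}/n)=x_{(i^{*})}$. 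The $P_{\Phi}^{\otimes n}$-a.s.\ qualifier is used to discard the null event on which the $X_{i}$ have ties, so that indices $i^{*}$ and sample points $x_{(i^{*})}$ correspond unambiguously.

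Second, I would apply Lemma~\ref{lem:general switch relation} to $g=\Upsilon_{n}$ on $I=[0,1]$, whose greatest convex minorant is $G_{n}$ with left-derivative $g_{n}$, to obtain
\[
g_{n}(t)>a \iff \tilde U_{n}(a)<t
\]
for every interior $t$ and every $a\in\bR$. By (\ref{eq:Phi}) combined with $\hat\Phi_{n}(x_{(i)})=g_{n}(i/n)$ we have $\hat\Phi_{n}(x)=g_{n}(F_{n}(x))$ on $[x_{(1)},\infty)$; substituting $t=F_{n}(x)$ and invoking the definition of the generalized inverse $F_{n}^{-1}$ (using $\tilde U_{n}(a)\in\{i/n\}$) yields
\[
\hat\Phi_{n}(x)>a \iff F_{n}^{-1}\circ\tilde U_{n}(a)\le x.
\]
Third, I would promote the strict $>$ to the non-strict $\ge$ of the stated lemma. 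Because $\hat\Phi_{n}(x)$ and $\tilde U_{n}(a)$ both range over finite grids and are monotone in their respective arguments, and because $g_{n}$ is left-continuous as the left-derivative of the convex function $G_{n}$, the identity $\{\hat\Phi_{n}(x)\ge a\}=\bigcap_{\varepsilon>0}\{\hat\Phi_{n}(x)>a-\varepsilon\}$ together with its counterpart on the $F_{n}^{-1}\circ\tilde U_{n}$ side lifts the strict equivalence to the non-strict one. Composing with Step~1 closes the chain of equivalences.

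The main obstacle is combinatorial book-keeping rather than analysis: matching the $\argminp$ conventions between the continuous, piecewise-linear $\Upsilon_{n}$ and the right-continuous step function $M_{n}$, and handling exceptional configurations in which $a$ coincides with a jump value of $g_{n}$ or $x$ coincides with a sample point. These edge cases are absorbed by the $P_{\Phi}^{\otimes n}$-null set, while monotonicity of $\hat\Phi_{n}$, $U_{n}$, $\tilde U_{n}$ together with the discreteness of their ranges prevents off-by-one inconsistencies between the strict and non-strict versions of the switch.
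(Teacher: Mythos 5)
The paper never proves this lemma: it is quoted as a characterization established in the literature it cites (Groeneboom 1985; Durot 2008, Sec.~4.1), so the only issue is whether your derivation from Lemma~\ref{lem:general switch relation} is sound, and it is not --- the two points you dismiss as book-keeping absorbed by the null set are exactly where the argument breaks, and the exceptional configurations have positive probability (the a.s.\ qualifier only removes ties among the $X_i$). Your Step~1 identification of $U_{n}(a)$ with $F_{n}^{-1}\circ\tilde U_{n}(a)$ fails: the objective defining $U_{n}$ is a right-continuous step function whose set of minimizers is a union of half-open blocks $[x_{(i)},x_{(i+1)})$, so its supremum of minimizers is the \emph{right} endpoint $x_{(i^{*}+1)}$ (or $T$), whereas $F_{n}^{-1}(\tilde U_{n}(a))=F_{n}^{-1}(i^{*}/n)=x_{(i^{*})}$; these differ by an entire block on every realization, so "both $\argminp$'s share a common rightmost index, hence equal $x_{(i^{*})}$" is simply false. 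Concretely, take $n=2$, $y_{(1)}=0$, $y_{(2)}=1$, $a=1/2$: then $\tilde U_{2}(a)=1/2$, $F_{2}^{-1}(\tilde U_{2}(a))=x_{(1)}$, but $U_{2}(a)=x_{(2)}$, and for $x\in[X_{(1)},X_{(2)})$ one has $F_{2}^{-1}\circ\tilde U_{2}(a)\le x$ while $\hat\Phi_{2}(x)=0<a$ --- an event of positive probability on which your chain of equivalences cannot close.

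Your Steps~2--3 then conflate strict and non-strict inequalities. Lemma~\ref{lem:general switch relation} applied to $\Upsilon_{n}$ gives exactly $\hat\Phi_{n}(x)>a\iff\tilde U_{n}(a)<F_{n}(x)$, whereas the Galois relation for the generalized inverse gives $F_{n}^{-1}(\tilde U_{n}(a))\le x\iff\tilde U_{n}(a)\le F_{n}(x)$; since $\tilde U_{n}(a)$ and $F_{n}(x)$ both live on the grid $\{i/n\}$, the gap between $<$ and $\le$ is precisely the event $\{F_{n}(x)=\tilde U_{n}(a)\}$, i.e.\ $x\in[x_{(i^{*})},x_{(i^{*}+1)})$, again not a null set. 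The $\varepsilon\downarrow 0$ device does not repair this: it would require $a\mapsto\tilde U_{n}(a)$ to be left-continuous at $a$, but $\tilde U_{n}$ jumps from the left to the right knot of a linear stretch of $G_{n}$ exactly when $a$ equals one of the (finitely many) values of $g_{n}$, which for binary labels and, say, $a=1/2$ happens with positive probability; so the non-strict/non-strict statement cannot be reached by intersecting the strict ones. A correct from-scratch argument must stay with the strict/strict switch relation (or with the $U_{n}$-formulation, whose sup-of-minimizers convention reproduces the non-strict $\ge$ whenever $a$ is not among the values of $g_{n}$) and then state explicitly for which $(x,a)$ --- e.g.\ all $a$ outside the finite random set of slope values, or Lebesgue-a.e.\ $a$ as it is used downstream --- the claimed equivalences hold; the blanket appeal to a $P_{\Phi}^{\otimes n}$-null set does not deliver the lemma in the form you assert.
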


One particularly important property of the NPMLE, which paves the way for our later study, is Hellinger-consistency
uniformly in $\Phi$. 
Let $h$ denote the Hellinger metric, i.e.
\begin{equation}\label{eq:hellinger metric}
h^{2}(P_{\Phi},P_{\Psi}) 
= \frac{1}{2}\hspace{-0.55mm}\int_{\bR}\hspace{-1.2mm}\big(\sqrt{1-\Phi(x)} - \sqrt{1-\Psi(x)}\big)^{2} 
							+ \big(\sqrt{\Phi(x)} - \sqrt{\Psi(x)}\big)^{2}dP_{X}(x) \eqdef d^2(\Phi,\Psi)
\end{equation}
for any $\Phi, \Psi \in \cF$, inducing the semi-metric $d$ on $\cF$. 

\begin{proposition}[Uniform Hellinger consistency]\label{prop:hellinger consistency}
For any $\varepsilon>0$, the NPMLE satisfies 
\[
\sup_{\Phi \in \cF} P_{\Phi}^{\otimes n}\big(d(\hat{\Phi}_{n},\Phi) > \varepsilon\big) 
\longrightarrow 0 \quad \text{as } n \longrightarrow \infty.
\] 
\end{proposition}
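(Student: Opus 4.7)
My plan is to follow the classical basic-inequality-plus-bracketing-entropy route to Hellinger consistency of an NPMLE (in the spirit of Wong--Shen and van~de~Geer), while tracking all constants to guarantee uniformity over $\Phi\in\cF$.

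\textbf{Basic inequality.} Since $\cF$ is convex, $\bar\Phi_{n}:=(\hat\Phi_{n}+\Phi)/2\in\cF$. From $\sum_{i=1}^{n}\log(p_{\hat\Phi_{n}}/p_{\Phi})(X_{i},Y_{i}^{n})\ge 0$, concavity of $\log$ gives the same inequality with $\bar\Phi_{n}$ replacing $\hat\Phi_{n}$, and $\log x\le 2(\sqrt{x}-1)$ then yields $0\le P_{n}[\sqrt{p_{\bar\Phi_{n}}/p_{\Phi}}-1]$. Since $P_{\Phi}[\sqrt{p_{\bar\Phi_{n}}/p_{\Phi}}-1]=-d^{2}(\bar\Phi_{n},\Phi)$, rearranging produces
\[
d^{2}(\bar\Phi_{n},\Phi)\le(P_{n}-P_{\Phi})[g_{\Phi,\hat\Phi_{n}}],\qquad g_{\Phi,\Psi}:=\sqrt{p_{(\Phi+\Psi)/2}/p_{\Phi}}-1.
\]
An elementary estimate on $|\sqrt{(a+b)/2}-\sqrt{a}|$ for $a,b\in[0,1]$ gives $d(\bar\Phi_{n},\Phi)\ge c\,d(\hat\Phi_{n},\Phi)$ for an absolute $c>0$, reducing the claim to uniform consistency of $d(\bar\Phi_{n},\Phi)$.

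\textbf{Universal bracketing.} Write $g_{\Phi,\Psi}=\varphi(\sqrt{p_{\Psi}/p_{\Phi}})$ with $\varphi(s):=\sqrt{(1+s^{2})/2}-1$. Since $\varphi$ is $1/\sqrt{2}$-Lipschitz and $\|\sqrt{p_{\Psi}/p_{\Phi}}-\sqrt{p_{\Psi'}/p_{\Phi}}\|_{L^{2}(P_{\Phi})}=\sqrt{2}\,d(\Psi,\Psi')$, the map $\Psi\mapsto g_{\Phi,\Psi}$ is $1$-Lipschitz from $(\cF,d)$ into $L^{2}(P_{\Phi})$, and any pointwise bracket $\Psi_{-}\le\Psi\le\Psi_{+}$ with $d(\Psi_{-},\Psi_{+})\le\varepsilon$ induces a bracket for $g_{\Phi,\Psi}$ of $L^{2}(P_{\Phi})$-size at most $\varepsilon$ (flipping the roles of $\Psi_{\pm}$ on $\{y=0\}$, where $g_{\Phi,\Psi}$ is anti-monotone in $\Psi$). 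Moreover $p_{(\Phi+\Psi)/2}/p_{\Phi}\ge 1/2$ pointwise, so $g_{\Phi,\Psi}\ge 1/\sqrt{2}-1$ is universally bounded below. Combined with the classical bound $H_{[]}(\varepsilon,\cF,d)\le K/\varepsilon$ (obtained by bracketing $\sqrt{\Phi}$ and $\sqrt{1-\Phi}$ separately in $L^{2}(P_{X})$, valid for any $P_{X}$), this yields
\[
H_{[]}\big(\varepsilon,\{g_{\Phi,\Psi}:\Psi\in\cF\},L^{2}(P_{\Phi})\big)\le K'/\varepsilon
\]
with $K'$ \emph{independent of $\Phi$}.

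\textbf{Peeling and main obstacle.} A standard peeling of the empirical process $(P_{n}-P_{\Phi})[g_{\Phi,\cdot}]$ over Hellinger shells $\{\Psi\in\cF:d((\Phi+\Psi)/2,\Phi)\in(2^{j-1}\varepsilon,2^{j}\varepsilon]\}$, combined with a maximal inequality for bracketed classes, gives $P_{\Phi}^{\otimes n}(d(\bar\Phi_{n},\Phi)>\varepsilon)\le\eta_{n}(\varepsilon)$ with $\eta_{n}(\varepsilon)\to 0$ depending only on $\varepsilon$ and on the absolute constants above; this contradicts the basic inequality on the event $\{d(\bar\Phi_{n},\Phi)>\varepsilon\}$ with probability tending to one. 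The empirical-process machinery itself is routine; the only delicate point is that all intervening constants -- envelope, Hellinger-$L^{2}$ equivalence, and bracketing-entropy constant -- can be chosen without reference to $\Phi$. The two crucial observations that enable this are (i) the averaging $\bar\Phi_{n}=(\hat\Phi_{n}+\Phi)/2$ forces $p_{\bar\Phi_{n}}/p_{\Phi}\ge 1/2$, providing a $\Phi$-free envelope for $g_{\Phi,\hat\Phi_{n}}$, and (ii) the Hellinger bracketing entropy of $\cF$ is intrinsic to the class of monotone $[0,1]$-valued functions and does not reference any specific $\Phi$ or $P_{X}$.
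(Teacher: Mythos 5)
Your argument is sound and reaches the same conclusion, but its second half follows a genuinely different route from the paper. The first half is essentially the paper's: its Lemma~\ref{lem:maximizer} is your concavity step (the criterion $m_{\Psi,\Phi}=\log\big((p_\Psi+p_\Phi)/(2p_\Phi)\big)$ is the logarithmic version of your $g_{\Phi,\Psi}$, and the paper likewise uses $\log x\le 2(\sqrt x-1)$ together with the comparison $|\sqrt a-\sqrt b|\le 4\,|\sqrt{(a+b)/2}-\sqrt b|$ of Lemma~\ref{lem:hellinger inequalities} inside Lemma~\ref{lem:well separated maximum}). Where you diverge is in controlling the stochastic term: the paper runs a Wald-type scheme --- well-separation $M(\Psi,\Phi)\le -d^{2}(\Psi,\Phi)/8$ plus a uniform-in-$\Phi$ Glivenko--Cantelli statement for $\{m_{\Psi,\Phi}:\Psi\in\cF\}$, obtained from $L^{1}(P_\Phi)$ brackets (Proposition~\ref{prop:bracketing}, with the same cancellation of $p_\Phi$ that you exploit) followed by nothing heavier than Chebyshev's inequality and a union bound over the finitely many bracket endpoints, using the uniform variance bound $\Var_\Phi(m_{\Psi,\Phi})\le 21$. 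You instead invoke the Wong--Shen/van de Geer rate machinery: $L^{2}(P_\Phi)$ bracketing of $\{g_{\Phi,\Psi}\}$, peeling over Hellinger shells, and a localized maximal inequality. Your route is heavier but buys more --- it would deliver a uniform Hellinger rate of order $n^{-1/3}$ rather than mere consistency --- while the paper's argument is more elementary and self-contained. One caveat on your side: $g_{\Phi,\Psi}$ is bounded below by $1/\sqrt2-1$ but not above, so the bracketing maximal inequality for uniformly bounded classes does not apply verbatim; you need the one-sided/Bernstein-norm version (or a truncation), for which your lower bound together with the universal control $\|g_{\Phi,\Psi}\|_{L^{2}(P_\Phi)}^{2}=2\,d^{2}\!\big((\Phi+\Psi)/2,\Phi\big)\le 2$ (and its scaling with the shell radius) are exactly the required inputs --- a matter of citing the right maximal inequality rather than a gap. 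Both proofs ultimately rest on the same two sources of uniformity you single out: the midpoint trick forcing $p_{(\Phi+\Psi)/2}/p_\Phi\ge 1/2$, and the $\Phi$- and $P_X$-free bracketing entropy of monotone $[0,1]$-valued functions.
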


The result might be well-known, yet we did not find it stated in the uniform version as formulated here.
Hence, we give a proof in Section~\ref{proof:hellinger consistency}. Because Hellinger distance dominates total variation, Proposition \ref{prop:hellinger consistency} reveals for any $\varepsilon>0$ likewise
\begin{equation}\label{eq: TV}
\sup_{\Phi \in \cF} P_{\Phi}^{\otimes n}\big(\| \hat{\Phi}_n-\Phi\|_{L^1(P_X)}> \varepsilon\big)
\longrightarrow 0 \quad \text{as } n \longrightarrow \infty.
\end{equation}
As a consequence, $d(\hat{\Phi}_n,\Phi_n)\longrightarrow_{\bP}0$ and $\| \hat{\Phi}_n-\Phi_n\|_{L^1(P_X)}\longrightarrow_{\bP}0$ in the weak-feature-impact scenario, irrespective of the level of feature impact. 
\begin{corollary}\label{cor:uniform consistency}
Let $\Phi_{0}$ be continuous in a neighborhood of zero. Then, for any compact interval $I$ in the interior of $\cX$, 
\[
\sup_{x \in I}|\hat{\Phi}_{n}(x) - \Phi_{n}(x)| \longrightarrow_{\bP} 0 \quad \text{as } n \longrightarrow \infty
\]
in the weak-feature-impact scenario.
\end{corollary}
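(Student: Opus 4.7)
The plan is to upgrade the $L^{1}(P_{X})$-consistency that falls out of Proposition~\ref{prop:hellinger consistency} to uniform consistency on compact subsets of the interior of $\cX$, exploiting monotonicity of $\hat{\Phi}_{n}$ in a Glivenko--Cantelli-type argument. The first observation is that, because $\Phi_{0}$ is continuous at zero and $\delta_{n}\searrow 0$, the sequence $\Phi_{n}(x)=\Phi_{0}(\delta_{n}x)$ converges uniformly on every compact subset of $\bR$ to the constant $c \defeq \Phi_{0}(0)$. Hence it suffices to establish $\sup_{x\in I}|\hat{\Phi}_{n}(x)-c|\longrightarrow_{\bP}0$.

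Next, I would choose a slightly larger compact interval $I' = [a',b']\subset\cX$ with $I=[a,b]\subset(a',b')$, which is possible because $I$ lies in the interior of $\cX$. Applying Proposition~\ref{prop:hellinger consistency} together with the bound $\|\hat{\Phi}_{n}-\Phi_{n}\|_{L^{1}(P_{X})}\leq\sqrt{2}\,d(\hat{\Phi}_{n},\Phi_{n})$ used in \eqref{eq: TV}, and combining this with the just-noted uniform convergence $\Phi_{n}\to c$ on $I'$, one obtains
\[
\int_{I'}|\hat{\Phi}_{n}(x)-c|\,dP_{X}(x) \longrightarrow_{\bP} 0.
\]

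The crucial step is then a monotonicity push. Fix $\varepsilon>0$. If at some $x_{0}\in I$ the inequality $\hat{\Phi}_{n}(x_{0})\geq c+\varepsilon$ holds, then monotonicity of $\hat{\Phi}_{n}$ forces $\hat{\Phi}_{n}(x)\geq c+\varepsilon$ for every $x\in[x_{0},b']\supseteq[b,b']$, whence $\int_{I'}|\hat{\Phi}_{n}-c|\,dP_{X}\geq \varepsilon P_{X}([b,b'])$. A symmetric argument on $[a',a]$ handles the case $\hat{\Phi}_{n}(x_{0})\leq c-\varepsilon$. Because $a,b$ lie in the interior of $\cX$ and $a'<a$, $b<b'$ can still be chosen inside $\cX$, both $P_{X}([a',a])$ and $P_{X}([b,b'])$ are strictly positive. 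Thus the event $\{\sup_{x\in I}|\hat{\Phi}_{n}(x)-c|\geq\varepsilon\}$ entails a lower bound of order $\varepsilon$ on the $L^{1}$-deviation on $I'$, and its probability vanishes by the previous display.

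The argument is essentially routine bookkeeping once uniform Hellinger consistency is at hand; the only mild obstacle is to secure enough room on either side of $I$ within $\cX$ so that the monotonicity of $\hat{\Phi}_{n}$ can convert a pointwise deviation on $I$ into a genuine $L^{1}$-deviation on the larger interval $I'$, which is precisely why the interiority assumption on $I$ appears in the statement.
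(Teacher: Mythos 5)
Your argument is correct, but it takes a genuinely different route from the paper's. The paper deduces the corollary from \eqref{eq: TV} by a two-stage subsequence argument: it first extracts a $\bP$-almost surely convergent subsequence of the $L^{1}(P_{X})$-errors, then, for each fixed $\omega$, a further ($\omega$-dependent) subsequence along which $|\hat{\Phi}_{n}(\omega,\cdot)-\Phi_{n}(\cdot)|\to 0$ holds $P_{X}$-a.s.; monotonicity together with the asymptotic flatness of $\Phi_{n}$ (Lemma~\ref{lem:pointwise convergence}) then sandwiches $\hat{\Phi}_{n}$ at the remaining points, giving pointwise convergence on all of $\cX^{o}$, uniformity on compacts follows because pointwise convergent $[0,1]$-valued isotonic functions with continuous limit converge uniformly on compacts, and finally both subsequence reductions are unwound. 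You use the same two ingredients (the $L^{1}(P_{X})$-consistency \eqref{eq: TV} and the flatness $\sup_{x\in I'}|\Phi_{n}(x)-\Phi_{0}(0)|\to 0$, for which continuity of $\Phi_{0}$ at zero indeed suffices), but you make the argument quantitative instead of qualitative: monotonicity yields the event inclusion $\{\sup_{x\in I}|\hat{\Phi}_{n}(x)-\Phi_{0}(0)|\geq \varepsilon\}\subset\{\int_{I'}|\hat{\Phi}_{n}-\Phi_{0}(0)|\,dP_{X}\geq \varepsilon\kappa\}$ with $\kappa=\min\{P_{X}([a',a]),P_{X}([b,b'])\}>0$, the positivity holding because $(a',a)$ and $(b,b')$ are nonempty open subsets of the support $\cX$ (so no positivity of the density on $\cX^{o}$ is needed, which the paper's appendix proof does invoke). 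This buys a shorter, entirely elementary proof with no subsequence extractions or a.s.\ arguments, converting convergence in probability of the $L^{1}$-error directly into convergence in probability of the sup-error and even quantifying the relation between the two deviations; note also that by monotonicity the supremum over $I=[a,b]$ is attained at an endpoint, so your ``there exists $x_{0}$'' step requires no $\varepsilon/2$ adjustment. What the paper's scheme buys in exchange is the intermediate, slightly stronger conclusion of pointwise convergence at every interior point along almost surely convergent subsequences, but for the corollary as stated your direct argument is fully sufficient.
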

The proof is given in Section~\ref{proof:uniform consistency}, where we 
design a tricky two-stage 
subsequence argument to deduce pointwise convergence from \eqref{eq: TV} in the weak-feature-impact scenario at any interior point of $\cX$. The result then follows from the fact that pointwise convergent $[0,1]$-valued isotonic functions with 
continuous limit also converge uniformly on compacts.\par
Throughout from now on, $P_{X}$ is assumed to be compactly supported on 
$\cX = [-T,T]$ for some $T > 0$ with continuous, strictly positive Lebesgue density $p_{X}$ on $\cX$.

\section{The local case} \label{sec:pointwise limit}

\subsection{Pointwise local minimax lower bounds}\label{sec:pointwise lower bound}
Recall that a crucial consequence of the weak-feature-impact scenario is that the level of feature impact controls the gradient of the feature-label relation uniformly on compacts, both from above and from below. This raises the question which rate of convergence is attainable in principle by any estimator over restricted classes reflecting this constraint.
For any function $f\in\cF$, let 
\begin{equation}\label{eq:modulus}
\| f\|_{\cX,L}\defeq\sup_{\substack{x,y\in\cX: \\x<y}}\frac{f(y)-f(x)}{y-x} \ \  \text{and} \ \ \omega_{\boldcdot}^{\cX}(f)\defeq\sup\big\{f(y)-f(x)\mid  x,y\in\cX,0<y-x\leq \boldcdot\big\}
\end{equation}
denote Lipschitz semi-norm and modulus of continuity of its restriction to $\cX$, respectively. For any $\delta \in [0,1]$, let 
\begin{equation}\label{eq: Fdelta}
\cF_{\delta} 
\defeq \Big\{\Phi \in \cF \mid \| \Phi\|_{\cX,L}\leq \delta \text{ and } \inf_{\nu}\omega_{\nu}^{\cX}(\Phi)/\nu\geq \delta/2 \Big\}
\end{equation}
and note that $\cF_{\delta}$ contains only functions of $\cF$ with steepness of order $\delta$. Due to the condition on the modulus of continuity, the steepness of these functions is not only controlled from above, but also from below.
We remark that for continuously differentiable $\Phi_0$ with $\Phi_0'(0) > 0$, $\Phi_n=\Phi_0(\delta_n\boldcdot)\in \cF_{\kappa\delta_n}$ for any $\kappa \in (\Phi_{0}'(0),2\Phi_{0}'(0))$ and $n > n_{0}(\kappa)$ sufficiently large.

\begin{theorem}[Pointwise lower bound] \label{thm:lower bound pointwise}
For any $x_0$ contained in the interior of $\cX$, there exists a positive constant 
$C > 0$, such that 
\[
\liminf_{n\rightarrow\infty}\inf_{\delta\in [0,\frac{1}{4T}]}
	\inf_{T_n^{\delta}(x_0)}\sup_{\Phi\in\cF_{\delta}}
				P_{\Phi}^{\otimes n}\Big(\Big(\sqrt{n}\wedge \Big(\frac{n}{\delta}\Big)^{1/3}\Big)\big| T_n^{\delta}(x_0)-\Phi(x_0)\big| \geq C\Big) > 0,
\]
where the infimum is running over all estimators $T_n^{\delta}(x_0)=T_n^{\delta}\big(x_0,(x_1,y_1),\dots,(x_n,y_n)\big)$.
\end{theorem}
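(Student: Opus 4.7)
The strategy is Le Cam's two-point method applied uniformly in $\delta$. For each pair $(n,\delta)$ I would construct two hypotheses $\Phi_0,\Phi_1\in\cF_\delta$ with separation $|\Phi_0(x_0)-\Phi_1(x_0)|=2\epsilon_n(\delta)$ of order $[\sqrt n\wedge(n/\delta)^{1/3}]^{-1}$ and Kullback--Leibler divergence $\mathrm{KL}(P_{\Phi_0}^{\otimes n},P_{\Phi_1}^{\otimes n})\le K<2$ for a universal $K$. Pinsker's inequality combined with the classical two-point bound then gives, uniformly in $\delta$,
\[
\inf_{T_n^\delta}\sup_{\Phi\in\cF_\delta}P_\Phi^{\otimes n}\bigl(|T_n^\delta(x_0)-\Phi(x_0)|\ge \epsilon_n(\delta)\bigr)\;\ge\;\tfrac12\bigl(1-\sqrt{K/2}\bigr)>0,
\]
which is exactly the desired lower bound after absorbing the rate into the constant $C$.

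I would use two perturbations sharing the linear baseline $\Phi_0(x)=\tfrac12+\delta x/2$, which belongs to $\cF_\delta$ since it has slope exactly $\delta/2$ on the relevant scales. In the \emph{fast regime} $\delta\le n^{-1/2}$, set $\Phi_1(x)=\Phi_0(x)+c_1 n^{-1/2}$; this is again linear with slope $\delta/2$, hence in $\cF_\delta$, and $\delta T\le 1/4$ together with small $c_1$ keeps both functions in $[\tfrac38,\tfrac78]$. Using $\mathrm{KL}(\mathrm{Bern}(p),\mathrm{Bern}(q))\le (p-q)^2/[q(1-q)]$ and the uniform lower bound $\Phi_i(1-\Phi_i)\ge 7/64$ yields a total divergence of order $n\epsilon^2=O(c_1^2)$. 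In the \emph{slow regime} $\delta>n^{-1/2}$, take $\Phi_1=\Phi_0+g$, where $g$ is the symmetric triangular bump supported on $[x_0-2h,x_0+2h]$ with peak $g(x_0)=\epsilon$ and piecewise-constant slopes $\pm\epsilon/h$. Matching $h=2\epsilon/\delta$ makes the bump slopes $\pm\delta/2$, so $\Phi_1'\in[0,\delta]$ almost everywhere; this yields $\|\Phi_1\|_{\cX,L}\le\delta$, while small intervals outside the bump, on which $\Phi_1$ is linear with slope $\delta/2$, give $\omega^\cX_\nu(\Phi_1)/\nu\ge\delta/2$. Setting $\epsilon=c_2(\delta/n)^{1/3}$ keeps $h=2c_2/(n\delta^2)^{1/3}\le 2c_2$, which fits around the interior point $x_0$ for $c_2$ small, and the total divergence satisfies
\[
\mathrm{KL}\;\le\;C\,n\!\int g^2\,dP_X\;\lesssim\;n\epsilon^2 h\;=\;\tfrac{2n\epsilon^3}{\delta}\;\lesssim\;c_2^3.
\]

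The delicate point---which I expect to require the most care---is the joint verification $\Phi_1\in\cF_\delta$ in the slow regime: the upper Lipschitz bound is immediate by construction, but the lower modulus-of-continuity condition forces the bump slopes to stay within $\pm\delta/2$, which together with the peak height $\epsilon$ fixes the optimal half-width $h\asymp(n\delta^2)^{-1/3}$. Any wider bump would inflate the KL beyond a universal constant; any narrower bump would violate the Lipschitz cap. Choosing $c_1,c_2$ uniformly small then yields a positive lower bound independent of $\delta$, and setting $C=\tfrac12\min(c_1,c_2)$ completes the argument.
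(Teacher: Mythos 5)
Your proposal is correct and follows essentially the same route as the paper: a two-point (Le Cam/Tsybakov) reduction with a vertically shifted line in the fast regime $\delta<n^{-1/2}$ and a tent-shaped perturbation of height $\asymp(n/\delta)^{-1/3}$ and half-width $\asymp(n\delta^2)^{-1/3}$ around $x_0$ in the slow regime, with the $n$-fold divergence bounded by a universal constant (the paper works with Hellinger distance instead of KL and chooses both slow-regime hypotheses to have slopes in $\{\delta/2,\delta\}$ rather than allowing your flat stretch of slope $0$, but these differences are cosmetic). One detail to tighten: for window widths $\nu$ comparable to $2T$ the lower-modulus check $\omega_\nu^{\cX}(\Phi_1)/\nu\geq\delta/2$ should be argued via intervals spanning the entire bump (on which the bump contributes nonnegatively to the increment), not only via intervals lying outside it, which do not exist at those scales.
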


While this lower bound remains true when enlarging $\cF_{\delta}$ by dropping the lower bound constraint on the modulus of continuity, this sharper formulation suitably complements the subsequent convergence rate in the weak-feature-impact scenario, as the latter is not stated in a uniform sense.
Note that the range of $\delta$ necessarily has to be bounded from above as $\sharp\cF_{\delta} < 2$ for $\delta \geq 1/T$.
The proof is given in Section~\ref{proof:pointwise lower bound}. The lower bound exhibits an elbow at $\delta=\delta_n\sim n^{-1/2}$ separating two regimes --- the slow regime $(n/\delta)^{-1/3}$ in case $\delta \gg n^{-1/2}$ and the fast regime $n^{-1/2}$ for $\delta \ll n^{-1/2}$ with the intermediate regime at $\delta \asymp n^{-1/2}$. Note that by smoothing out the kinks in the respective lower bound hypotheses, the result continues to hold when restricted to continuously differentiable functions.

\subsection{The metamorphosis of the pointwise limiting distribution}\label{sec:metamorphosis}
In view of the valuable pointwise adaptivity properties of the NPMLE in \cite{Cator2011optimality}, it does not come as a surprise that the above stated faster rate (as compared to the $n^{-1/3}$-rate in the unrestricted case) is actually adaptively attained by the NPMLE in the weak-feature-impact scenario. In this section, we complement the rates with the complete picture in terms of limiting distributions. To state the result, let $\sigma_{\Phi_{0}} \defeq \sqrt{\Phi_{0}(0)(1-\Phi_{0}(0))}$, let $(Z(s))_{s \in \bR}$ denote a 
standard two-sided Brownian motion and let for $\beta \in \bN$ and any $c \geq 0$, 
\begin{align*}
f_{\beta} \colon \bR \to \bR, \quad 
f_{\beta}(s) &\defeq \sigma_{\Phi_{0}}Z(s) + \frac{\Phi_{0}^{(\beta)}(0)}{p_{X}(x_{0})^{\beta}(\beta+1)!}s^{\beta+1} \\
g_{\beta,c} \colon [0,1] \to \bR, \quad 
g_{\beta,c}(s) &\defeq \sigma_{\Phi_{0}}Z(s) + \sqrt{c}\,\Phi_{0}^{(\beta)}(0)\bE\big[(X-x_{0})^{\beta}
												\mathds{1}_{\{X \leq F_{X}^{-1}(s)\}}\big].
\end{align*}

\begin{theorem} \label{thm:pointwise rate}
For $\beta \in \bN$, let $x_{0}$ be an interior point of $ \cX$ and assume $\Phi_{0}$ to be $\beta$-times continuously 
differentiable in a neighborhood of zero with the $\beta$th derivative being the first non-vanishing derivative in zero. 
\begin{itemize}
\item[(i)] (Slow regime) If $n\delta_{n}^{2\beta} \longrightarrow \infty$, then 
\[
\Big(\frac{n}{\delta_{n}}\Big)^{\beta/(2\beta+1)}\big(\hat{\Phi}_{n}(x_{0}) - \Phi_{n}(x_{0})\big) 
\longrightarrow_{\cL} f_{\beta}^{*,\ell}(0) \quad \text{as }n \longrightarrow \infty.
\]

\item[(ii)] (Intermediate regime) 
If $n\delta_{n}^{2\beta} \longrightarrow c\in(0,\infty)$ and $F_{X}^{-1}$ is Hölder-continuous to the exponent $\alpha > 1/2$, then
\[
\sqrt{n}\big(\hat{\Phi}_{n}(x_{0}) - \Phi_{n}(x_{0})\big) 
\longrightarrow_{\cL} g_{\beta,c}^{*,\ell}(F_{X}(x_{0})) \quad \text{as } n \longrightarrow \infty.
\]
\item[(iii)](Fast regime) If $n\delta_{n}^{2\beta} \longrightarrow 0$, then
\[
\sqrt{n}\big(\hat{\Phi}_{n}(x_{0}) - \Phi_{n}(x_{0})\big) 
\longrightarrow_{\cL} g_{\beta,0}^{*,\ell}(F_{X}(x_{0})) \quad \text{as } n \longrightarrow \infty.
\]
\end{itemize}
\end{theorem}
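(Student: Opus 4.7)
I would treat all three regimes via a common scheme built on the switch relation (Lemma~\ref{lem:switch relation}): rewrite $\{r_n(\hat\Phi_n(x_0)-\Phi_n(x_0))\geq u\}$ as an event on the inverse process $U_n(\Phi_n(x_0)+u/r_n)$ (or its polygonal version $\tilde U_n$), rescale the defining argmin criterion at the regime-specific rate, establish functional convergence of the rescaled criterion in the appropriate Skorokhod space, and invoke a continuous-mapping theorem for the argmin; by Lemma~\ref{lem:general switch relation}, this is equivalent to reading off the left-derivative of the greatest convex minorant of the limit process.

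For the slow regime (i), I would take $r_n=(n/\delta_n)^{\beta/(2\beta+1)}$ and rescale locally around $x_0$ via $x=x_0+s/\tau_n$ with $\tau_n\sim(n\delta_n^{2\beta})^{1/(2\beta+1)}$, so that noise and drift balance. Splitting the rescaled criterion
\[
V_n(s)\defeq A_n\Bigl[\frac1n\sum_i\bigl(Y_i^n-\Phi_n(x_0)-u/r_n\bigr)\bigl(\mathds{1}_{X_i\leq x_0+s/\tau_n}-\mathds{1}_{X_i\leq x_0}\bigr)\Bigr]
\]
into a centred stochastic part and a bias, a Lindeberg--Feller CLT for the former (using that $\Phi_n(X_i)\to\Phi_0(0)$ uniformly in a neighbourhood of $x_0$ so that $\mathrm{Var}(Y_i^n\mid X_i)\to\sigma_{\Phi_0}^2$) delivers the Gaussian limit $\sigma_{\Phi_0}Z(s)$, a Taylor expansion of $\Phi_0$ around $0$ at order $\beta$ produces the polynomial drift $\tfrac{\Phi_0^{(\beta)}(0)}{p_X(x_0)^\beta(\beta+1)!}s^{\beta+1}$, and the $-u/r_n$-shift supplies the linear term $-us$. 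A standard cube-root-type tightness argument (via Kim--Pollard type maximal inequalities over shells) together with the argmin continuous-mapping theorem and Lemma~\ref{lem:general switch relation} then yields the claim $f_\beta^{*,\ell}(0)$.

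For (ii) and (iii), I would use the polygonal form of Lemma~\ref{lem:switch relation} globally on the uniform scale $[0,1]$. Since $F_n(x_0)\to F_X(x_0)$ almost surely, the task reduces to establishing the functional convergence
\[
\sqrt{n}\bigl(\Upsilon_n(t)-\Phi_n(x_0)\,t\bigr)\longrightarrow_{\cL} g_{\beta,c}(t)\quad\text{in }D[0,1],
\]
with $c=0$ in the fast regime. The centred part converges to $\sigma_{\Phi_0}Z(t)$ by a KMT/Sakhanenko strong approximation applied to the partial sums of the triangular array $(Y_i^n)$, while the deterministic bias, after the same Taylor expansion of $\Phi_n-\Phi_n(x_0)$ combined with $\sqrt{n}\delta_n^\beta\to\sqrt{c}$, converges to $\sqrt{c}\,\Phi_0^{(\beta)}(0)\,\bE[(X-x_0)^\beta\mathds{1}_{X\leq F_X^{-1}(t)}]$ and vanishes when $c=0$ (any $t$-linear correction is absorbed into the $-ut$-perturbation coming from $a=\Phi_n(x_0)+u/\sqrt{n}$). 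A continuous-mapping theorem for the left-derivative of the greatest convex minorant on $[0,1]$ then delivers $g_{\beta,c}^{*,\ell}(F_X(x_0))$.

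The main obstacle I anticipate lies in the boundary case (ii), where the KMT approximation in $D[0,1]$ must be combined with uniform control of the Taylor bias at the \emph{exact} scale $\sqrt{n}\delta_n^\beta\sim\sqrt{c}$: the Hölder hypothesis $\alpha>1/2$ on $F_X^{-1}$ enters precisely here to control the error incurred by evaluating the bias at the empirical quantile $F_n^{-1}(t)$ rather than $F_X^{-1}(t)$, which would otherwise dominate due to the $\tfrac12$-Hölder modulus of the uniform empirical process and destroy the limit identification.
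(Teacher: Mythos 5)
Your treatment of the slow regime (i) follows essentially the same route as the paper: switch relation, local rescaling at $(n\delta_{n}^{2\beta})^{1/(2\beta+1)}$, Lindeberg--Feller for the centred part, Taylor expansion for the drift, shell-type maximal inequalities for tightness of the argmin, and the argmin continuous-mapping theorem; the paper implements the maximal inequalities via bracketing entropy rather than Kim--Pollard bounds, but that is a cosmetic difference. For (ii)/(iii) your plan is also close in spirit (the paper likewise works on the uniform scale $[0,1]$ after the switch relation and proves weak convergence of the criterion in $\ell^{\infty}([0,1])$, by CLT plus bracketing rather than by KMT/Sakhanenko, which would also be admissible for the centred part).

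There is, however, a genuine gap in your final step for (ii) and (iii), and it coincides with a misidentification of where the hypothesis on $F_{X}^{-1}$ is needed. The map sending a process on $[0,1]$ to the left-derivative of its greatest convex minorant at a fixed point is \emph{not} a continuous functional, so invoking ``a continuous-mapping theorem for the left-derivative of the greatest convex minorant'' is exactly the point that must be justified. What is actually required (and what the paper supplies) is: (a) after the switch relation, almost-sure uniqueness of $\argmin_{s\in[0,1]}\{g_{\beta,c}(s)-vs\}$ so that the argmin-CMT applies (Pimentel-type uniqueness for Gaussian-plus-drift processes), and (b) continuity of the distribution function of this argmin, so that $F_{X}(x_{0})$ is a continuity point and one may pass from $\bP(\argminp\{\cdot\}>F_{X}(x_{0}))$ to the limit probability. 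Step (b) is an anti-concentration statement for the argmin of Brownian motion plus the nonlinear drift $s\mapsto\sqrt{c}\,\Phi_{0}^{(\beta)}(0)\int_{0}^{s}(F_{X}^{-1}(u)-x_{0})^{\beta}du$, and it is precisely here that the Hölder condition $\alpha>1/2$ on $F_{X}^{-1}$ enters (via an adaptation of Lemma~A.2 of \cite{Cattaneo2024bootstrap}; for $c=0$ one can instead quote \cite{Stryhn1996triangular}). By contrast, the issue you single out --- replacing $F_{n}^{-1}(t)$ by $F_{X}^{-1}(t)$ in the bias --- is harmless without any Hölder assumption: since $p_{X}$ is bounded away from zero on its compact support, $\sup_{t}|F_{n}^{-1}(t)-F_{X}^{-1}(t)|=\cO_{\bP}(n^{-1/2})$, and multiplied by $\sqrt{n}\delta_{n}^{\beta}=\cO(1)$ and a Lipschitz bound for $x\mapsto(x-x_{0})^{\beta}$ on $[-T,T]$ this contributes $o_{\bP}(1)$ uniformly. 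So your proposal as written asserts the delicate step without its hypotheses and attributes the assumption of (ii) to a step where it is not needed; to close the argument you must add the uniqueness and anti-concentration ingredients for the limiting argmin.
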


The proof of the theorem is given in Section~\ref{proof:pointwise rate}. 
The result identifies three regimes, each of them with a different limiting distribution. Note that if $\Phi_0$ is continuously differentiable with $\Phi_0'(0)>0$, the convergence rate of the NPMLE equals, in correspondence to the minimax lower bound, 
$$\sqrt{n}\wedge \Big(\frac{n}{\delta_{n}}\Big)^{1/3}.$$ 
Corresponding to the elbow in this rate, the limiting distribution exhibits a phase transition. The elbow is shifted to 
$\delta_{n} = n^{-1/(2\beta)}$ if the $\beta$th derivative of $\Phi_{0}$ for some $\beta >1$ is the 
first non-vanishing derivative at zero. The distribution of $f_{\beta}^{*,\ell}(0)$ in (i) (slow regime) appeared first in \cite{Wright1981asymptotic} and is the well-known Chernoff-type limit (in the terminology of \cite{Han2022bounds}) of the NPMLE in 
classical asymptotics under these general conditions on the derivative of the function to estimate, in consonance with 
Theorem~2.2 in \cite{Mallick2023asymptotic}. 
We remark that by the switch relation (Lemma~\ref{lem:general switch relation}) and Lemma~\ref{lem:argmin brownian motion transformation},
\begin{align*}
\bP\big(f_1^{*,\ell}(0) \leq v\big) 
&=\bP\bigg(\argmin_{s \in \bR}\Big\{\sigma_{\Phi_{0}}\sqrt{p_{X}(x_{0})}Z(s) 
			+ \frac{\Phi_{0}'(0)p_{X}(x_{0})}{2}s^{2} - vp_{X}(x_{0})s\Big\} \geq 0\bigg) \\
&=\bP\bigg(\bigg(\frac{4\sigma_{\Phi_{0}}^{2}\Phi_{0}'(0)}{p_{X}(x_{0})}\bigg)^{1/3}
										\argmin_{s \in \bR}\big\{Z(s) + s^{2} \big\} \leq v\bigg)
\end{align*}
for any $v\in\bR$. 
That is, for $\beta=1$, the limit law $\cL\big(f_1^{*,\ell}(0)\big)$ coincides indeed with a scaled Chernoff distribution. 
\begin{itemize}
\item Without affecting the Chernoff-type limiting shape in the slow regime, the rate of consistency accelerates from the classical rate $n^{\beta/(2\beta+1)}$ to 
$
(n/\delta_{n})^{\beta/(2\beta+1)}
$
according to the level of feature impact. 
\item In the fast regime, rate of convergence and limiting distribution coincide with the one for flat functions in classical asymptotics as derived mutatis mutandis in Theorem~2.4 of \cite{Jankowski2014misspecified} for the Grenander estimator. 
\item With the new intermediate regime ($n\delta_n^{2\beta}\longrightarrow c\in (0,\infty)$) at the phase transition, the picture is completed by the new limiting distribution, which is different from the other two occurring distributions and does not show up in classical asymptotics. It is the so-far missing approximating limiting distribution, visualized in Figure~\ref{fig:freeze}.
\end{itemize}

The new limiting distribution in the intermediate regime depends continuously on $c$ in the topology of weak convergence. As the next result shows for $\beta = 1$, it converges to the limit in (iii) for $c \longrightarrow 0$, whereas a rescaled version converges to the limiting distribution in (i) for $c \longrightarrow \infty$.

\begin{theorem}[Continuity]\label{thm:intermediate}
Let $x_{0}$ be an interior point of $\cX$ and assume $\Phi_{0}$ to be continuously differentiable in a neighborhood of zero with non-vanishing derivative in zero. Then, 
\begin{alignat*}{2}
c^{-1/6}g_{1,c}^{*,\ell}(F_{X}(x_{0})) 
&\longrightarrow_{\cL} f_{1}^{*,\ell}(0) \quad &&\text{as } c \longrightarrow \infty, \\
g_{1,c}^{*,\ell}(F_{X}(x_{0})) 
&\longrightarrow_{\cL} g_{1,0}^{*,\ell}(0) \quad &&\text{as } c \longrightarrow 0.
\end{alignat*}
\end{theorem}
The proof is given in Section~\ref{proof:intermediate}.

\begin{remark}
It is insightful to relate Theorem~\ref{thm:pointwise rate} to the interesting recent work of \cite{Mallick2023asymptotic} (MSK for short). Both articles, theirs and ours, study asymptotic properties of the isotonic regression estimator under a sequence of models changing with the sample size. However, the frameworks considered in the two articles are non-nested: We describe a global property, whereas MSK focuses on a local property and correspondingly restricts their attention to pointwise limiting results. In the pointwise case, we characterize a new ``intermediate'' limiting distribution, which lies between the two cases well-established in the literature and which cannot be learnt from the work of MSK. On the other hand, while MSK only considers the slow regime, the class of drift terms of the Gaussian process in the limiting distribution is richer than the one considered in here, due to the fact that beyond the scaling, their triangular array allows also for varying qualitative properties of the elements of $(\Phi_{n})_{n \in \bN}$.
\end{remark}

\section{A global case}\label{sec:L1 limit}
As a weak feature-label relation constitutes a global property, it is natural to study its implications on the $L^1$-error. In the classical $L^{1}$-asymptotics for strictly isotonic regression functions, the already parametric $\sqrt{n}$-convergence rate in the limit theorem of the appropriately centered $L^{1}$-error raises the question whether the phase transition was a purely pointwise phenomenon. Indeed, a new effect will be shown to emerge here, namely an interplay of the rate of convergence of the so-called inverse process and the $L^{1}$-convergence rate of the NPMLE, which do not coincide in the weak-feature-impact scenario any longer. 
To properly uncover this phenomenon, we first complement our pointwise lower bounds by $L^1$-risk local minimax lower bounds and prove that they are adaptively attained by the NPMLE in the weak-feature-impact scenario (Section~\ref{sec:L1 lower bound}). On this basis, the main result in Section~\ref{subsec:L1 limit} is the second order asymptotic of the $L^1$-error, which turns out to behave fundamentally different to the pointwise case and is considerably harder to derive. 

\subsection{$L^1$-risk local minimax lower bounds and adaptivity of the NPMLE}\label{sec:L1 lower bound}

\begin{theorem}[${L^1}$-risk lower bound] \label{thm:lower bound L1}
Recalling the definition in \eqref{eq: Fdelta} of the restricted classes from Section~\ref{sec:pointwise lower bound}, we have 
\[
\liminf_{n\rightarrow\infty}\inf_{\delta\in [0,\frac{1}{4T}]}
	\inf_{T_n^{\delta}}\sup_{\Phi\in\cF_{\delta}}\Big(\sqrt{n}\wedge \Big(\frac{n}{\delta}\Big)^{1/3}\Big)
				\bE_{\Phi}^{\otimes n}\bigg[\int_{-T}^T\big| T_n^{\delta}(t)-\Phi(t)\big| d t \bigg]>0,
\]
where the infimum is running over all estimators $T_n^{\delta}=T_n^{\delta}\big(\boldcdot,(x_1,y_1),\dots,(x_n,y_n)\big)$.
\end{theorem}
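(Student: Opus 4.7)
The plan is to handle the two rate regimes $\delta \lesssim n^{-1/2}$ and $\delta \gtrsim n^{-1/2}$ separately via the two classical lower-bound techniques: Le Cam's two-point method produces the parametric $\sqrt{n}$-rate uniformly in $\delta$, while Assouad's lemma applied to a hypercube of bump perturbations produces the nonparametric $(n/\delta)^{1/3}$-rate whenever $\delta \gtrsim n^{-1/2}$. Both constructions will be built around the affine base $\Phi^{(0)}(x) \defeq 1/2 + (3\delta/4)x$ on $[-T,T]$, which for $\delta \in [0,1/(4T)]$ takes values in $[1/4,3/4]$, has Lipschitz constant $3\delta/4$ and modulus $\omega_\nu^{\cX}(\Phi^{(0)})/\nu \equiv 3\delta/4 > \delta/2$. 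The crucial point is that this base leaves a $\delta/4$-slack on both sides, so any perturbation whose pointwise slope stays in $[-\delta/4,\delta/4]$ yields a function with pointwise slope in $[\delta/2,\delta]$, hence Lipschitz constant $\leq \delta$ and $\omega_\nu^{\cX}/\nu \geq \delta/2$ (testing on a generic subinterval of length $\nu$); the perturbed function thus lies in $\cF_\delta$.

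For the parametric bound I would apply Le Cam's inequality to the pair $\Phi^{(0)}$ and $\Phi^{(1)} \defeq \Phi^{(0)} + \eta$ with $\eta \sim n^{-1/2}$. A constant shift preserves both Lipschitz norm and modulus, so $\Phi^{(1)} \in \cF_\delta$ for every admissible $\delta$, and since $\Phi^{(0)}, \Phi^{(1)}$ stay in $[1/4,3/4]$ the per-sample Kullback--Leibler divergence is bounded by a constant multiple of $\eta^2$, keeping the total variation between the $n$-fold product measures bounded away from one. The $L^1$-separation equals $2T\eta \sim n^{-1/2}$, so Le Cam's inequality returns $\sqrt{n}\,\bE_\Phi^{\otimes n}[\int|T_n^\delta - \Phi|dt] \gtrsim 1$ uniformly in $\delta \in [0, 1/(4T)]$.

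For the nonparametric bound, which is binding when $\delta \gtrsim n^{-1/2}$, I would apply Assouad's lemma to the hypercube $\Phi_\epsilon \defeq \Phi^{(0)} + \sum_{i=1}^m \epsilon_i \psi_i$, $\epsilon \in \{0,1\}^m$, where $\psi_1,\dots,\psi_m$ are triangular bumps of slope $\pm\delta/4$ supported on disjoint intervals $I_1,\dots,I_m$ of length $h = 2T/m$. Each $\psi_i$ then satisfies $\|\psi_i\|_\infty \sim \delta h$, $\int|\psi_i|dx \sim \delta h^2$ and $\int\psi_i^2 dP_X \lesssim \delta^2 h^3$ (using $p_X$ bounded on $\cX$), so the per-coordinate $L^1$-separation is of order $\delta h^2$ while the Kullback--Leibler divergence between neighboring $n$-fold product measures is at most of order $n\delta^2 h^3$. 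The balance $h \sim (n\delta^2)^{-1/3}$ yields $m \sim (n\delta^2)^{1/3}$ bumps, which is at least one precisely when $\delta \gtrsim n^{-1/2}$, and Assouad's lemma then gives $\bE_\Phi^{\otimes n}[\int|T_n^\delta - \Phi|dt] \gtrsim m\cdot\delta h^2 \sim (\delta/n)^{1/3}$, i.e. $(n/\delta)^{1/3}\,\bE[\cdot]\gtrsim 1$.

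The main obstacle I foresee is the simultaneous enforcement of the upper Lipschitz constraint and the lower modulus-of-continuity constraint in $\cF_\delta$: many natural perturbation families preserve one and violate the other, and it is the choice of slope $3\delta/4$ for the base together with the $\pm\delta/4$ cap on bump slopes that reconciles both. A secondary but important point is uniformity in $\delta$: the Le Cam constant depends only on $T$ and the bounds $1/4 \leq \Phi^{(0)} \leq 3/4$, and the Assouad constant depends only on $T$, $\|p_X\|_\infty$ and $\inf_\cX p_X$ (finite and positive by continuity and positivity of $p_X$ on the compact $\cX$), so neither depends on $\delta$. Since the Le Cam bound covers the range $\delta \lesssim n^{-1/2}$ where $\sqrt{n}\wedge(n/\delta)^{1/3} = \sqrt{n}$, and the Assouad bound covers the complementary range where this minimum equals $(n/\delta)^{1/3}$, their minimum gives the desired uniform lower bound $(\sqrt{n}\wedge (n/\delta)^{1/3})\,\bE[\cdot] \gtrsim 1$ over $\delta \in [0,1/(4T)]$.
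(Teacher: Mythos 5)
Your proposal is correct and follows essentially the same strategy as the paper: a two-point (Le Cam/Tsybakov) argument with a constant vertical shift of order $n^{-1/2}$ for the regime $\delta\lesssim n^{-1/2}$, and Assouad's hypercube with per-cell perturbations of slope variation of order $\delta$ on cells of width $h\sim(n\delta^2)^{-1/3}$ for $\delta\gtrsim n^{-1/2}$, with membership in $\cF_\delta$ enforced by keeping all slopes in $[\delta/2,\delta]$. The only differences are cosmetic: the paper realizes the hypercube via alternating slopes $\delta/2$ and $\delta$ (rather than a base of slope $3\delta/4$ plus $\pm\delta/4$ bumps) and controls closeness of the product measures via Hellinger rather than Kullback--Leibler.
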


The proof, which is based on Assouad's hypercube technique (cf.~Theorem~2.12 \cite{Tsybakov2008}), is deferred to Section~\ref{proof:L1 lower bound}. The construction of the hypotheses for the slow regime is visualized in the subsequent 
Figure~\ref{fig:hypotheses}. 
\vspace{-5mm}
{
\setlength{\belowcaptionskip}{-4pt}
\begin{figure}[htp]
  \centering
  \begin{tikzpicture}[scale=0.9]
  \draw[->, thin] (0,-0.1) -- (0,3.5) node[above] {};
  \draw[->, thin] (-0.1,0.0) -- (5.3,0.0) node[right] {};
  
  \draw (1.2,-0.1) -- (1.2,0.1);
  \node[below] at (1.2,-0.1) {\tiny{$x_{k}$}};
  
  \draw (2.8,-0.05) -- (2.8,0.05);
  \node[below] at (2.8,-0.1) {\tiny{$x_{k}+h_{n}$}};
  
  \draw (4.4,-0.1) -- (4.4,0.1);
  \node[below] at (4.4,-0.1) {\tiny{$x_{k+1}$}};
  
  \draw[thick, dotted] (2.8,1.0) -- (2.8,2.0);
  
  \draw (-0.1,0.0) node[left] {\tiny{$0$}};
  \draw (-0.1,1.0) -- (0.1,1.0);
  \draw (-0.1,2.0) -- (0.1,2.0);
  \draw (-0.1,3.0) -- (0.1,3.0);

  \foreach \x/\y/\xnew/\ynew/\arrow in {
    0.0/0.01/1.2/0.01/0,
    1.2/0.01/2.8/1.0/0,
    2.8/1.0/4.4/3.01/0,
    4.4/3.01/5.3/3.01/0
  }{
  \ifnum\arrow=1
    \draw[->, thick, orange] (\x,\y) -- (\xnew,\ynew);
  \else
    \draw[thick, orange] (\x,\y) -- (\xnew,\ynew);
  \fi
}
  
  \foreach \x/\y/\xnew/\ynew/\arrow in {
    0.0/-0.01/1.2/-0.01/0,
    1.2/-0.01/2.8/2.0/0,
    2.8/2.0/4.4/2.99/0,
    4.4/2.99/5.3/2.99/0
  }{
  \ifnum\arrow=1
    \draw[->, thick, blue] (\x,\y) -- (\xnew,\ynew);
  \else
    \draw[thick, blue] (\x,\y) -- (\xnew,\ynew);
  \fi
    }
  
  \filldraw[fill=black, draw=black] (1.2,0.0) circle (1.5pt);
  \filldraw[fill=black, draw=black] (4.4,3.0) circle (1.5pt);
  
  \node at (3.7,1.3) {\tiny{$\varphi_{k,n}$}};
  \node at (2.0,1.7) {\tiny{$\psi_{k,n}$}};
  \end{tikzpicture}
  \hspace{5em}
  \begin{tikzpicture}[scale=0.9]
  \draw[->, thin] (0,-0.1) -- (0,4.3) node[above] {};
  \draw[->, thin] (-0.1,0.0) -- (5.3,0.0) node[right] {};
  
  \draw (1.0,-0.1) -- (1.0,0.1); 
  \draw (2.0,-0.1) -- (2.0,0.1); 
  \draw (3.0,-0.1) -- (3.0,0.1); 
  \draw (4.0,-0.1) -- (4.0,0.1); 
  \draw (5.0,-0.1) -- (5.0,0.1); 

  \draw (0.5,-0.05) -- (0.5,0.05); 
  \draw (1.5,-0.05) -- (1.5,0.05); 
  \draw (2.5,-0.05) -- (2.5,0.05); 
  \draw (3.5,-0.05) -- (3.5,0.05); 
  \draw (4.5,-0.05) -- (4.5,0.05); 
  
  \draw (-0.1,0.0) node[left] {\tiny{$1/4$}};
  \draw (-0.1,4.0) node[left] {\tiny{$3/4$}};
  \draw (-0.1,4.0) -- (0.1,4.0); 

  \foreach \x/\y/\xnew/\ynew/\arrow in {
    0.0/0.0/0.5/0.25/1,
    0.5/0.25/1.0/0.75/0,
    1.0/0.75/1.5/1.00/1,
    1.5/1.00/2.0/1.50/0,
    2.0/1.50/2.5/1.75/1,
    2.5/1.75/3.0/2.25/0,
    3.0/2.25/3.5/2.50/1,
    3.5/2.50/4.0/3.00/0,
    4.0/3.00/4.5/3.25/1,
    4.5/3.25/5.0/3.75/0
  }{
  \ifnum\arrow=1
    \draw[->, thick, orange] (\x,\y) -- (\xnew,\ynew);
  \else
    \draw[thick, orange] (\x,\y) -- (\xnew,\ynew);
  \fi
    }
    
  \foreach \x/\y/\xnew/\ynew/\dot in {
    0.0/0.0/0.5/0.50/1,
    0.5/0.50/1.0/0.75/0,
    1.0/0.75/1.5/1.25/1,
    1.5/1.25/2.0/1.50/0,
    2.0/1.50/2.5/2.00/1,
    2.5/2.00/3.0/2.25/0,
    3.0/2.25/3.5/2.75/1,
    3.5/2.75/4.0/3.00/0,
    4.0/3.00/4.5/3.50/1,
    4.5/3.50/5.0/3.75/0
  }{
  \ifnum\dot=1
    \draw[->, thick, blue] (\x,\y) -- (\xnew,\ynew);
    \filldraw[fill=black, draw=black] (\x,\y) circle (1pt);
  \else
    \draw[thick, blue] (\x,\y) -- (\xnew,\ynew);
  \fi
    }
  \end{tikzpicture}
  \caption{Left: Visualization of $\varphi_{n,k}$ and $\psi_{n,k}$, which are the base functions to construct 
  the hypotheses. They are defined to have either slope equal to $\delta$ on $(x_{k},x_{k}+h_{n})$ and slope equal to 
  $\delta/2$ on $(x_{k}+h_{n},x_{k+1})$ or the other way around for a partition of $[-T,T]$ with step width $2h_{n}$.
  Note that the pointwise 
  distance between these two functions at $x_{k} + h_{n}$ is of order $(n/\delta)^{-1/3}$, with 
  $h_{n} \sim (n\delta^{2})^{-1/3}$. 
  Right: For $m \sim (n\delta^{2})^{1/3}$, the hypotheses are obtained by choosing at each of the $m$ black bullets 
  either the blue path (i.e.~$\psi_{n,k}$) or the orange path 
  (i.e.~$\varphi_{n,k}$), resulting in $2^{m}$ graphs corresponding to different hypotheses 
  functions.}\label{fig:hypotheses}
\end{figure}
}

In preparation for the limiting distribution theory, the next proposition shows that this faster rate of convergence for the $L^1$-risk is actually adaptively attained by the NPMLE in the weak-feature-impact scenario. In particular, the transition from the nonparametric to the parametric regime shows up again at the level of feature impact $\delta=\delta_n\sim n^{-1/2}$. 

\begin{proposition}\label{prop: 4.2}
Suppose that $\Phi_0$ is continuously differentiable with $\Phi_0'(0)>0$. Then, 
\[
\Big(\sqrt{n}\wedge \Big(\frac{n}{\delta_n}\Big)^{1/3}\Big)
				\bE\bigg[\int_{-T}^T\big| \hat\Phi_n(t)-\Phi_n(
			t)\big| d t \bigg]=\cO(1)
\] 
in the weak-feature-impact scenario.
\end{proposition}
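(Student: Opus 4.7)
The plan is to exploit the switch relation (Lemma~\ref{lem:switch relation}) to transfer the $L^1$-error of $\hat\Phi_n$ onto the inverse process $U_n$ of \eqref{eq:Un}, and then to bound its first moment uniformly in $a\in[0,1]$. By Fubini and the switch relation,
\[
\int_{-T}^T|\hat\Phi_n(x)-\Phi_n(x)|\,dx
=\int_0^1\lambda\big(\big\{x\in[-T,T]:\mathds{1}_{U_n(a)\leq x}\neq \mathds{1}_{\Phi_n^{-1}(a)\leq x}\big\}\big)\,da,
\]
where $\Phi_n^{-1}(a)=\delta_n^{-1}\Phi_0^{-1}(a)$ on the range $[\Phi_n(-T),\Phi_n(T)]$ of $\Phi_n$ and is clipped to $\pm T$ outside; on the middle piece the integrand equals $|U_n(a)-\Phi_n^{-1}(a)|$.

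For the slow regime $n\delta_n^2\to\infty$, the expectation $s\mapsto\int_{-T}^s(\Phi_n(t)-a)p_X(t)\,dt$ of the objective defining $U_n$ is minimized at $\Phi_n^{-1}(a)$ with parabolic curvature of order $\delta_n\Phi_0'(0)p_X(\Phi_n^{-1}(a))$, since $\Phi_n'=\delta_n\Phi_0'(\delta_n\cdot)$ is uniformly of order $\delta_n$ on compacts. Combined with the $\sqrt{h/n}$ fluctuation of the centered empirical process over indicator classes at local scale $h$, an argmin peeling argument (cf.~Lemma~4.1 of \cite{Kim1990cube}) yields the uniform moment bound $\bE|U_n(a)-\Phi_n^{-1}(a)|\leq C(n\delta_n^2)^{-1/3}$ for $a$ in a compact interior subset of the range of $\Phi_n$. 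Since this range has Lebesgue measure of order $\delta_n$, integration gives the claimed $\cO(\delta_n(n\delta_n^2)^{-1/3})=\cO((n/\delta_n)^{-1/3})$ contribution. For $a$ outside the range, the linear slope of the population objective combined with the trivial bound $|U_n(a)\mp T|\leq 2T$ yields, via a peeling argument over $|\Phi_n(\mp T)-a|$-dependent scales, a tail contribution of order $n^{-1/2}=o((n/\delta_n)^{-1/3})$.

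In the fast regime $n\delta_n^2\to 0$, the function $\Phi_n$ lies within $\cO(\delta_n)=o(n^{-1/2})$ of the constant $\Phi_0(0)$ uniformly on $[-T,T]$, so it suffices to bound the $L^1$-error of $\hat\Phi_n$ against the best constant, modulo a deterministic bias of the same order. A KMT-/Sakhanenko strong approximation of the empirical partial-sum process $s\mapsto n^{-1/2}\sum_i(Y_i^n-\Phi_0(0))\mathds{1}_{X_i\leq s}$ reduces the problem to the $L^1$-norm of the left-derivative of the greatest convex minorant of a Brownian motion on $[0,1]$, which has $\cO(n^{-1/2})$ expectation by classical Brownian moment bounds---precisely the scaling underlying Theorem~\ref{thm:pointwise rate}(iii).

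The \emph{main obstacle} is the uniformity of the moment bound on $|U_n(a)-\Phi_n^{-1}(a)|$ across the full range of $a$ (including a controlled treatment near the boundary of the range of $\Phi_n$, where the local quadratic curvature degrades), together with a coherent handling of the transitional regime $n\delta_n^2\asymp 1$, where neither regime-specific bound is sharp on its own. The curvature $\delta_n\Phi_0'(\delta_n\Phi_n^{-1}(a))p_X(\Phi_n^{-1}(a))$ is uniformly bounded below (in units of $\delta_n$) by the positivity of $\Phi_0'(0)$ and continuity of $p_X$, keeping the peeling constants controllable; matching of constants at the transition is ensured by observing that the slow- and fast-regime bounds agree in order at $n\delta_n^2\asymp 1$.
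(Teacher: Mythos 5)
Your slow-regime argument is in essence the paper's: the paper also passes via the switch relation to the inverse process, proves a tail bound of order $(n\delta_{n}^{2}x^{3})^{-q/2}$ (Lemma~\ref{lem:tail bounds - 1}, Corollary~\ref{cor:tail bounds - 1}) giving $\bE|U_{n}(a)-\Phi_{n}^{-1}(a)|\lesssim(n\delta_{n}^{2})^{-1/3}$, a separate bound for $a$ outside the range of $\Phi_{n}$ (Lemma~\ref{lem:tail bounds - 2}), and then integrates; concretely it packages this as the pointwise bound $\bE|\hat{\Phi}_{n}(t)-\Phi_{n}(t)|\leq K\max\{(n/\delta_{n})^{-1/3},(n(T-t))^{-1/2},(n(T+t))^{-1/2}\}$ (Lemma~\ref{lem:rate expectation}), which after integration over $t$ yields the proposition in \emph{all} regimes simultaneously. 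Within this part of your sketch there is still an uncovered zone: you establish the $(n\delta_{n}^{2})^{-1/3}$ moment bound only for $a$ in a compact \emph{interior} subset of the range of $\Phi_{n}$ and treat $a$ outside the range separately, so the values of $a$ inside the range but near its endpoints are handled by neither argument. Your diagnosis that the quadratic curvature degrades there is also not accurate: since $\Phi_{0}'>0$ near zero and $p_{X}$ is bounded below, $\lambda_{n}'\geq K\delta_{n}$ uniformly, and the drift inequality $\Lambda_{n}(\lambda_{n}^{-1}(a))-\Lambda_{n}(u)+a(u-\lambda_{n}^{-1}(a))\leq-K\delta_{n}(u-\lambda_{n}^{-1}(a))^{2}$ survives at clipped points by a sign argument, which is exactly why the paper's tail bound holds for every $a\in[0,1]$; but this is an argument you must actually supply. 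The same remark applies to the transitional case $n\delta_{n}^{2}\asymp1$: "matching of constants in order" is not a proof, whereas the paper's bounds hold for all $n\geq N_{0}$ without any regime assumption, so $\cO(1)$ along arbitrary sequences $(\delta_{n})$ is automatic.

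The genuine gap is your fast regime. Replacing $\Phi_{n}$ by $\Phi_{0}(0)$ costs only $\cO(\delta_{n})=o(n^{-1/2})$, fine; but the claim that a KMT/Sakhanenko approximation "reduces the problem to the $L^{1}$-norm of the left-derivative of the greatest convex minorant of a Brownian motion" skips the step that carries all the difficulty. The GCM itself is a sup-norm contraction, but its left derivative is not controlled by the sup-norm of the input: a strong approximation of the cusum process with sup-norm error $\cO(n^{-1+1/q})$ does not by itself bound $\bE\int_{-T}^{T}|\hat{\Phi}_{n}-\Phi_{0}(0)|\,dt$ by the corresponding Brownian functional plus $o(n^{-1/2})$; standard convexity or oracle arguments lose at least a logarithmic factor in exactly this parametric regime (compare the remark after Proposition~\ref{prop: 4.2} on the extra $\log$ in \cite{Bellec2018oracle}), the GCM slope of the limiting Brownian motion is unbounded near the endpoints, and the labels form a triangular array that is only conditionally Bernoulli with non-constant means, so exact flat-case computations do not apply verbatim. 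You would need a quantitative stability estimate for GCM derivatives in $L^{1}$, together with moment (not just in-probability) control, none of which is sketched. The paper avoids this entirely: in the fast regime the range of $\Phi_{n}$ has length $\cO(\delta_{n})\leq Kn^{-1/2}$, so the in-range part of the Fubini/switch-relation decomposition is negligible, while Lemma~\ref{lem:tail bounds - 2} gives the out-of-range tails $(n(T\mp t))^{-1/2}$, which integrate to $\cO(n^{-1/2})$ — i.e., the same inverse-process route you use in the slow regime already closes the fast regime, and switching to a GCM-of-Brownian-motion argument for the moment bound creates rather than removes work.
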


Although local adaptivity properties of the NPMLE for the global estimation problem were derived in \cite{ChaGunSen2015} and \cite{Bellec2018oracle}, Proposition~\ref{prop: 4.2} is not covered by those results. Note in particular that the sharp oracle inequality of \cite{Bellec2018oracle} in Euclidean norm for monotone vectors in $\bR^n$ has an additional logarithmic factor in the parametric regime that we do not observe here. 

\smallskip
\textit{Preview of the proof. } Arguing in the slow regime ($n\delta_n^2\longrightarrow\infty$) along the spirit of \cite{Durot2007Lp} and \cite{Durot2008monotone} (to actually get the bound in expectation rather than in probability), the proof is quite elucidating. On basis of Fubini's theorem and partial integration, the idea is to rewrite 
\begin{align*}
\bE\int_{-T}^T&\big| \hat\Phi_n(t)-\Phi_n(t)\big| dt
=\int_{-T}^T\int_{0}^{1}\bP\big(\hat{\Phi}_{n}(t) - \Phi_{n}(t) > x\big)dx\,dt+ \int_{-T}^T\int_{0}^{1}\bP\big(\Phi_n(t)-\hat{\Phi}_{n}(t) > x\big)dx\,dt
\end{align*}
to employ the switch relation (Lemma \ref{lem:switch relation}) in the probabilities inside the integrals, giving 
\[
\bP\big(\hat{\Phi}_{n}(t) - \Phi_{n}(t) > x\big) 
= \bP\big(F_{n}^{-1} \circ \tilde{U}_{n}(\Phi_{n}(t) + x) < F_{n}^{-1}(F_{n}(t))\big)
\]
(exemplarily for the left-hand side), and to derive by means of the slicing device and the \cite{DKW1956} inequality 
a tail bound (Lemma \ref{lem:tail bounds - 1}) for the process $F_n^{-1}\circ \tilde U_n-\Phi_n^{-1}$. 
This is the moment where the level of feature impact~$\delta_n$, i.e.~the exact dependence on the derivative $\Phi_n'$, starts to matter. Its occurence has to be traced back for being incorporated explicitly 
but notably in the tail inequality. Whereas NPMLE and inverse process both scale at the rate $n^{1/3}$ in the classical asymptotics, their convergence rates do not coincide in the weak-feature-impact scenario any longer: As the tail inequality in Lemma~\ref{lem:tail bounds - 1} reveals, the inverse process scales pointwise at the rate $(n\delta_{n}^{2})^{1/3}$. It is insightful to contrast its rate with 
the convergence rate $(n/\delta_{n})^{1/3}$ of the NPMLE. 
In the parametric regime ($n\delta_n^2=\cO(1)$), arguing by means of the inverse process is 
subtle as it is not everywhere convergent any longer. However, the interval of non-convergence turns out to have length of order $\delta_n$ only.
Combining this with sufficiently fast convergence outside this interval bounds the expected $L^1$-error in the fast regime. 
The complete proof is given in Section~\ref{proof:4.2}.

\subsection{Distributional transition of the $L^1$-error}\label{subsec:L1 limit}
Our final aim is to study the second order asymptotics of the stabilized $L^{1}$-error (cf.~Proposition~\ref{prop: 4.2})
\[
\Big(\sqrt{n}\wedge \Big(\frac{n}{\delta_n}\Big)^{1/3}\Big)
\int_{-T}^{T}|\hat{\Phi}_{n}(t) - \Phi_{n}(t)|dt,
\]
i.e.~to investigate the stochastic fluctuation around an appropriate centering $\mu_n=\cO(1)$. 
For this, let us define for ease of notation $X(a) \defeq \argmin_{s \in \bR}\{Z(s) + (s-a)^{2}\}$ for $a \in \bR$, as well as 
\[
\mu_{n} \defeq \bE[|X(0)|]\int_{-T}^{T}\bigg(\frac{4\Phi_n(t)(1-\Phi_n(t))\Phi_{0}'(\delta_{n}t)}{p_{X}(t)}\bigg)^{1/3}dt. 
\]
Note that $\cL(X(0))$ is the Chernoff distribution and that, indeed, $\mu_n=\cO(1)$. 
Next, set 
\begin{align}\label{eq:L1var}
\cC \defeq \int_{0}^{\infty}\Cov(|X(0)|,|X(a)-a|)da \quad \text{and} \quad \sigma^2\defeq8\,\cC\int_{-T}^T\frac{\Phi_0(0)(1-\Phi_0(0))}{p_X(t)}dt.
\end{align}

The next theorem shows that as in the pointwise asymptotics studied in Section~\ref{sec:pointwise limit}, the second order asymptotic of the NPMLE's stabilized $L^{1}$-error exhibits a phase transition. In contrast to the pointwise asymptotics, however, the rate of convergence diminishes in the slow regime and collapses at the phase transition. Indeed, it uncovers an interplay of the newly derived convergence rates $(n/\delta_{n})^{1/3}$ of the NPMLE and $(n\delta_{n}^{2})^{1/3}$ of the inverse process, which cannot be read off in the classical asymptotics. To the best of our knowledge, the limiting distribution in the fast regime is likewise new and has not even been discovered in classical asymptotics for flat functions. Our proof in here develops a new strategy, not based on the switch relation.

\begin{theorem}\label{thm:l1 rate}
Let $\Phi_{0}$ be differentiable in a neighborhood of zero with 
$\Phi_{0}'(0) > 0$. 
\begin{itemize}
\item[(i)] (Slow regime) Let $p_{X}$ be continuously differentiable on $[-T,T]$ (one-sided at $-T,T$) and assume that $\Phi_0'$ is Hölder-continuous in a neighborhood of zero. If $n\delta_n^2\longrightarrow\infty$, then
\[
(n\delta_{n}^{2})^{1/6}\bigg(\Big(\frac{n}{\delta_{n}}\Big)^{1/3}
		\int_{-T}^{T}|\hat{\Phi}_{n}(t) - \Phi_{n}(t)|dt - \mu_{n}\bigg)
\longrightarrow_{\cL} N\sim\cN(0,\sigma^{2}) \quad \text{as } n \longrightarrow \infty.
\]

\item[(ii)] (Fast regime) 
Let $\Phi_0'$ be continuous in a neighborhood of zero. If $n\delta_{n}^{2} \longrightarrow 0$, then 
\[
\sqrt{n}\int_{-T}^{T}|\hat{\Phi}_{n}(x) - \Phi_{n}(x)|dP_{X}(x)
\longrightarrow_{\cL} \max_{s \in [-T,T]}A(s) \quad \text{as } n \longrightarrow \infty,
\]
where $(A(s))_{s \in [-T,T]}$ is a 
continuous, centered Gaussian process with $A(-T) = -A(T)$ and 
\[
\Cov(A(s),A(t)) = \Phi_{0}(0)(1-\Phi_{0}(0))(1-2|F_{X}(s) - F_{X}(t)|) \quad \text{for } s,t \in [-T,T].
\] 
\end{itemize}
\end{theorem}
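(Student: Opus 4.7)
In both regimes, my starting point is to transfer the problem to the inverse process via the switch relation of Lemma~\ref{lem:switch relation}. For monotone $f,g\colon[-T,T]\to[0,1]$ and any Borel measure $\mu$ on $[-T,T]$, Fubini gives $\int|f-g|\,d\mu=\int_0^1|F_\mu(f^{-1}(a))-F_\mu(g^{-1}(a))|\,da$. Applied with $f=\hat\Phi_n$, $g=\Phi_n$, and $\hat\Phi_n^{-1}=U_n$, this converts (i) into $\int_0^1|U_n(a)-\Phi_n^{-1}(a)|\,da$ and (ii) into $\int_0^1|F_X(U_n(a))-F_X(\Phi_n^{-1}(a))|\,da$. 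The analysis then reduces to the asymptotics of $U_n-\Phi_n^{-1}$, with the convenience that $\Phi_n^{-1}=\delta_n^{-1}\Phi_0^{-1}$ is deterministic.

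\textbf{Slow regime (i).} The factor $\delta_n^{-1}$ in $\Phi_n^{-1}$ forces the inverse process to scale locally at rate $(n\delta_n^2)^{1/3}$, one factor of $\delta_n$ short of the NPMLE rate $(n/\delta_n)^{1/3}$. My plan is: (a) apply the Sakhanenko (resp.\ KMT) strong approximation to the centered empirical process $\sqrt n(M_n-\bE M_n)$ with $M_n(x)=\tfrac1n\sum_iY_i^n\mathbf 1\{X_i\le x\}$, carefully tracking the $n$-dependent Bernoulli variances determined by $\Phi_n$; (b) show that, after rescaling and centering around each fixed $a$, $(n\delta_n^2)^{1/3}(U_n(a)-\Phi_n^{-1}(a))$ becomes the argmin of a two-sided Brownian motion plus a quadratic drift with coefficient $\tfrac12\,\Phi_0'(0)/p_X(\Phi_n^{-1}(a))$, which pointwise is asymptotically a scaled Chernoff variable $\propto|X(0)|$; (c) implement Bernstein's blocking on $[0,1]$, partitioning into long blocks separated by short gaps of vanishing total length, using the tail bounds of Lemma~\ref{lem:tail bounds - 1} to localize the argmins and establish asymptotic independence across blocks, and invoking a triangular Lindeberg CLT. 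The change of variables $a\mapsto t=\Phi_n^{-1}(a)$ produces the Jacobian $\delta_n\Phi_0'(\delta_n t)$: this absorbs one factor of $\delta_n$ into the rate, identifies $\mu_n$ as the integral of the pointwise Chernoff means, and, applied to the covariance between $|X(a)|$ and $|X(a+h)|$ integrated in rescaled $h$, produces the constant $\cC$ in \eqref{eq:L1var} together with the variance $\sigma^2$ and the $(n\delta_n^2)^{1/6}$ fluctuation rate.

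\textbf{Fast regime (ii).} Here the inverse process does not converge pointwise and a functional approach is needed. The cumulative label process $t\mapsto\tfrac1n\sum_{j\le nt}Y_{(j)}$ centered at $t\Phi_0(0)$ and rescaled by $\sqrt n$ satisfies a triangular-array Donsker theorem and converges in $D[0,1]$ to $\sigma_{\Phi_0}W$ with $W$ a standard Brownian motion, because the deterministic drift $\tfrac1n\sum_{j\le nt}(\Phi_n(X_{(j)})-\Phi_0(0))=O_{\bP}(\delta_n)$ is $o(n^{-1/2})$ precisely when $n\delta_n^2\to 0$. Since $\hat\Phi_n$ is the slope of the GCM of this cumulative process, and the GCM-slope operator is continuous on the relevant Skorokhod subspace (Ch.~3.3 of \cite{Groeneboom2014shape}), the continuous-mapping theorem yields, jointly,
\[
\sqrt n\big(\hat\Phi_n(\,\cdot\,)-\Phi_n(\,\cdot\,)\big)\ \Longrightarrow\ \sigma_{\Phi_0}\,W^{*,\ell}\circ F_X.
\]
Combined with uniform $L^1(P_X)$-integrability of $\sqrt n(\hat\Phi_n-\Phi_n)$ supplied by the tail bounds of Lemma~\ref{lem:tail bounds - 1} in the parametric regime, passing the $L^1(P_X)$-functional through the weak limit gives $\sqrt n\int|\hat\Phi_n-\Phi_n|\,dP_X\Rightarrow\sigma_{\Phi_0}\int_0^1|W^{*,\ell}(u)|\,du$. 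Since $W^{*,\ell}$ is monotone with unique sign change at $u_0=\argmin_{u\in[0,1]}W(u)$, integration by parts yields $\int_0^1|W^{*,\ell}(u)|\,du=W(1)-2W(u_0)=\sup_{u\in[0,1]}(W(1)-2W(u))$. Setting $A(s)\defeq\sigma_{\Phi_0}(W(1)-2W(F_X(s)))$ recovers the stated covariance, the boundary relation $A(-T)=-A(T)$, and the limit formula.

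\textbf{Main obstacles.} In (i), the hardest step is the blocking: the local drift coefficient $\Phi_0'(0)/p_X(\Phi_n^{-1}(a))$ varies over $[0,1]$, so block widths must be simultaneously small enough to retain the argmin limit inside each block, large enough for the Sakhanenko coupling error to remain negligible, and such that the total length of the gaps vanishes. In (ii), the technical subtlety is justifying the exchange of the $L^1(P_X)$-functional with the weak limit; this goes beyond Skorokhod tightness and relies on parametric-regime tail bounds for $\hat\Phi_n-\Phi_n$ uniform in $x\in[-T,T]$.
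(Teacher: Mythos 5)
For part (i) your outline follows essentially the same route as the paper: switch to the inverse process (which scales at $(n\delta_{n}^{2})^{1/3}$), KMT/Sakhanenko strong approximations, localized argmins of Brownian motion plus a quadratic drift, Bernstein blocking, and the change of variables $a\mapsto\Phi_{n}^{-1}(a)$ producing $\mu_{n}$, $\cC$, $\sigma^{2}$ and the $(n\delta_{n}^{2})^{1/6}$ fluctuation rate. This is a sketch rather than a proof -- the bulk of the work in the paper consists precisely of the localizations and approximation claims you list as ``obstacles'' (Claims I--VI in Section~\ref{proof:4.4i}, including boundary control of the integration domain via Lemma~\ref{lem:rate expectation}) -- but the strategy is the paper's.

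Part (ii) is where you depart from the paper, and there is a genuine gap. Your argument hinges on (a) functional convergence $\sqrt{n}(\hat{\Phi}_{n}-\Phi_{n})\Rightarrow \sigma_{\Phi_{0}}W^{*,\ell}\circ F_{X}$ obtained by ``continuity of the GCM-slope operator'', and (b) uniform $L^{1}(P_{X})$-integrability ``supplied by the tail bounds of Lemma~\ref{lem:tail bounds - 1} in the parametric regime''. Neither step is available as stated. The GCM map itself is sup-norm contractive, but its left-derivative is not a continuous functional of the cusum process in any topology in which your Donsker statement holds; worse, the limit $W^{*,\ell}$ is a.s.\ unbounded near $0$ and $1$, and correspondingly $\sqrt{n}(\hat{\Phi}_{n}(t)-\Phi_{n}(t))$ is not $O_{\bP}(1)$ uniformly near $\pm T$ (Lemma~\ref{lem:rate expectation} gives the boundary rate $(n(T\mp t))^{-1/2}$), so process convergence can at best hold locally in the interior and the exchange of the $L^{1}(P_{X})$-functional with the limit is exactly the unresolved difficulty. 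As for (b), the bounds of Lemma~\ref{lem:tail bounds - 1} are vacuous when $n\delta_{n}^{2}\longrightarrow 0$ (the right-hand side exceeds $1$; the inverse process simply does not converge in this regime, which is why the paper abandons the inverse-process route for (ii) altogether). So the two pillars of your fast-regime argument are unsupported, and repairing them would require a separate boundary/truncation analysis plus a local slope-convergence argument that you have not supplied.

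The paper avoids all of this by working at finite $n$: it replaces $dP_{X}$ by $dP_{n}$ (bracketing entropy plus Corollary~\ref{cor:uniform consistency}), replaces $\Phi_{n}$ by $\Phi_{0}(0)$ using $n\delta_{n}^{2}\to 0$, and then uses Lemma~\ref{lem:jumping point} together with the local-average characterization of $\hat{\Phi}_{n}$ to rewrite the empirical $L^{1}$-error \emph{exactly} (up to $o_{\bP}(n^{-1/2})$) as $\max_{s}A_{n}(s)$ for the centered partial-sum process $A_{n}$, to which a triangular-array Donsker theorem in $\cC([-T,T])$ applies (Lemma~\ref{lem: 4.5}). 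Your closing computation is, in effect, the limiting analogue of that identity: your check that $\int_{0}^{1}|W^{*,\ell}(u)|du=W(1)-2\min_{u}W(u)$ and that $\sigma_{\Phi_{0}}\big(W(1)-2W(F_{X}(\boldcdot))\big)$ has the stated covariance and satisfies $A(-T)=-A(T)$ is correct and gives a nice explicit representation of the paper's limit process $A$; but as a proof it presupposes the functional convergence and $L^{1}$-passage that are precisely what is missing.
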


Note that statement (ii) can be turned into the convergence of 
$\frac{\sqrt{n}}{2T}\int_{-T}^{T}|\hat{\Phi}_{n}(t) - \Phi_{n}(t)|dt$
in case the feature is uniformly distributed. The proof of the theorem extends over Section~\ref{proof:4.4ii} and Section~\ref{proof:4.4i}, employing auxiliary results as visualized in Figure~\ref{fig:proofs}.

{
\setlength{\belowcaptionskip}{-16pt}
\begin{figure}[h]
\centering
\begin{tikzpicture}[node distance=0.5cm and 1.8cm]
\node[mynode, fill=blue!10] (proof43ii) {\small Proof of Theorem~\ref{thm:l1 rate} (ii) \\ (see Section~\ref{proof:4.4ii})};
\node[mynode, fill=blue!10, below=of proof43ii] (prop42) {\small Proof of Proposition~\ref{prop: 4.2} \\ (see Section~\ref{proof:4.2})};
\node[mynode, right=of prop42] (lemma101) {\small Lemma~\ref{lem:rate expectation}};
\node[mynode, right=of lemma101] (lemma61) {\small Lemma~\ref{lem:tail bounds - 1}};
\node[mynode, fill=blue!10, above=of lemma61] (thm43i) {\small Proof of Theorem~\ref{thm:l1 rate} (i) \\ (see Section~\ref{proof:4.4i})};
\node[mynode, right=of proof43ii] (cor24) {\small Corollary~\ref{cor:uniform consistency}};

\draw[myarrow] (cor24) -- (proof43ii);
\draw[myarrow] (lemma101) -- (prop42);
\draw[myarrow] (lemma61) -- (lemma101);
\draw[myarrow] (lemma61) -- (thm43i);
\draw[myarrow] (lemma101) -- (thm43i);
\end{tikzpicture}
\caption{Structural interrelation of the proofs of Section~\ref{sec:L1 limit} and their auxiliary results.}\label{fig:proofs}
\end{figure}
}
\begin{itemize}
\item Without affecting the limiting normal distribution and in contrast to the accelerating rate according to Proposition~\ref{prop: 4.2}, the rate of the stabilized $L^{1}$-error slows down to $(n\delta_{n}^{2})^{1/6}$ and collapses at the phase transition $\delta_n\sim n^{-1/2}$.
\item As already mentioned in Section~\ref{sec:L1 lower bound}, $(n\delta_n^2)^{1/3}$ is the convergence rate of the inverse process, which will be shown to actually drive the convergence in (i), and this inverse process is not convergent any longer if $n\delta_n^2=\cO(1)$. 
\item To derive the new limit in the fast regime (ii), arguing by means of the inverse process is therefore not reasonable any longer. Instead, we utilize Corollary \ref{cor:uniform consistency} to move over to an integral with respect to the empirical feature distribution in order to exploit the characterization \eqref{eq:char_phi}, which in turn allows to approximate the resulting empirical $L^1$-error by a supremum over a centered partial sum process. 
\item The limiting distributions in the intermediate regime ($n\delta_{n}^{2} \longrightarrow c \in (0,\infty)$) remain an open problem.
\end{itemize}

\subsection{Auxiliary results on the inverse process}\label{sec:inverse process}
The following result is a key ingredient for the proofs of 
Proposition~\ref{prop: 4.2} and Theorem~\ref{thm:l1 rate} (i). Recall the definition of the inverse process $\tilde{U}_{n}$ 
in \eqref{eq:Un} and define $\lambda_{n} \defeq \Phi_{n} \circ F_{X}^{-1}$.

\begin{lemma} \label{lem:tail bounds - 1}
Suppose that $\Phi_0$ is continuously differentiable with $\Phi_0'(0)>0$. Then, for any $q \geq 2$, there exist
constants $C = C(\Phi_{0},p_{X},q) > 0$ and $N_{0} = N_{0}(\Phi_{0},(\delta_{n})_{n\in\bN},q) \in \bN$, such that for every 
$n \geq N_{0}$, $a \in [0,1]$ and $x > 0$, 
\begin{itemize}
\item[(i)] 
$\displaystyle
\bP\big(|\tilde{U}_{n}(a) - \lambda_{n}^{-1}(a)| \geq x\big) 
\leq \mathds{1}_{\{x \in [0,(n\delta_{n}^{2})^{-1/3})\}} 
			+ \frac{C}{(n\delta_{n}^{2}x^{3})^{q/2}}\mathds{1}_{\{x \in [(n\delta_{n}^{2})^{-1/3},1]\}}$,

\item[(ii)] 
$ \displaystyle
\bP\big(|F_{n}^{-1}(\tilde{U}_{n}(a)) - \Phi_{n}^{-1}(a)| \geq x\big) 
\leq \mathds{1}_{\{x \in [0,(n\delta_{n}^{2})^{-1/3})\}} 
			+ \frac{C}{(n\delta_{n}^{2}x^{3})^{q/2}}\mathds{1}_{\{x \in [(n\delta_{n}^{2})^{-1/3},2T]\}}$.
\end{itemize}
\end{lemma}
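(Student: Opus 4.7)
The plan is to combine the argmin characterization of $\tilde U_n(a)$ with a dyadic slicing of the tail event and a DKW-type maximal inequality. Throughout, set $t_0 \defeq \lambda_n^{-1}(a)$ and $\Lambda_n(t) \defeq \int_0^t \lambda_n(s)\,ds$, so that $t\mapsto \Lambda_n(t) - at$ is minimized at $t_0$. Since $\lambda_n'(t) = \Phi_n'(F_X^{-1}(t))/p_X(F_X^{-1}(t))\sim\delta_n\Phi_0'(0)/p_X(F_X^{-1}(t))$ by the weak-feature-impact construction, positivity of $\Phi_0'(0)$ together with positivity of $p_X$ on $[-T,T]$ yields, for some $c>0$ and all $n\geq N_0$, uniformly in $a\in[0,1]$,
\[
\Lambda_n(t) - \Lambda_n(t_0) - a(t-t_0) \geq c\delta_n (t-t_0)^2 \quad \text{for all } t \in [0,1].
\]

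For (i), the bound is trivial when $x \in [0,(n\delta_n^2)^{-1/3})$, so fix $x \in [(n\delta_n^2)^{-1/3},1]$. The event $\tilde U_n(a) \geq t_0 + x$ forces some $t\geq t_0+x$ with $\Upsilon_n(t)-at \leq \Upsilon_n(t_0)-at_0$, which combined with the quadratic lower bound above gives $V_n(t_0) - V_n(t) \geq c\delta_n(t-t_0)^2$ for this $t$, where $V_n \defeq \Upsilon_n - \Lambda_n$. Peeling $[t_0+x,1]$ into dyadic annuli $A_k \defeq [t_0+2^k x,\, t_0+2^{k+1}x)$ gives
\[
\bP\big(\tilde U_n(a)-t_0 \geq x\big) \leq \sum_{k\geq 0}\bP\Big(\sup_{t\in A_k}\big(V_n(t_0)-V_n(t)\big) \geq c\delta_n(2^k x)^2\Big).
\]
Writing $\Upsilon_n = S_n\circ F_n^{-1}$ with $S_n(u)\defeq\frac{1}{n}\sum_i Y_i^n\mathds{1}\{X_i\leq u\}$, the process $V_n$ decomposes into a centered empirical process of bounded indicators plus an $\cO(n^{-1/2})$ Glivenko--Cantelli remainder inherited from $F_n^{-1}-F_X^{-1}$ and controlled via DKW; in the slow regime this remainder is dominated by $c\delta_n(2^k x)^2$ whenever $x\geq (n\delta_n^2)^{-1/3}$. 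A Doob/Marcinkiewicz--Zygmund-type $q$-th moment bound on the centered empirical piece over $A_k$, whose variance is $\lesssim 2^k x/n$, then produces a tail of order $(2^k x/n)^{q/2}/(\delta_n(2^k x)^2)^q = C(n\delta_n^2)^{-q/2}(2^k x)^{-3q/2}$. Summing the geometric series in $k$ yields the claimed $C(n\delta_n^2 x^3)^{-q/2}$ bound, and the symmetric event $\{\tilde U_n(a)-t_0\leq -x\}$ is treated analogously.

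For (ii), I would decompose
\[
F_n^{-1}(\tilde U_n(a)) - \Phi_n^{-1}(a) = \big[F_n^{-1}(\tilde U_n(a)) - F_X^{-1}(\tilde U_n(a))\big] + \big[F_X^{-1}(\tilde U_n(a)) - F_X^{-1}(\lambda_n^{-1}(a))\big].
\]
The first bracket is controlled uniformly over its argument by DKW, yielding a $q$-th moment tail of order $n^{-q/2}$ that is absorbed into the target bound. The second bracket is bounded by $\|(F_X^{-1})'\|_\infty\,|\tilde U_n(a) - \lambda_n^{-1}(a)|$, since $p_X$ is bounded below on $[-T,T]$, so (i) applies after adjusting the constant. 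I expect the main technical obstacles to be (a) securing uniformity of the curvature constant $c$ in $a\in[0,1]$ near the endpoints, where $t_0$ approaches $\{0,1\}$ and the quadratic lower bound has to be truncated, and (b) cleanly isolating the genuine empirical-process piece of $V_n$ from the $F_n^{-1}$-induced remainder at the right dyadic scale. Both are handled by careful bookkeeping, and the hypothesis $n\delta_n^2\to\infty$ inherent in the useful range $x\geq(n\delta_n^2)^{-1/3}$ ensures that the auxiliary $n^{-1/2}$ errors remain subleading relative to the dyadic threshold throughout.
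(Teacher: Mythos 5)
Your skeleton matches the paper's proof of this lemma: the argmin characterization of $\tilde U_n(a)$, a quadratic drift bound with curvature of order $\delta_n$ coming from $\lambda_n'\gtrsim\delta_n$, dyadic peeling of $\{|\tilde U_n(a)-\lambda_n^{-1}(a)|\ge x\}$ with Markov at exponent $q$, a Doob/Rosenthal moment bound for the centered part together with DKW for the quantile part, and, for (ii), the decomposition through $F_X^{-1}$ using that $p_X$ is bounded away from zero. Part (ii) as you describe it is fine once (i) is in place.

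There is, however, a genuine gap in your treatment of the remainder induced by replacing $F_n^{-1}$ by $F_X^{-1}$. You describe it as ``an $\cO(n^{-1/2})$ Glivenko--Cantelli remainder'' and claim it is dominated by the dyadic threshold $c\delta_n(2^kx)^2$ because $x\ge(n\delta_n^2)^{-1/3}$; your closing sentence even attributes this to $n\delta_n^2\to\infty$. That comparison fails: at the critical scale $x=(n\delta_n^2)^{-1/3}$ and $k=0$ the threshold is $c\,n^{-2/3}\delta_n^{-1/3}$, and $n^{-1/2}\le c\,n^{-2/3}\delta_n^{-1/3}$ is equivalent to $(n\delta_n^2)^{1/6}\le c$, which is violated exactly when $n\delta_n^2\to\infty$ --- the only regime in which the lemma carries content. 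What saves the argument (and what the paper exploits) is that the remainder on the annulus $A_k$ is not a bare $n^{-1/2}$ term but $\int(\Phi_n\circ F_n^{-1}-\Phi_n\circ F_X^{-1})$ over an interval of length $\lesssim 2^kx$, hence bounded by $\|\Phi_n'\|_\infty\,(2^kx)\sup_v|F_n^{-1}(v)-F_X^{-1}(v)|\lesssim\delta_n\,(2^kx)\,n^{-1/2}$ in $L^q$ by DKW. Both extra factors are essential: the interval length from the localization and the Lipschitz constant $\|\Phi_n'\|_\infty\lesssim\delta_n$ (the paper explicitly stresses that the upper bound on $\Phi_n'$ enters here, while the lower bound enters the curvature). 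With them, the remainder is absorbed into the $(2^kx/n)^{q/2}$ moment bound of the martingale part, and Markov against $c\delta_n(2^kx)^2$ yields $(n\delta_n^2)^{-q/2}(2^kx)^{-3q/2}$, summable in $k$, which is the claimed tail. Two smaller points: the uniformity in $a\in[0,1]$ that you defer to ``bookkeeping'' requires no truncation --- if $a\notin\lambda_n([0,1])$ then $\lambda_n^{-1}(a)\in\{0,1\}$ and the linear term $(a-\lambda_n(\lambda_n^{-1}(a)))(u-\lambda_n^{-1}(a))$ is nonpositive, so the quadratic lower bound holds for all $a$ as stated; and the $\cO(1/n)$ polygonal-interpolation terms of $\Upsilon_n$ must also be absorbed, which works because $x\ge(n\delta_n^2)^{-1/3}\ge 1/n$.
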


The proof is given in Section~\ref{proof:tail bound - 1}. Interestingly, tight bounds on $\Phi_n'$, both from above and from below, enter its derivation. Therefore, the tail bound crucially depends on the fact that the level of feature impact $\delta_n$ actually precisely characterizes the speed with which the gradient of the feature-label relation approaches zero (uniformly on compacts). 

\begin{corollary}\label{cor:tail bounds - 1}
Suppose $\Phi_0$ to be continuously differentiable with $\Phi_0'(0)>0$. For $i=1,2$, let $(Z_{i,n})_{n\in\bN}$ be a 
sequence of $\bR$-valued random variables with $|Z_{i,n}|\leq c_n$ for some sequence $(c_n)_{n\in \bN}$. Then, for any 
$q \geq 2$ and any $r \in [1,3q/2)$, there exist $C = C(\Phi_{0},p_{X},q) > 0$ and $N_{0} = N_{0}(\Phi_{0},(\delta_{n})_{n\in\bN},q) \in \bN$, such that for every 
$n \geq N_{0}$, $a \in [0,1]$ and $Z_{i,n} \in [-a,1-a]$, 
\[
\bE\big[\big|\tilde{U}_{n}(a+Z_{1,n}) - \lambda_{n}^{-1}(a+Z_{2,n})\big|^{r}\big] 
\leq C\min\Big\{(n\delta_{n}^{2})^{-r/3} + \Big(\frac{c_{n}}{\delta_{n}}\Big)^{r}, 1\Big\}.
\]
\end{corollary}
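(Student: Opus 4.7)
My plan is to reduce the corollary to Lemma~\ref{lem:tail bounds - 1}(i) by first decoupling the two random arguments and then replacing the remaining random argument by two deterministic extremes via the monotonicity of $\tilde U_n$ and $\lambda_n^{-1}$. Concretely, I would split
\[
\tilde U_n(a+Z_{1,n})-\lambda_n^{-1}(a+Z_{2,n}) = \bigl[\tilde U_n(a+Z_{1,n})-\lambda_n^{-1}(a+Z_{1,n})\bigr] + \bigl[\lambda_n^{-1}(a+Z_{1,n})-\lambda_n^{-1}(a+Z_{2,n})\bigr].
\]
The second bracket is handled deterministically by a uniform Lipschitz bound on $\lambda_n^{-1}$. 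Since $\lambda_n(s)=\Phi_0(\delta_n F_X^{-1}(s))$ gives $\lambda_n'(s) = \delta_n\Phi_0'(\delta_n F_X^{-1}(s))/p_X(F_X^{-1}(s))$, the assumptions $\Phi_0'(0)>0$, continuity of $\Phi_0'$ near $0$, and the two-sided positivity of $p_X$ on $[-T,T]$ pinch $\lambda_n'(s)$ between two positive multiples of $\delta_n$ uniformly in $s\in[0,1]$ for all sufficiently large $n$. Since the generalized inverse is clipped to $\{0,1\}$ outside the range of $\lambda_n$, the global Lipschitz constant of $\lambda_n^{-1}$ on $[0,1]$ is $O(\delta_n^{-1})$, so the second bracket is at most $Cc_n/\delta_n$ in absolute value.

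For the first bracket, set $a_1\defeq \min(a+c_n,1)$ and $a_2\defeq \max(a-c_n,0)$, both of which lie in $[0,1]$ and sandwich $a+Z_{1,n}$. Monotonicity of $\tilde U_n$ and $\lambda_n^{-1}$ then sandwiches $\tilde U_n(a+Z_{1,n})-\lambda_n^{-1}(a+Z_{1,n})$ between $\tilde U_n(a_2)-\lambda_n^{-1}(a_1)$ and $\tilde U_n(a_1)-\lambda_n^{-1}(a_2)$. Inserting and subtracting $\lambda_n^{-1}(a_1)$ respectively $\lambda_n^{-1}(a_2)$ and re-using the Lipschitz bound produces
\[
\bigl|\tilde U_n(a+Z_{1,n})-\lambda_n^{-1}(a+Z_{1,n})\bigr| \leq |\tilde U_n(a_1)-\lambda_n^{-1}(a_1)| + |\tilde U_n(a_2)-\lambda_n^{-1}(a_2)| + Cc_n/\delta_n.
\]

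It then suffices to estimate $\bE[|\tilde U_n(a')-\lambda_n^{-1}(a')|^r]$ for $a'\in\{a_1,a_2\}\subset[0,1]$ via the layer-cake formula together with Lemma~\ref{lem:tail bounds - 1}(i). Splitting the integral at $x_0\defeq(n\delta_n^2)^{-1/3}\wedge 1$, the contribution from $[0,x_0]$ is at most $x_0^r\leq (n\delta_n^2)^{-r/3}$, while the tail contribution is $Cr(n\delta_n^2)^{-q/2}\int_{x_0}^{1}x^{r-1-3q/2}dx$, which under the hypothesis $r<3q/2$ is dominated by its value at the lower endpoint and evaluates to $O((n\delta_n^2)^{-r/3})$. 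Combining the three contributions in the previous display via $(x+y+z)^r\leq 3^{r-1}(x^r+y^r+z^r)$ yields the additive bound $\bE[|\tilde U_n(a+Z_{1,n})-\lambda_n^{-1}(a+Z_{2,n})|^r]\leq C[(n\delta_n^2)^{-r/3}+(c_n/\delta_n)^r]$, while the alternative $\min\{\cdot,1\}$ form in the statement follows from the a priori bound $\bE[|\cdot|^r]\leq 2^r$ afforded by $\tilde U_n,\lambda_n^{-1}\in[0,1]$. I expect no substantial obstacle here; the only slightly delicate point is the global Lipschitz control of $\lambda_n^{-1}$ on all of $[0,1]$ rather than on the shrinking range $[\lambda_n(0),\lambda_n(1)]$, which is resolved by the clipping convention for the generalized inverse.
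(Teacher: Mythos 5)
Your proposal is correct and takes essentially the same route as the paper: both arguments exploit monotonicity of $\tilde{U}_{n}$ and $\lambda_{n}^{-1}$ to pass to the deterministic endpoints $a\pm c_{n}$ (respectively their clipped versions), invoke the $\cO(c_{n}/\delta_{n})$ Taylor/Lipschitz control of $\lambda_{n}^{-1}$ coming from $\lambda_{n}'\geq K\delta_{n}$, apply Lemma~\ref{lem:tail bounds - 1}~(i) at those fixed arguments, and integrate the tail bound using $r<3q/2$, with the trivial bound $|\tilde{U}_{n}-\lambda_{n}^{-1}|\leq 1$ giving the $\min\{\cdot,1\}$ form. The only difference is organizational: the paper bounds the tail probability of the combined difference (splitting thresholds into $x/2$ and $x/4$ pieces) and integrates once at the end, whereas you decompose the random variable first and take $r$-th moments of each summand, which is an equivalent bookkeeping of the same estimate.
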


The proof is deferred to Section~\ref{proof:cor tail bounds - 1}, utilizing monotonicity of both $\tilde{U}_{n}$ and 
$\lambda_{n}^{-1}$. For $c_n=0$, we obtain an upper bound on the pointwise risk of the inverse process. 

\section{Further statistical implications and open problems} \label{sec:statistical implications}
Apart from explaining the small-sample effect on the NPMLE and identifying $n\delta_{n}^{2}$ (corresponding to $(\sqrt{n}\cdot\Phi_{n}')^{2}$) as the key quantity which governs the full picture of limiting distributions, our results reveal further statistical consequences sketched below.

\medskip\noindent
\textbf{A new hypotheses test and its optimal power against local alternatives. }Based on $n$ i.i.d.~observations $(X_{1},Y_{1}),\dots,(X_{n},Y_{n})$ in the isotonic binary regression model with uniformly distributed  $X_{i} \sim \cU([-T,T])$, we consider the testing problem 
\[
H_{0} \ : \ \Phi_{|[-T,T]} = c_{0}
\qquad \text{versus} \qquad 
H_{1} \ : \ \Phi \in \cF \ \text{with} \ \Phi(0) = c_{0} \ \text{and} \ \inf_{\nu}\frac{\omega_{\nu}^{[-T,T]}(\Phi)}{\nu} > 0,
\]
with $c_{0} \in (0,1)$ and where $\omega_{\boldcdot}^{[-T,T]}$ is defined in \eqref{eq:modulus}. We propose the test statistic 
\[
S_{n} \defeq \frac{\sqrt{n}}{2T}\int_{-T}^{T} |\hat{\Phi}_{n}(x) - c_{0}|dx
\] 
and the corresponding test 
\[
T_{n}\big((X_{1},Y_{1}),\cdots,(X_{n},Y_{n})\big) 
\defeq 
\begin{cases}
0, &\quad \text{if } S_{n} \leq \kappa_{\alpha}, \\
1, &\quad \text{else}, 
\end{cases}
\]
with $\kappa_{\alpha}$ the $(1-\alpha)$-quantile of the limiting distribution derived in Theorem~\ref{thm:l1 rate} (ii). Note that the limiting distribution of the $L^{1}$-error and in particular the quantiles under the simple null were previously unknown. Hence, the test itself could not have been developed before. As a consequence of Theorem~\ref{thm:l1 rate}~(i), this test is consistent against local alternatives $\Phi_{n}(x) = \Phi_{0}(\delta_{n}x)$ with $\Phi_{0} \in H_{1}$ in the whole slow regime. Indeed, if $n\delta_{n}^{2} \longrightarrow \infty$,  
\begin{align*}
\bP_{\Phi_{n}}(T_{n}=1) 
\geq \ &\bP_{\Phi_{n}}\bigg(\frac{\sqrt{n}}{2T}\int_{-T}^{T} |\Phi_{n}(x) - c_{0}|dx 
				- \frac{\sqrt{n}}{2T}\int_{-T}^{T} |\hat{\Phi}_{n}(x) - \Phi_{n}(x)|dx  > \kappa_{\alpha}\bigg) \\
= \ &\bP_{\Phi_{n}}\bigg(\frac{(n\delta_{n}^{2})^{1/6}}{2T}\bigg(\Big(\frac{n}{\delta_{n}}\Big)^{1/3}\int_{-T}^{T}|\Phi_{n}(x) - c_{0}|dx - \mu_{n}\bigg) \\
	&\qquad\quad - \frac{(n\delta_{n}^{2})^{1/6}}{2T}\bigg(\Big(\frac{n}{\delta_{n}}\Big)^{1/3}\int_{-T}^{T}|\hat{\Phi}_{n}(x) - \Phi_{n}(x)|dx - \mu_{n}\bigg) > \kappa_{\alpha}\bigg) \\
= \ &\bP_{\Phi_{n}}\bigg(\frac{(n\delta_{n}^{2})^{1/6}}{2T}\big(O((n\delta_{n}^{2})^{1/3}) - \mu_{n}\big) \\
	&\qquad\quad - \frac{(n\delta_{n}^{2})^{1/6}}{2T}\bigg(\Big(\frac{n}{\delta_{n}}\Big)^{1/3}\int_{-T}^{T}|\hat{\Phi}_{n}(x) - \Phi_{n}(x)|dx - \mu_{n}\bigg) > \kappa_{\alpha}\bigg) 
	\longrightarrow 1 \quad \text{as } n \longrightarrow \infty, 
\end{align*}
whereas by Theorem~\ref{thm:l1 rate} (ii), the test $T_{n}$ has no power for $n\delta_{n}^{2} \longrightarrow 0$, as 
\begin{align*}
\bP_{\Phi_{n}}(T_{n}=1) 
\leq \bP_{\Phi_{n}}\bigg(\frac{\sqrt{n}}{2T}\int_{-T}^{T} |\hat{\Phi}_{n}(x) - \Phi_{n}(x)|dx + o(1) > \kappa_{\alpha}\bigg) 
\longrightarrow \alpha \quad \text{for } n \longrightarrow \infty.
\end{align*}
Note that no test can achieve non-trivial power against these local alternatives $\Phi_{n}(\cdot) = \Phi_{0}(\delta_{n} \ \cdot)$ in the fast regime, because, with $P_{0}$ representing the distribution under the null, 
\begin{align*}
h^{2}\Big(P_{0}^{\otimes n},P_{\Phi_{n}}^{\otimes n}\Big) 
= 2\Bigg(1-\bigg(1-\frac{h^{2}(P_{0},P_{\Phi_{n}})}{2}\bigg)^{n}\Bigg) 
\longrightarrow 0 \quad \text{iff } n\delta_{n}^{2} \longrightarrow 0 
\end{align*}
by elementary algebra, where $h$ denotes the Hellinger metric as in \eqref{eq:hellinger metric}. That is, our test achieves the optimal separation rate against these types of local alternatives.

\medskip
\begin{remark}
In the context of hypotheses testing, a more natural formulation for the local alternatives is given by $\overline{\Phi}_{n}(x) \defeq c_{0} + \delta_{n}f(x)$, 
for some strictly isotonic function $f_{|[-T,T]} \colon [-T,T] \to [-c_{0},1-c_{0}]$. The results of Section~\ref{sec:pointwise limit} for $\beta = 1$, as well as the results of Section~\ref{sec:L1 limit} carry over mutatis mutandis for $\Phi_{n}$ replaced by $\overline{\Phi}_{n}$.
Solely for $\beta > 1$, the convergence rate and phase transition are affected, as in this case $\overline{\Phi}_{n}^{(\beta)}(x) = \delta_{n}f^{(\beta)}(x)$, i.e.~the rate at which $\overline{\Phi}_{n}^{(\beta)}$ converges to zero is now the same for all $\beta$. An appealing aspect of inserting $\delta_{n}$ inside the regression function in binary regression is that $\Phi_{n}(\bR)$ of the local alternatives remains unchanged, as is the case in logistic regression, and $\Phi_{n}$ converges for the opposite case $\delta_{n} \longrightarrow \infty$ to the step function, i.e.~the label is then measurable with respect to the feature.
\end{remark}

\medskip
\noindent
\textbf{The new benchmark for distributional approximation. }Subsequently, we use the abbreviation $\sigma_{x_{0}}^{2}(f) \defeq f(x_{0})(1-f(x_{0}))$, $f \in \cF$. Recalling $\Phi_{n}'(x_{0}) \asymp \delta_{n}$, an immediate consequence of Theorem~\ref{thm:intermediate} is that the Chernoff approximation
\begin{align*}
\Bigg(\frac{n}{\sigma_{x_{0}}^{2}(\Phi_{n})\Phi_{n}'(x_{0})}\Bigg)^{1/3}\big(\hat{\Phi}_{n}(x_{0})-\Phi_{n}(x_{0})\big) 
\longrightarrow_{\cL} \bigg(\frac{4}{p_{X}(x_{0})}\bigg)^{1/3}\argmin_{t \in \bR}\big\{Z(t) + t^{2} \big\} \quad \text{for } n \longrightarrow \infty 
\end{align*}
continues to hold iff $n\!\cdot\!\Phi_{n}'(x_{0})^{2}\longrightarrow\infty$. Under this condition, if $\widehat{\Phi_n'}(x_0)-\Phi_n'(x_0)=o_{\mathbb{P}}(\Phi_n'(x_0))$ for some derivative estimator $\widehat{\Phi_n'}$ of $\Phi_{n}'$, the approximation remains valid for
$$
\Bigg(\frac{n}{\sigma_{x_{0}}^{2}(\hat{\Phi}_{n})\widehat{\Phi_{n}'}(x_{0})}\Bigg)^{1/3}\big(\hat{\Phi}_{n}(x_{0})-\Phi_{n}(x_{0})\big) 
$$
by Slutsky's lemma. Of course, this required speed $\widehat{\Phi_n'}(x_0)-\Phi_n'(x_0)=o_{\mathbb{P}}(\Phi_{n}'(x_{0}))$ limits applicability in case of small derivatives $\Phi_{n}'(x_{0})$ and especially does not allow to exploit the Chernoff approximation in the full slow regime as long as no additional data on the derivative is available. Note that the popular method by \cite{BerPolRom1999} to construct confidence intervals in case of unknown rates of convergence is not applicable here a priori, as the subsampling procedure leads to a reduced sample size, thus changing the relation of sample size and steepness of the regression function and with this potentially the limiting regime. \par 
However, as pointed out in~\cite{HallYat2007}, there do exist situations where additional data on the derivatives are present. That is, if even 
\begin{equation}\label{eq:derivative}
\widehat{\Phi_n'}(x_0)-\Phi_n'(x_0)=o_{\mathbb{P}}(n^{-1/2}), 
\end{equation}
for some (partly) external derivative estimator, the distributional approximation 
simultaneously in all regimes as derived in  Section~\ref{sec:pointwise limit} could be exploited in its full generality. Actually, as a consequence of Theorem~\ref{thm:pointwise rate} and Theorem~\ref{thm:intermediate}, if $\bU_{n,\Phi_{n}'(x_{0})}$ denotes the distribution of 
\begin{equation}\label{eq:51}
\bigg\{\Big(\frac{n}{\Phi_n'(x_{0})}\Big)^{1/3} \wedge \sqrt{n}\bigg\}\big(\hat{\Phi}_{n}(x_{0})-\Phi_n(x_{0})\big),
\end{equation}
then $d_{BL}\big(\bU_{n,\Phi_{n}'(x_{0})}, \mathbb{Q}_{n,\Phi_n'(x_0)}\big)
\longrightarrow 0$ as $n\longrightarrow\infty$, 
where $d_{BL}$ denotes the dual bounded Lipschitz metric and $\mathbb{Q}_{n,\Phi_n'(x_0)}$ denotes the distribution of the rescaled greatest convex minorant
$$
\Big(\big(n\Phi_n'(x_{0})^{2}\big)^{-1/6} \wedge 1\Big)\bar g_{n\Phi_n'(x_0)^2}^{*,\ell}(F_{X}(x_{0})) 
$$
for 
$$\bar g_{c}(s) \defeq \sqrt{\Phi_n(x_0)(1-\Phi_n(x_0))}Z(s) + \sqrt{c}\,\bE\Big[(X-x_{0})
												\mathds{1}_{\{X \leq F_{X}^{-1}(s)\}}\Big].$$ 
Indeed, as visualized in Figure~\ref{fig:new limit}, the new distribution $\mathbb{Q}_{n,\Phi_n'(x_0)}$ (orange line) provides a significantly better distributional approximation of $\bU_{n,\Phi_{n}'(x_{0})}$ (blue line), as compared to both, the Chernoff distribution (black line), as well as the limiting distribution for flat functions (grey line). Hereby, the distribution $\mathbb{Q}_{n,\Phi_n'(x_0)}$, which does not degenerate as $n \longrightarrow \infty$, provides a new benchmark for distributional approximation of the NPMLE. 
In case of \eqref{eq:derivative}, we can replace the quantity $\Phi_{n}'(x_{0})$ by its estimate in the rate in \eqref{eq:51}, without affecting the limiting distribution. Likewise, as $\mathbb{Q}_{n,\Phi_n'(x_0)}$ depends continuously on $n\Phi_{n}'(x_{0})^{2}$, Theorem~\ref{thm:intermediate} yields 
$$d_{BL}\Big(\mathbb{Q}_{n,\widehat{\Phi_n'}(x_0)},\mathbb{Q}_{n,\Phi_n'(x_0)}\Big) \longrightarrow_{\bP} 0 \quad \text{as } n \longrightarrow \infty,$$
noting that \eqref{eq:derivative} reveals 
$n\widehat{\Phi_{n}'}(x_{0})^{2} = n\Phi_{n}'(x_{0})^{2} + \big(1+\sqrt{n}\Phi_{n}'(x_{0})\big)o_{\bP}(1)$, such that in total 
\begin{align*}
d_{BL}\Bigg(\cL\bigg(\bigg\{\bigg(\frac{n}{\widehat{\Phi_n'}(x_{0})}\bigg)^{1/3} \wedge \sqrt{n}\bigg\}\big(\hat{\Phi}_{n}(x_{0}) - \Phi_{n}(x_{0})\big)\bigg), \mathbb{Q}_{n,\widehat{\Phi_n'}(x_0)}\Bigg)
\longrightarrow_{\bP} 0\quad \text{as }n\longrightarrow\infty.
\end{align*}
\indent
Finally, as $\Phi_{n}'(x_{0})$ is involved unpleasantly in the rate of convergence of the NPMLE as well as in the new approximating distribution $\mathbb{Q}_{n,\Phi_n'(x_0)}$ and \eqref{eq:derivative} cannot be taken for granted in general, the convergence 
$$d_{BL}\big(\bU_{n,\Phi_{n}'(x_{0})}, \mathbb{Q}_{n,\Phi_n'(x_0)}\big)
\longrightarrow 0 \quad \text{as } n\longrightarrow\infty$$ 
simultaneously in all regimes raises the desire about a valid bootstrap approximation, thus elegantly avoiding estimation of the derivative. That is, the question is to which extent it is possible to imitate $\hat{\Phi}_{n}(x_{0}) - \Phi_{n}(x_{0})$ by a bootstrap statistic $T_{n}^{*}$ such that 
\begin{align*}
d_{BL}\Bigg(\cL\bigg(\bigg\{\Big(\frac{n}{\Phi_n'(x_{0})}\Big)^{1/3} \wedge \sqrt{n}\bigg\}T_{n}^{*}\, \bigg\arrowvert \, (X_{1},Y_{1}^{n}),\dots,(X_{n},Y_{n}^{n}) \bigg), \mathbb{U}_{n,\Phi_n'(x_0)}\Bigg)
\longrightarrow_{\bP} 0\quad \text{as }n\longrightarrow\infty,
\end{align*}
aiming at adaptive confidence intervals for $\Phi_{n}(x_{0})$. This problem is under current investigation by the authors. The bootstrap in $L^{1}$ is likewise open.

\medskip
\noindent
\textbf{A multivariate extension. }In the spirit of logistic regression, a natural extension for multivariate feature variables is given by the semiparametric relation $\bP(Y = 1|X = x) = \Phi_{0}(\beta^{\top}x)$ for some isotonic $[0,1]$-valued function $\Phi$ and $\beta \in \bS^{d-1}$, the unit sphere in $\bR^{d}$. Up to now, it is still an unsolved conjecture (cf.~\cite{BalGroHen2019}) that the component $\hat{\beta}_{n}$ of the semiparametric MLE $(\hat{\Phi}_{n},\hat{\beta}_{n})$ converges at the parametric rate. To the best of our knowledge, also the semiparametric efficiency theory is still open. 
As a further development, the semiparametric MLE can be studied in the weak-feature-impact triangular array, where 
$\bP(Y^{n} = 1|X = x) = \Phi_{n}(\beta^{\top}x) = \Phi_{0}(\delta_{n}\beta^{\top}x)$ with $\delta_{n} \searrow 0$. Here, especially the rate-of-convergence question for $\hat{\beta}_{n}$ in dependence on the level of feature impact is open once again. In view of our results, we expect the rate and limiting distribution of $\hat{\Phi}_{n}$ to be affected by the level of feature impact, but it might influence the convergence rate and limiting distribution of $\hat{\beta}_{n}$ as well. Note that the index parameter $\beta$ is clearly not identifiable in the extremal case where $\Phi_{0}$ is constant.

\section{Proof of Theorem~\ref{thm:intermediate}}\label{proof:intermediate}
The statement for $c \longrightarrow 0$ is immediate. For $c \longrightarrow \infty$, let $I_{c} \defeq (-F_{X}(x_{0})c^{1/3},(1-F_{X}(x_{0}))c^{1/3})$ and define 
\[
h_{c} \colon I_{c} \to \bR, \quad 
h_{c}(t) \defeq c^{1/6}\big(g_{1,c}(F_{X}(x_{0})+c^{-1/3}t)-g_{1,c}(F_{X}(x_{0}))\big).
\]
Note that 
\begin{align*}
h_c^{*,\ell}(t) 
= c^{1/6}\big(g_{1,c}(F_{X}(x_{0})+c^{-1/3}t)-g_{1,c}(F_{X}(x_{0}))\big)^{*,\ell} 
= c^{-1/6}g_{1,c}^{*,\ell}(F_{X}(x_{0})+c^{-1/3}t),
\end{align*}
and consequently, 
\[
h_{c}^{*,\ell}(0) 
= c^{-1/6}g_{1,c}^{*,\ell}(F_{X}(x_{0})).
\]
It thus suffices to prove the desired convergence for $h_{c}^{*,\ell}(0)$ for $c \longrightarrow \infty$. For ease of notation, let 
\[
m \colon [0,1] \to \bR, \quad
m(s) \defeq \int_{-\infty}^{F_{X}^{-1}(s)}(x-x_{0})p_{X}(x)dx 
= \bE[(X-x_{0})\mathds{1}_{\{X\leq F_{X}^{-1}(s)\}}]
\]
and note that 
\[
m(s)-m(F_{X}(x_{0}))
= \int_{x_{0}}^{F_{X}^{-1}(s)}(x-x_0)p_{X}(x)dx.
\]
Since $p_{X}$ is continuous at $x_{0}$ and $p_{X}(x_{0})>0$, a Taylor expansion of $F_{X}^{-1}$ around $F_{X}(x_{0})$ gives 
\[
F_{X}^{-1}(F_{X}(x_{0})+h)
= x_{0} + \frac{h}{p_{X}(x_{0})} + o(h).
\]
Consequently,
\begin{align*}
m(F_{X}(x_{0})+h)-m(F_{X}(x_{0}))
= \int_{x_{0}}^{F_{X}^{-1}(F_{X}(x_{0})+h)}(x-x_0)p_{X}(x)dx 
= \frac{h^{2}}{2p_{X}(x_{0})} + o(h^{2}).
\end{align*}
Therefore, for every fixed $R>0$,
\[
\sup_{|t|\leq R} \Big|c^{2/3}\big(m(F_{X}(x_{0})+c^{-1/3}t)-m(F_{X}(x_{0}))\big)-\frac{t^2}{2p_X(x_0)}\Big|
\longrightarrow 0 \quad \text{as } c \longrightarrow \infty.
\]
Now define
\[
\rho_{c} \colon I_{c} \to \bR, \qquad 
\rho_{c}(t)
\defeq c^{2/3}\big(m(F_{X}(x_{0})+c^{-1/3}t)-m(F_{X}(x_{0}))\big), 
\]
set $\tau \defeq \frac{\Phi_{0}'(0)}{2p_{X}(x_{0})}$ and consider the process $(\tilde{h}_{c}(t))_{t \in I_{c}}$, where
\[
\tilde{h}_{c}(t) 
\defeq \sigma_{\Phi_{0}} Z(t)+\Phi_{0}'(0)\rho_{c}(t).
\]
Then, by basic invariance properties of the law of Brownian motion, $\tilde{h}_{c}$ has the same law as $h_{c}$. Further, for 
\[
H \colon \bR \to \bR, \qquad 
H(t) \defeq \sigma_{\Phi_{0}} Z(t) + \tau t^{2}, 
\]
we have 
\[
\tilde{h}_{c}(t) \longrightarrow H(t) \quad \text{locally uniformly on } \bR \text{ as } c \longrightarrow \infty.
\]
For $a \in \bR$, Lemma~\ref{lem:general switch relation} now yields 
\begin{equation}\label{eq:50}
\{\tilde h_{c}^{*,\ell}(0)\leq a\} = \Big\{\argminp_{t \in I_{c}}\{\tilde{h}_{c}(t) - at\} \geq 0\Big\}
\ \text{ and } \ 
\{H^{*,\ell}(0)\leq a\} = \Big\{\argminp_{t \in \bR}\{H(t) - at\} \geq 0\Big\}.
\end{equation}
Since $H(t)-at = \sigma_{\Phi_{0}} Z(t) + \tau t^{2}-at \longrightarrow \infty$ as $|t| \longrightarrow \infty$ by the law of the iterated logarithm, the minimizer of this expression is almost surely finite and together with Theorem~2 in \cite{Pimentel2014location}, it is also almost surely unique. By argmin-continuous mapping (Theorem~3.2.2 of \cite{VaartWellner2023}), the local uniform convergence $\tilde{h}_{c} \longrightarrow H$ then implies
\[
\argminp_{t \in I_{c}}\{\tilde{h}_{c}(t) - at\} 
\longrightarrow_{\cL} \argminp_{t \in \bR}\{H(t) - at\} \quad \text{as } c \longrightarrow \infty.
\]
Hence, by an application of an obvious adjustment of Lemma~A.2 of \cite{Cattaneo2024bootstrap} to processes defined on a compact interval (see \cite{Cattaneo2025continuity} as well) and \eqref{eq:50}, $\tilde{h}_{c}^{*,\ell}(0) \longrightarrow_{\cL} H^{*,\ell}(0)$ for $c \longrightarrow \infty$ and it remains to identify the law of $H^{*,\ell}(0)$. By Lemma~\ref{lem:argmin brownian motion transformation}, 
\[
\argmin_{t\in\bR}\big\{\sigma_{\Phi_{0}} Z(t) + \tau t^{2} - at\big\}
=_{\cL}
\Big(\frac{\sigma_{\Phi_{0}}}{\tau}\Big)^{2/3}\argmin_{t \in \bR}\big\{Z(t) + t^{2}\big\} + \frac{a}{2\tau} 
\]
and so together with Lemma~\ref{lem:general switch relation}, 
\[
\bP\big(H^{*,\ell}(0)\le a\big) 
= \bP\bigg(\argmin_{t \in \bR}\big\{Z(t) + t^{2}\big\} \geq -\frac{a}{2(\sigma_{\Phi_{0}}^{2}\tau)^{1/3}}\bigg) 
= \bP\Big(2(\sigma_{\Phi_{0}}^{2}\tau)^{1/3}\argmin_{t \in \bR}\big\{Z(t) + t^{2}\big\} \leq a\Big).
\]
Thus,
\[
H^{*,\ell}(0) 
=_{\cL} 2(\sigma_{\Phi_{0}}^2\tau)^{1/3}\argmin_{t \in \bR}\big\{Z(t) + t^{2}\big\}
= \bigg(\frac{4\sigma_{\Phi_{0}}^{2}\Phi_{0}'(0)}{p_{X}(x_{0})}\bigg)^{1/3}
										\argmin_{t \in \bR}\big\{Z(t) + t^{2} \big\}
\]
and the assertion follows. \qed

\section{Proof of Theorem~\ref{thm:l1 rate} (ii)} \label{proof:4.4ii}
With $P_{n}=\frac{1}{n}\sum_{i=1}^n\delta_{X_i}$ denoting the empirical measure of $X_{1},\dots,X_{n}$, we shall first prove 
that 
\begin{equation}\label{eq: Step1-fast}
\sqrt{n}\int_{-T}^{T}|\hat{\Phi}_{n}(x) - \Phi_{n}(x)|dP_{X}(x) = \sqrt{n}\int_{-T}^{T}|\hat{\Phi}_{n}(x) - \Phi_{n}(x)|dP_{n}(x)+ o_{\bP}(1).
\end{equation}
To this aim, we decompose
\begin{align*}
\sqrt{n}&\int_{-T}^{T}|\hat{\Phi}_{n}(x) - \Phi_{n}(x)|dP_{X}(x) \\
&\qquad\quad= \sqrt{n}\int_{-T}^{T}|\hat{\Phi}_{n}(x) - \Phi_{n}(x)|d(P_{X}-P_{n})(x) 
							+ \sqrt{n}\int_{-T}^{T}|\hat{\Phi}_{n}(x) - \Phi_{n}(x)|dP_{n}(x) 
\end{align*}
and have to verify that the first term on the right-hand side converges to zero in probability. For this, let 
$\varepsilon>0$ be 
arbitrary. Setting $\Psi_{n}(\boldcdot) \defeq |\hat{\Phi}_{n}(\boldcdot)-\Phi_{n}(\boldcdot)|$, 
$I_{\eta} \defeq [-T+\eta,T-\eta]$ and writing $\| \boldcdot \|_{I_{\eta}}$ for the $\sup$-norm on $I_{\eta}$, 
we have for any $\eta\in (0,T)$, 
\begin{align*}
&\bP\bigg(\sqrt{n}\bigg|\int_{-T}^{T}|\hat{\Phi}_{n}(x)-\Phi_{n}(x)|d(P_{X}-P_{n})(x)\bigg| > \varepsilon\bigg) \\
&\qquad\quad\leq \bP\bigg(\sqrt{n}\bigg|\int_{I_{\eta}}\Psi_{n}(x)d(P_{X}-P_{n})(x)\bigg| > \varepsilon/2, 
															\|\Psi_{n}\|_{I_{\eta}} \leq \eta\bigg) 
					+ \bP\big(\|\Psi_{n}\|_{I_{\eta}} > \eta\big) \\
&\qquad\qquad\qquad\quad+ \bP\bigg(\sqrt{n}\bigg|\int_{[-T,T] \setminus I_{\eta}}\Psi_{n}(x)d(P_{X}-P_{n})(x)\bigg| > \varepsilon/2\bigg).
\end{align*}
By Corollary~\ref{cor:uniform consistency}, $\bP\big(\|\Psi_{n}\|_{I_{\eta}} > \eta\big) \longrightarrow 0$ as 
$n \longrightarrow \infty$. From Markov's inequality, we get 
\begin{align*}
\bP\bigg(\sqrt{n}\bigg|\int_{I_{\eta}}\Psi_{n}(x)d(P_{X}-P_{n})(x)\bigg| > \varepsilon/2, 
															\|\Psi_{n}\|_{I_{\eta}} \leq \eta\bigg) 
&\leq \bP\bigg(\sup_{g \in \cG_{n,\eta}}\bigg|\sqrt{n}\int_{I_{\eta}}g(x)d(P_{n}-P_{X})(x)\bigg| > \varepsilon/2\bigg) \\
&\leq \frac{2}{\varepsilon}\bE\bigg[\sup_{g \in \cG_{n,\eta}} \bigg|\frac{1}{\sqrt{n}}\sum_{i=1}^{n}g(X_{i})-\bE[g(X_{i})]\bigg|\bigg] 
\end{align*}
for the class
$\cG_{n,\eta} \defeq \big\{g \colon I_{\eta} \to [0,1] 
\mid g=|f-\Phi_{n}| \text{ for } f \in \cF, \ \|g\|_{I_{\eta}} \leq \eta\big\}$. 
Note that any $g \in \cG_{n,\eta}$ satisfies $\bE[g(X)^{2}] \leq \eta^{2}$ and $\|g\|_{I_{\eta}} \leq \eta$. 
Theorem~2.14.17' of \cite{VaartWellner2023} then reveals for some universal constant $C > 0$, 
\begin{align*}
\bE\bigg[\sup_{g \in \cG_{n,\eta}}\bigg|\frac{1}{\sqrt{n}}\sum_{i=1}^{n}g(X_{i})-\bE[g(X_{i})]\bigg|\bigg] 
\leq CJ_{[]}\big(\eta,\cG_{n,\eta},L^{2}(P_{X})\big) \bigg(1 + \frac{J_{[]}\big(\eta,\cG_{n,\eta},L^{2}(P_{X})\big)}{\eta^{2}\sqrt{n}}\eta\bigg).
\end{align*}
where $J_{[]}\big(\eta,\cG_{n,\eta},L^{2}(P_{X})\big) 
\defeq \int_{0}^{\eta}\sqrt{1+\log(N_{[]}(\nu,\cG_{n,\eta},L^{2}(P_{X}))}d\nu$ denotes the bracketing integral with $\nu$-bracketing number 
$N_{[]}(\nu,\cG_{n,\eta},L^{2}(P_{X}))$ of $\cG_{n,\eta}$ in $L^{2}(P_{X})$. 
It remains to specify a bound for the entropy with bracketing. We prove in Lemma~\ref{lem:bracketing monotone functions} 
that 
\[
\log\big(N_{[]}\big(\nu,\cG_{n,\eta},L^{2}(P_{X})\big)\big)
\leq K\frac{(\eta+\delta_{n})}{\nu}\quad \forall\,\nu\in [0,\eta]
\]
for some constant $K > 0$ independent of $n$, $\eta$ and $\nu$, whence $J_{[]}\big(\eta,\cG_{n,\eta},L^{2}(P_{X})\big)$ is 
bounded by $K\sqrt{(\eta+\delta_{n})\eta}$ and therefore, 
\[
\limsup_{n\rightarrow\infty}\,\bE\bigg[\sup_{g \in \cG_{n,\eta}}\bigg|\frac{1}{\sqrt{n}}\sum_{i=1}^{n}g(X_{i})-\bE[g(X_{i})]\bigg|\bigg]
=\cO\big(\eta\big) \quad \text{as } \eta \longrightarrow 0.
\]
Now note that 
\begin{align*}
\bP\bigg(\sqrt{n}\bigg|\int_{[-T,T] \setminus I_{\eta}}\Psi_{n}(x)d(P_{X}-P_{n})(x)\bigg| > \varepsilon/2\bigg) 
&\leq \bP\bigg(\sqrt{n}\bigg|\int_{-T}^{-T+\eta}\Psi_{n}(x)d(P_{X}-P_{n})(x)\bigg| > \varepsilon/4\bigg) \\
		&\qquad+ \bP\bigg(\sqrt{n}\bigg|\int_{T-\eta}^{T}\Psi_{n}(x)d(P_{X}-P_{n})(x)\bigg| > \varepsilon/4\bigg). 
\end{align*}
Similar as before, but now for the class $\cG_{n,\eta}' \defeq \big\{g \colon [-T,-T+\eta] \to [0,1] \mid g=|f-\Phi_{n}| \text{ for } f \in \cF\big\}$,
\[
\bP\bigg(\sqrt{n}\bigg|\int_{-T}^{-T+\eta}\Psi_{n}(x)d(P_{X}-P_{n})(x)\bigg| > \varepsilon/4 \bigg) 
\leq \frac{4}{\varepsilon}\bE\bigg[\sup_{g \in \cG_{n,\eta}'}
												\bigg|\frac{1}{\sqrt{n}}\sum_{i=1}^{n}g(X_{i})-\bE[g(X_{i})]\bigg|\bigg]. 
\]
Note that any $g \in \cG_{n,\eta}'$ satisfies $\bE[g(X)^{2}] \leq \eta\|p_{X}\|_{\infty}$ and $\|g\|_{[-T,-T+\eta]} \leq 1$.
Theorem~2.14.17' of \cite{VaartWellner2023} then reveals for some universal constant $C > 0$, 
\begin{align*}
&\bE\bigg[\sup_{g \in \cG_{n,\eta}'}\bigg|\frac{1}{\sqrt{n}}\sum_{i=1}^{n}g(X_{i})-\bE[g(X_{i})]\bigg|\bigg] \\
&\qquad\qquad\qquad\leq CJ_{[]}\big(\sqrt{\eta\|p_{X}\|_{\infty}},\cG_{n,\eta}',L^{2}(P_{X})\big)
				\bigg(1 + \frac{J_{[]}\big(\sqrt{\eta\|p_{X}\|_{\infty}},
								\cG_{n,\eta}',L^{2}(P_{X})\big)}{\eta\|p_{X}\|_{\infty}\sqrt{n}}\bigg). 
\end{align*}
Again, from Lemma~\ref{lem:bracketing monotone functions}, 
\[
\log\big(N_{[]}\big(\nu,\cG_{n,\eta}',L^{2}(P_{X})\big)\big)
\leq K\frac{(1+\delta_{n})}{\nu}\quad \forall\,\nu\in \big[0,\sqrt{\eta\|p_{X}\|_{\infty}}\big]
\]
for some constant $K > 0$ independent of $n$, $\eta$ and $\nu$ and so 
$J_{[]}\big(\sqrt{\eta\|p_{X}\|_{\infty}},\cF_{\eta},L^{2}(P_{X})\big)$ is bounded by 
$K\sqrt{1+\delta_{n}}\eta^{1/4}$. Therefore, 
\[
\limsup_{n\rightarrow\infty}\,\bE\bigg[\sup_{g \in \cG_{n,\eta}'}\bigg|\frac{1}{\sqrt{n}}\sum_{i=1}^{n}g(X_{i})-\bE[g(X_{i})]\bigg|\bigg]
=\cO\big(\eta^{1/4}\big) \quad \text{as } \eta \longrightarrow 0.
\]
Identical arguments hold for $\bP\big(\sqrt{n}|\int_{T-\eta}^{T}\Psi_{n}(x)d(P_{X}-P_{n})(x)| > \varepsilon/4\big)$ and so \eqref{eq: Step1-fast} is verified. \par
Next, we shall prove that we may replace $\Phi_n$ by the constant $\Phi_0(0)$ in the $L^1$-distance within an error of 
negligible order. Here, the requirement $n\delta_n^2\longrightarrow 0$ is getting essential. By the reverse triangle 
inequality, a Taylor expansion of $\Phi_{n}$ around $0$ reveals 
\begin{align*}
\sqrt{n}\bigg|\int_{-T}^{T}|\hat{\Phi}_{n}(x) - \Phi_{n}(x)|dP_{n}(x) 
						- \int_{-T}^{T}|\hat{\Phi}_{n}(x) - \Phi_{0}(0)|dP_{n}(x)\bigg| 
&\leq \sqrt{n}\int_{-T}^{T}|\Phi_{n}(x) - \Phi_{0}(0)|dP_{n}(x) \\
&= \delta_{n}\frac{1}{\sqrt{n}}\sum_{i=1}^{n}\Phi_{0}'(\delta_{n}\xi_{i}^{n})|X_{i}|
\end{align*}
for suitable $\xi_{i}^{n}$ between $0$ and $X_{i}$.
Markov's inequality combined with the assumption that $n\delta_{n}^{2} \longrightarrow 0$ then yields 
$$
\sqrt{n}\int_{-T}^{T}|\hat{\Phi}_{n}(x) - \Phi_{n}(x)|dP_{n}(x) 
= \sqrt{n}\int_{-T}^{T}|\hat{\Phi}_{n}(x) - \Phi_{0}(0)|dP_{n}(x) + o_{\bP}(1) 
$$
and in view of \eqref{eq: Step1-fast}, we have 
$\sqrt{n}\int_{-T}^{T}|\hat{\Phi}_{n}(x) - \Phi_{n}(x)|dP_{X}(x) 
= \sqrt{n}\int_{-T}^{T}|\hat{\Phi}_{n}(x) - \Phi_{0}(0)|dP_{n}(x) + o_{\bP}(1)$. Now, as the NPMLE is an increasing function and by Lemma~\ref{lem:jumping point}, as illustrated in 
Figure~\ref{fig:integral}, 
\begin{align}
\int_{-T}^{T} |\hat{\Phi}_{n}(x)-\Phi_{0}(0)| dP_{n}(x) 
&= \hspace{-0.5em}\sup_{s \in [-T,T]}\hspace{-0.2em}\bigg\{\int_{s}^{T}\big(\hat{\Phi}_{n}(x)-\Phi_{0}(0)\big)dP_{n}(x) 
					- \hspace{-0.2em}\int_{-T}^{s}\big(\hat{\Phi}_{n}(x)-\Phi_{0}(0)\big)dP_{n}(x)\bigg\}\nonumber \\
&= \hspace{-0.5em}\sup_{s \in [-T,T]}\hspace{-0.2em}\bigg\{\int_{-T}^{T}\big(\hat{\Phi}_{n}(x)-\Phi_{0}(0)\big)\big(1-2\mathds{1}_{\{x \leq s\}}\big)dP_{n}(x)\bigg\}. \label{eq: Step3-fast}
\end{align} 

\vspace{-6mm}
\begin{figure}[H]
  \centering
  \begin{tikzpicture}[scale=1.1]
  \draw[->, thin] (0,-1.5) -- (0,1.5) node[above] {};
  \draw[->, thin] (-0.1,0.2) -- (5.2,0.2) node[right] {};
  
  \draw (0.3, 0.1) -- (0.3, 0.3);
  \node[below] at (0.3, 0.1) {\tiny{$-T$}};

  \draw (4.9, 0.1) -- (4.9, 0.3);
  \node[below] at (4.9, 0.1) {\tiny{$T$}};
  
  \draw[dotted] (1.7, -0.9) -- (1.7, 1.0);
  \node[below] at (1.7, -0.9) {\tiny{$s$}};
  
  \draw (-0.1,-1.0) node[left] {\tiny{$0$}};
  
  \draw (-0.1,1.0) node[left] {\tiny{$1$}};
  \draw (-0.1,1.0) -- (0.1,1.0);

  \draw (-0.1,0.2) node[left] {\tiny{$\Phi_{0}(0)$}};

  \foreach \x/\y/\len/\openstart/\openend in {
    -0.1/-1.0/0.7/0/0,
    0.6/-0.6/0.9/1/0,
    1.5/-0.3/0.6/1/0,
    2.1/-0.1/0.8/1/0,
    2.9/0.35/0.5/1/0,
    3.4/0.5/0.8/1/0,
    4.2/0.7/0.9/1/1
  }{
    \draw[thin, black] (\x,\y) -- ({\x+\len},\y);
    \ifnum\openstart=1
      \filldraw[fill=black, draw=black] (\x,\y) circle (1.25pt);
    \fi
    \ifnum\openend=0
      \filldraw[fill=white, draw=black] (\x + \len,\y) circle (1.25pt);
    \fi
    }

  \fill[blue!30, opacity=0.5]
    (0.3,0.2) -- (0.3,-1.0) --
    (0.6,-1.0) -- (0.6,-0.6) --
    (1.5,-0.6) -- (1.5,-0.3) --
    (1.7,-0.3) -- (1.7,0.2);
   
  \fill[orange!30, opacity=0.5]
    (1.7,0.2) -- (1.7,-0.3) --
    (2.1,-0.3) -- (2.1,-0.1) --
    (2.9,-0.1) -- (2.9,0.2); 

  \fill[blue!30, opacity=0.5]
    (2.9,0.2) -- (2.9,0.35) --
    (3.4,0.35) -- (3.4,0.5) --
    (4.2,0.5) -- (4.2,0.7) --
    (4.9,0.7) -- (4.9,0.2);

  \node at (4.9,1.0) {\tiny{$\hat{\Phi}_{n}$}};
  \end{tikzpicture}
  \hfill
  \begin{tikzpicture}[scale=1.1]
  \draw[->, thin] (0,-1.5) -- (0,1.5) node[above] {};
  \draw[->, thin] (-0.1,0.2) -- (5.2,0.2) node[right] {};
  
  \draw (0.3, 0.1) -- (0.3, 0.3); 
  \node[below] at (0.3, 0.1) {\tiny{$-T$}}; 

  \draw (4.9, 0.1) -- (4.9, 0.3); 
  \node[below] at (4.9, 0.1) {\tiny{$T$}};
  
  \draw[dotted] (2.9, -0.9) -- (2.9, 1.0); 
  \node[below] at (2.9, -0.9) {\tiny{$s$}}; 
  
  \draw (-0.1,-1.0) node[left] {\tiny{$0$}};
  
  \draw (-0.1,1.0) node[left] {\tiny{$1$}};
  \draw (-0.1,1.0) -- (0.1,1.0); 

  \draw (-0.1,0.2) node[left] {\tiny{$\Phi_{0}(0)$}};

  \foreach \x/\y/\len/\openstart/\openend in {
    -0.1/-1.0/0.7/0/0,
    0.6/-0.6/0.9/1/0,
    1.5/-0.3/0.6/1/0,
    2.1/-0.1/0.8/1/0,
    2.9/0.35/0.5/1/0,
    3.4/0.5/0.8/1/0,
    4.2/0.7/0.9/1/1
  }{
    \draw[thin, black] (\x,\y) -- ({\x+\len},\y);
    \ifnum\openstart=1
      \filldraw[fill=black, draw=black] (\x,\y) circle (1.25pt);
    \fi
    \ifnum\openend=0
      \filldraw[fill=white, draw=black] (\x + \len,\y) circle (1.25pt);
    \fi
    }

  \fill[blue!30, opacity=0.5]
    (0.3,0.2) -- (0.3,-1.0) --
    (0.6,-1.0) -- (0.6,-0.6) --
    (1.5,-0.6) -- (1.5,-0.3) --
    (2.1,-0.3) -- (2.1,-0.1) --
    (2.9,-0.1) -- (2.9,0.2);

  \fill[blue!30, opacity=0.5]
    (2.9,0.2) -- (2.9,0.35) --
    (3.4,0.35) -- (3.4,0.5) --
    (4.2,0.5) -- (4.2,0.7) --
    (4.9,0.7) -- (4.9,0.2);

  \node at (4.9,1.0) {\tiny{$\hat{\Phi}_{n}$}};
  \end{tikzpicture}
  \caption{The colored area represents 
  $\int_{s}^{T} (\hat{\Phi}_{n}(x)-\Phi_{0}(0)) dP_{n}(x) - \int_{-T}^{s} (\hat{\Phi}_{n}(x)-\Phi_{0}(0)) dP_{n}(x)$, 
  where the blue color signals a positive area w.r.t~$P_{n}$ and the orange color signals a negative area w.r.t~$P_{n}$. 
  As we see, the area is maximized in the situation visualized on the right side and is equal to 
  $\int_{-T}^{T} |\hat{\Phi}_{n}(x)-\Phi_{0}(0)| dP_{n}(x)$.}\label{fig:integral}
\end{figure}

\vspace{-2mm}
Let $T_{1}^{n},\dots,T_{j_{n}}^{n}$ denote the jumping points of $\hat{\Phi}_{n}$ (which are random, both in number and location) and set $T_{0}^{n} \defeq X_{(1)}$, $T_{j_{n}+1}^{n} \defeq X_{(n)}$ and 
$T_{j_{n}+2}^{n} \defeq T$. Then, \eqref{eq: Step3-fast} can be rewritten as
\begin{align*}
\sup_{s \in [-T,T]}&\bigg\{\int_{-T}^{T}\big(\hat{\Phi}_{n}(x)-\Phi_{0}(0)\big)\big(1-2\mathds{1}_{\{x \leq s\}}\big)dP_{n}(x)\bigg\} \\
&= \sup_{s \in [-T,T]}\Bigg\{\sum_{j=0}^{j_{n}+1}\big(\hat{\Phi}_{n}(T_{j+1}^{n})-\Phi_{0}(0)\big)
				\big(F_{n}(T_{j+1}^{n})-F_{n}(T_{j}^{n})\big)\big(1-2\mathds{1}_{\{T_{j+1}^{n} \leq s\}}\big)\Bigg\}.
\end{align*}
Exploiting the characterization of the NPMLE as local sample average between two jumping points, which can be deduced from \eqref{eq:Phi} and \eqref{eq:char_phi} (cf.~\cite{Brunk1958Estimation}), i.e.
\[
\hat{\Phi}_{n}|_{(-\infty,T_{0}^{n})} 
= 0, \quad 
\hat{\Phi}_{n}|_{[T_{j_{n}+1}^{n},\infty)} 
= \hat{\Phi}_{n}(X_{(n)}), \quad 
\hat{\Phi}_{n}|_{[T_{j}^{n},T_{j+1}^{n})}
= \frac{\sum_{\ell=1}^{n}Y_{\ell}^{n}\mathds{1}_{\{T_{j}^{n} \leq X_{\ell} < T_{j+1}^{n}\}}}
											{\sum_{\ell=1}^{n}\mathds{1}_{\{T_{j}^{n} < X_{\ell} \leq T_{j+1}^{n}\}}}
\]
for $j=0,\dots,j_{n}$, where we also agree on $\hat{\Phi}_{n}(T_{j_{n}+2}^{n}) = \hat{\Phi}_{n}(X_{(n)})$, we obtain 
\begin{align*}
\sum_{j=0}^{j_{n}+1}\hat{\Phi}_{n}(T_{j+1}^{n})\big(F_{n}(T_{j+1}^{n})-F_{n}(T_{j}^{n})\big)
				\big(1-2\mathds{1}_{\{T_{j+1}^{n} \leq s\}}\big) 
&= \frac{1}{n}\sum_{\ell=1}^{n}Y_{\ell}^{n}\sum_{j=0}^{j_{n}+1}\mathds{1}_{\{T_{j}^{n} \leq X_{\ell} < T_{j+1}^{n}\}}
										\big(1 - 2\mathds{1}_{\{T_{j+1} \leq s\}}\big) \\
&= \frac{1}{n}\sum_{\ell=1}^{n}Y_{\ell}^{n}\big(1-2\mathds{1}_{\{X_{\ell} \leq s\}}\big)
					+ \frac{2}{n}\sum_{\ell=1}^{n}Y_{\ell}^{n}\mathds{1}_{\{X_{\ell} = s\}}.
\end{align*}
Further, we have 
\[
\sum_{j=0}^{j_{n}+1}\Phi_{0}(0)\big(F_{n}(T_{j+1}^{n}) - F_{n}(T_{j}^{n})\big)
						\big(1 - 2\mathds{1}_{\{T_{j+1}^{n} \leq s\}} \big) 
= \Phi_{0}(0)\big(1-2F_{n}(s)\big) + \frac{\Phi_{0}(0)}{n}, 
\]
as well as 
\[
\Bigg|\sup_{s \in [-T,T]}\Bigg\{\frac{2}{n}\sum_{\ell=1}^{n}Y_{\ell}^{n}\mathds{1}_{\{X_{\ell} = s\}} 
							- \frac{\Phi_{0}(0)}{n}\Bigg\}\Bigg| 
= o_{\bP}(n^{-1/2}).
\]
Now for $A_{n} \colon [-T,T] \to \bR$ denoting the continuous, piecewise linear process that satisfies 
\[
A_{n}(X_{i}) 
= \frac{1}{\sqrt{n}}\sum_{\ell=1}^{n}(Y_{\ell}^{n}-\Phi_{0}(0))
						\big(1-2\mathds{1}_{\{X_{\ell} \leq X_{i}\}}\big)
\]
for $i \in \{1,\dots,n\}$ and noting that $A_{n}$ attains its maximum at the observation points, combining the previous 
results shows that $\sqrt{n}\int_{\bR} |\hat{\Phi}_{n}(x) - \Phi_{n}(x)| dP_{X}(x)$ has the same asymptotic distribution as 
\[
\sup_{s \in [-T,T]}\Bigg\{\frac{1}{\sqrt{n}}\sum_{\ell=1}^{n}(Y_{\ell}^{n}-\Phi_{0}(0))
						\big(1-2\mathds{1}_{\{X_{\ell} \leq s\}}\big)\Bigg\}
= \sup_{s \in [-T,T]}\{A_{n}(s)\} 
= \max_{s \in [-T,T]}\{A_{n}(s)\}, 
\]
where we used continuity of $A_{n}$ and the fact that the process inside the $\sup$ on the left-hand side changes its value only 
at the observation points. Lemma \ref{lem: 4.5} yields $A_{n} \longrightarrow_{\cL} A$ in 
the space $\cC([-T,T])$ of continuous functions on $[-T,T]$, equipped with the topology of uniform convergence. 
The assertion then follows from the continuous mapping theorem. \hfill \qed

\begin{acknowledgement}
The results are part of the first author's PhD thesis. 
This work was supported by the Research Unit 5381, DFG-grant RO3766/8-1. 
\end{acknowledgement}
\begin{supplement} 
The supplement contains proofs and auxiliary results, namely the proofs of Lemma~\ref{lem:switch relation}, Propsition~\ref{prop:hellinger consistency}, Corollary~\ref{cor:uniform consistency}, 
Theorem~\ref{thm:lower bound pointwise}, Theorem~\ref{thm:pointwise rate}, Theorem~\ref{thm:lower bound L1}, Proposition~\ref{prop: 4.2}, Theorem~\ref{thm:l1 rate} (i), Lemma~\ref{lem:tail bounds - 1}, Corollary~\ref{cor:tail bounds - 1} and necessary intermediate results of 
the proofs of Theorem~\ref{thm:pointwise rate} and Theorem~\ref{thm:l1 rate}.
\end{supplement}

\section*{Appendix}
\addtocontents{toc}{\protect\setcounter{tocdepth}{2}} 
\tableofcontents
\appendix

\allowdisplaybreaks

\section{Proofs of Section~\ref{sec:NPMLE}} \label{sec:consistency proof}
In this section, the proofs of Lemma~\ref{lem:switch relation}, Proposition~\ref{prop:hellinger consistency}, necessary auxiliary results, as well as the proof of Corollary~\ref{cor:uniform consistency} are given.

\subsection{Proof of Lemma~\ref{lem:switch relation}}\label{proof:switch relation}
Using the notation introduced in Section~\ref{sec:NPMLE}, we have $\hat{\Phi}_{n}(x) = g_{n} \circ F_{n}(x)$, where $g_{n}$ denoted the left-derivative of $G_{n}$, which is the greatest convex minorant of $\Upsilon_{n}$. Thus, for $x \in \cX$ and $a \in [0,1]$, 
\[
\hat{\Phi}_{n}(x) > a 
\quad \iff \quad 
g_{n} \circ F_{n}(x) > a.
\]
By Lemma~\ref{lem:general switch relation} and the definition of $\tilde{U}_{n}$, we obtain 
\[
g_{n}(F_{n}(x)) > a
\quad \iff \quad 
\argminp_{u \in [0,1]}\{\Upsilon_{n}(u)-au\} < F_{n}(x)
\quad \iff \quad 
\tilde{U}_{n}(a) < F_{n}(x).
\]
As $F_{n}^{-1}$ is monotonically increasing and both $F_{n}$ and $\tilde{U}_{n}$ map into the set $\{i/n \mid i=0,\dots,n\}$, this is equivalent to 
\[
F_{n}^{-1} \circ \tilde{U}_{n}(a) < F_{n}^{-1} \circ F_{n}(x) 
\]
and the assertion follows from $F_{n}^{-1} \circ \tilde{U}_{n}(a) = U_{n}(a)$. \hfill \qed

\subsection{Proof of Proposition~\ref{prop:hellinger consistency}} \label{proof:hellinger consistency}
Before we start with the actual proof, 
let us introduce for every $\Psi \in \cF$ the functions 
\begin{align*}
f_{\Psi,\Phi} \colon \bR \times \{0,1\} \to \bar{\bR}, 
&\quad
f_{\Psi,\Phi}(x,y) \defeq \frac{p_{\Psi}(x,y) + p_{\Phi}(x,y)}{2p_{\Phi}(x,y)}, \\
m_{\Psi,\Phi} \colon \bR \times \{0,1\} \to \bar{\bR}, 
&\quad 
m_{\Psi,\Phi}(x,y) 
\defeq \log(f_{\Psi,\Phi}(x,y)), 
\end{align*}
where $\bar{\bR} \defeq \bR \cup \{-\infty,\infty\}$ and we agree on $\frac{c}{0} \defeq \infty$ for $c \geq 0$. Further, we define for every $n \in \bN$ 
\[
M_{n}(\Psi,\Phi) \defeq \frac{1}{n}\sum_{i=1}^{n}m_{\Psi,\Phi}(x_{i},y_{i}) 
\]
and their expectation under $P_{\Phi}^{\otimes n}$,  
\[
M(\Psi,\Phi) \defeq \bE_{\Phi}[m_{\Psi,\Phi}(X,Y)].
\]
Note that for the set $A_{\Phi} \defeq \{p_{\Phi} > 0\}$, we have $P_{\Phi}(A_{\Phi}) = 1$, so $f_{\Psi,\Phi}$ and $m_{\Psi,\Phi}$ are finite outside of a set of measure zero and we can work on the intersection with $A_{\Phi}$ for the remainder of this proof whenever necessary. Note further that $\Phi$ is identifiable by definition and that $M_{n}(\Phi,\Phi) = M(\Phi,\Phi) = 0$ by definition of 
$m_{\Psi,\Phi}$. The following Lemma guarantees $M_{n}(\hat{\Phi}_{n},\Phi) \geq M_{n}(\Phi,\Phi) = 0$ for every 
$n \in \bN$, which 
is a weaker statement than $\hat{\Phi}_{n}$ nearly maximizing $M_{n}$, but still suffices for the consistency proof. 

\begin{lemma} \label{lem:maximizer}
For every $n \in \bN$, we have $M_{n}(\hat{\Phi}_{n},\Phi) \geq 0$ $P_{\Phi}^{\otimes n}$-almost surely.  
\end{lemma}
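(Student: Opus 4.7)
The plan is to exploit the arithmetic-geometric mean inequality pointwise and then invoke the defining property of $\hat{\Phi}_n$ as likelihood maximizer. Concretely, for each $i$ the pointwise AM-GM bound gives
\[
\frac{p_{\hat{\Phi}_n}(X_i,Y_i^n) + p_{\Phi}(X_i,Y_i^n)}{2\,p_{\Phi}(X_i,Y_i^n)} \;\geq\; \sqrt{\frac{p_{\hat{\Phi}_n}(X_i,Y_i^n)}{p_{\Phi}(X_i,Y_i^n)}},
\]
provided $p_{\Phi}(X_i,Y_i^n) > 0$. Taking logarithms, averaging over $i=1,\dots,n$, and using that $\hat{\Phi}_n$ maximizes $\Psi \mapsto \frac{1}{n}\sum_{i=1}^n \log p_{\Psi}(X_i,Y_i^n)$ over $\cF$ (so in particular $\sum_i \log p_{\hat{\Phi}_n}(X_i,Y_i^n) \geq \sum_i \log p_{\Phi}(X_i,Y_i^n)$) would yield
\[
M_n(\hat{\Phi}_n,\Phi) \;\geq\; \frac{1}{2n}\sum_{i=1}^n \log\frac{p_{\hat{\Phi}_n}(X_i,Y_i^n)}{p_{\Phi}(X_i,Y_i^n)} \;\geq\; 0.
\]

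The only delicate point is to make sure $f_{\Psi,\Phi}$ is well-defined at the observations, i.e.\ that $p_{\Phi}(X_i,Y_i^n) > 0$ almost surely. This is a standard matter: the set on which $p_{\Phi}(X,Y^n) = 0$ corresponds to $\{\Phi(X)=0, Y^n=1\} \cup \{\Phi(X)=1, Y^n=0\}$, each of which is a null event under $P_\Phi$, so the above display holds $P_\Phi^{\otimes n}$-a.s. I would briefly remark on this before the main argument.

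Alternatively, and perhaps more instructively, one can avoid AM-GM altogether by observing that convex combinations stay inside $\cF$: the function $\Psi = (\hat{\Phi}_n + \Phi)/2$ is monotone with values in $[0,1]$, and one checks directly from $p_\Psi(x,y) = \Psi(x)^y(1-\Psi(x))^{1-y}$ that $p_{(\hat{\Phi}_n+\Phi)/2} = (p_{\hat{\Phi}_n} + p_{\Phi})/2$ identically (both for $y=0$ and $y=1$). Maximality of $\hat{\Phi}_n$ at the midpoint then gives $\prod_i p_{\hat{\Phi}_n} \geq \prod_i (p_{\hat{\Phi}_n}+p_{\Phi})/2$, which combined with AM-GM $(p_{\hat{\Phi}_n}+p_{\Phi})/2 \geq \sqrt{p_{\hat{\Phi}_n}p_{\Phi}}$ and maximality again yields the claim. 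I would present the short AM-GM argument above, as it is cleaner, and possibly mention the midpoint interpretation as a remark, since it is precisely why this ``halved'' likelihood ratio is the natural quantity to work with in Hellinger consistency proofs à la van de Geer. There is no real obstacle here; the lemma is essentially a one-line consequence of AM-GM combined with the NPMLE's maximizing property.
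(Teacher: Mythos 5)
Your main argument is correct and is essentially the paper's proof: the pointwise bound $\log\big((p_{\hat\Phi_n}+p_\Phi)/(2p_\Phi)\big)\geq \tfrac12\log\big(p_{\hat\Phi_n}/p_\Phi\big)$ (your AM--GM step is exactly concavity of the logarithm, which is what the paper invokes) combined with the maximizing property $\sum_i\log p_{\hat\Phi_n}(X_i,Y_i^n)\geq\sum_i\log p_{\Phi}(X_i,Y_i^n)$. The remark on $p_\Phi(X_i,Y_i^n)>0$ holding $P_\Phi^{\otimes n}$-a.s.\ and the midpoint observation are fine but not needed beyond what the paper does.
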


\begin{proof}
By concavity of the logarithm and the definition of $\hat{\Phi}_{n}$ as the maximizer of the log-likelihood, we have $P_{\Phi}^{\otimes n}$-almost surely
\begin{align*}
M_{n}(\hat{\Phi}_{n},\Phi) 
&= \frac{1}{n}\sum_{i=1}^{n}\log\bigg(\frac{p_{\hat{\Phi}_{n}}(x_{i},y_{i}) 
										+ p_{\Phi}(x_{i},y_{i})}{2p_{\Phi}(x_{i},y_{i})}\bigg) \\
&\geq \frac{1}{n}\sum_{i=1}^{n}\frac{1}{2}\log\bigg(\frac{p_{\hat{\Phi}_n}(x_{i},y_{i})}
																{p_{\Phi}(x_{i},y_{i})}\bigg) \\
&= \frac{1}{2n}\sum_{i=1}^{n}\log(p_{\hat{\Phi}_{n}}(x_{i},y_{i})) - \log(p_{\Phi}(x_{i},y_{i})) \\
&\geq \frac{1}{2n}\sum_{i=1}^{n}\log(p_{\Phi}(x_{i},y_{i})) - \log(p_{\Phi}(x_{i},y_{i})) 
= 0.
\end{align*}
\end{proof}

The following Lemma guarantees that $\Phi$ is a well-separated point of maximum of $M(\cdot,\Phi)$. 

\begin{lemma} \label{lem:well separated maximum}
For every $\Psi, \Phi \in \cF$, we have $M(\Psi,\Phi) \leq -\frac{d^{2}(\Psi,\Phi)}{8}$.
In particular, 
\[
\sup_{\Psi : d(\Psi,\Phi) \geq \varepsilon} M(\Psi,\Phi) \leq -\frac{\varepsilon^{2}}{8} 
\quad \text{for every } \varepsilon > 0.
\]
\end{lemma}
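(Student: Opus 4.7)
The plan is to exploit the concave upper bound $\log t \leq 2(\sqrt{t}-1)$ (valid for $t > 0$, with equality at $t=1$) to upgrade the trivial Jensen bound $M(\Psi,\Phi)\leq 0$ into a quantitative bound involving a Hellinger-type quantity. Applying it to $t = f_{\Psi,\Phi}(X,Y)$ and taking expectation under $P_\Phi$ gives
\[
M(\Psi,\Phi) \;\leq\; 2\,\bE_\Phi\!\left[\sqrt{\tfrac{p_\Psi+p_\Phi}{2p_\Phi}}-1\right]
 \;=\; 2\!\left(\!\int \!\sqrt{p_\Phi\cdot \tfrac{p_\Psi+p_\Phi}{2}}\,d\mu -1\!\right)
 \;=\; -2\,h^2\!\left(P_\Phi,\tfrac{P_\Psi+P_\Phi}{2}\right),
\]
where $\mu = P_X\otimes \nu$ with $\nu$ the counting measure on $\{0,1\}$, and in the last step I use the identity $h^2(P,Q)=1-\int\!\sqrt{pq}\,d\mu$.

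The core of the proof is then the pointwise comparison
\[
\Bigl(\sqrt{p_\Phi}-\sqrt{\tfrac{p_\Psi+p_\Phi}{2}}\Bigr)^{\!2} \;\geq\; \tfrac{1}{16}\bigl(\sqrt{p_\Phi}-\sqrt{p_\Psi}\bigr)^{\!2},
\]
which, once integrated, yields $h^2(P_\Phi,(P_\Psi+P_\Phi)/2)\geq \tfrac{1}{16}\,h^2(P_\Phi,P_\Psi)$ and therefore $M(\Psi,\Phi)\leq -\tfrac{1}{8}d^2(\Psi,\Phi)$, as required. To prove the pointwise inequality, I set $a=\sqrt{p_\Phi}$, $b=\sqrt{p_\Psi}$ and rationalise:
\[
a-\sqrt{\tfrac{a^2+b^2}{2}} \;=\; \frac{a^2 - (a^2+b^2)/2}{a+\sqrt{(a^2+b^2)/2}} \;=\; \frac{(a-b)(a+b)}{2\bigl(a+\sqrt{(a^2+b^2)/2}\bigr)}.
\]
Using $\sqrt{(a^2+b^2)/2}\leq \sqrt{(a+b)^2}=a+b$ in the denominator gives $a+\sqrt{(a^2+b^2)/2}\leq 2(a+b)$, whence the squared expression is bounded below by $(a-b)^2/16$.

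The second, ``well-separated maximum'' statement is immediate: for any $\Psi$ with $d(\Psi,\Phi)\geq \varepsilon$ the inequality just established yields $M(\Psi,\Phi)\leq -\varepsilon^2/8$, and the supremum over all such $\Psi$ inherits the same bound. The only delicate step is the constant $1/16$ in the pointwise Hellinger comparison; the factorisation above gives it cleanly, and no sharpness is needed since any positive constant suffices to run the $M$-estimator consistency argument that this lemma feeds into.
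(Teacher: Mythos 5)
Your proof is correct and follows essentially the same route as the paper: the bound $\log t\leq 2(\sqrt{t}-1)$, rewriting $M(\Psi,\Phi)$ as $-2h^{2}\bigl(P_{\Phi},\tfrac{P_{\Psi}+P_{\Phi}}{2}\bigr)$, and the pointwise comparison with constant $\tfrac{1}{16}$, which is exactly the paper's Lemma~\ref{lem:hellinger inequalities} (your rationalisation argument is the same algebra, just packaged differently). No gaps; the argument and constants match the paper's proof.
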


\begin{proof}
By some basic calculations and Lemma~\ref{lem:log inequalities} (ii), we obtain 
\begin{align*}
M(\Psi,\Phi) 
&= \int_{\bR \times \{0,1\}}m_{\Psi,\Phi}(x,y)dP_{\Phi}(x,y) \\
&= \int_{\bR}\int_{\{0,1\}}m_{\Psi,\Phi}(x,y)p_{\Phi}(x,y)d\zeta(y)dP_{X}(x) \\
&= \int_{\bR}\int_{\{0,1\}} \log\bigg(\frac{p_{\Psi}(x,y) + p_{\Phi}(x,y)}{2p_{\Phi}(x,y)}\bigg)
																		p_{\Phi}(x,y)d\zeta(y)dP_{X}(x) \\
&\leq \int_{\bR}\int_{\{0,1\}} 2\Bigg(\sqrt{\frac{p_{\Psi}(x,y) 
											+ p_{\Phi}(x,y)}{2p_{\Phi}(x,y)}} - 1\Bigg)p_{\Phi}(x,y)d\zeta(y)dP_{X}(x) \\
&= \int_{\bR}2\int_{\{0,1\}} \sqrt{\frac{p_{\Psi}(x,y) 
											+ p_{\Phi}(x,y)}{2p_{\Phi}(x,y)}}p_{\Phi}(x,y)d\zeta(y)dP_{X}(x) - 2 \\
&= -\int_{\bR}\int_{\{0,1\}} \bigg(\sqrt{\frac{p_{\Psi}(x,y) + p_{\Phi}(x,y)}{2}} 
												- \sqrt{p_{\Phi}(x,y)}\bigg)^{2} d\zeta(y)dP_{X}(x) 
\end{align*}
and by Lemma~\ref{lem:hellinger inequalities}, 
\[
\bigg(\sqrt{\frac{p_{\Psi}(x,y) + p_{\Phi}(x,y)}{2}} - \sqrt{p_{\Phi}(x,y)}\bigg)^{2} 
\geq \frac{1}{16}\big(\sqrt{p_{\Psi}(x,y)} - \sqrt{p_{\Phi}(x,y)}\big)^{2}.
\]
Consequently, 
\begin{align*}
M(\Psi,\Phi) 
\leq -\frac{1}{16}\int_{\bR}\int_{\{0,1\}}\big(\sqrt{p_{\Psi}(x,y)} - \sqrt{p_{\Phi}(x,y)}\big)^{2}d\zeta(y)dP_{X}(x) 
= -\frac{1}{8}h^{2}(p_{\Psi},p_{\Phi}) 
= -\frac{1}{8}d^{2}(\Psi,\Phi).
\end{align*}
Now for any $\varepsilon > 0$ and every $\Psi \in \cF$ satisfying $d(\Psi,\Phi) \geq \varepsilon$, we have 
$M(\Psi,\Phi) \leq -\frac{\varepsilon^{2}}{8}$ and the assertion follows.
\end{proof}

Note that the previous result implies $\Phi \in \argmax_{\Psi \in \cF}M(\Psi,\Phi)$. 
Moreover, we obtain $M(\Psi,\Phi) = 0$ if and only if $\Psi = \Phi$ almost surely. \par 
As an intermediate step, before we prove that the difference between $M_{n}$ and $M$ converges uniformly in probability over $\cF$, we derive an upper bound uniformly in $\Phi$ on the bracketing numbers of the set of functions $m_{\Psi,\Phi}$. 

\begin{proposition} \label{prop:bracketing}
Let $\cG_{\Phi} \defeq \{m_{\Psi,\Phi} \ | \ \Psi \in \cF \}$. Then, there exists a constant $C > 0$, such that for all 
$\delta > 0$,
\[
\sup_{\Phi\in\cF}N_{[]}\big(\delta,\cG_{\Phi},L^{1}(P_{\Phi})\big) 
\leq N_{[]}(\delta/2,\cF,L^{1}(P_{X}))\leq C^{1/\delta}.
\]
\end{proposition}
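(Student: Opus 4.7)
The strategy is to transfer brackets for $\cF$ (with respect to $L^{1}(P_X)$) into brackets for $\cG_{\Phi}$ (with respect to $L^{1}(P_\Phi)$) at the cost of a factor of two in the bracket size, and then invoke the classical bracketing entropy bound for monotone functions with values in $[0,1]$.

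First, I would record the monotonicity structure of $m_{\Psi,\Phi}$ in $\Psi$, which is opposite on the two atoms $y=0,1$:
\[
m_{\Psi,\Phi}(x,1)=\log\!\bigg(\frac{\Psi(x)+\Phi(x)}{2\Phi(x)}\bigg),\qquad
m_{\Psi,\Phi}(x,0)=\log\!\bigg(\frac{2-\Psi(x)-\Phi(x)}{2(1-\Phi(x))}\bigg).
\]
Given a $\delta/2$-bracket $[\Psi_L,\Psi_U]$ for some $\Psi\in\cF$ in $L^1(P_X)$ (which, after truncation, I may assume to take values in $[0,1]$), I would construct the bracket $[L,U]$ on $\bR\times\{0,1\}$ by
\[
L(x,y)\defeq m_{\Psi_L,\Phi}(x,y)\,\mathds{1}_{\{y=1\}}+m_{\Psi_U,\Phi}(x,y)\,\mathds{1}_{\{y=0\}},
\]
\[
U(x,y)\defeq m_{\Psi_U,\Phi}(x,y)\,\mathds{1}_{\{y=1\}}+m_{\Psi_L,\Phi}(x,y)\,\mathds{1}_{\{y=0\}},
\]
so that $L\le m_{\Psi,\Phi}\le U$ by the sign of monotonicity on each atom.

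The main step, and the only non-trivial computation, is the size control of $[L,U]$ in $L^{1}(P_\Phi)$. Writing out the integral atomwise gives
\[
\int(U-L)\,dP_\Phi
=\int\log\!\bigg(\tfrac{\Psi_U+\Phi}{\Psi_L+\Phi}\bigg)\Phi\,dP_X
+\int\log\!\bigg(\tfrac{2-\Psi_L-\Phi}{2-\Psi_U-\Phi}\bigg)(1-\Phi)\,dP_X.
\]
Applying $\log(1+u)\le u$ to each logarithm and using the elementary bounds $\Phi/(\Psi_L+\Phi)\le 1$ and $(1-\Phi)/(2-\Psi_U-\Phi)\le 1$ (both valid since $\Psi_L,\Psi_U\in[0,1]$) shows that each summand is at most $\|\Psi_U-\Psi_L\|_{L^1(P_X)}$, hence
\[
\|U-L\|_{L^1(P_\Phi)}\le 2\,\|\Psi_U-\Psi_L\|_{L^1(P_X)}\le \delta.
\]
This is the uniform-in-$\Phi$ transfer step, and it yields the first inequality of the proposition.

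For the second inequality, I would appeal to the classical bound on bracketing numbers of uniformly bounded monotone functions: Theorem 2.7.5 of \cite{VaartWellner2023} (or the earlier monograph) yields, with a universal constant $K>0$,
\[
\log N_{[]}\big(\delta/2,\cF,L^{1}(P_X)\big)\le \frac{K}{\delta},
\]
uniformly over all probability measures $P_X$. Exponentiating and absorbing constants into a single $C$ gives the stated bound $C^{1/\delta}$. The supremum over $\Phi\in\cF$ is harmless since the transfer step works bracket-by-bracket without any dependence on $\Phi$ in the size estimate. The only potential obstacle is ensuring that $\Psi_L$ can be taken non-negative and $\Psi_U\le 1$; this is harmless because truncating the brackets to $[0,1]$ only decreases their $L^1(P_X)$-size.
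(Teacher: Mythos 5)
Your proposal is correct and follows essentially the same route as the paper: the mixed bracket $[L,U]$ you build (using $\Psi_L$ on the atom $y=1$ and $\Psi_U$ on $y=0$, and vice versa) is exactly the paper's construction written via $p_L(x,y)=\Psi_L(x)^y(1-\Psi^U(x))^{1-y}$ and $p^U(x,y)=\Psi^U(x)^y(1-\Psi_L(x))^{1-y}$, and your size bound via $\log(1+u)\le u$ with the weights $\Phi/(\Psi_L+\Phi)\le 1$, $(1-\Phi)/(2-\Psi_U-\Phi)\le 1$ delivers the same factor $2$ as the paper's Lipschitz bound $|\log(1/2+b)-\log(1/2+a)|\le 2|b-a|$. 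The concluding appeal to the van der Vaart--Wellner bracketing bound for $[0,1]$-valued monotone functions is likewise the paper's second step, so no gap remains.
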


\begin{proof}
The second inequality is an immediate consequence of Theorem~2.7.9 in \cite{VaartWellner2023}, 
where the constructed brackets in particular belong to $\cF$. 
For arbitrary $\Psi \in \cF$, let 
$[\Psi_{L},\Psi^{U}]$ denote a corresponding $\delta$-bracket for $\Psi$, where $\Psi_{L}, \Psi^{U} \in \cF$. Let 
\begin{align*}
p_{L} \colon \bR \times \{0,1\} \to \bar{\bR}, &\quad p_{L}(x,y) \defeq \Psi_{L}(x)^{y}(1-\Psi^{U}(x))^{1-y}, \\
p^{U} \colon \bR \times \{0,1\} \to \bar{\bR}, &\quad p^{U}(x,y) \defeq \Psi^{U}(x)^{y}(1-\Psi_{L}(x))^{1-y}
\end{align*}
and define 
\begin{align*}
f_{\Phi,L} \colon \bR \times \{0,1\} \to \bar{\bR}, 	
&\quad f_{\Phi,L}(x,y) \defeq \frac{p_{L}(x,y) + p_{\Phi}(x,y)}{2p_{\Phi}(x,y)}, \\
f_{\Phi}^{U} \colon \bR \times \{0,1\} \to \bar{\bR}, 
&\quad f_{\Phi}^{U}(x,y) \defeq \frac{p^{U}(x,y) + p_{\Phi}(x,y)}{2p_{\Phi}(x,y)}.
\end{align*}
Then, for every $x \in \bR$, 
\begin{align*}
f_{\Phi,L}(x,0) 
= \frac{1}{2} + \frac{1-\Psi^{U}(x)}{2(1-\Phi(x))} 
&\leq \frac{1}{2} + \frac{1-\Psi(x)}{2(1-\Phi(x))} 
= f_{\Psi,\Phi}(x,0), \\
f_{\Phi,L}(x,1) = \frac{1}{2} + \frac{\Psi_{L}(x)}{2\Phi(x)} 
&\leq \frac{1}{2} + \frac{\Psi(x)}{2\Phi(x)} 
= f_{\Psi,\Phi}(x,1), \\
f_{\Phi}^{U}(x,0) 
= \frac{1}{2} + \frac{1-\Psi_{L}(x)}{2(1-\Phi(x))} 
&\geq \frac{1}{2} + \frac{1-\Psi(x)}{2(1-\Phi(x))} 
= f_{\Psi,\Phi}(x,0), \\
f_{n}^{U}(x,1) = \frac{1}{2} + \frac{\Phi^{U}(x)}{2\Phi(x)} 
&\geq \frac{1}{2} + \frac{\Psi(x)}{2\Phi(x)} 
= f_{\Psi,\Phi}(x,1),
\end{align*}
i.e. for every $(x,y) \in \bR \times \{0,1\}$, we have 
\[
f_{\Phi,L}(x,y) \leq f_{\Phi,\Psi}(x,y) \leq f_{\Phi}^{U}(x,y).
\]
Defining 
\begin{align*}
m_{\Phi,L} \colon \bR \times \{0,1\} \to \bar{\bR}, &\quad m_{\Phi,L}(x,y) \defeq \log(f_{\Phi,L}(x,y)), \\
m_{\Phi}^{U} \colon \bR \times \{0,1\} \to \bar{\bR}, &\quad m_{\Phi}^{U}(x,y) \defeq \log(f_{\Phi}^{U}(x,y)),
\end{align*}
we have 
\[
m_{\Phi,L}(x,y) \leq m_{\Psi,\Phi}(x,y) \leq m_{\Phi}^{U}(x,y)
\]
by definition of $m_{\Psi,\Phi}$. Moreover, from Lemma~\ref{lem:log inequalities} (i), we obtain 
\begin{align*}
\|m_{\Phi}^{U} - m_{\Phi,L}\|_{1,P_{\Phi}} 
&= \Big\|\log\Big(\frac{1}{2} + \frac{p^{U}}{2p_{\Phi}}\Big) 
				- \log\Big(\frac{1}{2} + \frac{p_{L}}{2p_{\Phi}}\Big)\Big\|_{1,P_{\Phi}} \\
&\leq 2\Big\|\frac{p^{U}}{2p_{\Phi}} - \frac{p_{L}}{2p_{\Phi}}\Big\|_{1,P_{\Phi}} \\
&= \int_{\bR}\int_{\{0,1\}} \Big|p^{U}(x,y) - p_{L}(x,y)\Big|\mathds{1}_{\{p_{\Phi}(x,y) > 0\}} d\zeta(y)dP_{X}(x) \\
&\leq \int_{\bR} |\Phi^{U}(x) - \Phi_{L}(x)| + |1 - \Phi^{U}(x) - (1-\Phi_{L}(x))| dP_{X}(x) \\
&= 2\|\Phi^{U} - \Phi_{L}\|_{1,P_{X}} 
\leq 2\delta.
\end{align*}
Thus, $[m_{\Phi,L},m_{\Phi}^{U}]$ is a $2\delta$-bracket enclosing $m_{\Psi,\Phi}\in\cG_{\Phi}$, where both, $m_{\Phi,L}$ 
and $m_{\Phi}^{U}$, are contained in $\cG_{\Phi}$ by construction. Consequently, 
\[
N_{[]}\big(\delta,\cG_{\Phi},L^{1}(P_{\Phi})\big) 
\leq N_{[]}(\delta/2,\cF,L^{1}(P_{X})). 
\]
\end{proof}

Uniformly in $\Phi$, the next Lemma states uniform convergence in probability of the difference 
$M_{n}(\cdot,\Phi)-M(\cdot,\Phi)$ over $\cF$, which will later 
allow us to derive convergence of the approximate maximizers of $M_{n}(\cdot,\Phi)$ and $M(\cdot,\Phi)$. The proof makes 
use of 
Proposition~\ref{prop:bracketing} and is based on a typical Glivenko-Cantelli argument (cf.~Lemma~3.1 in \cite{Geer2010}), which we had to modify for our setting to take into account the $\Phi$-dependent function 
classes. 

\begin{lemma} \label{lem:uniform convergence}
For every $\varepsilon>0$, we have 
\[
\sup_{\Phi\in\cF}P_{\Phi}^{\otimes n}\bigg(\sup_{\Psi \in \cF}|M_{n}(\Psi,\Phi) - M(\Psi,\Phi)| > \varepsilon\bigg)
\longrightarrow 0 \quad \text{as } n \longrightarrow \infty.
\]
\end{lemma}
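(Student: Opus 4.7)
The strategy is the standard bracketing argument for Glivenko--Cantelli, with the twist that everything must be controlled uniformly in $\Phi$. Fix $\varepsilon>0$ and choose $\delta\in(0,\varepsilon/3]$. By Proposition~\ref{prop:bracketing}, for every $\Phi$ there exist $N\le C^{2/\delta}$ brackets $[m_{\Phi,L}^{(k)}, m_{\Phi}^{U,(k)}]$, $k=1,\dots,N$, of $L^1(P_\Phi)$-size at most $\delta$ that cover $\cG_\Phi$; crucially $N$ does not depend on $\Phi$ because the brackets are induced by a single $\Phi$-free $L^1(P_X)$-bracketing of $\cF$. For any $\Psi\in\cF$, choosing $k$ such that $m_{\Phi,L}^{(k)}\le m_{\Psi,\Phi}\le m_\Phi^{U,(k)}$ gives
\[
|(\bP_n-P_\Phi)m_{\Psi,\Phi}|\;\le\;\max_{k\le N}|(\bP_n-P_\Phi)m_\Phi^{U,(k)}|\;+\;\max_{k\le N}|(\bP_n-P_\Phi)m_{\Phi,L}^{(k)}|\;+\;\delta,
\]
so it suffices to show that the two maxima are $o_\bP(1)$ uniformly in $\Phi$.

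The obstacle is that the bracket endpoints are bounded below (by $-\log 2$, since $f_{\Psi,\Phi}\ge 1/2$) but not above, so neither Hoeffding nor direct moment bounds apply out of the box. I would resolve this by truncation at a level $M_n\to\infty$ growing slower than $\sqrt n$ (for instance $M_n=\log n$), writing
\[
m_\Phi^{U,(k)}\;=\;(m_\Phi^{U,(k)}\wedge M_n)\;+\;(m_\Phi^{U,(k)}-M_n)_+,
\]
and similarly for the lower endpoints. The truncated parts take values in $[-\log 2,\,M_n]$, so Hoeffding combined with a union bound over the $N$ brackets yields
\[
\sup_{\Phi\in\cF}P_\Phi^{\otimes n}\Big(\max_{k\le N}|(\bP_n-P_\Phi)(m_\Phi^{U,(k)}\wedge M_n)|>\varepsilon/6\Big)\;\le\;2N\,e^{-n\varepsilon^2/(18(M_n+\log 2)^2)}\;\longrightarrow\;0,
\]
with an identical bound for the lower endpoints, because $N$ and $M_n$ are $\Phi$-free.

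For the truncation remainders I would exploit the inequality $\log t\le 2(\sqrt t-1)$ to get $(m_\Phi^{U,(k)})_+^2\le 4 f_\Phi^{U,(k)}$, hence
\[
P_\Phi\big(m_\Phi^{U,(k)}\big)_+^2\;\le\;4\,P_\Phi f_\Phi^{U,(k)}\;=\;2\Big(1+\int p^{U,(k)}d P_X d\zeta\Big)\;\le\;6,
\]
using that $p^{U,(k)}$ integrates to at most $1+\delta\le 2$ by construction of the brackets in Proposition~\ref{prop:bracketing}; the same bound holds for the lower endpoints. Then $P_\Phi(m_\Phi^{U,(k)}-M_n)_+\le P_\Phi(m_\Phi^{U,(k)})_+^2/M_n\le 6/M_n\to 0$ uniformly in $k$ and $\Phi$, and Markov combined with a union bound over the $N$ brackets shows that $\max_k \bP_n(m_\Phi^{U,(k)}-M_n)_+$ is $o_\bP(1)$ uniformly in $\Phi$. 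Combining the truncated and tail contributions, and taking $\delta$ small enough, yields $\sup_\Psi|(\bP_n-P_\Phi)m_{\Psi,\Phi}|\le\varepsilon$ with probability tending to one, uniformly in $\Phi$. The main (and really only) delicate point is this truncation step; the rest is bookkeeping ensuring that every bound is $\Phi$-independent.
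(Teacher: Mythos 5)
Your plan is correct, and its skeleton coincides with the paper's: the uniform-in-$\Phi$ bracketing bound from Proposition~\ref{prop:bracketing} reduces the supremum over $\Psi$ to a maximum over finitely many ($\Phi$-free in number) bracket endpoints plus $\delta$, and the whole burden is then a per-function concentration bound whose constants do not depend on $\Phi$. Where you diverge is in that last step. The paper does not truncate at all: it shows $\Var_{\Phi}(m_{\Psi,\Phi}(X,Y))\leq 21$ uniformly over $\Psi,\Phi$ (splitting on $\{f_{\Psi,\Phi}\geq 1\}$ and $\{f_{\Psi,\Phi}<1\}$ and using exactly the inequalities $\log(a)^{2}\leq 4(\sqrt a-1)^{2}$ for $a\geq1$ and $\log(a)^{2}\leq(1-1/a)^{2}$ for $a\leq1$ together with $f_{\Psi,\Phi}\geq 1/2$, cf.\ Lemma~\ref{lem:log inequalities}), and the argument applies verbatim to the bracket endpoints since it only uses $0\leq p_{L},p^{U}\leq1$; Chebyshev plus a union bound then finishes. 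Your truncation at $M_{n}=\log n$ with Hoeffding on the bounded part and Markov on the remainder is valid — the key computation $P_{\Phi}(m_{\Phi}^{U,(k)})_{+}^{2}\leq 6$ is sound and uniform — but it is a detour: the same inequality you use for the positive part, combined with the trivial bound $\log^{2}2$ on the negative part (since $f\geq1/2$), already gives a uniform second-moment bound for the untruncated endpoints, so Chebyshev applies directly and the Hoeffding/truncation machinery buys nothing. In short, your proof works and is uniform in $\Phi$ for the right reasons ($\Phi$-free bracket count and $\Phi$-free moment/range bounds), but the "delicate truncation step" you flag is unnecessary; the paper's uniform variance bound is the cleaner route.
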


\begin{proof}
First of all, note that for $\cG_{\Phi}$ defined as in Proposition~\ref{prop:bracketing}, we have 
\[
\sup_{\Psi \in \cF}|M_{n}(\Psi,\Phi) - M(\Psi,\Phi)| 
= \sup_{g \in \cG_{\Phi}} \bigg|\frac{1}{n}\sum_{i=1}^{n}g(x_{i},y_{i}) - \bE_{\Phi}[g(X,Y)]\bigg|.
\]
From Lemma~\ref{prop:bracketing}, we know that there exists $C > 0$, independent of $\Phi$, such that
\[
N_{[]}\big(\delta,\cG_{\Phi},L^{1}(P_{\Phi})\big) \leq C^{1/\delta} \qquad \text{for all } \delta > 0 \text{ and all } \Phi \in \cF.
\]
Thus, for every $\delta > 0$, there exists a $\delta$-bracketing set 
$\{[g_{j,L}^{\Phi},g_{j}^{U,\Phi}]\}_{j=1,\dots,N(\delta)}$ for $\cG_{\Phi}$ with respect to $P_{\Phi}$, satisfying 
$N(\delta) \leq C^{1/\delta}$ and $g_{j,L}^{\Phi}, g_{j}^{U,\Phi} \in \cG_{\Phi}$ for $j=1,\dots,N(\delta)$, for every 
$\Phi\in\cF$. More specifically, this means 
\[
\Big\|g_{j}^{U,\Phi} - g_{j,L}^{\Phi}\Big\|_{1,P_{\Phi}} \leq \delta 
\]
for $j=1,\dots,N(\delta)$ and that for every $g \in \cG_{\Phi}$, there exists $j \in \{1,\dots,N(\delta)\}$, such that 
\[
g_{j,L}^{\Phi} \leq g \leq g_{j}^{U,\Phi}.
\]
Thus, for every $g \in \cG_{\Phi}$, 
\begin{align*}
&\frac{1}{n}\sum_{i=1}^{n}g(x_{i},y_{i}) - \bE_{\Phi}[g(X,Y)] \\
&\qquad\qquad \leq \frac{1}{n}\sum_{i=1}^{n}g_{j}^{U,\Phi}(x_{i},y_{i}) - \bE_{\Phi}\Big[g_{j}^{U,\Phi}(X,Y)\Big] 
																+ \bE_{\Phi}\Big[g_{j}^{U,\Phi}(X,Y)\Big] - \bE_{\Phi}[g(X,Y)] \\
&\qquad\qquad \leq \frac{1}{n}\sum_{i=1}^{n}g_{j}^{U,\Phi}(x_{i},y_{i}) - \bE_{\Phi}\Big[g_{j}^{U,\Phi}(X,Y)\Big] 
																+ \Big\|g_{j}^{U,\Phi} - g\Big\|_{1,P_{\Phi}} \\
&\qquad\qquad \leq \frac{1}{n}\sum_{i=1}^{n}g_{j}^{U,\Phi}(x_{i},y_{i}) - \bE_{\Phi}\Big[g_{j}^{U,\Phi}(X,Y)\Big] + \delta.
\end{align*}
Similarly, we obtain 
\[
\frac{1}{n}\sum_{i=1}^{n}g(x_{i},y_{i}) - \bE_{\Phi}[g(X,Y)] 
\geq \frac{1}{n}\sum_{i=1}^{n}g_{j,L}^{\Phi}(x_{i},y_{i}) - \bE_{\Phi}\Big[g_{j,L}^{\Phi}(X,Y)\Big] - \delta, 
\]
implying 
\begin{align*}
&\bigg|\frac{1}{n}\sum_{i=1}^{n}g(x_{i},y_{i}) - \bE_{\Phi}[g(X,Y)]\bigg| 
\leq \max \bigg\{\bigg|\frac{1}{n}\sum_{i=1}^{n}g_{j}^{U,\Phi}(x_{i},y_{i}) - \bE\Big[g_{j}^{U,\Phi}(X,Y)\Big]\bigg|, \\
&\qquad\qquad\qquad\qquad\qquad\qquad\qquad\qquad\qquad 
		\bigg|\frac{1}{n}\sum_{i=1}^{n}g_{j,L}^{\Phi}(x_{i},y_{i}) - \bE_{\Phi}\Big[g_{j,L}^{\Phi}(X,Y)\Big]\bigg|\bigg\} + \delta.
\end{align*}
Defining 
$\cG_{\Phi,\delta}' \defeq \big\{ g_{j,L}^{\Phi} \ | \ j=1,\dots,N(\delta) \big\} 
													\cup \big\{ g_{j}^{U,\Phi} \ | \ j=1,\dots,N(\delta) \big\}$, 
we know from Proposition~\ref{prop:bracketing}, that $\cG_{\Phi,\delta}' \subset \cG_{\Phi}$ and obtain 
\begin{align*}
&\sup_{g \in \cG_{\Phi}} \bigg|\frac{1}{n}\sum_{i=1}^{n}g(x_{i},y_{i}) - \bE_{\Phi}[g(X,Y)]\bigg| \\
&\qquad\qquad \leq \max_{j=1,\dots,N(\delta)}
		\max \bigg\{\bigg|\frac{1}{n}\sum_{i=1}^{n}g_{j}^{U,\Phi}(x_{i},y_{i}) - \bE_{\Phi}\Big[g_{j}^{U,\Phi}(X,Y)\Big]\bigg|, \\
&\qquad\qquad\qquad\qquad\qquad\qquad\qquad\qquad\quad 
			\bigg|\frac{1}{n}\sum_{i=1}^{n}g_{j,L}^{\Phi}(x_{i},y_{i}) - \bE_{\Phi}\Big[g_{j,L}^{\Phi}(X,Y)\Big]\bigg|\bigg\} + \delta \\
&\qquad\qquad = \max_{g \in \cG_{\Phi,\delta}'}\bigg|\frac{1}{n}\sum_{i=1}^{n}g(x_{i},y_{i}) - \bE_{\Phi}[g(X,Y)]\bigg| 
																											+ \delta.
\end{align*}
Now for every $\varepsilon > 0$ and $\delta = \frac{\varepsilon}{2}$, we have $N_{n} \leq C^{1/\delta} = C^{2/\varepsilon}$ 
and we obtain from an application of Chebyshev's inequality, that 
\begin{align*}
&P_{\Phi}^{\otimes n}\bigg(\sup_{g \in \cG_{\Phi,\delta}}\bigg|\frac{1}{n}\sum_{i=1}^{n}g(x_{i},y_{i}) - \bE_{\Phi}[g(X,Y)]\bigg| 
																							\geq \varepsilon\bigg) \\
&\qquad\qquad\qquad \leq P_{\Phi}^{\otimes n}\bigg(\max_{g \in \cG_{\Phi,\delta}'}
					\bigg|\frac{1}{n}\sum_{i=1}^{n}g(x_{i},y_{i}) - \bE_{\Phi}[g(X,Y)]\bigg| + \frac{\varepsilon}{2} 
																							\geq \varepsilon\bigg) \\
&\qquad\qquad\qquad \leq \sum_{g \in \cG_{\Phi,\delta}'}P_{\Phi}^{\otimes n}\bigg(\bigg|\frac{1}{n}\sum_{i=1}^{n}g(x_{i},y_{i}) 
															- \bE_{\Phi}[g(X,Y)]\bigg| \geq \frac{\varepsilon}{2}\bigg) \\
&\qquad\qquad\qquad \leq \sum_{g \in \cG_{\Phi,\delta}'}\frac{4}{\varepsilon^{2}}\frac{\Var_{\Phi}(g(X,Y))}{n} \\
&\qquad\qquad\qquad \leq C^{2/\varepsilon}\frac{4}{\varepsilon^{2}}\frac{1}{n}
																	\sup_{g \in \cG_{\Phi,\delta}}\Var_{\Phi}(g(X,Y)).
\end{align*}
Assuming the variance is uniformly bounded in $\Phi$, the assertion follows immediately. To this aim, note first that for 
arbitrary $\Psi \in \cF$, 
\begin{align*}
\Var_{\Phi}(m_{\Psi,\Phi}(X,Y)) 
&\leq \int_{\bR}\int_{\{0,1\}} \log( f_{\Psi,\Phi}(x,y))^{2}p_{\Phi}(x,y)d\zeta(y)dP_{X}(x) \\
&= \int_{\bR}\int_{\{0,1\}} \log(f_{\Psi,\Phi}(x,y))^{2}
											p_{\Phi}(x,y)\mathds{1}_{\{f_{\Psi,\Phi}(x,y) \geq 1\}}d\zeta(y)dP_{X}(x) \\
	&\qquad + \int_{\bR}\int_{\{0,1\}} \log(f_{\Psi,\Phi}(x,y))^{2}
											p_{\Phi}(x,y)\mathds{1}_{\{f_{\Psi,\Phi}(x,y) < 1\}}d\zeta(y)dP_{X}(x).
\end{align*}
By applying Lemma~\ref{lem:log inequalities} (ii), as well as using the fact that $0 \leq p_{\Psi} \leq 1$ for every 
$\Psi \in \cF$, we obtain 
\begin{align*}
&\int_{\bR}\int_{\{0,1\}} \log( f_{\Psi,\Phi}(x,y))^{2}
											p_{\Phi}(x,y)\mathds{1}_{\{f_{\Psi,\Phi}(x,y) \geq 1\}}d\zeta(y)dP_{X}(x) \\
&\qquad \leq 4\int_{\bR}\int_{\{0,1\}} \Big(\sqrt{f_{\Psi,\Phi}(x,y)}-1\Big)^{2}
											p_{\Phi}(x,y)\mathds{1}_{\{f_{\Psi,\Phi}(x,y) \geq 1\}}d\zeta(y)dP_{X}(x) \\
&\qquad \leq 4\int_{\bR}\int_{\{0,1\}} \Big(f_{\Psi,\Phi}(x,y) - 2\sqrt{f_{\Psi,\Phi}(x,y)} + 1\Big)
											p_{\Phi}(x,y)d\zeta(y)dP_{X}(x) \\
&\qquad \leq 4\int_{\bR}\int_{\{0,1\}} \Big(\frac{p_{\Psi}(x,y) + p_{\Phi}(x,y)}{2} + p_{\Phi}(x,y)\Big)d\zeta(y)dP_{X}(x) \\ 
&\qquad \leq 4\int_{\bR}\int_{\{0,1\}} 2d\zeta(y)dP_{X}(x).
\end{align*}
Similarly, by an application of Lemma~\ref{lem:log inequalities} (iii) and by using $p_{\Psi}(x,y) + p_{\Phi}(x,y) \geq p_{\Phi}(x,y)$, 
\begin{align*}
&\int_{\bR}\int_{\{0,1\}} \log(f_{\Psi,\Phi}(x,y))^{2}
											p_{\Phi}(x,y)\mathds{1}_{\{f_{\Psi,\Phi}(x,y) < 1\}}d\zeta(y)dP_{X}(x) \\
&\qquad \leq \int_{\bR}\int_{\{0,1\}} \bigg(1 - \frac{1}{f_{\Psi,\Phi}(x,y)}\bigg)^{2}
											p_{\Phi}(x,y)\mathds{1}_{\{f_{\Psi,\Phi}(x,y) < 1\}}d\zeta(y)dP_{X}(x) \\
&\qquad \leq \int_{\bR}\int_{\{0,1\}} \bigg(1 - \frac{2}{f_{\Psi,\Phi}(x,y)} + \frac{1}{f_{\Psi,\Phi}(x,y)^{2}}\bigg)
											p_{\Phi}(x,y)d\zeta(y)dP_{X}(x) \\
&\qquad \leq \int_{\bR}\int_{\{0,1\}} \bigg(1 + \frac{1}{f_{\Psi,\Phi}(x,y)^{2}}\bigg)p_{\Phi}(x,y)d\zeta(y)dP_{X}(x) \\
&\qquad \leq \int_{\bR}\int_{\{0,1\}} \bigg(1 + 4\frac{p_{\Phi}(x,y)^{2}}{p_{\Phi}(x,y)^{2}}\bigg)
												p_{\Phi}(x,y)d\zeta(y)dP_{X}(x) \\
&\qquad = 5\int_{\bR}\int_{\{0,1\}} p_{\Phi}(x,y)d\zeta(y)dP_{X}(x).  
\end{align*}
Combining these results, we have shown that $\Var_{\Phi}(m_{\Psi,\Phi}(X,Y)) \leq 21$.
\end{proof}

Based on Lemmas~\ref{lem:maximizer}, \ref{lem:well separated maximum} and \ref{lem:uniform convergence}, we can now prove 
Proposition~\ref{prop:hellinger consistency}, following the proof of Theorem~5.7 in \cite{Vaart1998}. 

\begin{proof}[Proof of Proposition~\ref{prop:hellinger consistency}]
For every $\varepsilon > 0$, Lemma~\ref{lem:well separated maximum} shows that 
$M(\Psi,\Phi) \leq -\frac{\varepsilon^{2}}{8}$ for all $\Psi \in \cF$ with $d(\Psi,\Phi) \geq \varepsilon$. Thus, 
\[
\big\{d(\hat{\Phi}_n,\Phi) \geq \varepsilon\big\} 
\subset \bigg\{M(\hat{\Phi}_{n},\Phi) \leq -\frac{\varepsilon^{2}}{8}\bigg\} 
= \bigg\{-M(\hat{\Phi}_{n},\Phi) \geq \frac{\varepsilon^{2}}{8}\bigg\} \quad P_{\Phi}^{\otimes n}-\text{a.s.}
\]
From Lemma~\ref{lem:maximizer}, we obtain 
\[
-M(\hat{\Phi}_{n},\Phi) 
\leq M_{n}(\hat{\Phi}_{n},\Phi) - M(\hat{\Phi}_{n},\Phi)
\leq \sup_{\Psi \in \cF}|M_{n}(\Psi,\Phi) - M(\Psi,\Phi)| \quad P_{\Phi}^{\otimes n}-\text{a.s.}
\]
Consequently, 
\[
\bigg\{-M(\hat{\Phi}_{n},\Phi) \geq \frac{\varepsilon^{2}}{8}\bigg\} 
\subset \bigg\{\sup_{\Psi \in \cF} |M_{n}(\Psi,\Phi) - M(\Psi,\Phi)| \geq \frac{\varepsilon^{2}}{8}\bigg\} \quad P_{\Phi}^{\otimes n}-\text{a.s.}
\]
and by Lemma~\ref{lem:uniform convergence}, we have for all $\varepsilon > 0$, 
\[
\sup_{\Phi\in\cF}P_{\Phi}^{\otimes n}\big(d(\hat{\Phi}_{n},\Phi) \geq \varepsilon\big) 
\leq \sup_{\Phi\in\cF} P_{\Phi}^{\otimes n}\bigg(\sup_{\Psi \in \cF} |M_{n}(\Psi,\Phi) - M(\Psi,\Phi)| 
																	\geq \frac{\varepsilon^{2}}{8}\bigg) 
\longrightarrow 0 \quad \text{as } n \longrightarrow \infty.
\]
\end{proof}

\subsection{Proof of Corollary~\ref{cor:uniform consistency}} \label{proof:uniform consistency}
The idea of the proof is to show for every subsequence of $D_{n} \defeq \hat{\Phi}_{n} - \Phi_{n}$ that 
there exists a subsubsequence converging uniformly to $0$ in probability under~$\bP$. 
To make this precise, we start with an arbitrary subsequence of $(D_{n})$, which we will denote by $(D_{n})$ again for ease 
of notation. Then, by \eqref{eq: TV} and the characterization of convergence in probability in terms of almost surely convergent 
subsequences, there exists a subsubsequence $(n_{j})_{j \in \bN}$ such that 
\[
\int_{\bR}|\hat{\Phi}_{n_{j}}(x) - \Phi_{n_{j}}(x)|dP_{X}(x) \longrightarrow 0 \quad \bP\text{-a.s.} 
\quad \text{as } j \longrightarrow \infty. 
\]
Define 
\[
S_{\bP} \defeq \bigg\{\omega \in \Omega \ \bigg| \int_{\bR}|\hat{\Phi}_{n_{j}}(\omega,x) - \Phi_{n_{j}}(x)|dP_{X}(x) 
\longrightarrow 0 \text{ as } j \longrightarrow \infty \bigg\}
\]
and consider for fixed $\omega \in S_{\bP}$ an arbitrary subsequence of $D_{n_{j}}(\omega,\cdot)$, which we denote by 
$D_{n_{j}}(\omega,\cdot)$ again. Then, by an application of Markov's inequality with respect to $P_{X}$ on $\bR$, we obtain 
for every $\varepsilon > 0$, 
\begin{align*}
P_{X}(|D_{n_{j}}(\omega,\cdot)| > \varepsilon) 
&\leq \frac{1}{\varepsilon}\bE_{X}\big[|\hat{\Phi}_{n_{j}}(\omega,\cdot) - \Phi_{n_{j}}(\cdot)|\big] \\
&= \frac{1}{\varepsilon}\int_{\bR}|\hat{\Phi}_{n_{j}}(\omega,x) - \Phi_{n_{j}}(x)|dP_{X}(x) 
\longrightarrow 0 \quad \text{as } j \longrightarrow \infty,
\end{align*}
by definition of $S_{\bP}$. In different notation, this means 
\[
|D_{n_{j}}(\omega,\cdot)| = |\hat{\Phi}_{n_{j}}(\omega,\cdot) - \Phi_{n_{j}}(\cdot)| 
								\longrightarrow_{P_{X}} 0 \quad \text{as } j \longrightarrow \infty.
\]
But then, again, there exists another increasing sequence $(j_{k}^{\omega})_{k \in \bN}$, 
depending on $\omega$, satisfying $j_{k}^{\omega} \longrightarrow \infty$ for $k \longrightarrow \infty$, such that 
\[
|D_{n_{j_{k}^{\omega}}}(\omega,\cdot)| = |\hat{\Phi}_{n_{j_{k}^{\omega}}}(\omega,\cdot) - \Phi_{n_{j_{k}^{\omega}}}(\cdot)| 
									\longrightarrow 0 \quad P_{X}\text{-a.s.} \quad \text{as } k \longrightarrow \infty.
\]
Now, similar as before, we define 
\[
S_{P_{X}}(\omega) \defeq \big\{x \in \cX^{o} \mid |\hat{\Phi}_{n_{j_{k}^{\omega}}}(\omega,\cdot) 
																- \Phi_{n_{j_{k}^{\omega}}}(\cdot)| 
													\longrightarrow 0 \text{ as } k \longrightarrow \infty \big\}, 
\]
where $\cX^{o}$ denotes the interior of $\cX$. Then, for arbitrary but fixed $x_{0} \in \cX^{o} \setminus S_{P_{X}}(\omega)$ 
and for all $\varepsilon > 0$, the fact that $P_{X}$ has a 
Lebesgue density being positive on $\cX^{o}$ implies the existence of $x_{1}, x_{2} \in S_{P_{X}}(\omega)$ with 
$x_{1} < x_{0} < x_{2}$. Moreover, from Lemma~\ref{lem:pointwise convergence}, we know that there exists $K \in \bN$, such 
that 
\[
|\Phi_{n_{j_{k}^{\omega}}}(x_{2})-\Phi_{n_{j_{k}^{\omega}}}(x_{1})| < \varepsilon/5
\]
for every $k > K$. By choosing $K \in \bN$ sufficiently large, we also have 
\[
|\hat{\Phi}_{n_{j_{k}^{\omega}}}(\omega,x_{1}) - \Phi_{n_{j_{k}^{\omega}}}(x_{1})| < \varepsilon/5
\quad \text{ and } \quad 
|\hat{\Phi}_{n_{j_{k}^{\omega}}}(\omega,x_{2}) - \Phi_{n_{j_{k}^{\omega}}}(x_{2})| < \varepsilon/5
\]
for all $k > K$, and obtain that $|D_{n_{j_{k}^{\omega}}}(\omega,x_{0})|$ is bounded by 
\begin{align*}
&|\hat{\Phi}_{n_{j_{k}^{\omega}}}(\omega,x_{0}) - \Phi_{n_{j_{k}^{\omega}}}(x_{0})| \\
&\leq |\hat{\Phi}_{n_{j_{k}^{\omega}}}(\omega,x_{0}) - \hat{\Phi}_{n_{j_{k}^{\omega}}}(\omega,x_{1})| 
		+ |\hat{\Phi}_{n_{j_{k}^{\omega}}}(\omega,x_{1}) - \Phi_{n_{j_{k}^{\omega}}}(x_{1})| 
								+ |\Phi_{n_{j_{k}^{\omega}}}(x_{1}) - \Phi_{n_{j_{k}^{\omega}}}(x_{0})| \\
&\leq |\hat{\Phi}_{n_{j_{k}^{\omega}}}(\omega,x_{2}) - \hat{\Phi}_{n_{j_{k}^{\omega}}}(\omega,x_{1})| 
		+ |\hat{\Phi}_{n_{j_{k}^{\omega}}}(\omega,x_{1}) - \Phi_{n_{j_{k}^{\omega}}}(x_{1})| 
								+ |\Phi_{n_{j_{k}^{\omega}}}(x_{1}) - \Phi_{n_{j_{k}^{\omega}}}(x_{2})| \\
&< |\hat{\Phi}_{n_{j_{k}^{\omega}}}(\omega,x_{2}) - \Phi_{n_{j_{k}^{\omega}}}(x_{2})| 
		+ |\Phi_{n_{j_{k}^{\omega}}}(x_{2}) - \Phi_{n_{j_{k}^{\omega}}}(x_{1})| 
								+ |\Phi_{n_{j_{k}^{\omega}}}(x_{1}) - \hat{\Phi}_{n_{j_{k}^{\omega}}}(\omega,x_{1})| 
								+ \frac{2\varepsilon}{5}, 
\end{align*}
which is bounded by $\varepsilon$ and 
where we used the fact that both $\hat{\Phi}_{n}$ and $\Phi_{n}$ are increasing in $x$. Thus, we have shown 
\[
|D_{n_{j_{k}^{\omega}}}(\omega,x)| = |\hat{\Phi}_{n_{j_{k}^{\omega}}}(\omega,x) - \Phi_{n_{j_{k}^{\omega}}}(x)| 
														\longrightarrow 0 \quad \text{as } k \longrightarrow \infty
\]
not only for $x \in S_{P_{X}}(\omega)$, but for all $x \in \cX^{o}$. 
Utilizing that pointwise convergent $[0,1]$-valued isotonic functions with continuous limit also converge uniformly on 
compacts, we obtain for any compact interval $I \subset \cX^{o}$, 
\[
\sup_{x \in I}|\hat{\Phi}_{n_{j_{k}^{\omega}}}(\omega,x) - \Phi_{n_{j_{k}^{\omega}}}(x)| 
										\longrightarrow 0 \quad \text{as } k \longrightarrow \infty.
\]
But this means, that for any arbitrary subsequence of $D_{n_{j}}(\omega,\cdot)$, we found a subsubsequence converging to 
zero uniformly on $I$, implying by the subsequence argument, 
\[
\sup_{x \in I}|\hat{\Phi}_{n_{j}}(\omega,x) - \Phi_{n_{j}}(x)| 
										\longrightarrow 0 \quad \text{as } j \longrightarrow \infty.
\]
But because $\omega \in S_{\bP}$ was arbitrary, we have actually shown by definition of $S_{\bP}$, that 
\[
\sup_{x \in I}|\hat{\Phi}_{n_{j}}(\cdot,x) - \Phi_{n_{j}}(x)| 
									\longrightarrow 0 \quad \bP\text{-a.s.} \quad \text{as } j \longrightarrow \infty, 
\]
implying
\[
\sup_{x \in I}|\hat{\Phi}_{n_{j}}(\cdot,x) - \Phi_{n_{j}}(x)| 
									\longrightarrow_{\bP} 0 \quad \text{as } j \longrightarrow \infty.
\]
Applying the subsequence argument again, we conclude
\[
\sup_{x \in I}|\hat{\Phi}_{n}(\cdot,x) - \Phi_{n}(x)| 
									\longrightarrow_{\bP} 0 \quad \text{as } n \longrightarrow \infty.
\] \hfill \qed

\section{Remaining proofs of Section~\ref{sec:pointwise limit}} 
In this section, we prove Theorem~\ref{thm:lower bound pointwise}, Theorem~\ref{thm:pointwise rate} and the auxiliary results used in the proof of Theorem~\ref{thm:pointwise rate}. 

\subsection{Proof of Theorem~\ref{thm:lower bound pointwise}} \label{proof:pointwise lower bound}
Assume there exist $\Phi_{0,n}, \Phi_{1,n} \in \cF_{\delta}$ with 
\begin{equation}\label{eq: US1}
|\Phi_{0,n}(x_{0}) - \Phi_{1,n}(x_{0})| \geq 2C\max\Big\{n^{-1/2},\Big(\frac{n}{\delta}\Big)^{-1/3}\Big\} 
\end{equation}
for some $C > 0$. Provided that
\begin{align} \label{eq:hellinger}
h^{2}\big(P_{0,n}^{\otimes n},P_{1,n}^{\otimes n}\big) \leq \alpha < 2
\end{align}
with $P_{0,n}^{\otimes n} \defeq P_{\Phi_{0,n}}^{\otimes n}$ and 
$P_{1,n}^{\otimes n} \defeq P_{\Phi_{1,n}}^{\otimes n}$,
the general reduction scheme of Chapter~2.2 in \cite{Tsybakov2008} and Theorem~2.2 (ii) in \cite{Tsybakov2008} then reveal
\[
\inf_{T_n^{\delta}(x_0)}\sup_{\Phi\in\cF_{\delta}}
						P_{\Phi}^{\otimes n}\Big(\Big(\sqrt{n}\wedge \Big(\frac{n}{\delta}\Big)^{1/3}\Big)\big| T_n^{\delta}(x_0)-\Phi(x_0)\big| \geq C\Big)
\geq \frac{1}{2}\big(1-\sqrt{\alpha(1-\alpha)/4}\big)
> 0.
\]
In what follows, we construct $\Phi_{0,n}$ and $\Phi_{1,n}$ with properties \eqref{eq: US1} and \eqref{eq:hellinger} for 
$\delta \geq n^{-1/2}$ and $\delta < n^{-1/2}$ separately, noting that 
$\max\{n^{-1/2},(\frac{n}{\delta})^{-1/3}\} = (\frac{n}{\delta})^{-1/3}$ if and 
only if $\delta \geq n^{-1/2}$. In both cases, the constructed hypotheses will satisfy $\Phi_{0,n} \geq \Phi_{1,n}$ (hence $1-\Phi_{0,n} \leq 1-\Phi_{1,n}$). Thus, 
\begin{align*}
&h^{2}\big(P_{0,n}^{\otimes n},P_{1,n}^{\otimes n}\big) 
\leq nh^{2}\big(P_{0,n},P_{1,n}\big) \\
&\quad= \frac{n}{2}\int_{-T}^{T}\Big(\sqrt{\Phi_{0,n}(x)} - \sqrt{\Phi_{1,n}(x)}\Big)^{2} 
								+ \Big(\sqrt{1-\Phi_{0,n}(x)} - \sqrt{1-\Phi_{1,n}(x)}\Big)^{2}dP_{X}(x) \\
&\quad= \frac{n}{2}\int_{-T}^{T}
		\bigg(\frac{\Phi_{0,n}(x)-\Phi_{1,n}(x)}{\sqrt{\Phi_{0,n}(x)} + \sqrt{\Phi_{1,n}(x)}}\bigg)^{2} 
			+ \bigg(\frac{\Phi_{0,n}(x)-\Phi_{1,n}(x)}{\sqrt{1-\Phi_{0,n}(x)} + \sqrt{1-\Phi_{1,n}(x)}}\bigg)^{2}dP_{X}(x) \\
&\quad\leq \frac{n}{8}\int_{-T}^{T}(\Phi_{0,n}(x)-\Phi_{1,n}(x))^{2}
		\bigg(\frac{1}{\Phi_{1,n}(x)} + \frac{1}{1-\Phi_{0,n}(x)}\bigg)dP_{X}(x).
\end{align*}
$\bullet$ We start with the case $\delta < n^{-1/2}$. Let $0 < C < 1/\sqrt{2}$, $\eta_{n,\delta} \defeq 1/2 - \delta T - Cn^{-1/2}$ and define 
\begin{align*}
\Phi_{0,n} \colon \bR \to [0,1], 
&\quad \Phi_{0,n}|_{[-T,T]}(x) \defeq \delta(x+T) + \eta_{n,\delta} + 2Cn^{-1/2}, \\
\Phi_{1,n} \colon \bR \to [0,1], 
&\quad \Phi_{1,n}|_{[-T,T]}(x) \defeq \delta(x+T) + \eta_{n,\delta},  
\end{align*}
where both functions are defined outside $[-T,T]$ by their values at the respective boundaries. Obviously, 
$\Phi_{0,n}, \Phi_{1,n} \in \cF_{\delta}$ and
\[
|\Phi_{0,n}(x_{0})-\Phi_{1,n}(x_{0})| 
= 2Cn^{-1/2}.
\]
Next, for $n \geq 16(C+T)^{2}$ and all $x \in [-T,T]$, 
\begin{align*}
\Phi_{1,n}(x) &\geq \Phi_{1,n}(-T) = \eta_{n,\delta} \geq 1/2 - n^{-1/2}(C+T) \geq 1/4, \\
1-\Phi_{0,n}(x) &\geq 1-\Phi_{0,n}(T) = 1 - 2T\delta - \eta_{n,\delta} - 2Cn^{-1/2} = \eta_{n,\delta} \geq 1/4.
\end{align*}
Consequently, for $\alpha = 4C^{2}$, 
\[
h^{2}\big(P_{0,n}^{\otimes n},P_{1,n}^{\otimes n}\big) 
\leq \frac{n}{8}\int_{-T}^{T}8(\Phi_{0,n}(x)-\Phi_{1,n}(x))^{2}dP_{X}(x) 
= 4C^{2} 
= \alpha 
< 2. 
\]
Thus, \eqref{eq: US1} and \eqref{eq:hellinger} are satisfied for all $\delta \in [0,n^{-1/2})$ and $n \geq 16(C+T)^{2}$, whence 
\[
\inf_{T_n^{\delta}(x_0)}\sup_{\Phi\in\cF_{\delta}}
						P_{\Phi}^{\otimes n}\Big(\Big(\sqrt{n}\wedge \Big(\frac{n}{\delta}\Big)^{1/3}\Big)\big| T_n^{\delta}(x_0)-\Phi(x_0)\big| \geq C\Big)
> \frac{1}{2}\big(1-\sqrt{\alpha(1-\alpha)/4}\big)
> 0 .
\]

$\bullet$ For $\delta \geq n^{-1/2}$, assume $0 < C < \min\{(4T)^{1/3}/8,(32\|p_{X}\|_{\infty})^{-1/3}\}$, define $\eta_{\delta} \defeq 1/2 -\frac{\delta}{2}(x_{0}+T)$ and 
set 
\begin{align*}
\Phi_{0,n}(x) 
\defeq \eta_{\delta} &+ \frac{\delta}{2}(x+T)\mathds{1}_{\{x \in [-T,x_{0}-4C(n\delta^{2})^{-1/3})\}} \\
		&+ \delta\Big(x + \frac{T-x_{0}+4C(n\delta^{2})^{-1/3}}{2}\Big)
														\mathds{1}_{\{x \in [x_{0}-4C(n\delta^{2})^{-1/3},x_{0})\}} \\
		&+ \frac{\delta}{2}\big(x + T+4C(n\delta^{2})^{-1/3}\big)
														\mathds{1}_{\{x \in [x_{0},T]\}}, 
\end{align*}
as well as 
\begin{align*}
\Phi_{1,n}(x) 
\defeq \eta_{\delta} &+ \frac{\delta}{2}(x+T)\mathds{1}_{\{x \in [-T,x_{0})\}} \\
		&+ \delta\Big(x+\frac{T-x_{0}}{2}\Big)\mathds{1}_{\{x \in [x_{0},x_{0}+4C(n\delta^{2})^{-1/3})\}} \\
		&+ \frac{\delta}{2}\big(x + T+4C(n\delta^{2})^{-1/3}\big)\mathds{1}_{\{x \in [x_{0}+4C(n\delta^{2})^{-1/3},T]\}}, 
\end{align*}
where both functions are defined outside $[-T,T]$ by their values at the respective boundaries. 
\begin{figure}
\centering
\begin{tikzpicture}[scale=1.1]
\draw[->, thin] (0,-0.3) -- (0,3.5) node[above] {};
\draw[->, thin] (-0.1,-0.2) -- (5.0,-0.2) node[right] {};
 
\draw (0.2,-0.3) -- (0.2,-0.1);
\node[below] at (0.2,-0.3) {\tiny{$-T$}}; 

\draw (2.2,-0.25) -- (2.2,-0.15); 
\draw (3.2,-0.25) -- (3.2,-0.15); 

\draw (2.7,-0.25) -- (2.7,-0.15); 
\node[below] at (2.7,-0.3) {\tiny{$x_{0}$}}; 

\draw (4.7,-0.3) -- (4.7,-0.1); 
\node[below] at (4.7,-0.3) {\tiny{$T$}}; 
 
\draw[thick, dotted] (2.7,1.24) -- (2.7,2.01);

\draw (-0.1,0.74) node[left] {\tiny{$1/4$}};
\draw (-0.1,0.74) -- (0.1,0.74); 
\draw (-0.1,1.24) -- (0.1,1.24); 
\draw (-0.1,2.0) -- (0.1,2.0); 
\draw (-0.1,2.5) -- (0.1,2.5); 
\draw (-0.1,2.5) node[left] {\tiny{$3/4$}};

\foreach \x/\y/\xnew/\ynew in {
  0.2/0.01/2.2/1.01,
  2.2/1.01/2.7/2.01,
  2.7/2.01/4.7/3.01
}{
\draw[thick, orange] (\x,\y) -- (\xnew,\ynew);
}
  
\foreach \x/\y/\xnew/\ynew in {
  0.2/-0.01/2.7/1.24,
  2.7/1.24/3.2/2.24,
  3.2/2.24/4.7/2.99
}{
\draw[thick, blue] (\x,\y) -- (\xnew,\ynew);
}

\node at (2.0,1.7) {\tiny{$\Phi_{0,n}$}};
\node at (3.5,1.5) {\tiny{$\Phi_{1,n}$}};
\end{tikzpicture}
\caption{Visualization of $\Phi_{0,n}$ and $\Phi_{1,n}$ in case $\delta \geq n^{-1/2}$. Note that $|\Phi_{0,n}(x_0)-\Phi_{1,n}(x_0)| = 2C(n/\delta)^{-1/3}$.}\label{fig:hypotheses 2}
\end{figure}
Obviously, $\Phi_{0,n}, \Phi_{1,n} \in \cF_{\delta}$. A visualization of the hypotheses is given in Figure~\ref{fig:hypotheses 2}. Note that 
\[
|\Phi_{0,n}(x_{0})-\Phi_{1,n}(x_{0})| 
= 2C\Big(\frac{n}{\delta}\Big)^{-1/3}.
\]
Note further that for $n \geq 16^{3}C^{3}$ and all $x \in [x_{0}-4C(n\delta^{2})^{-1/3},x_{0}+4C(n\delta^{2})^{-1/3}]$, 
\begin{align*}
\Phi_{1,n}(x) 
&\geq \Phi_{1,n}(x_{0}-4C(n\delta^{2})^{-1/3}) 
= \frac{1}{2} - 4C\Big(\frac{n}{\delta}\Big)^{-1/3}
\geq \frac{1}{4}, \\
1-\Phi_{0,n}(x) 
&\geq 1-\Phi_{0,n}(x_{0}+4C(n\delta^{2})^{-1/3}) 
= \frac{1}{2} - 4C\Big(\frac{n}{\delta}\Big)^{-1/3} 
\geq \frac{1}{4}.
\end{align*}
Thus, for $\alpha = 8^{2}C^{3}\|p_{X}\|_{\infty}$, 
\begin{align*}
h^{2}\big(P_{0,n}^{\otimes n},P_{1,n}^{\otimes n}\big) 
&\leq n\int_{x_{0}-4C(n\delta^{2})^{-1/3}}^{x_{0}+4C(n\delta^{2})^{-1/3}}
											(\Phi_{0,n}(x)-\Phi_{1,n}(x))^{2}dP_{X}(x) \\
&= n\int_{x_{0}-4C(n\delta^{2})^{-1/3}}^{x_{0}}
							\Big(\frac{\delta}{2}(x-x_{0})+2C\Big(\frac{n}{\delta}\Big)^{-1/3}\Big)^{2}dP_{X}(x) \\
	&\qquad\qquad+ n\int_{x_{0}}^{x_{0}+4C(n\delta^{2})^{-1/3}}
							\Big(\frac{\delta}{2}(x_{0}-x)+2C\Big(\frac{n}{\delta}\Big)^{-1/3}\Big)^{2}dP_{X}(x) \\
&\leq n\int_{x_{0}-4C(n\delta^{2})^{-1/3}}^{x_{0}}
							\frac{\delta^{2}}{4}(x-x_{0})^{2}+4C^{2}\Big(\frac{n}{\delta}\Big)^{-2/3}dP_{X}(x) \\
	&\qquad\qquad+ n\int_{x_{0}}^{x_{0}+4C(n\delta^{2})^{-1/3}}
							\frac{\delta^{2}}{4}(x-x_{0})^{2}+4C^{2}\Big(\frac{n}{\delta}\Big)^{-2/3}dP_{X}(x) \\
&\leq n\int_{x_{0}-4C(n\delta^{2})^{-1/3}}^{x_{0}}8C^{2}\Big(\frac{n}{\delta}\Big)^{-2/3}dP_{X}(x) \\
	&\qquad\qquad+ n\int_{x_{0}}^{x_{0}+4C(n\delta^{2})^{-1/3}}8C^{2}\Big(\frac{n}{\delta}\Big)^{-2/3}dP_{X}(x) \\
&\leq 8^{2}C^{3}n(n\delta^{2})^{-1/3}\Big(\frac{n}{\delta}\Big)^{-2/3}\|p_{X}\|_{\infty} 
= 8^{2}C^{3}\|p_{X}\|_{\infty} 
= \alpha < 2
\end{align*}
and we have 
\[
\inf_{T_n^{\delta}(x_0)}\sup_{\Phi\in\cF_{\delta}}
						P_{\Phi}^{\otimes n}\Big(\Big(\sqrt{n}\wedge \Big(\frac{n}{\delta}\Big)^{1/3}\Big)\big| T_n^{\delta}(x_0)-\Phi(x_0)\big| \geq C\Big)
> \frac{1}{2}\big(1-\sqrt{\alpha(1-\alpha)/4}\big)
> 0 
\]
for all $\delta \in [n^{-1/2},1]$ and all $n \geq 12^{3}C^{3}$. \par 
In Summary, 
\[
\inf_{T_n^{\delta}(x_0)}\sup_{\Phi\in\cF_{\delta}}
						P_{\Phi}^{\otimes n}\Big(\Big(\sqrt{n}\wedge \Big(\frac{n}{\delta}\Big)^{1/3}\Big)\big| T_n^{\delta}(x_0)-\Phi(x_0)\big| \geq C\Big)
\geq \frac{1}{2}\big(1-\sqrt{\alpha(1-\alpha)/4}\big)
> 0 
\]
for $\alpha = \max\{4C^{2},8^{2}C^{3}\|p_{X}\|_{\infty}\}$, all $\delta \in [0,1]$ and 
$n > \max\{12^{3}C^{3}, 16(C+T)^{2}\}$ and so the assertion follows. \hfill \qed

\subsection{Proof of Theorem~\ref{thm:pointwise rate}}\label{proof:pointwise rate}
Note first that for every $v \in \bR$ and any sequence $(r_{n})_{n \in \bN}$ of real numbers, the switch 
relation (Lemma~\ref{lem:switch relation}) reveals 
\begin{equation}\label{eq:switch}
\begin{aligned}
\bP\big(r_{n}\big(\hat{\Phi}_{n}&(x_{0}) - \Phi_{n}(x_{0})\big) \leq v\big) 
= \bP\big(\hat{\Phi}_{n}(x_{0}) \leq \Phi_{n}(x_{0}) + r_{n}^{-1}v\big) \\
&= \bP\bigg(\argminp_{s \in [-T,T]}\bigg\{\frac{1}{n}\sum_{i=1}^{n}\big(Y_{i}^{n}-\Phi_{n}(x_{0})\big)
																			\mathds{1}_{\{X_{i} \leq s\}} 
				- r_{n}^{-1}\frac{v}{n}\sum_{i=1}^{n}\mathds{1}_{\{X_{i} \leq s\}}\bigg\} \geq F_{n}^{-1}(F_{n}(x_{0}))\bigg). 
\end{aligned} 
\end{equation} \par
$\bullet$ For the proof of (ii) and (iii), let $r_{n} = \sqrt{n}$ and define 
\[
h_{n} \colon [-T,T] \times \{0,1\} \times [-T,T] \to \bR, 
\quad h_{n}(x,y,t) \defeq (y-\Phi_{n}(x_{0}))\mathds{1}_{\{x \leq t\}}, 
\]
as well as $H_{n}(t) \defeq \bE[h_{n}(X,Y^{n},t)]$. Note that multiplying a function inside the $\argminp$ by $\sqrt{n}$ 
does not change the location of its minimum. Hence, by \eqref{eq:switch} and by utilizing that $F_{X}$ is a strictly 
isotonic bijection between $[-T,T]$ and $[0,1]$, we obtain 
\begin{align*}
&\bP\big(\sqrt{n}\big(\hat{\Phi}_{n}(x_{0}) - \Phi_{n}(x_{0})\big) \leq v\big) \\
&\quad= \bP\bigg(\argminp_{s \in [-T,T]}\bigg\{\frac{1}{\sqrt{n}}\sum_{i=1}^{n}\big(h_{n}(X_{i},Y_{i}^{n},s) - H_{n}(s)\big) 
					+ \sqrt{n}H_{n}(s) - \frac{v}{n}\sum_{i=1}^{n}\mathds{1}_{\{X_{i} \leq s\}}\bigg\} \geq F_{n}^{-1}(F_{n}(x_{0})) \bigg) \\
&\quad= \bP\bigg(\argminp_{s \in [0,1]}\bigg\{\frac{1}{\sqrt{n}}
							\sum_{i=1}^{n}\big(h_{n}(X_{i},Y_{i}^{n},F_{X}^{-1}(s)) - H_{n}(F_{X}^{-1}(s))\big) \\
&\quad\qquad\qquad\qquad\qquad\qquad\qquad	
					+ \sqrt{n}H_{n}(F_{X}^{-1}(s)) 
					- v\frac{1}{n}\sum_{i=1}^{n}\mathds{1}_{\{X_{i} \leq F_{X}^{-1}(s)\}}\bigg\} \geq F_{X}(F_{n}^{-1}(F_{n}(x_{0})))\bigg). 
\end{align*}
By Lemma~\ref{lem:pointwise inside argmin - fast}, the sequence inside the $\argmin$ 
converges weakly in $\ell^{\infty}([0,1])$ to 
\begin{equation}\label{eq:limit}
\big(\sigma_{\Phi_{0}}W(s) + \sqrt{c}\Phi_{0}^{(\beta)}(0)\bE\big[(X-x_{0})^{\beta}
												\mathds{1}_{\{X \leq F_{X}^{-1}(s)\}}\big] - vs\big)_{s \in [0,1]} 
\end{equation}
as long as $n\delta_{n}^{2\beta} \longrightarrow c \in [0,\infty)$ and so Proposition~\ref{prop:pointwise argmin - fast} 
yields convergence in distribution of the respective $\argmin$'s. By \cite{Stryhn1996triangular} for $c=0$ and more general, 
by an application of an obvious adjustment of Lemma~A.2 in \cite{Cattaneo2024bootstrap} to processes defined on a compact 
interval (see \cite{Cattaneo2025continuity} as well), we obtain under the condition in (ii) for any $c \geq 0$ 
that the $\argmin$ of the process in \eqref{eq:limit} has a continuous distribution function. In particular, $F_{X}(x_{0})$ 
is a continuity point of this distribution and $F_{X}(F_{n}^{-1}(F_{n}(x_{0})))$ converges stochastically to $F_{X}(x_{0})$ for $n \longrightarrow \infty$. Thus, 
\begin{align*}
\bP\big(\sqrt{n}\big(\hat{\Phi}_{n}(x_{0}) - \Phi_{n}(x_{0})\big) \leq v\big) 
\longrightarrow \ &\bP\bigg(\argmin_{s \in [0,1]}\{g_{\beta,c}(s) - vs\} \geq F_{X}(x_{0})\bigg) 
=\bP\big(g_{\beta,c}^{*,\ell}(F_{X}(x_{0})) \leq v\big), 
\end{align*}
where the last equality is a consequence of the switch relation (Lemma~\ref{lem:general switch relation}). 
As this is true for every $v \in \bR$, statements (ii) and (iii) now follow immediately. \par 
$\bullet$ Statement (i) could be deduced by appropriately specifying and verifying the technical ingredients from 
Theorem~2.2 in \cite{Mallick2023asymptotic}, which itself follows the so-called direct approach along the lines of 
\cite{Wright1981asymptotic}. Here, however, we prove (i) based on the switch relation in line with the proof of (ii) and 
(iii), as it highlights the occurence of the convergence rate of the inverse process, which also plays an important role in 
Section~\ref{sec:L1 limit}. Let us start by introducing the following functions 
\begin{align*}
g \colon [-T,T] \times [-T,T] \to \bR, 
&\quad g(x,t) \defeq \mathds{1}_{\{x \leq t\}} - \mathds{1}_{\{x \leq x_{0}\}}, \\
f_{n} \colon [-T,T] \times \{0,1\} \times [-T,T] \to \bR, 
&\quad f_{n}(x,y,t) \defeq (y-\Phi_{n}(x_{0}))g(x,t) 
\end{align*}
and let $r_{n} = (n/\delta_{n})^{\beta/(2\beta+1)}$. As in \eqref{eq:switch} and by noting that adding 
expressions which are independent of $s$ does not change the 
location of the minimum of a function in $s$, we obtain by an addition of zero 
\begin{align*}
&\bP\big(r_{n}\big(\hat{\Phi}_{n}(x_{0}) - \Phi_{n}(x_{0})\big) \leq v\big) \\
&\quad= \bP\bigg(\argminp_{s \in [-T,T]}\bigg\{\frac{1}{n}\sum_{i=1}^{n}f_{n}(X_{i},Y_{i}^{n},s) 
				- \frac{v}{nr_{n}}\sum_{i=1}^{n}g(X_{i},s)\bigg\} \geq F_{n}^{-1}(F_{n}(x_{0}))\bigg) \\
&\quad= \bP\bigg(\argminp_{s \in [x_{0}-T,x_{0}+T]}\bigg\{\frac{1}{n}\sum_{i=1}^{n}f_{n}(X_{i},Y_{i}^{n},x_{0}+s) 
				- \frac{v}{nr_{n}}\sum_{i=1}^{n}g(X_{i},x_{0}+s)\bigg\} \geq F_{n}^{-1}(F_{n}(x_{0})) - x_{0}\bigg).
\end{align*}
Defining $E_{n}(t) \defeq \bE[f_{n}(X_{i},Y_{i}^{n},t)]$ for $t \in [-T,T]$, 
$a_{n} \defeq (n\delta_{n}^{2\beta})^{-1/(2\beta+1)}$ and $b_{n} \defeq (n^{\beta+1}\delta_{n}^{\beta})^{1/(2\beta+1)}$, 
an addition of zero and multiplying with $b_{n}$ inside the $\argmin$ yields 
\begin{align*}
&\bP\big(r_{n}\big(\hat{\Phi}_{n}(x_{0}) - \Phi_{n}(x_{0})\big) \leq v\big) \\
&= \bP\bigg(\argminp_{s \in [x_{0}-T,x_{0}+T]}\bigg\{\frac{1}{n}\sum_{i=1}^{n}
									\big(f_{n}(X_{i},Y_{i}^{n},x_{0}+s) - E_{n}(x_{0}+s)\big) \\
&\qquad\qquad\qquad\qquad\qquad+ E_{n}(x_{0}+s) - \frac{v}{nr_{n}}\sum_{i=1}^{n}
																g(X_{i},x_{0}+s)\bigg\} - \big(F_{n}^{-1}(F_{n}(x_{0})) - x_{0}\big) \geq 0 \bigg) \\
&= \bP\bigg(a_{n}\argminp_{s \in [a_{n}^{-1}(x_{0}-T),a_{n}^{-1}(x_{0}+T)]}\bigg\{\frac{b_{n}}{n}
												\sum_{i=1}^{n}\big(f_{n}(X_{i},Y_{i}^{n},x_{0}+a_{n}s) 
																			- E_{n}(x_{0}+a_{n}s)\big) \\ 
&\qquad\qquad\qquad\qquad\qquad+ b_{n}E_{n}(x_{0}+a_{n}s) 
											- v\frac{b_{n}}{nr_{n}}\sum_{i=1}^{n}g(X_{i},x_{0}+a_{n}s)\bigg\} - \big(F_{n}^{-1}(F_{n}(x_{0})) - x_{0}\big) \geq 0 \bigg).
\end{align*}
By Lemma~\ref{lem:pointwise inside argmin}, the sequence inside the $\argmin$ restricted to $[-S,S]$ converges weakly in the space $\ell^{\infty}([-S,S])$ to 
\[
\bigg(\sigma_{\Phi_{0}}\sqrt{p_{X}(x_{0})}Z(s) 
				+ \frac{1}{(\beta+1)!}\Phi_{0}^{(\beta)}(0)p_{X}(x_{0})s^{\beta+1} - vp_{X}(x_{0})s\bigg)_{s \in [-S,S]}, 
\]
for every $S > 0$, as long as $(n\delta_{n}^{2\beta}) \longrightarrow \infty$. 
From Proposition~\ref{prop:pointwise argmin}, we 
then obtain convergence in distribution of the respective $\argmin$'s and by Lemma~A.2 of \cite{Cattaneo2024bootstrap}, 
the $\argmin$ of this process has a continuous distribution function. Further, $a_{n}^{-1}\big(F_{n}^{-1}(F_{n}(x_{0})) - x_{0}\big)$ converges stochastically to zero for $n \longrightarrow \infty$ and thus, 
\begin{align*}
\bP\big(r_{n}\big(\hat{\Phi}_{n}(x_{0}) &- \Phi_{n}(x_{0})\big) \leq v\big) \\
\longrightarrow \ &\bP\bigg(\argmin_{s \in \bR}\bigg\{\sigma_{\Phi_{0}}\sqrt{p_{X}(x_{0})}Z(s) 
	+ \frac{\Phi_{0}^{(\beta)}(0)p_{X}(x_{0})}{(\beta+1)!}s^{\beta+1} - vp_{X}(x_{0})s\bigg\} \geq 0\bigg) \\
&= \bP\bigg(\frac{1}{p_{X}(x_{0})}\argmin_{s \in \bR}\bigg\{\sigma_{\Phi_{0}}Z(s) 
				+ \frac{\Phi_{0}^{(\beta)}(0)}{p_{X}(x_{0})^{\beta}(\beta+1)!}s^{\beta+1} - vs\bigg\} \geq 0\bigg), 
\end{align*}
as $n \longrightarrow \infty$ and by the switch relation (Lemma~\ref{lem:general switch relation}), for every $v \in \bR$, 
\[
\bP\Big(\Big(\frac{n}{\delta_{n}}\Big)^{1/3}\big(\hat{\Phi}_{n}(x_{0}) - \Phi_{n}(x_{0})\big) \leq v\Big) 
\longrightarrow \bP\big(f_{\beta}^{*,\ell}(0) \leq v\big) 
\quad \text{as } n \longrightarrow \infty.
\] \hfill \qed

\subsection{Auxiliary results for the proof of Theorem~\ref{thm:pointwise rate}}
For the results related to the proof of Theorem~\ref{thm:pointwise rate} (i), let us recall the definitions 
\begin{align*}
g \colon [-T,T] \times [-T,T] \to \bR, 
&\quad g(x,t) \defeq \mathds{1}_{\{x \leq t\}} - \mathds{1}_{\{x \leq x_{0}\}}, \\
f_{n} \colon [-T,T] \times \{0,1\} \times [-T,T] \to \bR, 
&\quad f_{n}(x,y,t) \defeq (y-\Phi_{n}(x_{0}))g(x,t)
\end{align*}
and $E_{n}(t) \defeq \bE[f_{n}(X_{i},Y_{i}^{n},t)]$ for every $t \in [-T,T]$.
Furthermore, let $\beta \in \bN_{\geq 1}$, let 
\[
r_{n} \defeq \Big(\frac{n}{\delta_{n}}\Big)^{\beta/(2\beta+1)}, 
\qquad 
a_{n} \defeq (n\delta_{n}^{2\beta})^{-1/(2\beta+1)}, 
\qquad 
b_{n} \defeq (n^{\beta+1}\delta_{n}^{\beta})^{1/(2\beta+1)}
\]
and let $Z(s)$ denote a standard two-sided Brownian motion on $\bR$. We also define the stochastic processes 
\begin{align*}
\fZ_{n}^{1}(s) 
	&\defeq \frac{b_{n}}{n}\sum_{i=1}^{n}\big(f_{n}(X_{i},Y_{i}^{n},x_{0}+a_{n}s) - E_{n}(x_{0}+a_{n}s)\big), \\
\fZ_{n}^{2}(s) 
	&\defeq b_{n}E_{n}(x_{0}+a_{n}s), \\
\fZ_{n}^{3}(s)
	&\defeq v\frac{b_{n}}{nr_{n}}\sum_{i=1}^{n}g(X_{i},x_{0}+a_{n}s), \\
\fZ^{1}(s) 
	&\defeq \sqrt{\Phi_{0}(0)(1-\Phi_{0}(0))p_{X}(x_{0})}Z(s), \\
\fZ^{2}(s) 
	&\defeq \frac{1}{(\beta+1)!}\Phi_{0}^{(\beta)}(0)p_{X}(x_{0})s^{\beta+1}, \\
\fZ^{3}(s)
	&\defeq vp_{X}(x_{0})s 
\end{align*}
and set 
\[
\fZ_{n}(s) 
	\defeq \fZ_{n}^{1}(s) + \fZ_{n}^{2}(s) - \fZ_{n}^{3}(s), 
\quad 
\fZ(s) 
	\defeq \fZ^{1}(s) + \fZ^{2}(s) - \fZ^{3}(s)
\]
for $s \in [a_{n}^{-1}(x_{0}-T),a_{n}^{-1}(x_{0}+T)]$. Moreover, let 
\[
\hat{s}_{n} \defeq \argminp_{s \in [a_{n}^{-1}(x_{0}-T),a_{n}^{-1}(x_{0}+T)]} \fZ_{n}(s) 
\quad \text{and} \quad 
\hat{s} \defeq \argmin_{s \in \bR} \fZ(s)
\]
denote the minimizers of $\fZ_{n}(s)$ and $\fZ(s)$ respectively. 

\begin{lemma} \label{lem:pointwise inside argmin}
Let $\beta \in \bN$, $x_{0}$ an interior point of $ \cX$ and assume $\Phi_{0}$ to be $\beta$-times continuously 
differentiable in a neighborhood of $0$ with the $\beta$th derivative being the first non-vanishing derivative in $0$. 
Then, as long as $n\delta_{n}^{2\beta} \longrightarrow \infty$, 
\[
(\fZ_{n}(s))_{s \in [-S,S]}\longrightarrow_{\cL}(\fZ(s))_{s\in[-S,S]}\ \text{ in } \ell^{\infty}([-S,S])
\] 
for every $S > 0$.
\end{lemma}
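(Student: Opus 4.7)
The plan is to decompose $\fZ_n = \fZ_n^1 + \fZ_n^2 - \fZ_n^3$ and handle the three summands separately, proving that $\fZ_n^1$ converges weakly in $\ell^\infty([-S,S])$ to the scaled two-sided Brownian motion $\fZ^1$, while $\fZ_n^2$ and $\fZ_n^3$ converge uniformly in probability to the deterministic limits $\fZ^2$ and $\fZ^3$. The claim then follows by Slutsky's theorem in $\ell^\infty([-S,S])$.

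For the drift $\fZ_n^2(s) = b_n E_n(x_0+a_n s)$, I would write $E_n(t) = \int_{x_0}^{t}(\Phi_n(x)-\Phi_n(x_0))p_X(x)\,dx$ and apply Taylor's theorem with integral remainder to $\Phi_0$ around $0$, using the assumption $\Phi_0^{(k)}(0) = 0$ for $k<\beta$, to obtain $\Phi_n(x) - \Phi_n(x_0) = \frac{\Phi_0^{(\beta)}(0)}{\beta!}\delta_n^\beta(x^\beta - x_0^\beta) + o(\delta_n^\beta)$ uniformly on a compact neighborhood of $x_0$. Substituting $x = x_0+a_n u$, expanding $(x_0+a_n u)^\beta - x_0^\beta$ in powers of $a_n$, and invoking continuity of $p_X$ at $x_0$ then gives the limit; the constant is produced by the scaling identity $b_n \delta_n^\beta a_n^{\beta+1} = 1$, verifiable directly from the definitions of $a_n$ and $b_n$. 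The term $\fZ_n^3$ is handled analogously: the identity $b_n a_n/r_n = 1$ together with $\bE[g(X,x_0+a_n s)] = p_X(x_0)a_n s + o(a_n)$ accounts for the mean, while the stochastic fluctuation is $o_{\bP}(1)$ uniformly in $s$ by a Bernstein bound on the VC class of indicator differences, since $b_n^2 a_n/(nr_n^2) \to 0$ under the assumption $n\delta_n^{2\beta}\to\infty$.

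The substantial work lies with $\fZ_n^1$. For convergence of finite-dimensional distributions, the triangular array $\frac{b_n}{n}(\phi_{n,s}-\bE[\phi_{n,s}])$ with $\phi_{n,s}(x,y) \defeq (y-\Phi_n(x_0))(\mathds{1}_{\{x\le x_0+a_n s\}}-\mathds{1}_{\{x\le x_0\}})$ consists of summands bounded by $O(b_n/n)\to 0$, so Lindeberg's condition is immediate; a direct covariance computation, exploiting $b_n^2 a_n/n=1$, $\Phi_n(x_0)\to\Phi_0(0)$, continuity of $p_X$ at $x_0$, and disjointness of the supports of $\phi_{n,s}$ and $\phi_{n,t}$ for $st<0$, yields exactly the covariance kernel $\Phi_0(0)(1-\Phi_0(0))p_X(x_0)(|s|\wedge|t|)\mathds{1}_{\{st\ge 0\}}$ of $\fZ^1$.

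The main obstacle is asymptotic tightness of $\fZ_n^1$. The class $\{\phi_{n,s}:|s|\le S\}$ is contained in a fixed VC class of indicator differences with envelope of $L^2(P_X)$-size $O(\sqrt{a_n})$, and the $L^2$-variance of an increment is of order $|s-t|$, which is borderline for Kolmogorov--Chentsov. I would resolve this either by a Rosenthal-type fourth-moment bound $\bE[(\fZ_n^1(s)-\fZ_n^1(t))^4]=O(|s-t|^2)$, which follows from the fact that each summand is uniformly bounded and non-zero only on a set of probability $O(a_n|s-t|)$, or by invoking the local empirical-process machinery adapted to the shrinking-envelope regime; either route yields tightness and identifies the limit as $\fZ^1$.
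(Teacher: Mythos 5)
Your overall architecture (split $\fZ_n=\fZ_n^1+\fZ_n^2-\fZ_n^3$, Lindeberg--Feller for the finite-dimensional laws of $\fZ_n^1$, uniform convergence of the two drift-type terms, then assemble) is the same as the paper's, and your finite-dimensional computation and your treatment of $\fZ_n^3$ via $b_na_n/r_n=1$ plus a maximal inequality are fine. However, your treatment of the drift $\fZ_n^2$ has a genuine gap. A remainder that is merely $o(\delta_n^\beta)$ uniformly on a fixed neighborhood of $x_0$ is too weak: the drift is multiplied by $b_n$ and integrated over an interval of length $a_n|s|$, and $b_na_n\delta_n^\beta=a_n^{-\beta}\longrightarrow\infty$, so an unlocalized $o(\delta_n^\beta)$ error is not negligible after rescaling. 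Moreover, for $\beta\geq 2$ your binomial expansion of $(x_0+a_nu)^\beta-x_0^\beta$ has leading term $\beta x_0^{\beta-1}a_nu$, of order $a_n$, which pairs with $b_n\delta_n^\beta a_n^{2}=a_n^{1-\beta}$ and not with the identity $b_n\delta_n^\beta a_n^{\beta+1}=1$; your computation therefore does not produce the stated limit $\frac{\Phi_0^{(\beta)}(0)p_X(x_0)}{(\beta+1)!}s^{\beta+1}$ beyond the case $\beta=1$. The paper instead expands $\Phi_n$ around the point $x_0$ itself in Lagrange form, writing the increment as $\frac{\delta_n^\beta}{\beta!}\Phi_0^{(\beta)}(\delta_n\xi_n(x))(x-x_0)^\beta$: the error is then quantified against $(x-x_0)^\beta$ with a coefficient converging to $\Phi_0^{(\beta)}(0)$, and after the substitution $x=x_0+a_nu$ the normalization is exactly absorbed by $b_n\delta_n^\beta a_n^{\beta+1}=1$, so only continuity of $\Phi_0^{(\beta)}$ and $p_X$ remains. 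You need an expansion centered at $x_0$ (equivalently, a remainder controlled by $(x-x_0)^\beta$), not an expansion of $\Phi_0$ at $0$ with an absolute error bound.

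The tightness step also fails as you state it. With $\xi_i=(Y_i^n-\Phi_n(x_0))(\mathds{1}_{\{X_i\leq x_0+a_ns\}}-\mathds{1}_{\{X_i\leq x_0+a_nt\}})$ one has $|\xi_i|\leq1$ and $\bE[\xi_i^2]\leq\|p_X\|_\infty a_n|s-t|$, so Rosenthal gives
$\bE\big[(\fZ_n^1(s)-\fZ_n^1(t))^4\big]\leq C\big(|s-t|^2+|s-t|/(na_n)\big)$,
and the linear term $|s-t|/(na_n)$ cannot be removed; it dominates for $|s-t|\lesssim (na_n)^{-1}=r_n^{-2}$, so the claimed bound $O(|s-t|^2)$ is false at small scales and a Kolmogorov--Chentsov/chaining argument based on it does not go through without a separate treatment of the sub-$(na_n)^{-1}$ scales. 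This is precisely why the paper proves asymptotic uniform equicontinuity via the bracketing maximal inequality (Theorem~2.14.17' of \cite{VaartWellner2023}) applied to the class of functions $(y-\Phi_n(x_0))(\mathds{1}_{\{x\leq x_0+a_ns\}}-\mathds{1}_{\{x\leq x_0+a_nt\}})$ with $|s-t|<\eta$, whose $L^2(P_{\Phi_n})$-bracketing numbers are of order $a_n^2\nu^{-4}$ (Lemma~\ref{lem:bracketing}~(i)); this yields $\limsup_n\bE[\sup_{|s-t|<\eta}|\fZ_n^1(s)-\fZ_n^1(t)|]\leq K\sqrt{\eta}\log(K/\eta^{2})$ and hence tightness. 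Your alternative route (``local empirical-process machinery in the shrinking-envelope regime'') is the right idea, but it is exactly the part that needs to be instantiated; as written, neither of your two options closes this step.
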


\begin{proof}
Let $S>0$ be fixed but arbitrary and denote $\|f\|_{[-S,S]}:=\sup_{s\in[-S,S]}|f(s)|$ for any continuous $f:[-S,S]\rightarrow \bR$.

\medskip
\noindent
\textsc{Claim I: } $\|\fZ_{n}^{2}-\fZ^2\|_{[-S,S]}\longrightarrow_{\bP}0$. 

\smallskip
\noindent
\textit{Proof of Claim I. } By a Taylor expansion with Lagrange remainder of 
$\Phi_{n}(x)$ around $x_{0}$ up to order $\beta$, there exists $\xi_{n}(x)$ between $x_{0}$ and $x$ such that 
\[
\Phi_{n}(x) - \Phi_{n}(x_{0}) 
= \frac{1}{\beta!}\Phi_{n}^{(\beta)}(\xi_{n}(x))(x-x_{0})^{\beta}
= \delta_{n}^{\beta}\frac{1}{\beta!}\Phi_{0}^{(\beta)}(\delta_{n}\xi_{n}(x))(x-x_{0})^{\beta}.
\]
Thus, by using $b_{n}\delta_{n}^{\beta}a_{n}^{\beta+1}=1$, 
\begin{align*}
&\sup_{s \in [-S,S]}|\fZ_{n}^{2}(s) - \fZ_{n}^{2}(s)| \\
&= \sup_{s \in [-S,S]}\bigg|b_{n}\int_{x_{0}}^{x_{0}+a_{n}s}\frac{\delta_{n}^{\beta}}{\beta!}\Phi_{0}^{(\beta)}(\delta_{n}\xi_{n}(x))(x-x_{0})^{\beta}p_{X}(x)dx 
						- \frac{\Phi_{0}^{(\beta)}(0)p_{X}(x_{0})}{(\beta+1)!}s^{\beta+1}\bigg| \\
&= \frac{1}{\beta!}\sup_{s \in [-S,S]}\bigg|b_{n}\delta_{n}^{\beta}a_{n}^{\beta+1}
						\int_{0}^{s}\Phi_{0}^{(\beta)}(\delta_{n}\xi_{n}(x_{0}+a_{n}x))x^{\beta}p_{X}(x_{0}+a_{n}x)dx - \Phi_{0}^{(\beta)}(0)p_{X}(x_{0})\int_{0}^{s}x^{\beta}dx\bigg| \\
&= \frac{1}{\beta!}\sup_{s \in [-S,S]}\bigg|
						\int_{0}^{s}\big(\Phi_{0}^{(\beta)}(\delta_{n}\xi_{n}(x_{0}+a_{n}x))p_{X}(x_{0}+a_{n}x) 
												- \Phi_{0}^{(\beta)}(0)p_{X}(x_{0})\big)x^{\beta}dx\bigg|\\
&\leq \frac{2S^{\beta+1}}{\beta!}\Big\|\Phi_{0}^{(\beta)}(\delta_{n}\xi_{n}(x_{0}+a_{n}\boldcdot))p_{X}(x_{0}+a_{n}\boldcdot) 
												- \Phi_{0}^{(\beta)}(0)p_{X}(x_{0})\Big\|_{[-S,S]}
\end{align*}
which converges to zero as $n \longrightarrow \infty$ by continuity of $\Phi_{0}^{(\beta)}$ and $p_{X}$, the convergence $a_n \longrightarrow 0$ and $\delta_n\longrightarrow 0$, as well as $\xi_{n}(x_{0}+a_{n}\boldcdot)\in[-S,S]$.

\medskip
\noindent
\textsc{Claim II: } $\| \fZ_{n}^{3}-\fZ^3\|_{[-S,S]}\longrightarrow_{\bP}0$.

\smallskip
\noindent
\textit{Proof of Claim II. }
Define 
\[
g_{n,s} \colon [-S,S] \to \bR, 
\quad 
g_{n,s}(x) \defeq va_{n}^{-1}\big(\mathds{1}_{\{x \leq x_{0}+a_{n}s\}} - \mathds{1}_{\{x \leq x_{0}\}}\big)
\]
for every $s \in [-S,S]$, set $\cG_{n} \defeq \{g_{n,s} \mid s \in [-S,S]\}$ for every $n \in \bN$ and note that 
\[
\fZ_{n}^{3}(s) = \frac{1}{n}\sum_{i=1}^{n}g_{n,s}(X_{i}).
\]
From
\[
\bE[\fZ_{n}^{3}(s)] 
= va_{n}^{-1}\bE[\mathds{1}_{\{X \leq x_{0}+a_{n}s\}} - \mathds{1}_{\{X \leq x_{0}\}}] 
= va_{n}^{-1}\big(F_{X}(x_{0}+a_{n}s) - F_{X}(x_{0})\big),
\]
we deduce with $s_{n}^{*} \in [-S,S]$ denoting the maximizier of the function inside the subsequent supremum,
\begin{equation}\label{eq: conv3}
\begin{aligned}
\sup_{s \in [-S,S]}\big|\bE[\fZ_{n}^{3}(s)] - vp_{X}(x_{0})s\big| 
&= \Big|va_{n}^{-1}(F_{X}(x_{0}+a_{n}s_{n}^{*}) - F_{X}(x_{0})) - vp_{X}(x_{0})s_{n}^{*}\Big| \\ 
&= |vs_{n}^{*}|\Big|\frac{1}{a_ns_{n}^{*}}(F_{X}(x_{0}+a_{n}s_{n}^{*}) 
														- F_{X}(x_{0})) - p_{X}(x_{0})\Big| \mathds{1}_{\{s_n^*\not=0\}} \\ 
&\leq |vS|\Big|\frac{1}{a_ns_{n}^{*}}(F_{X}(x_{0}+a_{n}s_{n}^{*}) - F_{X}(x_{0})) - p_{X}(x_{0})\Big|\mathds{1}_{\{s_n^*\not=0\}} \\
&\quad\longrightarrow 0 \quad \text{as }\, n \longrightarrow \infty
\end{aligned}
\end{equation}
by the fundamental theorem of calculus. \par
Now we bound the $\nu$-bracketing number 
$N_{[]}(\nu,\cG_{n},L^{1}(P_{X}))$. To this end, let $\nu > 0$ be arbitrary, set $N(\nu) \defeq \frac{2S}{\nu}2vp_{X}(x_{0})$ 
and define for $i=1,\dots,\lfloor N(\nu) \rfloor$, 
\[
s_{0} \defeq -S, 
\qquad 
s_{i} \defeq s_{i-1} + \frac{\nu}{2vp_{X}(x_{0})}, 
\qquad 
s_{\lfloor N(\nu) \rfloor + 1} \defeq S.
\]
Then, $-S = s_{0} < s_{1} < \cdots < s_{\lfloor N(\nu) \rfloor + 1} = S$ and 
$s_{i}-s_{i-1} \leq \frac{\nu}{2vp_{X}(x_{0})}$ for $1 \leq i \leq \lfloor N(\nu) \rfloor + 1$ and for every 
$s \in [-S,S]$, there exists $i \in \{1,\dots,\lfloor N(\nu) \rfloor + 1\}$ such that $s_{i-1} \leq s \leq s_{i}$. 
Consequently, we have $g_{n,s_{i-1}}(x) \leq g_{n,s}(x) \leq g_{n,s_{i}}(x)$ for every $x \in \bR$ and 
\begin{align*}
&\int_{\bR}|g_{n,s_{i}}(x) - g_{n,s_{i-1}}(x)|dP_{X}(x) \\
&= \int_{\bR}g_{n,s_{i}}(x) - vs_{i}p_{X}(x_{0}) + vs_{i-1}p_{X}(x_{0}) - g_{n,s_{i-1}}(x)dP_{X}(x) 
			+ v(s_{i}-s_{i-1})p_{X}(x_{0}) \\
&\leq 2\sup_{s \in [-S,S]}|\bE[\fZ_{n}^{3}(s)] - vp_{X}(x_{0})s| 
			+ \frac{\nu}{2}.
\end{align*}
By \eqref{eq: conv3}, $\| \bE[\fZ_{n}^{3}] -\fZ^3\|_{[-S,S]}<\nu/4$
for $n$ large enough, whence $[g_{n,s_{i-1}},g_{n,s_{i}}]_{i=1,\dots,\lfloor N(\nu) \rfloor + 1}$ 
define $\nu$-brackets for $\cG_{n}$ with respect to $L^{1}(P_{X})$ and 
\[
N_{[]}(\nu,\cG_{n},L^{1}(P_{X})) 
\leq \lfloor N(\nu) \rfloor + 1
\leq 1 + \frac{2S}{\nu}2vp_{X}(x_{0})
\]
for $n$ sufficiently large. Moreover, 
\begin{align}\label{eq: 7.3}
\Var(g_{n,s}(X)) 
\leq v^{2}a_{n}^{-2}\big(\bE[(\mathds{1}_{\{X \leq x_{0}+a_{n}s\}} - \mathds{1}_{\{X \leq x_{0}\}})^{2}]\big) 
\leq 2v^{2}a_{n}^{-2}
\end{align}
for every $s \in [-S,S]$. 
By definition of $\cG_{n}$, 
\[
\big\|\fZ_{n}^{3} - \bE[\fZ_{n}^{3}]\big\|_{[-S,S]}
= \sup_{g \in \cG_{n}}\bigg|\frac{1}{n}\sum_{i=1}^{n}g(X_{i}) - \bE[g(X)]\bigg|. 
\]
Therefore, for every $\varepsilon > 0$, we obtain with the $(\varepsilon/2)$-brackets $g_{n}^{1},\dots,g_{n}^{N(\varepsilon/2)} \in \cG_{n}$ by the union bound, Chebychev's inequality and \eqref{eq: 7.3},
\begin{align*}
\bP\Big(\big\|\fZ_{n}^{3} - \bE[\fZ_{n}^{3}]\big\|_{[-S,S]}\geq \varepsilon\Big) 
&\leq \bP\bigg(\max_{j=1,\dots, N(\varepsilon/2)}\Big|\frac{1}{n}\sum_{i=1}^{n}g_{n}^{j}(X_{i}) 
											- \bE[g_{n}^{j}(X)]\Big| \geq \frac{\varepsilon}{2}\bigg)\\
&\leq \big(N(\varepsilon/2)+1\big)\frac{8}{\varepsilon^{2}}v^{2}
							\Big(\frac{\delta_{n}^{4\beta}}{n^{2\beta-1}}\Big)^{1/(2\beta+1)} 
\longrightarrow 0 
\end{align*}
as $n \longrightarrow \infty$. Together with \eqref{eq: conv3}, this reveals
\begin{align*}
\| \fZ_{n}^{3}-\fZ^3\|_{[-S,S]}&\leq \big\|\fZ_{n}^{3} - \bE[\fZ_{n}^{3}]\big\|_{[-S,S]}
		+ \big\| \bE[\fZ_{n}^{3}] -\fZ^3\big\|_{[-S,S]}
\longrightarrow_{\bP} 0 \quad \text{as } n \longrightarrow \infty
\end{align*}

\medskip
\noindent
\textsc{Claim III: } $(\fZ_{n}^{1}(s))_{s \in [-S,S]}\longrightarrow_{\cL}(\fZ^1(s))_{s \in [-S,S]}$ in $\ell^{\infty}([-S,S])$.

\smallskip
\noindent
\textit{Proof of Claim III. } By Theorem~1.5.4 in \cite{VaartWellner2023}, it is sufficient to show that the sequence of 
stochastic processes $\fZ_{n}^{1}$ is asymptotically tight and 
that for every finite subset $\{s_{1},\dots,s_{k}\} \subset [-S,S]$, the marginals 
$(\fZ_{n}^{1}(s_{1}),\dots,\fZ_{n}^{1}(s_{k}))$ converge weakly to $(\fZ^{1}(s_{1}),\dots,\fZ^{1}(s_{k}))$.

\smallskip
\noindent
\textit{Convergence of finite-dimensional distributions. }
Let $k \in \bN$ be arbitrary, let $\{s_{1},\dots,s_{k}\} \subset [-S,S]$ denote an arbitrary finite subset of 
$[-S,S]$ and note that 
\begin{align*}
\begin{pmatrix}
\fZ_{n}^{1}(s_{1}) \\
\vdots \\
\fZ_{n}^{1}(s_{k})
\end{pmatrix} 
= 
\sum_{i=1}^{n}\frac{b_{n}}{n}
\begin{pmatrix}
f_{n}(X_{i},Y_{i}^{n},x_{0} + a_{n}s_{1}) - \bE[f_{n}(X_{i},Y_{i}^{n},x_{0} + a_{n}s_{1})] \\
\vdots \\
f_{n}(X_{i},Y_{i}^{n},x_{0} + a_{n}s_{k}) - \bE[f_{n}(X_{i},Y_{i}^{n},x_{0} + a_{n}s_{k})]
\end{pmatrix}
.
\end{align*}
As a shorthand notation, let us introduce 
\begin{align*}
V_{i}^{n}
\defeq 
\frac{b_{n}}{n}
\begin{pmatrix}
f_{n}(X_{i},Y_{i}^{n},s_{1}) \\
\vdots \\
f_{n}(X_{i},Y_{i}^{n},s_{k})
\end{pmatrix}
\end{align*}
for $i=1,\dots,n$. 
Note that 
$\|V_{i}^{n}\|_2^{2} \leq k\frac{b_{n}^{2}}{n^{2}} = k(\frac{\delta_{n}}{n})^{2\beta/(2\beta+1)}$ by definition of $f_{n}$ 
and $b_{n}$, hence for every $\varepsilon > 0$, 
\begin{align*}
\sum_{i=1}^{n}\bE\big[\|V_{i}^{n}\|_2^{2}\mathds{1}_{\{\|V_{i}^{n}\|_2 > \varepsilon\}}\big] 
&\leq k\Big(\frac{\delta_{n}}{n}\Big)^{2\beta/(2\beta+1)}
		\sum_{i=1}^{n}\bE\big[\mathds{1}_{\{\|V_{i}^{n}\|_2^{2} > \varepsilon^{2}\}}\big] \\
&\leq k\Big(\frac{\delta_{n}}{n}\Big)^{2\beta/(2\beta+1)}
		\sum_{i=1}^{n}\bE\big[\mathds{1}_{\{k > (\frac{n}{\delta_{n}})^{2\beta/(2\beta+1)}\varepsilon^{2}\}}\big] \\
&= ka_{n}^{-1}\mathds{1}_{\{k > (\frac{n}{\delta_{n}})^{2\beta/(2\beta+1)}\varepsilon^{2}\}} 
\longrightarrow 0 \quad \text{as } n \longrightarrow \infty, 
\end{align*}
where we used $n/\delta_{n} \longrightarrow \infty$. For the sum of the covariance 
matrices of $V_{i}$, note that for $j,\ell \in \{1,\dots,k\}$, 
\begin{align*}
\bigg(\sum_{i=1}^{n}\Cov(V_{i}^{n})\bigg)_{j\ell} \hspace{-0.1em}
= \frac{b_{n}^{2}}{n}\big(\bE[f_{n}(X,Y^{n},x_{0}+a_{n}s_{j})f_{n}(X,Y^{n},x_{0}+a_{n}s_{\ell})] 
- E_{n}(x_{0} + a_{n}s_{j})E_{n}(x_{0} + a_{n}s_{\ell})\big).
\end{align*}
Recall from Claim I that $b_{n}E_{n}(x_{0} + a_{n}s) = \fZ_{n}^{2}(s) 
\longrightarrow \frac{1}{(\beta+1)!}\Phi_{0}^{(\beta)}(0)p_{X}(x_{0})s^{\beta+1}$ for any $s \in [-S,S]$ as $n \longrightarrow \infty$, whence
\[
\frac{b_{n}^{2}}{n}E_{n}(x_{0} + a_{n}s_{j})E_{n}(x_{0} + a_{n}s_{\ell}) 
= \frac{1}{n}\fZ_{n}^{2}(s_{j})\fZ_{n}^{2}(s_{\ell}) \longrightarrow 0 \quad \text{as } n \longrightarrow \infty. 
\]
Observe further that 
\begin{align*}
&f_{n}(X,Y^{n},x_{0}+a_{n}s_{j})f_{n}(X,Y^{n},x_{0}+a_{n}s_{\ell}) \\
&\qquad= (Y^{n}-\Phi_{n}(x_{0}))^{2}
					\big(\mathds{1}_{\{X \leq x_{0} + a_{n}s_{j}\}} - \mathds{1}_{\{X \leq x_{0}\}}\big)
						\big(\mathds{1}_{\{X \leq x_{0} + a_{n}s_{\ell}\}} - \mathds{1}_{\{X \leq x_{0}\}}\big) \\
&\qquad= (Y^{n}-\Phi_{n}(x_{0}))^{2}
					\big(\mathds{1}_{\{x_{0} < X \leq x_{0} + a_{n}\min\{s_{j},s_{\ell}\}\}} 
							+ \mathds{1}_{\{x_{0} - a_{n}\min\{-s_{j},-s_{\ell}\} < X \leq x_{0}\}}\big)
\end{align*}
and consequently, 
\begin{align*}
&\bE[f_{n}(X,Y^{n},x_{0}+a_{n}s_{j})f_{n}(X,Y^{n},x_{0}+a_{n}s_{\ell})] \\
&\qquad= \bE\big[(Y^{n}-\Phi_{n}(x_{0}))^{2}
					\big(\mathds{1}_{\{x_{0} < X \leq x_{0} + a_{n}\min\{s_{j},s_{\ell}\}\}} 
							+ \mathds{1}_{\{x_{0} - a_{n}\min\{-s_{j},-s_{\ell}\} < X \leq x_{0}\}}\big)\big] \\
&\qquad= \mathds{1}_{\{s_{j},s_{\ell} > 0\}}
				\bE\big[(Y^{n}-\Phi_{n}(x_{0}))^{2}
							\mathds{1}_{\{x_{0} < X \leq x_{0} + a_{n}\min\{s_{j},s_{\ell}\}\}}\big] \\
&\qquad\qquad\qquad+ \mathds{1}_{\{s_{j},s_{\ell} < 0\}}
						\bE\big[(Y^{n}-\Phi_{n}(x_{0}))^{2}
									\mathds{1}_{\{x_{0} - a_{n}\min\{|s_{j}|,|s_{\ell}|\} < X \leq x_{0}\}}\big].
\end{align*}
From now on, we will only consider the case $s_{j},s_{\ell} > 0$ as the case $s_{j},s_{\ell} < 0$ follows analogously. 
Note first that by the tower property of conditional expectation,
\begin{align*}
&\bE\big[(Y^{n}-\Phi_{n}(x_{0}))^{2}\mathds{1}_{\{x_{0} < X \leq x_{0} + a_{n}\min\{s_{j},s_{\ell}\}\}}\big] \\
&\qquad= \bE\big[\big((1-\Phi_{n}(x_{0}))^{2}\Phi_{n}(X) + (\Phi_{n}(x_{0}))^{2}(1-\Phi_{n}(X))\big)
									\mathds{1}_{\{x_{0} < X \leq x_{0} + a_{n}\min\{s_{j},s_{\ell}\}\}}\big] \\
&\qquad= \bE\big[\big(\Phi_{n}(X)(1 - 2\Phi_{n}(x_{0})) + \Phi_{n}(x_{0})^{2}\big)
									\mathds{1}_{\{x_{0} < X \leq x_{0} + a_{n}\min\{s_{j},s_{\ell}\}\}}\big] \\
&\qquad= \bE\big[\Phi_{n}(X)(1 - 2\Phi_{n}(x_{0}))
								\mathds{1}_{\{x_{0} < X \leq x_{0} + a_{n}\min\{s_{j},s_{\ell}\}\}}\big] \\
&\qquad\qquad\qquad\qquad + \Phi_{n}(x_{0})^{2}(F_{X}(x_{0} + a_{n}\min\{s_{j},s_{\ell}\}) - F_{X}(x_{0})).
\end{align*}
Now, because of $\frac{b_{n}^{2}}{n} = a_{n}^{-1}$, we obtain 
\[
\frac{b_{n}^{2}}{n}\Phi_{n}(x_{0})^{2}\big(F_{X}(x_{0} + a_{n}\min\{s_{j},s_{\ell}\}) - F_{X}(x_{0})\big)
\longrightarrow \Phi_{0}(0)^{2}\min\{s_{j},s_{\ell}\}p_{X}(x_{0}) 
\]
as $n \longrightarrow \infty$. Defining $J_{n}(t) \defeq \int_{x_{0}}^{t}\Phi_{n}(x)p_{X}(x)dx$, 
we further have 
\begin{align*}
&\frac{b_{n}^{2}}{n}
	\bE\big[\Phi_{n}(X)(1 - 2\Phi_{n}(x_{0}))\mathds{1}_{\{x_{0} < X \leq x_{0} + a_{n}\min\{s_{j},s_{\ell}\}\}}\big] \\
&\qquad\qquad\qquad= a_{n}^{-1}(1 - 2\Phi_{n}(x_{0}))J_{n}(x_{0} + a_{n}\min\{s_{j},s_{\ell}\})
\end{align*}
and by a Taylor expansion with Lagrange remainder of $J_{n}(t)$ around $x_{0}$, we obtain for a suitable intermediate point 
$\eta_{n}$ between $x_{0}$ and $x_{0} + a_{n}\min\{s_{j},s_{\ell}\}$ by the fundamental theorem of calculus
\begin{align*}
a_{n}^{-1}&(1 - 2\Phi_{n}(x_{0}))J_{n}(x_{0} + a_{n}\min\{s_{j},s_{\ell}\}) \\
=&\, a_{n}^{-1}(1 - 2\Phi_{n}(x_{0}))(J_{n}(x_{0}) + J_{n}'(\eta_{n})a_{n}\min\{s_{j},s_{\ell}\}) \\
=&\, (1 - 2\Phi_{n}(x_{0}))J_{n}'(\eta_{n})\min\{s_{j},s_{\ell}\} \\
=&\, (1 - 2\Phi_{n}(x_{0}))\Phi_{n}(\eta_{n})p_{X}(\eta_{n})\min\{s_{j},s_{\ell}\} \longrightarrow (1 - 2\Phi_{0}(0))\Phi_{0}(0)p_{X}(x_{0})\min\{s_{j},s_{\ell}\},
\end{align*}
where we used that $\eta_{n} \longrightarrow x_{0}$ as $n \longrightarrow \infty$. Combining the previous 
calculations, we see
\begin{align*}
\bigg(&\sum_{i=1}^{n}\Cov(V_{i}^{n})\bigg)_{j\ell} \\
&\hspace*{-2mm}\longrightarrow \big((1 - 2\Phi_{0}(0))\Phi_{0}(0)p_{X}(x_{0})\min\{|s_{j}|,|s_{\ell}|\} 
						+ \Phi_{0}(0)^{2}\min\{|s_{j}|,|s_{\ell}|\}p_{X}(x_{0})\big)\mathds{1}_{\{s_{j}s_{\ell} > 0\}} \\
&\phantom{\longrightarrow}\ = \Phi_{0}(0)(1 - \Phi_{0}(0))p_{X}(x_{0})\min\{|s_{j}|,|s_{\ell}|\}\mathds{1}_{\{s_{j}s_{\ell} > 0\}} \\
&\phantom{\longrightarrow}\ = \Cov\big(\sqrt{\Phi_{0}(0)(1-\Phi_{0}(0))p_{X}(x_{0})}Z(s_{j}),\sqrt{\Phi_{0}(0)(1-\Phi_{0}(0))p_{X}(x_{0})}Z(s_{\ell})\big) \\
&\phantom{\longrightarrow}\ = \Cov(\fZ^{1}(s_{j}),\fZ^{1}(s_{\ell})).
\end{align*}
Finally, the Lindeberg-Feller central limit theorem yields 
\[
(\fZ_{n}^{1}(s_{1}),\dots,\fZ_{n}^{1}(s_{k})) 
\longrightarrow_{\cL} (\fZ^{1}(s_{1}),\dots,\fZ^{1}(s_{k})) 
\quad \text{as } n \longrightarrow \infty. 
\]

\smallskip
\noindent
\textit{Asymptotic tightness.} By convergence of the finite dimensional distributions, it is sufficient by 
Theorem~1.5.7 of \cite{VaartWellner2023} to prove asymptotic uniform equicontinuity in probability. For this, let 
$\Delta>0$ be fixed but arbitrary and note that by Markov's inequality, 
\begin{align}\label{eq:markov2}
\bP\bigg(\sup_{|s-t|<\eta}|\fZ_{n}^{1}(s)-\fZ_{n}^{1}(t)| > \Delta\bigg) 
\leq \frac{1}{\Delta}\bE\bigg[\sup_{|s-t|<\eta}|\fZ_{n}^{1}(s)-\fZ_{n}^{1}(t)|\bigg].
\end{align}
Define
\[
f_{n,s,t} \colon [-S,S] \times \{0,1\} \to \bR, 
\quad 
f_{n,s,t}(x,y) \defeq (y-\Phi_{n}(x_{0}))\big(\mathds{1}_{\{x \leq x_{0}+a_{n}s\}}-\mathds{1}_{\{x \leq x_{0}+a_{n}t\}}\big)
\]
for $s,t \in [-S,S]$, and $\cF_{n,\eta} \defeq \{f_{n,s,t} \mid s,t \in [-S,S], |s-t| < \eta\}$. For any 
$\varepsilon_{n} > 0$ and $M_{n} > 0$ satisfying $\bE[f^{2}] < \varepsilon_{n}^{2}$ and $\|f\|_{\infty} \leq M_{n}$ for every 
$f \in \cF_{n,\eta}$, Theorem~2.14.17' of \cite{VaartWellner2023} reveals for a universal constant $C > 0$, 
\begin{align}\label{eq: 7.4}
\bE\bigg[\sup_{|s-t|<\eta}|\fZ_{n}^{1}(s)-&\fZ_{n}^{1}(t)|\bigg]
= b_{n}n^{-1/2}\bE\bigg[\sup_{f_{n} \in \cF_{n,\eta}}\bigg|\frac{1}{\sqrt{n}}\sum_{i=1}^{n}f_{n}(X_{i},Y_{i}^{n}) 
																	- \bE[f_{n}(X_{i},Y_{i}^{n})] \bigg|\bigg] \nonumber\\
&\leq Cb_{n}n^{-1/2}J_{[]}\big(\varepsilon_{n},\cF_{n,\eta},L^{2}(P_{\Phi_n})\big)
		\bigg(1 + \frac{J_{[]}\big(\varepsilon_{n},\cF_{n,\eta},L^{2}(P_{\Phi_n})\big)}
																	{\varepsilon_{n}^{2}n^{1/2}}M_{n}\bigg) 
\end{align}
with 
$J_{[]}\big(\varepsilon_{n},\cF_{n,\eta},L^{2}(P_{\Phi_{n}})\big) 
\defeq \int_{0}^{\varepsilon_{n}}\sqrt{1+\log(N_{[]}(\nu,\cF_{n,\eta},L^{2}(P_{\Phi_{n}}))}d\nu$.
It remains to specify $M_n$, $\varepsilon_n$ and a bound for the entropy with bracketing. For arbitrary $f \in \cF_{n,\eta}$, there exist $s,t \in [-S,S]$, satisfying $|s-t| < \eta$, such that 
\begin{align*}
\bE[f(X,Y^{n})^{2}] 
\leq \bE[(\mathds{1}_{\{X \leq x_{0}+a_{n}s\}}-\mathds{1}_{\{X \leq x_{0}+a_{n}t\}})^{2}] 
= |F_{X}(x_{0} + a_{n}s) - F_{X}(x_{0} + a_{n}t)| 
\leq a_{n}\eta\|p_{X}\|_{\infty} 
\end{align*}
and 
$
\|f\|_{\infty} 
= \|f_{n,s,t}\|_{\infty} 
\leq 1$.
Thus, with $\varepsilon_{n} = \sqrt{a_{n}\eta\|p_{X}\|_{\infty}}$ and $M_{n} = 1$, \eqref{eq: 7.4} is equal to
\[
b_{n}n^{-1/2}J_{[]}\big(\sqrt{a_{n}\eta\|p_{X}\|_{\infty}},\cF_{n,\eta},L^{2}(P_{\Phi_n})\big) \bigg(1 + \frac{J_{[]}\big(\sqrt{a_{n}\eta\|p_{X}\|_{\infty}},
								\cF_{n,\eta},L^{2}(P_{\Phi_n})\big)}{a_{n}\eta\|p_{X}\|_{\infty}\sqrt{n}}\bigg).
\]
By Lemma~\ref{lem:bracketing} (i), it follows that for some constant $K > 0$ independent from the variable parameters in the 
following expressions and which may change from line to line, 
\[
N_{[]}\big(\nu,\cF_{n,\eta},L^{2}(P_{\Phi_n})\big) 
\leq N_{[]}\big(\nu,\cF_{n,2S},L^{2}(P_{\Phi_n})\big) 
\leq a_{n}^{2}\frac{K}{\nu^{4}}.
\]
Thus, by utilizing that $\frac{d}{d x}x(\log(K/x^{4}) + 4) = \log(K/x^{4})$, 
\begin{align*}
J_{[]}\big(\sqrt{a_{n}\eta\|p_{X}\|_{\infty}},\cF_{n,\eta},L^{2}(P_{\Phi_{n}})\big) 
\leq K\int_{0}^{\sqrt{a_{n}\eta\|p_{X}\|_{\infty}}}\log\Big(a_{n}^{2}\frac{K}{\nu^{4}}\Big)d\nu 
\leq K\sqrt{a_{n}\eta}\log\Big(\frac{K}{\eta^{2}}\Big)
\end{align*}
and Claim III follows from \eqref{eq:markov2} together with 
\[
\limsup_{n\to\infty}\bE\bigg[\sup_{|s-t|<\eta}|\fZ_{n}^{1}(s)-\fZ_{n}^{1}(t)|\bigg] 
\leq \limsup_{n\to\infty}Kb_{n}n^{-1/2}\sqrt{a_{n}\eta}\log\Big(\frac{K}{\eta^{2}}\Big) 
= K\sqrt{\eta}\log\Big(\frac{K}{\eta^{2}}\Big) .
\]

\medskip
\noindent
The assertion now follows from the fact that $\fZ_{n} = \fZ_{n}^{1}+ \fZ_{n}^{2} - \fZ_{n}^{3}$ as well as that 
$\fZ_{n}^{2}$ and $\fZ_{n}^{3}$ converge to nonrandom functions.
\end{proof}

\begin{lemma} \label{lem:pointwise argmin bounded}
Under the same assumptions as in Lemma~\ref{lem:pointwise inside argmin}, the minimizers $\hat{s}_{n}$ of 
$\fZ_{n}$ form a tight sequence.
\end{lemma}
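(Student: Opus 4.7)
The plan is a dyadic peeling argument based on the observation that $\fZ_n(0) = 0$ by construction of all three summands, so any minimizer $\hat{s}_n$ satisfies $\fZ_n(\hat{s}_n) \leq 0$. Consequently, for every $M > 0$,
\[
\bP(|\hat{s}_n| > M) \leq \bP\Big(\inf_{|s| \geq M} \fZ_n(s) \leq 0\Big),
\]
where the infimum is taken over $s$ in the minimization domain $[a_n^{-1}(x_0-T), a_n^{-1}(x_0+T)]$. To make the right-hand side go to zero as $M \to \infty$ uniformly in $n$, I would decompose $\{|s| \geq M\}$ into dyadic shells $S_k \defeq \{s : 2^k M \leq |s| < 2^{k+1} M\}$, $k \geq 0$, and apply a union bound.

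On each $S_k$ the plan is to exploit the lower bound $\fZ_n(s) \geq \fZ_n^2(s) - |\fZ_n^1(s)| - |\fZ_n^3(s)|$. The monotonicity of $\Phi_0$ combined with the vanishing of its first $\beta - 1$ derivatives at zero forces $\beta$ to be odd and $\Phi_0^{(\beta)}(0) > 0$, so the limiting drift $\fZ^2(s) = c|s|^{\beta+1}$ is non-negative with $c > 0$. Refining the pointwise convergence in Claim~I of the proof of Lemma~\ref{lem:pointwise inside argmin} to uniform control on growing intervals yields $\fZ_n^2(s) \geq (c/2)(2^k M)^{\beta+1}$ uniformly on $S_k$ for $n$ large. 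The fluctuation $\fZ_n^1$ would be handled by the maximal inequality derived from the bracketing entropy bound invoked in Claim~III of the same lemma, delivering $\bE[\sup_{s \in S_k}|\fZ_n^1(s)|] = O(\sqrt{2^k M})$. For $\fZ_n^3$, the mean value theorem applied to $F_X$ gives $|\bE[\fZ_n^3(s)]| \leq K|s|$, while a bracketing estimate for its centered part yields a fluctuation of the same order $\sqrt{2^k M}$.

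Since $\beta + 1 \geq 2$, on $S_k$ the drift $(c/2)(2^k M)^{\beta+1}$ dominates both the linear contribution $K\cdot 2^{k+1} M$ and the square-root fluctuations for $M$ sufficiently large. Markov's inequality then converts the expectation bounds into tail estimates of the form $\bP(\inf_{s \in S_k}\fZ_n(s)\leq 0) \leq K'(2^k M)^{-q}$ for some $q > 0$ independent of $k$, and summing over $k \geq 0$ gives a bound tending to zero as $M \to \infty$, uniformly in $n$ sufficiently large.

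The main obstacle will be ensuring that the maximal inequality for $\fZ_n^1$ is uniform simultaneously in $n$ and in the shell index $k$: the scaling factor $a_n$ enters both the variance envelope and the bracketing estimates of Lemma~\ref{lem:bracketing}, so the $n$-dependence has to be tracked carefully through Theorem~2.14.17' of \cite{VaartWellner2023}. Because the process is essentially a rescaled empirical process and $\beta + 1 \geq 2$ provides a comfortable margin between the deterministic drift and the stochastic fluctuations, I expect this to be a technical rather than conceptual difficulty.
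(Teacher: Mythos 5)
Your proposal is correct and follows essentially the same route as the paper's proof: a dyadic slicing argument in the spirit of Theorem~3.2.5 of \cite{VaartWellner2023}, using $\fZ_{n}(0)=0$, a deterministic drift lower bound of order $|s|^{\beta+1}$ on each shell, bracketing-based maximal inequalities (Theorem~2.14.17' of \cite{VaartWellner2023}) giving fluctuations of order $\sqrt{|s|}$ for $\fZ_{n}^{1}$ together with a linear bound for $\fZ_{n}^{3}$, and Markov's inequality summed over shells. The only small caveat is that the drift bound $\fZ_{n}^{2}(s)\geq c|s|^{\beta+1}$ should be obtained directly from a Taylor expansion combined with $\inf_{[-T,T]}p_{X}>0$ and $\Phi_{0}^{(\beta)}>0$ near zero (as the paper does), rather than by upgrading the convergence of Claim~I to the whole growing domain, where uniform convergence to $\fZ^{2}$ fails because $p_{X}(x_{0}+a_{n}s)$ need not be close to $p_{X}(x_{0})$ on far-out shells.
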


\begin{proof}
Let $s_{n} \defeq \argmin_{s \in \bR}\fZ_{n}^{2}(s)$ and note that $s_{n} = 0$, which follows from 
\begin{align*}
E_{n}(x_{0} + a_{n}s) 
&= \bE[(Y^{n}-\Phi_{n}(x_{0}))(\mathds{1}_{\{X \leq x_{0} + a_{n}s\}} - \mathds{1}_{\{X \leq x_{0}\}})] \\
&= \bE[(\Phi_{n}(X)-\Phi_{n}(x_{0}))(\mathds{1}_{\{X \leq x_{0} + a_{n}s\}} - \mathds{1}_{\{X \leq x_{0}\}})] \\
&= \bE[|\Phi_{n}(X)-\Phi_{n}(x_{0})||\mathds{1}_{\{X \leq x_{0} + a_{n}s\}} - \mathds{1}_{\{X \leq x_{0}\}}|], 
\end{align*}
where we used monotonicity of $\Phi_{n}$. Next, fix some neighborhood $U(0)$ of zero and note that, as long as $n$ is large enough, 
\[
\inf_{x\in U(0)}\Phi_{0}^{(\beta)}(x)>0
\] 
and $p_{X}>0$ on its 
support $[-T,T]$. Assuming $s \geq 0$ for the moment, a Taylor expansion of $\Phi_{n}$ around $x_{0}$ reveals for some $\xi_{n}$ between $X$ and $x_{0}$ and some 
constant $C > 0$, which may change from line to line, that 
\begin{align*}
\fZ_{n}^{2}(s) 
&= b_{n}\bE[(\Phi_{n}(X)-\Phi_{n}(x_{0}))(\mathds{1}_{\{X \leq x_{0} + a_{n}s\}} - \mathds{1}_{\{X \leq x_{0}\}})] \\
&= b_{n}\bE[\Phi_{n}^{(\beta)}(\xi_{n})(X-x_{0})^{\beta}
								(\mathds{1}_{\{X \leq x_{0} + a_{n}s\}} - \mathds{1}_{\{X \leq x_{0}\}})] \\
&\geq Cb_{n}\delta_{n}^{\beta}\bE[(X-x_{0})^{\beta}(\mathds{1}_{\{X \leq x_{0} + a_{n}s\}} - \mathds{1}_{\{X \leq x_{0}\}})] \\
&= Cb_{n}\delta_{n}^{\beta}\int_{0}^{a_{n}s}x^{\beta}p_{X}(x_{0}+x)dx \\
&\geq Cb_{n}\delta_{n}^{\beta}a_{n}^{\beta+1}s^{\beta+1} 
= Cs^{\beta+1}.
\end{align*}
By similar arguments, the same result holds for $s \leq 0$. To show uniform tightness of $\hat{s}_{n}$, we use a slicing 
argument similar to the proof of Theorem~3.2.5 in \cite{VaartWellner2023}. For $j\in\bN$, define the slices 
\[
S_{j,n} \defeq \big\{s \in \bR \mid 2^{j-1} < |s|^{\beta+1} \leq 2^{j} \big\}.
\]
Using that $\fZ_{n}(s_{n}) - \fZ_{n}(\hat{s}_{n}) \geq 0$ by the property of $\hat{s}_{n}$, we obtain by $\sigma$-subadditivity, as well as 
$\fZ_{n}(s_{n}) = 0$,
\begin{align*}
\bP\big(|\hat{s}_{n}| > 2^{K}\big) 
&\leq \sum_{j=K+1}^{\infty}\bP\bigg(\sup_{s \in S_{j,n}}(\fZ_{n}(s_{n}) - \fZ_{n}(s)) \geq 0\bigg) \\
&= \sum_{j=K+1}^{\infty}\bP\bigg(\sup_{s \in S_{j,n}}(\fZ_{n}^{2}(s) - \fZ_{n}(s) - \fZ_{n}^{2}(s)) \geq 0\bigg) \\
&\leq \sum_{j=K+1}^{\infty}\bP\bigg(\sup_{s \in S_{j,n}}(\fZ_{n}^{2}(s) - \fZ_{n}(s)) 
													\geq \inf_{s \in S_{j,n}}\fZ_{n}^{2}(s)\bigg) \\
&\leq \sum_{j=K+1}^{\infty}\bP\Big(\|\fZ_{n}^{2} - \fZ_{n}\|_{S_{j,n}} 
													\geq C2^{j-1}\Big) \\
&\leq \frac{4}{C}\sum_{j=K+1}^{\infty}\frac{1}{2^{j}}
													\bE[\|\fZ_{n} - \fZ_{n}^{2}\|_{S_{j,n}}],
\end{align*}
where we used Markov's inequality in the last step. 
Let us now define 
\begin{align*}
\fZ_{n,j}^{1} &\defeq \bigg\|\frac{b_{n}}{n}\sum_{i=1}^{n}f_{n}(X_{i},Y_{i}^{n},x_{0}+a_{n}\boldcdot) 
													- \bE[f_{n}(X_{i},Y_{i}^{n},x_{0}+a_{n}\boldcdot)]\bigg\|_{S_{j,n}}, \\
\fZ_{n,j}^{3} &\defeq \bigg\|v\frac{b_{n}}{nr_{n}}\sum_{i=1}^{n}g(X_{i},x_{0}+a_{n}\boldcdot)\bigg\|_{S_{j,n}} 
\end{align*}
and note that 
\[
\bE[\|\fZ_{n} - \fZ_{n}^{2}\|_{S_{j,n}}] 
\leq \bE[\fZ_{n,j}^{1}] + \bE[\fZ_{n,j}^{3}].
\]
As an immediate consequence, we have 
\[
\bE[\fZ_{n,j}^{3}] 
\leq v (n\delta_n^{2\beta})^{\frac{1}{2\beta+1}}\bE\mathds{1}_{(x_0,x_0+a_nS]}\leq v\| p_X\|_{[-T,T]}.
\]
Define
\[
f_{n,s} \colon [-S,S] \times \{0,1\} \to \bR, 
\quad 
f_{n,s}(x,y) \defeq (y-\Phi_{n}(x_{0}))\big(\mathds{1}_{\{x \leq x_{0}+a_{n}s\}}-\mathds{1}_{\{x \leq x_{0}\}}\big)
\]
for $s \in \bR$ and set $\cF_{n,j}^{\beta} \defeq \{f_{n,s} \mid s \in \bR, 2^{j} < |s|^{\beta+1} \leq 2^{j+1}\}$. 
For any $\varepsilon_{n} > 0$ and $M_{n} > 0$ satisfying $\bE[f^{2}] < \varepsilon_{n}^{2}$ and $\|f\|_{\infty} \leq M_{n}$ 
for every $f \in \cF_{n,j}^{\beta}$, Theorem~2.14.17' of \cite{VaartWellner2023} reveals for a universal constant $C > 0$, 
\begin{align}\label{eq: 7.5}
\bE[\fZ_{n,j}^{1}]\nonumber 
&= b_{n}n^{-1/2}\bE\bigg[\sup_{f_{n} \in \cF_{n,j}^{\beta}}\bigg|\frac{1}{\sqrt{n}}\sum_{i=1}^{n}f_{n}(X_{i},Y_{i}^{n}) 
																	- \bE[f_{n}(X_{i},Y_{i}^{n})] \bigg|\bigg] \nonumber\\
&\leq Cb_{n}n^{-1/2}J_{[]}\big(\varepsilon_{n},\cF_{n,j}^{\beta},L^{2}(P_{\Phi_n})\big)
		\bigg(1 + \frac{J_{[]}\big(\varepsilon_{n},\cF_{n,j}^{\beta},L^{2}(P_{\Phi_n})\big)}
																	{\varepsilon_{n}^{2}n^{1/2}}M_{n}\bigg) 
\end{align}
with 
$J_{[]}\big(\varepsilon_{n},\cF_{n,j}^{\beta},L^{2}(P_{\Phi_{n}})\big) 
\defeq \int_{0}^{\varepsilon_{n}}\sqrt{1+\log(N_{[]}(\nu,\cF_{n,j}^{\beta},L^{2}(P_{\Phi_{n}}))}d\nu$. It 
remains to specify $M_n$, $\varepsilon_n$ and a bound for the entropy with bracketing. For arbitrary 
$f \in \cF_{n,j}^{\beta}$, there exist $s \in [-S,S]$, satisfying $|s|^{\beta+1} \leq 2^{j+1}$, such that 
\begin{align*}
\bE[f(X,Y^{n})^{2}] 
\leq \bE[(\mathds{1}_{\{X \leq x_{0}+a_{n}s\}}-\mathds{1}_{\{X \leq x_{0}\}})^{2}] 
= |F_{X}(x_{0} + a_{n}s) - F_{X}(x_{0})| 
\leq a_{n}2^{(j+1)/(\beta+1)}\|p_{X}\|_{\infty} 
\end{align*}
and $\|f\|_{\infty} = \|f_{n,s}\|_{\infty} \leq 1$. Thus, with 
$\varepsilon_{n} = \sqrt{a_{n}\|p_{X}\|_{\infty}}2^{\frac{j+1}{2(\beta+1)}}$ and 
$M_{n} = 1$, \eqref{eq: 7.5} equals
\begin{align*}
b_{n}n^{-1/2}J_{[]}\big(\sqrt{a_{n}\|p_{X}\|_{\infty}}2^{\frac{j+1}{2(\beta+1)}},&\cF_{n,j}^{\beta},L^{2}(P_{\Phi_n})\big) 
	\bigg(1 + \frac{J_{[]}\big(\sqrt{a_{n}\|p_{X}\|_{\infty}}2^{\frac{j+1}{2(\beta+1)}},
		\cF_{n,j}^{\beta},L^{2}(P_{\Phi_n})\big)}{\sqrt{a_{n}\|p_{X}\|_{\infty}}2^{\frac{j+1}{2(\beta+1)}}\sqrt{n}}\bigg).
\end{align*}
By Lemma~\ref{lem:bracketing} (ii), it follows that for some constant $L > 0$ independent from the variable parameters in 
the following expressions and which may changes from line to line, 
\[
N_{[]}\big(\nu,\cF_{n,j}^{\beta},L^{2}(P_{\Phi_n})\big) 
\leq 2^{\frac{j+1}{\beta+1}}a_{n}\frac{L}{\nu^{2}}, 
\]
Thus, by utilizing that $\frac{d}{d x}x(\log(K/x^{2}) + 2) = \log(K/x^{2})$, 
\begin{align*}
J_{[]}\big(\sqrt{a_{n}\|p_{X}\|_{\infty}}2^{\frac{j+1}{2(\beta+1)}},\cF_{n,j}^{\beta},L^{2}(P_{\Phi_{n}})\big) 
\leq L\int_{0}^{\sqrt{a_{n}\|p_{X}\|_{\infty}}2^{\frac{j+1}{2(\beta+1)}}}\log\Big(2^{\frac{j+1}{\beta+1}}a_{n}\frac{L}{\nu^{2}}\Big)d\nu 
\leq L\sqrt{a_{n}}2^{\frac{j+1}{2(\beta+1)}}
\end{align*}
and we have 
\[
\bE[\fZ_{n,j}^{1}]
\leq Lb_{n}n^{-1/2}\sqrt{a_{n}}2^{\frac{j+1}{2(\beta+1)}}
= L2^{\frac{j+1}{2(\beta+1)}}.
\]
Summarizing, we have shown 
\[
\bP\big(|\hat{s}_{n}| > 2^{K}\big) 
\leq v\| p_X\|_{[-T,T]}\frac{4L}{C}\sum_{j=K+1}^{\infty}\frac{2^{\frac{j}{2(\beta+1)}}}{2^{j}} 
= v\| p_X\|_{[-T,T]}\frac{4L}{C}\sum_{j=K+1}^{\infty}\frac{1}{2^{j\frac{(2\beta+1)}{2(\beta+1)}}} 
\longrightarrow 0
\]
as $K\longrightarrow\infty$ and the assertion follows.
\end{proof}

\begin{proposition} \label{prop:pointwise argmin}
Under the same assumptions as in Lemma~\ref{lem:pointwise inside argmin}, the sequence of minimizers $\hat{s}_{n}$ of 
$\fZ_{n}(s)$ converges weakly to the minimizer $\hat{s}$ of $\fZ(s)$ for $n \longrightarrow \infty$.
\end{proposition}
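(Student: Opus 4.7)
The plan is to combine the two preparatory lemmas via an argmin continuous mapping theorem. With Lemma~\ref{lem:pointwise inside argmin} providing process convergence on compact intervals $[-S,S]$ and Lemma~\ref{lem:pointwise argmin bounded} providing uniform tightness of $\hat{s}_{n}$, the statement reduces to a standard localization-and-continuous-mapping argument once one verifies that the limit process $\fZ$ admits a unique minimizer almost surely.

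First I would localize. Given $\eta>0$, Lemma~\ref{lem:pointwise argmin bounded} yields $S=S(\eta)>0$ with $\bP(|\hat{s}_{n}|<S)\geq 1-\eta$ for all $n$ sufficiently large. Since $a_{n}\to 0$, the interval $[-S,S]$ is eventually contained in the optimization domain $[a_{n}^{-1}(x_{0}-T),a_{n}^{-1}(x_{0}+T)]$ of $\hat{s}_{n}$, so on $\{|\hat{s}_{n}|<S\}$ the minimizer $\hat{s}_{n}$ coincides with $\argminp_{s\in[-S,S]}\fZ_{n}(s)$. The same tightness argument applied to $\fZ$ (whose polynomial drift $s^{\beta+1}$ dominates the Brownian noise at infinity) ensures $\bP(|\hat{s}|<S)\geq 1-\eta$ for $S$ sufficiently large, and so $\hat{s}=\argmin_{s\in[-S,S]}\fZ(s)$ with probability at least $1-\eta$.

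Next I would apply the argmin-continuous-mapping theorem on the fixed compact set $[-S,S]$. Lemma~\ref{lem:pointwise inside argmin} gives $\fZ_{n}\longrightarrow_{\cL}\fZ$ in $\ell^{\infty}([-S,S])$. To invoke continuity of $\argminp\colon\cC([-S,S])\to[-S,S]$, it is enough that the limit $\fZ|_{[-S,S]}$ admits a unique minimizer almost surely. Because $\fZ$ is a two-sided Brownian motion plus a smooth deterministic drift $\frac{\Phi_{0}^{(\beta)}(0)p_{X}(x_{0})}{(\beta+1)!}s^{\beta+1}-vp_{X}(x_{0})s$, the argmin-uniqueness result of \cite{Cattaneo2024bootstrap} (an adapted version of which is explicitly used in the proof of Theorem~\ref{thm:pointwise rate}) yields a unique minimizer almost surely, and consequently $\argminp=\argmin$ almost surely. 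The continuous mapping theorem then gives $\argminp_{s\in[-S,S]}\fZ_{n}(s)\longrightarrow_{\cL}\argmin_{s\in[-S,S]}\fZ(s)$.

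Finally, I would combine the two steps: for every bounded continuous $\varphi\colon\bR\to\bR$,
\[
\big|\bE[\varphi(\hat{s}_{n})]-\bE[\varphi(\hat{s})]\big|\leq\big|\bE[\varphi(\argminp_{[-S,S]}\fZ_{n})]-\bE[\varphi(\argmin_{[-S,S]}\fZ)]\big|+2\|\varphi\|_{\infty}\eta,
\]
so the left-hand side is eventually at most $3\|\varphi\|_{\infty}\eta$. Since $\eta>0$ was arbitrary, $\hat{s}_{n}\longrightarrow_{\cL}\hat{s}$. The main obstacle is the a.s.~uniqueness of the minimizer of the limit process $\fZ$ on $[-S,S]$, which is not automatic because the drift is only locally non-degenerate; but this is precisely what the Cattaneo-type lemma handles, and it has already been acknowledged as a prerequisite in the main body of the proof of Theorem~\ref{thm:pointwise rate}.
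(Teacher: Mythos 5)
Your proposal is correct and follows essentially the same route as the paper: process convergence on compacts (Lemma~\ref{lem:pointwise inside argmin}), uniform tightness of $\hat{s}_{n}$ (Lemma~\ref{lem:pointwise argmin bounded}), a.s.~uniqueness and tightness of the limit minimizer, and then the argmin-continuous-mapping theorem — the paper simply cites Theorem~3.2.2 of \cite{VaartWellner2023} (with uniqueness/tightness of $\hat{s}$ from \cite{Wright1981asymptotic}) where you unpack that theorem by hand via localization to $[-S,S]$. The only differences are cosmetic: your uniqueness reference (a Cattaneo-type lemma rather than \cite{Wright1981asymptotic}) and slightly loose constants in the final test-function estimate.
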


\begin{proof}
From Lemma~\ref{lem:pointwise inside argmin}, we have for every compact set $\cK \subset \bR$ that $(\fZ_{n}(s))_{s\in \cK}$ 
converges weakly to $(\fZ(s))_{s\in \cK}$ in $\ell^{\infty}(\cK)$. Moreover, the sample paths 
$s \mapsto \fZ(s)$ are continuous and $\hat{s}$ is unique a.s.~and tight (cf.~\cite{Wright1981asymptotic}). By 
Lemma~\ref{lem:pointwise argmin bounded}, $\hat{s}_{n}$ is uniformly tight and consequently, by Theorem~3.2.2 of 
\cite{VaartWellner2023}, 
$\hat{s}_{n} \longrightarrow_{\cL} \hat{s}$ as $n \longrightarrow \infty$.
\end{proof}

For the results related to the proof of Theorem~\ref{thm:pointwise rate} (ii) and (iii), let us recall the definitions 
\[
h_{n} \colon [-T,T] \times \{0,1\} \times [-T,T] \to \bR, 
\quad h_{n}(x,y,t) \defeq (y-\Phi_{n}(x_{0}))\mathds{1}_{\{x \leq t\}} 
\]
and $H_{n}(t) \defeq \bE[h_{n}(X,Y^{n},t)]$ for every $t \in [-T,T]$. Further, let $(W(s))_{s \in [0,1]}$ denote a standard 
Brownian motion on $[0,1]$, define the stochastic processes 
\begin{align*}
\fW_{n}^{1}(s) 
	&\defeq \frac{1}{\sqrt{n}}\sum_{i=1}^{n}\big(h_{n}(X_{i},Y_{i}^{n},F_{X}^{-1}(s)) - H_{n}(F_{X}^{-1}(s))\big) \\
\fW_{n}^{2}(s) 
	&\defeq \sqrt{n}H_{n}(F_{X}^{-1}(s)) \\
\fW_{n}^{3}(s) 
	&\defeq v\frac{1}{n}\sum_{i=1}^{n}\mathds{1}_{\{X_{i} \leq F_{X}^{-1}(s)\}} \\
\fW^{1}(s) 
	&\defeq \sqrt{\Phi_{0}(0)(1-\Phi_{0}(0))}B(s) \\
\fW_{c}^{2}(s) 
	&\defeq \sqrt{c}\Phi_{0}^{(\beta)}(0)\bE\big[(X-x_{0})^{\beta}\mathds{1}_{\{X \leq F_{X}^{-1}(s)\}}\big] \\
\fW^{3}(s) 
	&\defeq vs
\end{align*}
and set for $s \in [0,1]$, 
\[
\fW_{n}(s) \defeq \fW_{n}^{1}(s) + \fW_{n}^{2}(s) - \fW_{n}^{3}(s), 
\quad 
\fW_{c}(s) \defeq \fW^{1}(s) + \fW_{c}^{2}(s) - \fW^{3}(s).
\]
Further, we redefine 
$\hat{s}_{n} \defeq \argminp_{s \in [0,1]} \fW_{n}(s)$ and $\hat{s}_{c} \defeq \argmin_{s \in [0,1]} \fW_{c}(s)$ from the previous 
proofs to now denote the minimizers of $\fW_{n}$ and $\fW_{c}$ respectively.

\begin{lemma} \label{lem:pointwise inside argmin - fast}
Let $\beta \in \bN$, $x_{0}$ an interior point of $ \cX$ and assume $\Phi_{0}$ to be $\beta$-times continuously 
differentiable in a neighborhood of $0$ with the $\beta$th derivative being the first non-vanishing derivative in $0$. 
Then, 
\[
\fW_{n}\longrightarrow_{\cL}\fW_{c} \ \text{ in }\, \ell^{\infty}([0,1]) 
\]
as 
$n \longrightarrow \infty$ and $n\delta_{n}^{2\beta} \longrightarrow c \in [0,\infty)$.
\end{lemma}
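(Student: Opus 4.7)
The plan is to establish the weak convergence component-by-component. Since $\fW_n^2$ is deterministic and $\fW_n^3$ converges to a deterministic limit uniformly in probability, the problem reduces to showing $\fW_n^1 \Rightarrow \fW^1$ in $\ell^\infty([0,1])$ and then combining via Slutsky in $\ell^\infty([0,1])$. For the easy pieces: $\fW_n^3(s) = vF_n \circ F_X^{-1}(s)$, so the DKW inequality gives $\sup_{s\in[0,1]}|\fW_n^3(s) - vs| = \cO_{\bP}(n^{-1/2})$. For $\fW_n^2$, a Taylor expansion of $\Phi_n$ around $x_0$ up to order $\beta$ yields, for some $\xi_n(x)$ between $x_0$ and $x$,
\[
\Phi_n(x) - \Phi_n(x_0) = \frac{\delta_n^\beta}{\beta!}\Phi_0^{(\beta)}(\delta_n\xi_n(x))(x-x_0)^\beta,
\]
so $\sqrt{n}H_n(F_X^{-1}(s)) = \sqrt{n}\delta_n^\beta \cdot \bE\big[\frac{\Phi_0^{(\beta)}(\delta_n\xi_n(X))}{\beta!}(X-x_0)^\beta\mathds{1}_{\{X \leq F_X^{-1}(s)\}}\big]$. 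Using $\sqrt{n}\delta_n^\beta \to \sqrt{c}$, continuity of $\Phi_0^{(\beta)}$ at $0$, and dominated convergence on the compact support of $X$, one obtains uniform convergence to $\fW_c^2$.

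The heart of the proof lies in handling $\fW_n^1$. The natural decomposition is $\fW_n^1 = M_n + R_n$ with
\[
M_n(s) \defeq \frac{1}{\sqrt{n}}\sum_{i=1}^n (Y_i^n - \Phi_n(X_i))\mathds{1}_{\{X_i \leq F_X^{-1}(s)\}},
\]
and $R_n$ being the recentered bias contribution from $\Phi_n(X_i) - \Phi_n(x_0)$. The class defining $R_n$ has envelope $\cO(\delta_n^\beta)$ on $[-T,T]$, so an application of Theorem~2.14.17' of \cite{VaartWellner2023} with a bracketing entropy bound analogous to that in Claim~III of the proof of Lemma~\ref{lem:pointwise inside argmin} gives $\bE[\|R_n\|_{[0,1]}] \to 0$, hence $\|R_n\|_{[0,1]} = o_{\bP}(1)$.

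For $M_n$, I would show weak convergence in $\ell^\infty([0,1])$ along two standard tracks. For finite-dimensional convergence, compute for $s \leq t$ the covariance of the summands by the tower property:
\[
\bE\!\big[(Y^n-\Phi_n(X))^2\mathds{1}_{\{X \leq F_X^{-1}(s)\}}\big] = \bE\!\big[\Phi_n(X)(1-\Phi_n(X))\mathds{1}_{\{X \leq F_X^{-1}(s)\}}\big] \longrightarrow \Phi_0(0)(1-\Phi_0(0))s,
\]
which yields the Brownian-motion covariance structure; Lindeberg-Feller applies because the summands are bounded by $1/\sqrt{n}$. Asymptotic tightness follows verbatim from the bracketing argument in Claim~III of the proof of Lemma~\ref{lem:pointwise inside argmin} applied to the VC class $\{(x,y)\mapsto (y-\Phi_n(x))\mathds{1}_{\{x \leq F_X^{-1}(s)\}} : s \in [0,1]\}$, which has uniform bracketing numbers $N_{[]}(\nu,\cdot,L^2) \lesssim 1/\nu^2$ thanks to the monotonicity of $F_X^{-1}$ and the $[0,1]$-boundedness of the envelope. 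The main obstacle is the triangular-array feature of the function classes (the prefactor $Y_i^n - \Phi_n(X_i)$ depends on $n$), which is handled by the observation that $\Phi_n \to \Phi_0(0)$ uniformly on compacts so the classes are small perturbations of a fixed Donsker class; all constants in the entropy bounds may be taken uniform in $n$. Finally, Slutsky in $\ell^\infty([0,1])$ combines the three components into $\fW_n \Rightarrow \fW_c$.
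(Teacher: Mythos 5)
Your proposal is correct and follows essentially the same route as the paper's proof: a Taylor expansion of $\Phi_n$ around $x_0$ for $\fW_n^{2}$, Glivenko--Cantelli/DKW for $\fW_n^{3}$, the Lindeberg--Feller CLT for the finite-dimensional distributions, and a bracketing-entropy maximal inequality (Theorem~2.14.17' of van der Vaart--Wellner) for asymptotic equicontinuity of the leading empirical-process term, combined at the end since $\fW_n^{2}$ and $\fW_n^{3}$ have nonrandom limits. The only cosmetic difference is your recentering at $\Phi_n(X_i)$ via the split $\fW_n^{1}=M_n+R_n$ with $\|R_n\|_{[0,1]}=o_{\bP}(1)$, whereas the paper works directly with the $\Phi_n(x_0)$-centered class and absorbs the resulting $\cO(\delta_n^{2\beta})$ product-of-means term into the covariance computation.
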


\begin{proof}
\textsc{Claim I: } $\| \fW_{n}^{2}-\fW_{c}^{2}\|_{[0,1]}\longrightarrow_{\bP}0$.

\smallskip
\noindent
\textit{Proof of Claim I. } By assumption on $\Phi_{n}$, 
a Taylor expansion of $\Phi_{n}$ around $x_{0}$ of order $\beta$ with Lagrange remainder yields the existence of some 
$\xi_{n}$ between $x$ and $x_{0}$ such that
\begin{equation}\label{eq:Taylor}
\Phi_{n}(x)-\Phi_{n}(x_{0}) 
= \Phi_{n}^{(\beta)}(\xi_{n})(x-x_{0})^{\beta} 
= \delta_{n}^{\beta}\Phi_{0}^{(\beta)}(\delta_{n}\xi_{n})(x-x_{0})^{\beta}.
\end{equation}
From $n\delta_{n}^{2\beta} \longrightarrow c$, we obtain 
\begin{align*}
\sup_{s \in [0,1]} |\fW_{n}^{2}(s) - \fW_{c}^{2}(s)| 
&= \sup_{s \in [0,1]} \big|\bE\big[n^{1/2}\delta_{n}^{\beta}\Phi_{0}^{(\beta)}(\delta_{n}\xi_{n})(X-x_{0})^{\beta}
													\mathds{1}_{\{X \leq F_{X}^{-1}(s)\}}\big] - \fW_{c}^{2}(s)\big| \\
&\leq \bE\big[\big|n^{1/2}\delta_{n}^{\beta}\Phi_{0}^{(\beta)}(\delta_{n}\xi_{n})
														-\sqrt{c}\Phi_{0}^{(\beta)}(0)\big||X-x_{0}|^{\beta}\big] 
\end{align*}
which converges to zero, as $n \longrightarrow \infty$. 

\medskip
\noindent
\textsc{Claim II.} $\| \fW_{n}^{3}-\fW^3\|_{[0,1]}\longrightarrow_{\bP}0$.

\smallskip
\noindent
\textit{Proof of Claim II. }The convergence $\fW_{n}^{3} \longrightarrow_{\bP} vF_{X} \circ F_{X}^{-1} = \fW^{3}$ in 
$\ell^{\infty}([0,1])$ as
$n \longrightarrow \infty$ is exactly the classical Glivenko-Cantelli result. 

\medskip
\noindent
\textsc{Claim III: } $\fW_{n}^{1}\longrightarrow_{\cL}\fW^1$ in $\ell^{\infty}([0,1])$.

\smallskip
\noindent
\textit{Proof of Claim III. } By Theorem~1.5.4 of \cite{VaartWellner2023}, it is sufficient to show that the sequence of 
stochastic processes $\fW_{n}^{1}$ is asymptotically tight and 
that for every finite subset $\{s_{1},\dots,s_{k}\} \subset [0,1]$, the marginals 
$(\fW_{n}^{1}(s_{1}),\dots,\fW_{n}^{1}(s_{k}))$ converge weakly to $(\fW^{1}(s_{1}),\dots,\fW^{1}(s_{k}))$.

\smallskip
\noindent
\textit{Convergence of finite-dimensional distributions. }For $k \in \bN$, let $\{s_{1},\dots,s_{k}\} \subset [0,1]$ denote some arbitrary finite subset of $[0,1]$ 
and note that 
\begin{align*}
\begin{pmatrix}
\fW_{n}^{1}(s_{1}) \\
\vdots \\
\fW_{n}^{1}(s_{k})
\end{pmatrix} 
= 
\sum_{i=1}^{n}\frac{1}{\sqrt{n}}
\begin{pmatrix}
h_{n}(X_{i},Y_{i}^{n},F_{X}^{-1}(s_{1})) - \bE[h_{n}(X_{i},Y_{i}^{n},F_{X}^{-1}(s_{1}))] \\
\vdots \\
h_{n}(X_{i},Y_{i}^{n},F_{X}^{-1}(s_{k})) - \bE[h_{n}(X_{i},Y_{i}^{n},F_{X}^{-1}(s_{k}))]
\end{pmatrix}
.
\end{align*}
As a shorthand notation, let us introduce 
\begin{align*}
V_{i}^{n}
\defeq 
\frac{1}{\sqrt{n}}
\begin{pmatrix}
h_{n}(X_{i},Y_{i}^{n},F_{X}^{-1}(s_{1})) \\
\vdots \\
h_{n}(X_{i},Y_{i}^{n},F_{X}^{-1}(s_{k}))
\end{pmatrix}
\end{align*}
for $i=1,\dots,n$. Then, $\|V_{i}^{n}\|_2^{2} \leq k/n$ by definition of $h_{n}$ and we have for every 
$\varepsilon > 0$, 
\[
\sum_{i=1}^{n}\bE[\|V_{i}^{n}\|_2^{2}\mathds{1}_{\{\|V_{i}^{n}\|_2 > \varepsilon\}}] 
\leq \frac{k}{n}\sum_{i=1}^{n}\bE[\mathds{1}_{\{\|V_{i}^{n}\|_2^{2} > \varepsilon^{2}\}}] 
\leq \frac{k}{n}\sum_{i=1}^{n}\bE[\mathds{1}_{\{k > n\varepsilon^{2}\}}] 
= k\mathds{1}_{\{k > n\varepsilon^{2}\}} 
\longrightarrow 0
\]
as $n \longrightarrow \infty$. As concerns the covariance matrices of $\sum_iV_{i}^n$, note that for 
$j,\ell \in \{1,\dots,k\}$, 
\begin{align*}
\bigg(\sum_{i=1}^{n}\Cov(V_{i}^{n})\bigg)_{j\ell} 
&= \bE\big[(Y^{n}-\Phi_{n}(x_{0}))^{2}\mathds{1}_{\{X \leq F_{X}^{-1}(s_{j})\}}
											\mathds{1}_{\{X \leq F_{X}^{-1}(s_{\ell})\}}\big] \\
&\quad- \bE\big[(Y^{n}-\Phi_{n}(x_{0}))\mathds{1}_{\{X \leq F_{X}^{-1}(s_{j})\}}\big]
						\bE\big[(Y^{n}-\Phi_{n}(x_{0}))\mathds{1}_{\{X \leq F_{X}^{-1}(s_{\ell})\}}\big] \\
&= \bE\big[(Y^{n}-\Phi_{n}(x_{0}))^{2}
							\mathds{1}_{\{X \leq \min{\{F_{X}^{-1}(s_{j}),F_{X}^{-1}(s_{\ell})\}}\}}\big] + \cO(\delta_n^{2\beta})
\end{align*}
by \eqref{eq:Taylor}. For the remaining summand, we observe 
\begin{align*}
&\bE\big[(Y^{n}-\Phi_{n}(x_{0}))^{2}\mathds{1}_{\{X \leq F_{X}^{-1}(\min{\{s_{j},s_{\ell}\}})\}}\big] \\
&\qquad= \bE\big[((1-\Phi_{n}(x_{0}))^{2}\Phi_{n}(X) + (\Phi_{n}(x_{0}))^{2}(1-\Phi_{n}(X)))
											\mathds{1}_{\{X \leq F_{X}^{-1}(\min{\{s_{j},s_{\ell}\}})\}}\big] \\
&\qquad= \bE\big[(\Phi_{n}(X) - 2\Phi_{n}(x_{0})\Phi_{n}(X) + \Phi_{n}(x_{0})^{2})
											\mathds{1}_{\{X \leq F_{X}^{-1}(\min{\{s_{j},s_{\ell}\}})\}}\big] \\
&\qquad\quad\longrightarrow \bE\big[(\Phi_{0}(0) - \Phi_{0}(0)^{2})
									\mathds{1}_{\{X \leq F_{X}^{-1}(\min{\{s_{j},s_{\ell}\}})\}}\big] 
\end{align*}
as $n \longrightarrow \infty$ by the theorem of dominated convergence. Thus, 
\begin{align*}
\bigg(\sum_{i=1}^{n}\Cov(V_{i}^{n})\bigg)_{j\ell} 
\longrightarrow \, &\bE\big[(\Phi_{0}(0) - \Phi_{0}(0)^{2})\mathds{1}_{\{X \leq F_{X}^{-1}(\min{\{s_{j},s_{\ell}\}})\}}\big] \\
&= \Phi_{0}(0)(1-\Phi_{0}(0))\min{\{s_{j},s_{\ell}\}} \\
&= \Cov(\fW^{1}(s_{j}),\fW^{1}(s_{\ell})),
\end{align*}
and by the Lindeberg-Feller central limit theorem, we conclude
\[
(\fW_{n}^{1}(s_{1}),\dots,\fW_{n}^{1}(s_{k})) 
\longrightarrow_{\cL} (\fW^{1}(s_{1}),\dots,\fW^{1}(s_{k})) 
\quad \text{as } n \longrightarrow \infty.
\]

\smallskip
\noindent
\textit{Asymptotic tightness.} By convergence of the finite dimensional distributions, it is sufficient by Theorem~1.5.7 of 
\cite{VaartWellner2023} to prove asymptotic uniform equicontinuity in probability. 
Define 
\[
h_{n,s,t} \colon [0,1] \times \{0,1\} \to \bR, 
\quad 
h_{n,s,t}(x,y) \defeq (y-\Phi_{n}(x_{0}))\big(\mathds{1}_{\{x \leq F_{X}^{-1}(s)\}}-\mathds{1}_{\{x \leq F_{X}^{-1}(t)\}}\big)
\]
for $s,t \in [0,1]$ and $\cH_{n,\eta} \defeq \{h_{n,s,t} \mid s,t \in [0,1], |s-t| < \eta\}$ for $\eta > 0$. For any 
$\varepsilon_{n} > 0$ and $M_{n} > 0$ satisfying $\bE[h^{2}] < \varepsilon_{n}^{2}$ and $\|h\|_{\infty} \leq M_{n}$ for every 
$h \in \cH_{n,\eta}$, Theorem~2.14.17' in \cite{VaartWellner2023} provides for a universal constant $C > 0$ the bound 
\begin{align*}
\bE\bigg[\sup_{|s-t|<\eta}|\fW_{n}^{1}(s)-\fW_{n}^{1}(t)|\bigg] 
&= \bE\bigg[\sup_{h_{n} \in \cH_{n,\eta}}\bigg|\frac{1}{\sqrt{n}}\sum_{i=1}^{n}h_{n}(X_{i},Y_{i}^{n}) 
																	- \bE[h_{n}(X_{i},Y_{i}^{n})] \bigg|\bigg] \\
&\leq CJ_{[]}\big(\varepsilon_{n},\cH_{n,\eta},L^{2}(P_{\Phi_n})\big)
		\bigg(1 + \frac{J_{[]}\big(\varepsilon_{n},\cH_{n,\eta},L^{2}(P_{\Phi_n})\big)}
																	{\varepsilon_{n}^{2}n^{1/2}}M_{n}\bigg), 
\end{align*}
with 
$J_{[]}\big(\varepsilon_{n},\cH_{n,\eta},L^{2}(P_{\Phi_{n}})\big) 
\defeq \int_{0}^{\varepsilon_{n}}\sqrt{1+\log(N_{[]}(\nu,\cH_{n,\eta},L^{2}(P_{\Phi_{n}}))}d\nu$.
For any $h \in \cH_{n,\eta}$, there exist $s,t \in [0,1]$ with $|s-t| < \eta$ such that 
\begin{align*}
\bE[h(X,Y^{n})^{2}] 
&\leq \bE[(\mathds{1}_{\{X \leq F_{X}^{-1}(s)\}}-\mathds{1}_{\{X \leq F_{X}^{-1}(t)\}})^{2}] \\
&= \bE[\mathds{1}_{\{F_{X}^{-1}(\min\{s,t\}) < X \leq F_{X}^{-1}(\max\{s,t\})\}}] \\
&= F_{X}(F_{X}^{-1}(\max\{s,t\})) - F_{X}(F_{X}^{-1}(\min\{s,t\})) 
= |s - t| 
< \eta 
\end{align*}
and $\|h\|_{\infty} \leq 1$.
Thus, by choosing $\varepsilon_{n} = \sqrt{\eta}$ and $M_{n} = 1$, we have 
\[
\bE\bigg[\sup_{|s-t|<\eta}|\fW_{n}^{1}(s)-\fW_{n}^{1}(t)|\bigg]
\leq CJ_{[]}\big(\eta^{1/2},\cH_{n,\eta},L^{2}(P_{\Phi_n})\big)
		\bigg(1 + \frac{J_{[]}\big(\sqrt{\eta},\cH_{n,\eta},L^{2}(P_{\Phi_n})\big)}{\eta \sqrt{n}}\bigg). 
\]
By Lemma~\ref{lem:bracketing} (iii), it follows that for some constant $K > 0$ which does not depend on 
the variables in the respective expressions and which may change from line to line, 
\[
N_{[]}(\nu,\cH_{n,\eta},L^{2}(P_{\Phi_n})) 
\leq N_{[]}(\nu,\cH_{n,1},L^{2}(P_{\Phi_n})) 
\leq \frac{K}{\nu^{4}}. 
\]
Thus, by utilizing that $\frac{d}{d x}x(\log(K/x^{4}) + 4) = \log(K/x^{4})$, 
\[
J_{[]}\big(\sqrt{\eta},\cH_{n,\eta},L^{2}(P_{\Phi_n})\big) 
\leq K\int_{0}^{\sqrt{\eta}}\log(K/\nu^{4})d\nu 
\leq K\sqrt{\eta}\log(1/\eta^{2})
\]
and Claim III follows by Markov's inequality and 
\[
\limsup_{n\to\infty}\bE\bigg[\sup_{|s-t|<\eta}|\fW_{n}^{1}(s)-\fW_{n}^{1}(t)|\bigg] 
\leq K\sqrt{\eta}\log(1/\eta^{2}).
\]

\medskip
\noindent
The 
assertion now follows from the fact that $\fW_{n} = \fW_{n}^{1}+ \fW_{n}^{2} - \fW_{n}^{3}$ as well as that 
$\fW_{n}^{2}$ and $\fW_{n}^{3}$ converge to nonrandom functions.
\end{proof}

\begin{proposition} \label{prop:pointwise argmin - fast}
Under the same conditions as in Lemma~\ref{lem:pointwise inside argmin - fast}, the sequence of minimizers $\hat{s}_{n}$ 
of $(\fW_{n}(s))_{s\in[0,1]}$ converges weakly to the minimizer $\hat{s}_{c}$ of $(\fW_{c}(s))_{s\in[0,1]}$ as 
$n \longrightarrow \infty$ as long as we have $n\delta_{n}^{2\beta} \longrightarrow c \in [0,\infty)$.
\end{proposition}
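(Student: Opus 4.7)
The proof should follow the template already used for Proposition~\ref{prop:pointwise argmin}, but the situation is actually simpler here because the minimizations are all performed over the compact set $[0,1]$, so uniform tightness of $(\hat s_n)$ comes for free and no slicing/peeling argument is needed. My plan is to invoke the argmin-continuous-mapping theorem (Theorem~3.2.2 in \cite{VaartWellner2023}) with three ingredients: weak convergence of the whole process, continuity of the limit paths, and a.s.~uniqueness of the limiting argmin.

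First, I would record that by Lemma~\ref{lem:pointwise inside argmin - fast}, under $n\delta_n^{2\beta}\to c\in[0,\infty)$, the process $(\fW_n(s))_{s\in[0,1]}$ converges weakly in $\ell^{\infty}([0,1])$ to $(\fW_c(s))_{s\in[0,1]}$. Since $\fW_c$ is the sum of $\sqrt{\Phi_0(0)(1-\Phi_0(0))}W$, a continuous deterministic drift $\fW_c^2$ (an integral with respect to $P_X$ is continuous in the upper limit because $P_X$ has a continuous Lebesgue density), and the linear function $-vs$, its sample paths are almost surely continuous on $[0,1]$.

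Next, I would handle uniqueness of $\hat s_c=\argmin_{s\in[0,1]}\fW_c(s)$. For $c=0$, $\fW_0$ is (up to a positive scaling of $W$) a Brownian motion with linear drift, whose argmin on $[0,1]$ is a.s.~attained at a unique point by \cite{Stryhn1996triangular}. For $c>0$, the deterministic drift $\fW_c^2(s)=\sqrt{c}\,\Phi_0^{(\beta)}(0)\bE[(X-x_0)^\beta\mathds{1}_{\{X\leq F_X^{-1}(s)\}}]-vs$ is the sum of an absolutely continuous function with bounded variation and a linear term; the Hölder-continuity assumption on $F_X^{-1}$ with exponent $\alpha>1/2$ ensures this drift is Hölder with a sufficiently low exponent to apply the adjustment of Lemma~A.2 in \cite{Cattaneo2024bootstrap} (see also \cite{Cattaneo2025continuity}) on the compact interval $[0,1]$, from which uniqueness of the minimizer of a Brownian motion plus this drift follows almost surely.

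Finally, since $\hat s_n\in[0,1]$ for every $n$, the sequence $(\hat s_n)$ is automatically uniformly tight. Together with the previous two points, the hypotheses of Theorem~3.2.2 in \cite{VaartWellner2023} are satisfied, yielding $\hat s_n\longrightarrow_{\cL}\hat s_c$. The only delicate step is the uniqueness claim for $c>0$; everything else is routine given Lemma~\ref{lem:pointwise inside argmin - fast}.
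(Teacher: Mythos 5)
Your overall strategy is exactly the paper's: weak convergence of $(\fW_n)$ in $\ell^{\infty}([0,1])$ from Lemma~\ref{lem:pointwise inside argmin - fast}, continuity of the limit paths, uniform tightness of $\hat s_n$ for free since $\hat s_n\in[0,1]$, and then the argmax/argmin continuous mapping theorem (Theorem~3.2.2 of \cite{VaartWellner2023}). The one substantive step where you deviate is the uniqueness of $\hat s_c$ for $c>0$, and there your argument has a genuine gap. First, the Hölder continuity of $F_X^{-1}$ with exponent $\alpha>1/2$ is \emph{not} among the hypotheses of this Proposition (which only inherits the conditions of Lemma~\ref{lem:pointwise inside argmin - fast}); in the paper that assumption appears only in the boundary case (ii) of Theorem~\ref{thm:pointwise rate}, where it is used for a different purpose, namely continuity of the distribution function of the limiting argmin. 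So a proof of the Proposition may not lean on it. Second, even granting that assumption, the result you cite (the adjustment of Lemma~A.2 of \cite{Cattaneo2024bootstrap}) delivers continuity of the \emph{law} of the argmin, not almost sure uniqueness of the minimizing point; the argmin continuous mapping theorem needs the latter, which is a pathwise property and does not follow from non-atomicity of the distribution of a measurable selection of minimizers.

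The fix is both simpler and assumption-free, and is what the paper does: for any $c\geq 0$ the process $\fW_c$ is a scaled Brownian motion plus a continuous deterministic drift on the compact interval $[0,1]$ (note that $\bE\big[(X-x_0)^{\beta}\mathds{1}_{\{X\leq F_X^{-1}(s)\}}\big]=\int_0^s(F_X^{-1}(u)-x_0)^{\beta}du$ is even Lipschitz in $s$, so your worry about the drift's regularity is moot), and for such processes the minimizer is almost surely unique by Theorem~2 of \cite{Pimentel2014location} (alternatively, a Kim--Pollard-type argument using nondegeneracy of the Gaussian increments). With uniqueness established this way, uniformly in $c\in[0,\infty)$ and without any condition on $F_X^{-1}$ beyond what is already assumed on $P_X$, the rest of your argument goes through verbatim; your treatment of the case $c=0$ via \cite{Stryhn1996triangular} is also unnecessary once this unified uniqueness argument is in place.
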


\begin{proof}
By Lemma~\ref{lem:pointwise inside argmin - fast}, $\fW_{n}\longrightarrow_{\cL}\fW$ in 
$\ell^{\infty}([0,1])$ as $n \longrightarrow \infty$. Further, the sample 
paths $s \mapsto \fW_{c}(s)$ are continuous and $\hat{s}_{c}$ is unique by Theorem~2 of \cite{Pimentel2014location}
and tight. As $\hat{s}_{n} \in [0,1]$ is uniformly tight, Theorem~3.2.2 of \cite{VaartWellner2023} reveals 
$\hat{s}_{n} \longrightarrow_{\cL} \hat{s}_{c}$ as $n \longrightarrow \infty$.
\end{proof}

\section{Remaining proofs of Section~\ref{sec:L1 limit}}
In this section, the proof of Theorem~\ref{thm:lower bound L1}, Proposition~\ref{prop: 4.2} and Theorem~\ref{thm:l1 rate} (i) are given.

\subsection{Proof of Theorem~\ref{thm:lower bound L1}} \label{proof:L1 lower bound}
We will calculate lower bounds for 
\[
\inf_{T_n^{\delta}}\sup_{\Phi\in\cF_{\delta}}\Big(\sqrt{n}\wedge \Big(\frac{n}{\delta}\Big)^{1/3}\Big)
				\bE_{\Phi}^{\otimes n}\bigg[\int_{-T}^{T}\big|T_n^{\delta}(x)-\Phi(x)\big|dx\bigg]
\]
separately for both $\delta \geq n^{-1/2}$ and $\delta < n^{-1/2}$, noting that 
$\max\{n^{-1/2},(\frac{n}{\delta})^{-1/3}\} = (\frac{n}{\delta})^{-1/3}$ if and only if $\delta \geq n^{-1/2}$. \\
$\bullet$ Let us start with the case $\delta < n^{-1/2}$. Let $C \leq \frac{1}{\sqrt{8T}}$, let 
$\eta_{n,\delta} \defeq 1/2 - \delta T - Cn^{-1/2}$ and define 
\begin{align*}
\Phi_{0,n} \colon \bR \to [0,1], 
&\quad \Phi_{0,n}|_{[-T,T]}(x) \defeq \delta(x+T) + \eta_{n,\delta} + 2Cn^{-1/2}, \\
\Phi_{1,n} \colon \bR \to [0,1], 
&\quad \Phi_{1,n}|_{[-T,T]}(x) \defeq \delta(x+T) + \eta_{n,\delta}, 
\end{align*}
where both functions are defined outside $[-T,T]$ by their values at the respective boundaries. 
Obviously, $\Phi_{0,n}, \Phi_{1,n} \in \cF_{\delta}$ and note that 
\[
\int_{-T}^{T}|\Phi_{0,n}(x)-\Phi_{1,n}(x)|dx 
= 4TCn^{-1/2}.
\]
Note further that for $n \geq 16(C+T)^{2}$ and all $x \in [-T,T]$, 
\begin{align*}
\Phi_{0,n}(x) &\geq \Phi_{1,n}(x) \geq \Phi_{1,n}(-T) = \eta_{n,\delta} \geq 1/2 - n^{-1/2}(C+T) \geq 1/4, \\
1-\Phi_{1,n}(x) &\geq 1-\Phi_{0,n}(x) \geq 1-\Phi_{0,n}(T) = 1 - 2T\delta - \eta_{n,\delta} - 2Cn^{-1/2} = \eta_{n,\delta} \geq 1/4.
\end{align*}
Writing $P_{0,n}^{\otimes n} \defeq P_{\Phi_{0,n}}^{\otimes n}$, 
$P_{1,n}^{\otimes n} \defeq P_{\Phi_{1,n}}^{\otimes n}$, we have for $\alpha = 4C^{2}$ 
\begin{align*}
&h^{2}\big(P_{0,n}^{\otimes n},P_{1,n}^{\otimes n}\big) 
\leq nh^{2}\big(P_{0,n},P_{1,n}\big) \\
&\quad= \frac{n}{2}\int_{-T}^{T}\Big(\sqrt{\Phi_{0,n}(x)} - \sqrt{\Phi_{1,n}(x)}\Big)^{2} 
								+ \Big(\sqrt{1-\Phi_{0,n}(x)} - \sqrt{1-\Phi_{1,n}(x)}\Big)^{2}dP_{X}(x) \\
&\quad= \frac{n}{2}\int_{-T}^{T}
		\bigg(\frac{\Phi_{0,n}(x)-\Phi_{1,n}(x)}{\sqrt{\Phi_{0,n}(x)} + \sqrt{\Phi_{1,n}(x)}}\bigg)^{2} 
		+ \bigg(\frac{\Phi_{0,n}(x)-\Phi_{1,n}(x)}{\sqrt{1-\Phi_{0,n}(x)} + \sqrt{1-\Phi_{1,n}(x)}}\bigg)^{2}dP_{X}(x) \\
&\quad\leq \frac{n}{8}\int_{-T}^{T}(\Phi_{0,n}(x)-\Phi_{1,n}(x))^{2}
		\bigg(\frac{1}{\Phi_{1,n}(x)} + \frac{1}{1-\Phi_{0,n}(x)}\bigg)dP_{X}(x) \\
&\quad\leq \frac{n}{8}\int_{-T}^{T}8(\Phi_{0,n}(x)-\Phi_{1,n}(x))^{2}dP_{X}(x) 
= 4C^{2} 
= \alpha 
< 2.
\end{align*}
From Chapter~2.2 in \cite{Tsybakov2008} and Theorem~2.2 (ii) of \cite{Tsybakov2008}, we have 
\[
\inf_{T_n^{\delta}}\sup_{\Phi\in\cF_{\delta}}C\Big(\sqrt{n}\wedge \Big(\frac{n}{\delta}\Big)^{1/3}\Big)
						\bE_{\Phi}^{\otimes n}\bigg[\int_{-T}^{T}\big|T_n^{\delta}(x)-\Phi(x)\big|dx\bigg]
> \frac{1}{2}\bigg(1-\sqrt{\frac{\alpha(1-\alpha)}{4}}\bigg)
> 0 
\]
for all $\delta \in [0,n^{-1/2})$ and all $n \geq 16(C+T)^{2}$. \\
$\bullet$ Let us now consider the case $\delta \geq n^{-1/2}$ and let 
$C \leq \big(\frac{1}{32\|p_{X}\|_{\infty}}\big)^{1/3}$. Following the idea of Chapter~2.6.1 in \cite{Tsybakov2008}, 
but with different hypotheses, we define $m \defeq \lfloor \frac{1}{4C}(n\delta^{2})^{1/3} \rfloor$, 
$h_{n} \defeq \frac{T}{m}$ and set 
\[
x_{k} \defeq -T + 2kh_{n}, 
\quad k = 0,\dots,m.
\]
Further, we define 
\begin{align*}
\varphi_{k,n}(x) \defeq 
\begin{cases}
0 \quad &x \in [x_{0},x_{k}] \\
\frac{\delta}{2}(x-x_{k}) &x \in [x_{k},x_{k}+h_{n}], \\
\frac{\delta}{2}h_{n} + \delta(x-(x_{k}+h_{n})) &x \in [x_{k}+h_{n},x_{k+1}] \\
\frac{\delta}{2}h_{n} + \delta(x_{k+1}-(x_{k}+h_{n})) &x \in [x_{k+1},x_{m}]
\end{cases}
\end{align*}
and 
\begin{align*}
\psi_{k,n}(x) \defeq 
\begin{cases}
0 \quad &x \in [x_{0},x_{k}] \\
\delta (x-x_{k}) \quad &x \in [x_{k},x_{k}+h_{n}], \\
\delta h_{n} + \frac{\delta}{2}(x-(x_{k}+h_{n})) &x \in [x_{k}+h_{n},x_{k+1}] \\
\delta h_{n} + \frac{\delta}{2}(x_{k+1}-(x_{k}+h_{n})) &x \in [x_{k+1},x_{m}]
\end{cases}
\end{align*}
for $k=0,\dots,m-1$. For $\gamma = (\gamma_{1},\dots,\gamma_{m}) \in \{0,1\}^{m}$, we define 
\[
\Phi_{\gamma,n} \colon \bR \to [0,1], 
\quad \Phi_{\gamma,n}|_{[-T,T]}(x) 
		\defeq \frac{1}{4} + \sum_{k=0}^{m-1}\gamma_{k+1}\varphi_{k,n}(x) + (1-\gamma_{k+1})\psi_{k,n}(x), 
\]
where the functions are defined outside $[-T,T]$ by their values at the respective boundaries 
and write $P_{\gamma,n}^{\otimes n} \defeq P_{\Phi_{\gamma,n}}^{\otimes n}$. Obviously, 
$\Phi_{0,n}, \Phi_{1,n} \in \cF_{\delta}$. A visualization of these hypotheses can be found in Figure~\ref{fig:hypotheses}. 
Note that for $n \geq 16T^{2}$, we have 
$n^{-1/2} \leq \frac{1}{4T}$ and so for $\delta \leq \frac{1}{4T}$ and all $x \in [-T,T]$, 
\begin{align*}
\Phi_{\gamma,n}(x) 
&\geq \Phi_{\gamma,n}(-T) = \Phi_{\gamma,n}(x_{0}) = 1/4, \\
1-\Phi_{\gamma,n}(x) 
&\geq 1-\Phi_{\gamma,n}(x_{m}) \geq 3/4 - 2T\delta \geq 1/4.
\end{align*}
Following Example~2.2 in \cite{Tsybakov2008}, let 
$\rho(\gamma',\gamma) \defeq \sum_{k=1}^{m}\mathds{1}_{\{\gamma_{k}' \neq \gamma_{k}\}}$, let 
\[
d_{k}(T_{n}^{\delta},\gamma_{k}) 
\defeq \int_{x_{k}}^{x_{k+1}}|T_{n}^{\delta}(x) - \gamma_{k}\varphi_{k,n}(x) - (1-\gamma_{k})\psi_{k,n}(x) - 1/4|dx
\]
and note that 
\begin{align*}
\bE_{\gamma,n}^{\otimes n}\bigg[\int_{-T}^{T}\big|T_n^{\delta}(x)-\Phi_{\gamma,n}(x)\big|dx\bigg] 
= \bE_{\gamma,n}^{\otimes n}\bigg[\sum_{k=0}^{m-1}\int_{x_{k}}^{x_{k+1}}\big|T_n^{\delta}(x)-\Phi_{\gamma,n}(x)\big|dx\bigg] 
= \sum_{k=0}^{m-1}\bE_{\gamma,n}^{\otimes n}\big[d_{k}(T_{n}^{\delta},\gamma_{k})\big]. 
\end{align*}
Defining $\hat{\gamma}_{k} \defeq \argmin_{t=0,1}d_{k}(T_{n}^{\delta},t)$, we have 
\begin{align*}
d_{k}(T_{n}^{\delta},\gamma_{k}) 
&\geq \frac{1}{2}d_{k}(\hat{\gamma}_{k}\varphi_{k,n} + (1-\hat{\gamma}_{k})\psi_{k,n} + 1/4,\gamma_{k}) \\
&= \frac{1}{2}|\hat{\gamma}_{k}-\gamma_{k}|\int_{x_{k}}^{x_{k+1}}|\varphi_{k,n}(x)-\psi_{k,n}(x)|dx 
\end{align*}
and so by noting that 
\[
\int_{x_{k}}^{x_{k+1}}|\varphi_{k,n}(x)-\psi_{k,n}(x)|dx 
= h_{n}|\varphi_{k,n}(x_{k}+h_{n})-\psi_{k,n}(x_{k}+h_{n})| 
\geq h_{n}2TC\Big(\frac{n}{\delta}\Big)^{-1/3}, 
\]
we obtain for all $\gamma \in \{0,1\}^{n}$, 
\begin{align*}
\bE_{\gamma,n}^{\otimes n}\bigg[\int_{-T}^{T}\big|T_n^{\delta}(x)-\Phi_{\gamma,n}(x)\big|dx\bigg] 
&\geq \frac{1}{2}\sum_{k=0}^{m-1}\bE_{\gamma,n}^{\otimes n}\bigg[|\hat{\gamma}_{k}-\gamma_{k}|\int_{x_{k}}^{x_{k+1}}|\varphi_{k,n}(x)-\psi_{k,n}(x)|dx\bigg] \\
&\geq h_{n}TC\Big(\frac{n}{\delta}\Big)^{-1/3}\bE_{\gamma,n}^{\otimes n}[\rho(\hat{\gamma},\gamma)].
\end{align*}
Consequently, for any $T_{n}^{\delta}$, 
\[
\max_{\gamma \in \{0,1\}^{n}}\bE_{\gamma,n}^{\otimes n}\bigg[\int_{-T}^{T}\big|T_n^{\delta}(x)-\Phi_{\gamma,n}(x)\big|dx\bigg] 
\geq h_{n}TC\Big(\frac{n}{\delta}\Big)^{-1/3}\inf_{\hat{\gamma}}\max_{\gamma \in \{0,1\}^{n}}\bE_{\gamma,n}^{\otimes n}[\rho(\hat{\gamma},\gamma)].
\]
By similar arguments as in the previous case, we have for $\alpha = 64C^{3}\|p_{X}\|_{\infty}$ and all 
$\gamma', \gamma \in \{0,1\}^{n}$ with $\rho(\gamma',\gamma) = 1$, 
\begin{align*}
h^{2}\big(P_{\gamma',n}^{\otimes n},P_{\gamma,n}^{\otimes n}\big) 
&\leq n\sum_{k=0}^{m-1}\int_{x_{k}}^{x_{k+1}}(\Phi_{\gamma',n}(x)-\Phi_{\gamma,n}(x))^{2}dP_{X}(x) \\
&= n\sum_{k=0}^{m-1}|\gamma_{k}'-\gamma_{k}|\int_{x_{k}}^{x_{k+1}}(\varphi_{k,n}(x)-\psi_{k,n}(x))^{2}dP_{X}(x) \\
&\leq n\sum_{k=0}^{m-1}|\gamma_{k}'-\gamma_{k}|\frac{1}{4}\int_{x_{k}}^{x_{k+1}}\delta^{2}h_{n}^{2}dP_{X}(x) \\
&\leq \frac{1}{2}n\delta^{2}h_{n}^{3}\|p_{X}\|_{\infty}\rho(\gamma',\gamma) \\
&\leq n\delta^{2}(4C(n\delta^{2})^{-1/3})^{3}\|p_{X}\|_{\infty} 
\leq 64C^{3}\|p_{X}\|_{\infty} = \alpha < 2.
\end{align*}
Thus, by Theorem~2.12 (iii) of \cite{Tsybakov2008}, 
\[
\inf_{\hat{\gamma}}\max_{\gamma \in \{0,1\}^{n}}\bE_{\gamma}[\rho(\hat{\gamma},\gamma)] 
\geq \frac{m}{2}\big(1-\sqrt{\alpha(1-\alpha)/4}\big) 
= \frac{1}{2h_{n}}\big(1-\sqrt{\alpha(1-\alpha)/4}\big) 
\]
and so we have for any $T_{n}^{\delta}$, 
\begin{align*}
\max_{\gamma \in \{0,1\}^{n}}\bE_{\gamma,n}^{\otimes n}\bigg[\int_{-T}^{T}\big|T_n^{\delta}(x)-\Phi_{\gamma,n}(x)\big|dx\bigg] 
&\geq h_{n}TC\Big(\frac{n}{\delta}\Big)^{-1/3}\inf_{\hat{\gamma}}\max_{\gamma \in \{0,1\}^{n}}\bE_{\gamma,n}^{\otimes n}[\rho(\hat{\gamma},\gamma)] \\
&\geq \Big(\frac{n}{\delta}\Big)^{-1/3}\frac{1}{2}TC\big(1-\sqrt{\alpha(1-\alpha)/4}\big) > 0 
\end{align*}
for all $\delta \in [n^{-1/2},\frac{1}{4T}]$ and all $n \geq 16T^{2}$, implying 
\[
\inf_{T_n^{\delta}}\sup_{\Phi\in\cF_{\delta}}\Big(\sqrt{n}\wedge \Big(\frac{n}{\delta}\Big)^{1/3}\Big)
						\bE_{\Phi}^{\otimes n}\bigg[\int_{-T}^{T}\big|T_n^{\delta}(x)-\Phi(x)\big|dx\bigg]
> \frac{1}{2}TC\big(1-\sqrt{\alpha(1-\alpha)/4}\big)
> 0 
\] 
for all $\delta \in [n^{-1/2},\frac{1}{4T}]$ and all $n \geq 16T^{2}$. \par
In summary, we have shown for any $C \leq \min\big\{\frac{1}{\sqrt{8T}},\big(\frac{1}{32\|p_{X}\|_{\infty}}\big)^{1/3}\big\}$, 
\begin{align*}
\inf_{T_n^{\delta}}\sup_{\Phi\in\cF_{\delta}}\Big(\sqrt{n}\wedge \Big(\frac{n}{\delta}\Big)^{1/3}\Big)
						\bE_{\Phi}^{\otimes n}\bigg[\int_{-T}^{T}\big|T_n^{\delta}(x)-\Phi(x)\big|dx\bigg] 
> \frac{1}{2}\big(1-\sqrt{\alpha(1-\alpha)/4}\big)\max\{TC,1\}
> 0 
\end{align*}
for $\alpha = \max\{16T^{2}C^{2},64C^{3}\|p_{X}\|_{\infty}\}$, all $\delta \in [0,\frac{1}{4T}]$ and 
$n > \max\{12^{3}C^{3}, 16T^{2}\}$ and so the assertion follows. \hfill \qed

\subsection{Proof of Proposition \ref{prop: 4.2}} \label{proof:4.2}
Proposition~\ref{prop: 4.2} is an immediate consequence of an application of Fubini's theorem and the following Lemma, 
which yields an upper bound on the convergence rate for $\bE[|\hat{\Phi}_{n}(t) - \Phi_{n}(t)|]$ for all $t \in (-T,T)$. For 
Proposition~\ref{prop: 4.2}, we utilize that in the subsequent result, 
the maximum is equal to $(n/\delta_{n})^{-1/3}$ if and only if 
$-T+(n\delta_{n}^{2})^{-1/3} \leq t \leq T - (n\delta_{n}^{2})^{-1/3}$.
Note that the following result is also used in the proof of Theorem~\ref{thm:l1 rate} (i).

\begin{lemma} \label{lem:rate expectation}
Assume $\Phi_{0}$ to be continuously differentiable in a neighborhood of zero and let $\Phi_{0}'(0) > 0$. Then, for $n$ 
large enough, there exists a constant $K > 0$ depending only on $\Phi_{0}$, $F_{X}$ and the bounds on its derivatives, such 
that for all $t \in (-T,T)$, 
\[
\bE\big[|\hat{\Phi}_{n}(t) - \Phi_{n}(t)|\big] 
\leq K\max\Big\{\Big(\frac{n}{\delta_{n}}\Big)^{-1/3}, \big(n(T-t)\big)^{-1/2}, \big(n(T+t)\big)^{-1/2}\Big\}.
\]
\end{lemma}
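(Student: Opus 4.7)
The plan is to bound the pointwise risk by reducing it to tail probabilities of the inverse process, then to invoke Lemma~\ref{lem:tail bounds - 1}(ii) in the bulk and a direct partial-sum argument near the boundary. First, by Cavalieri's principle,
\[
\bE\big[|\hat{\Phi}_{n}(t)-\Phi_{n}(t)|\big] = \int_{0}^{\infty} \bP\big(\hat{\Phi}_{n}(t)-\Phi_{n}(t)\geq x\big)\,dx + \int_{0}^{\infty} \bP\big(\Phi_{n}(t)-\hat{\Phi}_{n}(t)\geq x\big)\,dx,
\]
and by the switch relation (Lemma~\ref{lem:switch relation}),
\[
\bP\big(\hat{\Phi}_{n}(t)\geq\Phi_{n}(t)+x\big) = \bP\big(F_{n}^{-1}\circ\tilde{U}_{n}(\Phi_{n}(t)+x)\leq t\big)
\]
together with its analogue for the lower tail.

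For $x\in[0,\Phi_{n}(T)-\Phi_{n}(t)]$, continuity of $\Phi_{0}'$ at $0$ and $\Phi_{0}'(0)>0$ ensure a two-sided bound $c_{1}\delta_{n}\leq \Phi_{n}'\leq c_{2}\delta_{n}$ on $[-T,T]$ for $n$ large, hence $\Phi_{n}^{-1}(\Phi_{n}(t)+x)-t\geq x/(c_{2}\delta_{n})$. Centering the previous probability at $\Phi_{n}^{-1}(\Phi_{n}(t)+x)$ and applying Lemma~\ref{lem:tail bounds - 1}(ii) then gives
\[
\bP\big(\hat{\Phi}_{n}(t)\geq\Phi_{n}(t)+x\big) \leq \mathds{1}_{\{x<c_{2}\delta_{n}(n\delta_{n}^{2})^{-1/3}\}} + \frac{C(c_{2}\delta_{n})^{3q/2}}{(n\delta_{n}^{2}x^{3})^{q/2}}
\]
for any integer $q\geq 2$. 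Integrating this over the bulk range and choosing $q$ sufficiently large so that the polynomial term is integrable with a smaller-order remainder yields a contribution of order $(n/\delta_{n})^{-1/3}$, matching the first term of the stated bound.

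For $x\in(\Phi_{n}(T)-\Phi_{n}(t),\,1-\Phi_{n}(t)]$, which is nonempty only when $T-t\lesssim(n\delta_{n}^{2})^{-1/3}$ since $\Phi_{n}(T)-\Phi_{n}(t)\sim\delta_{n}\Phi_{0}'(0)(T-t)$, the tail bound from Lemma~\ref{lem:tail bounds - 1}(ii) no longer provides an integrable estimate. Here I would invoke the max-min characterisation of the NPMLE (Brunk's formula, cf.~\eqref{eq:char_phi}) to derive the inclusion
\[
\{\hat{\Phi}_{n}(t)\geq\Phi_{n}(t)+x\}\subseteq\bigcup_{j\leq i(t)}\bigg\{\sum_{\ell=j}^{n}(Y_{(\ell)}-\Phi_{n}(X_{(\ell)}))\geq(n-j+1)\big(x-(\Phi_{n}(T)-\Phi_{n}(t))\big)\bigg\},
\]
where $i(t)$ is the rank of $t$ among the $X_{i}$. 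Conditioning on the design, Hoeffding's inequality for each $j$ together with a DKW-type event on which $n-i(t)+1\gtrsim n(T-t)$, and a careful union bound over $j$, produce a sub-Gaussian tail with variance proxy of order $(n(T-t))^{-1}$ in the shifted variable $x-(\Phi_{n}(T)-\Phi_{n}(t))$. Integration then yields the boundary contribution $(n(T-t))^{-1/2}$; by the symmetric argument at $-T$, one obtains $(n(T+t))^{-1/2}$.

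The main obstacle is precisely the boundary regime: the union bound over $j$ must be combined with the stochastic fluctuation of $i(t)$ around $nF_{X}(t)$ so as to extract a clean, $t$-measurable sub-Gaussian tail of the form $\exp(-c\,n(T-t)y^{2})$, with the endpoint $y\to 0^{+}$ (where this bound degenerates) handled by a direct truncation $\bP(\cdot)\leq 1$ at $y\sim(n(T-t))^{-1/2}$. An alternative is to peel the partial-sum process $(S_{k})$ dyadically and apply a one-sided Bernstein inequality, sidestepping explicit control of $i(t)$. In either path, the delicate point is to guarantee a bound tight down to $x\sim(n(T-t))^{-1/2}$ without stray logarithmic factors, so that the bulk and boundary contributions together reproduce exactly the maximum appearing in the statement.
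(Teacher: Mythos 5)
Your treatment of the bulk range $x\in[0,\Phi_{n}(T)-\Phi_{n}(t)]$ is exactly the paper's argument: Cavalieri's principle, the switch relation (Lemma~\ref{lem:switch relation}), the two-sided bound $\Phi_{n}'\asymp\delta_{n}$ to get $\Phi_{n}^{-1}(\Phi_{n}(t)+x)-t\geq Kx/\delta_{n}$, then Lemma~\ref{lem:tail bounds - 1}(ii) and integration, producing the $(n/\delta_{n})^{-1/3}$ term. Where you genuinely deviate is the boundary range $x>\Phi_{n}(T)-\Phi_{n}(t)$: the paper stays entirely within the inverse-process framework and invokes Lemma~\ref{lem:tail bounds - 2} (the Durot-type tail bound for $\tilde{U}_{n}(a)$ when $a\notin\Phi_{n}([-T,T])$, of order $(nx)^{-1}(\Phi_{n}\circ\Phi_{n}^{-1}(a)-a)^{-2}$), integrates it, and distinguishes the cases $T-t\gtrless(n\delta_{n}^{2})^{-1/3}$; you instead use Brunk's max--min characterization (take $k=n$ in the inner minimum, which makes your inclusion correct), Hoeffding conditionally on the design, a union bound over the left endpoint $j$, and binomial concentration for the rank $i(t)$. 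This route does work, and the delicate point you flag resolves cleanly: the geometric sum gives $\sum_{m\geq n-i(t)+1}e^{-2my^{2}}\leq Cy^{-2}e^{-2(n-i(t)+1)y^{2}}$, and $\int_{0}^{1}\min\{1,\,Cy^{-2}e^{-cn(T-t)y^{2}}\}\,dy\leq C'(n(T-t))^{-1/2}$ with no logarithmic loss, while the bad design event $\{n-i(t)+1<cn(T-t)\}$ costs at most $e^{-c'n(T-t)}\lesssim(n(T-t))^{-1/2}$, and the regime $n(T-t)=\cO(1)$ is trivial since $|\hat{\Phi}_{n}-\Phi_{n}|\leq 1$. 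One small correction: the interval $(\Phi_{n}(T)-\Phi_{n}(t),\,1-\Phi_{n}(t)]$ is nonempty for every $t$ (its length is about $1-\Phi_{0}(0)$); what is true is only that its contribution exceeds the bulk rate precisely when $T-t\lesssim(n\delta_{n}^{2})^{-1/3}$, which is the case split the paper makes — with the max in the statement you can simply add your two bounds (plus the mirrored one at $-T$) and skip the split. In terms of trade-offs, the paper's version recycles machinery it needs anyway (Lemma~\ref{lem:tail bounds - 2}) and never touches the GCM directly, whereas yours is more elementary and self-contained and yields exponential rather than polynomial boundary tails, at the price of carrying out the rank-concentration and union-bound bookkeeping explicitly.
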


\begin{proof}
Conceptually, the proof follows the idea of the proof of Theorem~1 in \cite{Durot2008monotone}. From a technical point of 
view, however, the $n$-dependence of $\Phi_{n}$ and its vanishing derivative required us to do some adjustments.\par 
For ease of notation, let $x_{+} \defeq \max\{x,0\}$ denote the positive part of $x$ for every $x \in \bR$, let $K > 0$ 
denote a constant which may changes from line to line depending only on $\Phi_{0}'$ in a neighborhood of zero and on 
$p_{X}$ and define 
\[
I_{1}(t) \defeq \bE\big[(\hat{\Phi}_{n}(t) - \Phi_{n}(t))_{+}\big] 
\quad \text{and} \quad
I_{2}(t) \defeq \bE\big[(\Phi_{n}(t) - \hat{\Phi}_{n}(t))_{+}\big]
\]
for $t \in [-T,T]$, implying that 
\[
\bE\big[|\hat{\Phi}_{n}(t) - \Phi_{n}(t)|\big] 
= I_{1}(t) + I_{2}(t).
\]
From $|\hat{\Phi}_{n}(t) - \Phi_{n}(t)| \leq 1$, we obtain 
\[
I_{1}(t) 
= \int_{0}^{1}\bP\big(\hat{\Phi}_{n}(t) - \Phi_{n}(t) > x\big)dx 
\]
and from the fact that $\hat{\Phi}_{n}$ maps into $[0,1]$, we observe 
\[
I_{1}(t) 
= \int_{0}^{1}\bP\big(\hat{\Phi}_{n}(t) > x + \Phi_{n}(t)\big)dx 
= \int_{0}^{1-\Phi_{n}(t)}\bP\big(\hat{\Phi}_{n}(t) > x + \Phi_{n}(t)\big)dx. 
\]
By the switch relation (Lemma~\ref{lem:switch relation}), this implies 
\[
I_{1}(t) 
= \int_{0}^{1-\Phi_{n}(t)}\hspace{-0.2em}\bP\big(F_{n}^{-1} \circ \tilde{U}_{n}(\Phi_{n}(t) + x) < F_{n}^{-1}(F_{n}(t))\big)dx 
\leq \int_{0}^{1-\Phi_{n}(t)}\hspace{-0.2em}\bP\big(F_{n}^{-1} \circ \tilde{U}_{n}(\Phi_{n}(t) + x) < t\big)dx. 
\]
Now note that for every $x > 0$ which satisfies $\Phi_{n}(t) + x < \Phi_{n}(T)$, a Taylor expansion with Lagrange remainder 
of $\Phi_{n}^{-1}$ around $\Phi_{n}(t)$ yields for some $\nu_{n} \in (\Phi_{n}(t),\Phi_{n}(t)+x)$ that 
\[
\Phi_{n}^{-1}(\Phi_{n}(t) + x) 
= t + \frac{1}{\Phi_{n}'(\Phi_{n}^{-1}(\nu_{n}))}x 
\geq t + K\delta_{n}^{-1}x 
\]
for $n$ large enough. By an addition of zero, by using that $t - \Phi_{n}^{-1}(\Phi_{n}(t) + x) < 0$ and by 
Lemma~\ref{lem:tail bounds - 1} (ii), we find 
\begin{align*}
\bP\big(F_{n}^{-1} \circ \tilde{U}_{n}(\Phi_{n}(t) + x) < t\big) 
&= \bP\big(F_{n}^{-1} \circ \tilde{U}_{n}(\Phi_{n}(t) + x) - \Phi_{n}^{-1}(\Phi_{n}(t) + x) 
										< t - \Phi_{n}^{-1}(\Phi_{n}(t) + x)\big) \\
&\leq \bP\big(|F_{n}^{-1} \circ \tilde{U}_{n}(\Phi_{n}(t) + x) - \Phi_{n}^{-1}(\Phi_{n}(t) + x)|
																	\geq \Phi_{n}^{-1}(\Phi_{n}(t) + x) - t\big) \\
&\leq \bP\big(|F_{n}^{-1} \circ \tilde{U}_{n}(\Phi_{n}(t) + x) - \Phi_{n}^{-1}(\Phi_{n}(t) + x)|
																	\geq K\delta_{n}^{-1}x\big) \\
&\leq \mathds{1}_{\{x < K(\delta_{n}/n)^{1/3}\}} 
					+ \mathds{1}_{\{x \geq K(\delta_{n}/n)^{1/3}\}}K\delta_{n}n^{-1}x^{-3}. 
\end{align*}
Because we assumed $x < \Phi_{n}(T) - \Phi_{n}(t)$, we now have 
\begin{align*}
I_{1}(t) 
\leq K\Big(\frac{\delta_{n}}{n}\Big)^{1/3} 
	&+ K\Big(\frac{\delta_{n}}{n}\Big)\int_{\bR}x^{-3}\mathds{1}_{\{x \in [K(\frac{\delta_{n}}{n})^{1/3},\Phi_{n}(T) - \Phi_{n}(t)]\}}dx \\
	&+ \int_{\Phi_{n}(T) - \Phi_{n}(t)}^{1-\Phi_{n}(t)}
							\bP\big(F_{n}^{-1} \circ \tilde{U}_{n}(\Phi_{n}(t) + x) < t\big)dx. 
\end{align*}
Note that $\Phi_{n}(T) - \Phi_{n}(t) = \cO(\delta_{n})$ and so we can choose $n$ large enough, such that 
\begin{align*}
K\Big(\frac{\delta_{n}}{n}\Big)\int_{\bR}x^{-3}\mathds{1}_{\{x \in [K(\frac{\delta_{n}}{n})^{1/3},\Phi_{n}(T) - \Phi_{n}(t)]\}}dx
&\leq K\Big(\frac{\delta_{n}}{n}\Big)^{1/3} 
\end{align*}
and consequently, 
\[
I_{1}(t) 
\leq K\Big(\frac{\delta_{n}}{n}\Big)^{1/3} + \int_{\Phi_{n}(T) - \Phi_{n}(t)}^{1-\Phi_{n}(t)}
									\bP\big(F_{n}^{-1} \circ \tilde{U}_{n}(\Phi_{n}(t) + x) < t\big)dx.
\]
To derive an upper bound for the remaining integral, note that for every $x \geq \Phi_{n}(T) - \Phi_{n}(t)$, we have 
$\Phi_{n}^{-1}(\Phi_{n}(t) + x) = T$. So again by Lemma~\ref{lem:tail bounds - 1} (ii) and for $n$ large enough, 
\begin{align*}
\bP\big(F_{n}^{-1} \circ \tilde{U}_{n}(\Phi_{n}(t) + x) < t\big) 
&= \bP\big(F_{n}^{-1} \circ \tilde{U}_{n}(\Phi_{n}(t) + x) - T < t - T\big) \\
&\leq \bP\big(|F_{n}^{-1} \circ \tilde{U}_{n}(\Phi_{n}(t) + x) - T| \geq T - t\big) \\
&\leq K(n\delta_{n}^2)^{-1}(T-t)^{-3}
\end{align*}
for $1 - \Phi_{n}(t) > x \geq \Phi_{n}(T) - \Phi_{n}(t)$. \par
To finish the proof, we will consider the cases $T-t \geq (n\delta_{n}^{2})^{-1/3}$ and $T-t \leq (n\delta_{n}^{2})^{-1/3}$ 
separately. \\
$\bullet$ Let us start by assuming $T-t \geq (n\delta_{n}^{2})^{-1/3}$. Then, 
\begin{align*}
&\int_{\Phi_{n}(T) - \Phi_{n}(t)}^{1-\Phi_{n}(t)}\bP\big(F_{n}^{-1} \circ \tilde{U}_{n}(\Phi_{n}(t) + x) < t\big)dx \\
&\qquad\qquad\leq \int_{\Phi_{n}(T) - \Phi_{n}(t)}^{\Phi_{n}(T) - \Phi_{n}(t) + (\frac{\delta_{n}}{n})^{1/3}}
							K(n\delta_{n}^2)^{-1}(T-t)^{-3}dx \\
&\qquad\qquad\qquad\qquad+ \int_{\Phi_{n}(T) - \Phi_{n}(t) + (\frac{\delta_{n}}{n})^{1/3}}^{1-\Phi_{n}(t)}
							\bP\big(F_{n}^{-1} \circ \tilde{U}_{n}(\Phi_{n}(t) + x) < t\big)dx, 
\end{align*}
where 
\[
\int_{\Phi_{n}(T)-\Phi_{n}(t)}^{\Phi_{n}(T)-\Phi_{n}(t) + (\frac{\delta_{n}}{n})^{1/3}}K(n\delta_{n}^2)^{-1}(T-t)^{-3}dx 
= K\Big(\frac{\delta_{n}}{n}\Big)^{1/3}(n\delta_{n}^2)^{-1}(T-t)^{-3} 
\leq K\Big(\frac{\delta_{n}}{n}\Big)^{1/3} 
\]
and by Lemma~\ref{lem:tail bounds - 2}, we have
\begin{align*}
&\int_{\Phi_{n}(T) - \Phi_{n}(t) + (\frac{\delta_{n}}{n})^{1/3}}^{1-\Phi_{n}(t)}
							\bP\big(F_{n}^{-1} \circ \tilde{U}_{n}(\Phi_{n}(t) + x) < t\big)dx \\
&\qquad\leq \int_{\Phi_{n}(T) - \Phi_{n}(t) + (\frac{\delta_{n}}{n})^{1/3}}^{1-\Phi_{n}(t)}
							\bP\big(|F_{n}^{-1} \circ \tilde{U}_{n}(\Phi_{n}(t) + x) - T| \geq T - t\big)dx \\
&\qquad\leq K\int_{\Phi_{n}(T) - \Phi_{n}(t) + (\frac{\delta_{n}}{n})^{1/3}}^{1-\Phi_{n}(t)}
							(n(T-t))^{-1}(\Phi_{n}(T) - \Phi_{n}(t) - x)^{-2}dx \\
&\qquad= K(n(T-t))^{-1}\big[(\Phi_{n}(T)-\Phi_{n}(t)-x)^{-1}
							\big]_{x=\Phi_{n}(T) - \Phi_{n}(t) + (\frac{\delta_{n}}{n})^{1/3}}^{1-\Phi_{n}(t)} \\
&\qquad= K(n(T-t))^{-1}\Big((1-\Phi_{n}(T))^{-1} + \Big(\frac{n}{\delta_{n}}\Big)^{1/3}\Big) \\
&\qquad\leq K\Big(\frac{\delta_{n}}{n}\Big)^{2/3} + K\Big(\frac{\delta_{n}}{n}\Big)^{1/3} \\
&\qquad\leq K\Big(\frac{\delta_{n}}{n}\Big)^{1/3}. 
\end{align*}
So we have shown 
\[
I_{1}(t) \leq K\Big(\frac{\delta_{n}}{n}\Big)^{1/3}
\]
for $T-t \geq (n\delta_{n}^{2})^{-1/3}$. \\
$\bullet$ Now assume $T-t \leq (n\delta_{n}^{2})^{-1/3}$. Then, 
\[
(n(T-t))^{-1/2} \geq \Big(\frac{\delta_{n}}{n}\Big)^{1/3}
\]
and we have 
\begin{align*}
I_{1}(t) 
\leq K(n(T-t))^{-1/2}
			&+ \int_{\Phi_{n}(T) - \Phi_{n}(t)}^{\Phi_{n}(T) - \Phi_{n}(t) + (n(T-t))^{-1/2}}1dx \\
			&+ \int_{\Phi_{n}(T) - \Phi_{n}(t) + (n(T-t))^{-1/2}}^{1-\Phi_{n}(t)}
							\bP\big(F_{n}^{-1} \circ \tilde{U}_{n}(\Phi_{n}(t) + x) < t\big)dx.
\end{align*} 
As before, we know from Lemma~\ref{lem:tail bounds - 2} that 
\begin{align*}
&\int_{\Phi_{n}(T) - \Phi_{n}(t) + (n(T-t))^{-1/2}}^{1-\Phi_{n}(t)}
							\bP\big(F_{n}^{-1} \circ \tilde{U}_{n}(\Phi_{n}(t) + x) < t\big)dx \\
&\qquad\leq K(n(T-t))^{-1}\big[(\Phi_{n}(T)-\Phi_{n}(t)-x)^{-1}
							\big]_{x=\Phi_{n}(T) - \Phi_{n}(t) + (n(T-t))^{-1/2}}^{1-\Phi_{n}(t)} \\
&\qquad= K(n(T-t))^{-1}\big((1-\Phi_{n}(T))^{-1} + (n(T-t))^{1/2}\big) \\
&\qquad\leq K(n(T-t))^{-1} + K(n(T-t))^{-1/2} \\
&\qquad\leq K(n(T-t))^{-1/2}
\end{align*}
and so we have shown 
\[
I_{1}(t) \leq K(n(T-t))^{-1/2}
\]
for $T-t \leq (n\delta_{n}^{2})^{-1/3}$. \par
Summarizing the results, we have 
\[
I_{1}(t) \leq K\Big(\Big(\frac{n}{\delta_{n}}\Big)^{-1/3} + (n(T-t))^{-1/2}\Big) 
\]
and by similar arguments, 
\[
I_{2}(t) \leq K\Big(\Big(\frac{n}{\delta_{n}}\Big)^{-1/3} + (n(T+t))^{-1/2}\Big).
\]
Thus, 
\[
\bE\big[|\hat{\Phi}_{n}(t) - \Phi_{n}(t)|\big] 
\leq K\max\Big\{\Big(\frac{n}{\delta_{n}}\Big)^{-1/3}, (n(T-t))^{-1/2}, (n(T+t))^{-1/2}\Big\} 
\]
for $n$ large enough which proves the assertion.
\end{proof}

\subsection{Proof of Theorem~\ref{thm:l1 rate} (i)}\label{proof:4.4i}
The concept of proof presented below, namely to employ the switch relation to move over from $\hat{\Phi}_{n}$ and $\Phi_{n}$ to their inverse counterparts and to analyze the 
$L^{1}$-limit of these inverse counterparts, appeared first in 
\cite{Groeneboom1985monotone}, was made rigorous in Corollary~2.1 of \cite{Groeneboom1999L1} and was later considerably generalized in \cite{Durot2007Lp} and \cite{Durot2008monotone}.

\medskip
\textit{Further notation.} Throughout this section, we use the notation introduced in Section~\ref{sec:L1 limit} and
recall $\lambda_{n} = \Phi_{n} \circ F_{X}^{-1}$. Next, we assume $\tilde{U}_{n}$ on $[0,1]$ and $\Phi_{n}^{-1}$, $\lambda_{n}^{-1}$ on $[\Phi_n(-T),\Phi_n(T)]$ 
 to be continuously extended to functions on the real line by their values at the respective boundary points of their original domains. 
Note that this extension satisfies $\lambda_{n}^{-1} = F_{X} \circ \Phi_{n}^{-1}$. Finally, we abbreviate 
\[
\sigma_{n}^{2}(t) \defeq \Phi_{n}(t)(1-\Phi_{n}(t)), 
\quad 
\Lambda_{n}(s) \defeq \int_{0}^{t}\lambda_{n}(u)du 
\quad \text{and} \quad 
\cJ_{n} \defeq \int_{-T}^{T}|\hat{\Phi}_{n}(t) - \Phi_{n}(t)|dt
\]
for $t \in [-T,T]$ and $s \in [0,1]$. Recall that throughout, 
$P_{X}$ is compactly supported on 
$\cX = [-T,T]$ for some $T > 0$ with continuous, strictly positive Lebesgue density $p_{X}$ on $\cX$. 

\begin{proof}[Proof of Theorem~\ref{thm:l1 rate} (i)]
The proof is subdivided into six claims. Right before stating a claim, additional notation will be introduced if required. 
Throughout the proof, $K$ denotes a universal constant which may changes from line to line.

\smallskip
\noindent
\textsc{Claim I: } $\cJ_{n} = \cJ_{n,1} + o_{\bP}(n^{-1/2})$ with $\displaystyle \cJ_{n,1}
\defeq \int_{\Phi_{n}(-T)}^{\Phi_{n}(T)}|F_{n}^{-1} \circ \tilde{U}_{n}(a) - \Phi_{n}^{-1}(a)|da$.

\smallskip
\noindent
Note that here, as compared to the classical asymptotics, the integration domain is 
$n$-dependent with length of order $\delta_{n}$. 

\smallskip
\noindent
\textit{Proof of Claim I. } 
The subsequent proof is based on the tail bound of the inverse process given in 
Lemma~\ref{lem:tail bounds - 1} (ii). In that way, the proof hinges on the convergence rate of the inverse process, which is 
again highlighted by the necessary localizations of the integration domain.
Let
$I_{1} \defeq \int_{-T}^{T}\big(\hat{\Phi}_{n}(t) - \Phi_{n}(t)\big)_{+}dt$, 
$I_{2} \defeq \int_{-T}^{T}\big(\Phi_{n}(t) - \hat{\Phi}_{n}(t)\big)_{+}dt$ and 
\[
J_{1} \defeq \int_{-T}^{T}\int_{0}^{\Phi_{n}(T)-\Phi_{n}(t)}\mathds{1}_{\{\hat{\Phi}_{n}(t) > \Phi_{n}(t) + u\}}du \, dt.
\]
By Cavalieri's principle applied to $I_{1}$, 
\begin{align*}
I_{1} - J_{1} 
&= \int_{-T}^{T}\int_{0}^{1}\mathds{1}_{\{\hat{\Phi}_{n}(t) > \Phi_{n}(t) + u\}}du \, dt 
	- \int_{-T}^{T}\int_{0}^{\Phi_{n}(T)-\Phi_{n}(t)}\mathds{1}_{\{\hat{\Phi}_{n}(t) > \Phi_{n}(t) + u\}}du \, dt \\
&= \int_{-T}^{T}\int_{\Phi_{n}(T)-\Phi_{n}(t)}^{1}\mathds{1}_{\{\hat{\Phi}_{n}(t) > \Phi_{n}(t) + u\}}du \, dt \\
&= \int_{F_{n}^{-1} \circ \tilde{U}_{n}(\Phi_{n}(T))}^{T}
					\int_{\Phi_{n}(T)-\Phi_{n}(t)}^{1}\mathds{1}_{\{\hat{\Phi}_{n}(t) > \Phi_{n}(t) + u\}}du \, dt,
\end{align*}
where the last equality is based on $\hat{\Phi}_{n}(t) > \Phi_{n}(T)$ 
if and only if $F_{n}^{-1}(F_{n}(t)) > F_{n}^{-1} \circ \tilde{U}_{n}(\Phi_{n}(T))$ by the switch relation 
(Lemma~\ref{lem:switch relation}). Thus, $I_{1} - J_{1} \geq 0$ and again by Cavalieri’s principle, 
\begin{align*}
I_{1} - J_{1} 
&\leq \int_{F_{n}^{-1} \circ \tilde{U}_{n}(\Phi_{n}(T))}^{T}
					\int_{0}^{1}\mathds{1}_{\{\hat{\Phi}_{n}(t) > \Phi_{n}(t) + u\}}du \, dt \\
&\leq \int_{T-(n\delta_{n}^{2})^{-1/3}\log(n\delta_{n}^{2})}^{T}|\hat{\Phi}_{n}(t)-\Phi_{n}(t)|dt 
					+ 2T\mathds{1}_{\big\{F_{n}^{-1} \circ \tilde{U}_{n}(\Phi_{n}(T)) 
												\leq T-\frac{\log(n\delta_{n}^{2})}{(n\delta_{n}^{2})^{1/3}}\big\}} 
\end{align*}
where we used without loss of generality that $n\delta_{n}^{2} \geq 1$ for $n$ large enough. For $\varepsilon > 0$, 
Lemma~\ref{lem:tail bounds - 1} (ii) provides for $n$ large enough, 
\begin{align*}
\bP\bigg(\sqrt{n}\mathds{1}_{\big\{F_{n}^{-1} \circ \tilde{U}_{n}(\Phi_{n}(T)) 
								\leq T-\frac{\log(n\delta_{n}^{2})}{(n\delta_{n}^{2})^{1/3}}\big\}} \geq \varepsilon \bigg) 
&\leq \bP\bigg(|F_{n}^{-1} \circ \tilde{U}_{n}(\Phi_{n}(T))-T| \geq \frac{\log(n\delta_{n}^{2})}{(n\delta_{n}^{2})^{1/3}}\bigg) \\
&\leq K\big(n\delta_{n}^{2}\big((n\delta_{n}^{2})^{-1/3}\log(n\delta_{n}^{2})\big)^{3}\big)^{-1} 
\end{align*}
which is bounded by $K\log(n\delta_{n}^{2})^{-3}$ and so we have 
\[
I_{1} - J_{1} 
\leq \int_{T-(n\delta_{n}^{2})^{-1/3}\log(n\delta_{n}^{2})}^{T}|\hat{\Phi}_{n}(t)-\Phi_{n}(t)|dt + o_{\bP}(n^{-1/2}).
\]
By Markov's inequality, Fubini's theorem and Proposition~\ref{lem:rate expectation}, 
\begin{align*}
&\bP\bigg(\sqrt{n}\int_{T-(n\delta_{n}^{2})^{-1/3}\log(n\delta_{n}^{2})}^{T}
									|\hat{\Phi}_{n}(t)-\Phi_{n}(t)|dt > \varepsilon\bigg) \\
&\quad\leq K\frac{\sqrt{n}}{\varepsilon}\Big(\frac{n}{\delta_{n}}\Big)^{-1/3}(n\delta_{n}^{2})^{-1/3}
																		\big(\log(n\delta_{n}^{2})-1\big) 
		+ K\frac{\sqrt{n}}{\varepsilon}\int_{T-(n\delta_{n}^{2})^{-1/3}}^{T}n^{-1/2}(T-t)^{-1/2}dt \\
&\quad= \frac{K}{\varepsilon}(n\delta_{n}^{2})^{-1/6}\big(1+\log(n\delta_{n}^{2})\big)
\end{align*}
and so we have shown that $I_{1} = J_{1} + o_{\bP}(n^{-1/2})$. Note further that by the change of variable 
$a = \Phi_{n}(t) + u$, Fubini's theorem and the switch relation (Lemma~\ref{lem:switch relation}), 
\begin{align*}
J_{1} 
&= \int_{\Phi_{n}(-T)}^{\Phi_{n}(T)}\int_{-T}^{T}\mathds{1}_{\{a \geq \Phi_{n}(t)\}}
									\mathds{1}_{\{\hat{\Phi}_{n}(t) > a\}}dt \, da \\
&= \int_{\Phi_{n}(-T)}^{\Phi_{n}(T)}\int_{F_{n}^{-1} \circ \tilde{U}_{n}(a)}^{T}\mathds{1}_{\{\Phi_{n}^{-1}(a) \geq t\}}
									\mathds{1}_{\{\hat{\Phi}_{n}(t) > a\}}dt \, da \\
&= \int_{\Phi_{n}(-T)}^{\Phi_{n}(T)}\int_{F_{n}^{-1} \circ \tilde{U}_{n}(a)}^{\Phi_{n}^{-1}(a)}
									\mathds{1}_{\{F_{n}^{-1} \circ \tilde{U}_{n}(a) < t\}}dt \, da \\
&= \int_{\Phi_{n}(-T)}^{\Phi_{n}(T)}\big(\Phi_{n}^{-1}(a) - F_{n}^{-1} \circ \tilde{U}_{n}(a)\big)
									\mathds{1}_{\{F_{n}^{-1} \circ \tilde{U}_{n}(a) < \Phi_{n}^{-1}(a)\}}da
\end{align*}
and so we have 
\[
I_{1} 
= \int_{\Phi_{n}(-T)}^{\Phi_{n}(T)}\big(\Phi_{n}^{-1}(a) - F_{n}^{-1} \circ \tilde{U}_{n}(a)\big)
							\mathds{1}_{\{F_{n}^{-1} \circ \tilde{U}_{n}(a) < \Phi_{n}^{-1}(a)\}}da + o_{\bP}(n^{-1/2}).
\]
By similar arguments, 
\[
I_{2} 
= \int_{\Phi_{n}(-T)}^{\Phi_{n}(T)}\big(F_{n}^{-1} \circ \tilde{U}_{n}(a) - \Phi_{n}^{-1}(a)\big)
						\mathds{1}_{\{F_{n}^{-1} \circ \tilde{U}_{n}(a) \geq \Phi_{n}^{-1}(a)\}}da + o_{\bP}(n^{-1/2})
\]
and Claim I follows.

\medskip
\noindent
\textsc{Claim II: } There exist Brownian bridges $B_{n}$ on $[0,1]$, such that $\cJ_{n,1} = \cJ_{n,2} + o_{\bP}(n^{-1/2})$, 
with 
\[
\cJ_{n,2} \defeq \int_{\lambda_{n}(0)}^{\lambda_{n}(1)}\bigg|\tilde{U}_{n}(a) - \lambda_{n}^{-1}(a) 
			- \frac{B_{n}(\lambda_{n}^{-1}(a))}{\sqrt{n}}\bigg|\frac{1}{p_{X}(\Phi_{n}^{-1}(a))}da.
\]

\smallskip
\noindent
\textit{Proof of Claim II. } Note first that $\Phi_{n}(T) = \lambda_{n}(1)$ and 
$\Phi_{n}(-T) = \lambda_{n}(0)$ by definition and that by Theorem~3 of \cite{KMT1975}, there 
exist Brownian bridges $B_{n}$ on $[0,1]$, such that 
\begin{align}
\bE\bigg[\sup_{t \in [0,1]}\bigg|F_{n} \circ F_{X}^{-1}(t) - t - \frac{B_{n}(t)}{\sqrt{n}}\bigg|^{r}\bigg]^{1/r} 
= \cO\Big(\frac{\log(n)}{n}\Big) \label{eq:KMT}
\end{align}
for $r \geq 1$. By definition of $\cJ_{n,1}$ and rewriting $\Phi_{n}^{-1} = F_{X}^{-1} \circ \lambda_{n}^{-1}$, 
\begin{align*}
\cJ_{n,1}
&= \int_{\lambda_{n}(0)}^{\lambda_{n}(1)}\bigg|F_{n}^{-1} \circ \tilde{U}_{n}(a) - F_{X}^{-1} \circ \tilde{U}_{n}(a) 
						+ \frac{B_{n}(\lambda_{n}^{-1}(a))}{\sqrt{n}p_{X}(\Phi_{n}^{-1}(a))} \\
	&\qquad\qquad\qquad\quad	+ F_{X}^{-1} \circ \tilde{U}_{n}(a) - F_{X}^{-1} \circ \lambda_{n}^{-1}(a) 
						- \frac{B_{n}(\lambda_{n}^{-1}(a))}{\sqrt{n}p_{X}(\Phi_{n}^{-1}(a))}\bigg|da.
\end{align*}
A Taylor expansion of $F_{X}^{-1}$ around $\lambda_{n}^{-1}(a)$ yields 
\[
F_{X}^{-1} \circ \tilde{U}_{n}(a) - F_{X}^{-1} \circ \lambda_{n}^{-1}(a) 
= \frac{\tilde{U}_{n}(a)-\lambda_{n}^{-1}(a)}{p_{X}(\Phi_{n}^{-1}(a))}
				 + \frac{1}{2}(F_{X}^{-1})''(\nu_{n})\big(\tilde{U}_{n}(a)-\lambda_{n}^{-1}(a)\big)^{2}
\]
for some $\nu_{n}$ between $\lambda_{n}^{-1}(a)$ and $\tilde{U}_{n}(a)$. But 
$(F_{X}^{-1})'' = -\frac{p_{X}' \circ F_{X}^{-1}}{(p_{X} \circ F_{X}^{-1})^{3}}$ is bounded as $p_{X}$ is continuously 
differentiable and $p_{X}$ is bounded away from zero, whence
\[
\bE\bigg[\bigg|F_{X}^{-1} \circ \tilde{U}_{n}(a) - F_{X}^{-1} \circ \lambda_{n}^{-1}(a) 
				- \frac{\tilde{U}_{n}(a)-\lambda_{n}^{-1}(a)}{p_{X}(\Phi_{n}^{-1}(a))}\bigg|\bigg] 
\leq K(n\delta_{n}^{2})^{-2/3} 
\]
by Corollary~\ref{cor:tail bounds - 1} for $Z_{1,n} = Z_{2,n} = 0$. 
Combined with $|\lambda_{n}(1)-\lambda_{n}(0)| = \cO(\delta_{n})$ and 
$n\delta_{n}^{2} \longrightarrow \infty$, an application of Markov's inequality 
and Fubini's theorem imply 
\begin{align*}
\cJ_{n} 
&= \int_{\lambda_{n}(0)}^{\lambda_{n}(1)}\bigg|F_{n}^{-1} \circ \tilde{U}_{n}(a) - F_{X}^{-1} \circ \tilde{U}_{n}(a) 
							+ \frac{B_{n}(\lambda_{n}^{-1}(a))}{\sqrt{n}p_{X}(\Phi_{n}^{-1}(a))} \\
	&\qquad\qquad\qquad+ \bigg(\tilde{U}_{n}(a)-\lambda_{n}^{-1}(a) 
							- \frac{B_{n}(\lambda_{n}^{-1}(a))}{\sqrt{n}}\bigg)\frac{1}{p_{X}(\Phi_{n}^{-1}(a))}\bigg|da 
							+ o_{\bP}(n^{-1/2}).
\end{align*}
Now before we bring the KMT approximation \eqref{eq:KMT} into play, we need to approximate the difference 
$F_{n}^{-1} \circ \tilde{U}_{n}(a) - F_{X}^{-1} \circ \tilde{U}_{n}(a)$ within the integral appropriately. To make this precise, observe 
\begin{align*}
&\int_{\lambda_{n}(0)}^{\lambda_{n}(1)}\bigg|F_{n}^{-1} \circ \tilde{U}_{n}(a) - F_{X}^{-1} \circ \tilde{U}_{n}(a) 
								+ \frac{B_{n}(\lambda_{n}^{-1}(a))}{\sqrt{n}p_{X}(\Phi_{n}^{-1}(a))}\bigg|da \\
&\qquad\qquad\leq K\delta_{n}\sup_{u \in [0,1]}\bigg|F_{n}^{-1}(u) - F_{X}^{-1}(u) 
								+ \frac{B_{n}(F_{X} \circ F_{n}^{-1}(u))}{\sqrt{n}p_{X}(F_{n}^{-1}(u))}\bigg| \\
&\qquad\qquad\qquad\qquad\qquad	+ \frac{1}{\sqrt{n}}\int_{\lambda_{n}(0)}^{\lambda_{n}(1)}
											\bigg|\frac{B_{n}(\lambda_{n}^{-1}(a))}{p_{X}(\Phi_{n}^{-1}(a))} 
								- \frac{B_{n}(F_{X} \circ F_{n}^{-1}(\tilde{U}_{n}(a)))}
																		{p_{X}(F_{n}^{-1}(\tilde{U}_{n}(a)))}\bigg|da \\
&\qquad\qquad= K\delta_{n}\sup_{u \in [0,1]}\bigg|F_{X}^{-1}(u) - F_{n}^{-1}(u) 
								- \frac{B_{n}(F_{X} \circ F_{n}^{-1}(u))}{\sqrt{n}p_{X}(F_{n}^{-1}(u))}\bigg| 
								+ o_{\bP}(n^{-1/2}), 
\end{align*}
where the last equality follows from Lemma~\ref{lem:tail bounds - 1} (ii) and the classical bound on the expected modulus 
of continuity of the Brownian bridge (e.g. formula (2) in \cite{Fischer2010}, rewriting the Brownian bridge in terms of an independent standard Gaussian random variable and a Brownian motion). Now by decomposing 
$\sup_{u \in [0,1]} = \max_{i}\sup_{u \in [i/n,(i+1)/n)}$ and by utilizing that we can write 
$\max_{i}\sup_{u \in [i/n,(i+1)/n)}|F_{X}^{-1}(u) - F_{X}^{-1}(i/n)| = \cO(n^{-1})$, we find 
\begin{align*}
&\sup_{u \in [0,1]}\bigg|F_{X}^{-1}(u) - F_{n}^{-1}(u) 
				- \frac{B_{n}(F_{X} \circ F_{n}^{-1}(u))}{\sqrt{n}p_{X}(F_{n}^{-1}(u))}\bigg| \\
&\ \leq \sup_{u \in [0,1]}\bigg|F_{X}^{-1}(F_{n} \circ F_{X}^{-1}(u)) - F_{X}^{-1}(u) 
				- \frac{B_{n}(u)}{\sqrt{n}p_{X}(F_{X}^{-1}(u))}\bigg| + \cO(n^{-1}) \\
&\ \leq \sup_{u \in [0,1]}\bigg|(F_{X}^{-1})'(u)(F_{n} \circ F_{X}^{-1}(u) - u)
								- \frac{B_{n}(u)}{\sqrt{n}p_{X}(F_{X}^{-1}(u))}\bigg| + \cO(n^{-1}) + K\sup_{u \in [0,1]}\big|(F_{n} \circ F_{X}^{-1}(u) - u)^{2}\big| \\
&\ \leq \sup_{u \in [0,1]}\bigg|F_{n} \circ F_{X}^{-1}(u) - u - \frac{B_{n}(u)}{\sqrt{n}}\bigg|\frac{1}{p_{X}(F_{X}^{-1}(u))} + \cO_{\bP}(n^{-1}) 
= o_{\bP}(n^{-1/2})
\end{align*}
by \eqref{eq:KMT} and the fact that 
$\sup_{u \in [0,1]}|F_{n} \circ F_{X}^{-1}(u) - u| = \cO_{\bP}(n^{-1/2})$.

\medskip
\noindent
The punch line of the next claim is to incorporate the Brownian bridges $B_{n}$ from 
Claim II into the inverse process.

\smallskip
\noindent
\textsc{Further notation. } For $a \in [\lambda_{n}(0),\lambda_{n}(1)]$, let $i_{n}(a)$ denote the integer part of the expression
$(a-\lambda_{n}(0))(n\delta_{n}^{2})^{1/3}/(\delta_{n}\log(n\delta_{n}^{2}))$, define 
$a_{n} \defeq \lambda_{n}(0) + i_{n}(a)\delta_{n}(n\delta_{n}^{2})^{-1/3}\log(n\delta_{n}^{2})$ and 
\begin{equation}\label{eq:anB}
a_{n}^{B} 
\defeq a - \frac{B_{n}(\lambda_{n}^{-1}(a_{n}))}{n^{1/2}(\lambda_{n}^{-1})'(a_{n})}. 
\end{equation}
For $i \in \bN_{0}$, let $k_{i}^{n} \defeq \lambda_{n}(0) + i\delta_{n}(n\delta_{n}^{2})^{-1/3}\log(n\delta_{n}^{2})$, 
\[
I_{i}^{n} 
\defeq \big[k_{i}^{n},\min\{k_{i+1}^{n},\lambda_{n}(1)\}\big), 
\]
let $N^{n} \defeq (\lambda_{n}(1)-\lambda_{n}(0))\frac{(n\delta_{n}^{2})}{\delta_{n}\log(n\delta_{n}^{2})}$ and note that 
$\bigcup_{i=0}^{N^{n}} I_{i}^{n} = [\lambda_{n}(0),\lambda_{n}(1)]$.
Let us also define the interval $J_{n} \defeq \bigcup_{i=1}^{N^{n}-2} I_{i}^{n}$. The definition of the intervals and the 
behavior of $a_{n}^{B}$ is visualized in Figure~\ref{fig:interval}.

\begin{figure}
\begin{tikzpicture}

\draw[thick] (-6.2,0) -- (-0.4,0); 
\draw[thick, dotted] (-0.4,0) -- (1.0,0);
\draw[thick] (1.0,0) -- (6.2,0); 
\draw[thick] (-6,-1.25) -- (-6,1.25);
\draw[thick] (6,-1.25) -- (6,1.25);

\foreach \x in {-4.2, -2.4, -0.6, 1.2, 3.0, 4.8} {
    \draw[thick] (\x,0.25) -- (\x,-0.25);
}

\draw (-2.0,0.15) -- (-2.0,-0.15);
\draw (-1.6,0.15) -- (-1.6,-0.15);

\node[below] at (-6,-1.25) {$\lambda_{n}(0)$};
\node[below] at (6,-1.25) {$\lambda_{n}(1)$};

\node[below] at (-4.2,-0.35) {$k_{1}^{n}$};
\node[below] at (-2.4,-0.35) {$k_{2}^{n}$};
\node[below] at (-0.6,-0.35) {$k_{3}^{n}$};

\node[below] at (1.2,-0.35) {$k_{N^{n}-2}^{n}$};
\node[below] at (3.0,-0.35) {$k_{N^{n}-1}^{n}$};
\node[below] at (4.8,-0.35) {$k_{N^{n}}^{n}$};

\node[below] at (-5.1,-0.75) {$I_{0}^{n}$};
\node[below] at (-3.3,-0.75) {$I_{1}^{n}$};
\node[below] at (-1.5,-0.75) {$I_{2}^{n}$};
\node[below] at (2.3,-0.75) {$I_{N^{n}-2}^{n}$};
\node[below] at (3.9,-0.75) {$I_{N^{n}-1}^{n}$};
\node[below] at (5.4,-0.75) {$I_{N^{n}}^{n}$};

\node[below] at (-2.0,-0.2) {$a$};
\node[below] at (-1.5,-0.02) {$a_n^{B}$};

\draw[decorate,decoration={brace,amplitude=5pt},yshift=5pt] (1.2,0.2) -- (3.0,0.2) node[midway,yshift=12pt] 
    {$\delta_{n}(n\delta_{n}^{2})^{-1/3}\log(n\delta_{n}^{2})$};

\draw[decorate,decoration={brace,amplitude=5pt},yshift=5pt] (-2.6,0.2) -- (-1.4,0.2) node[midway,yshift=12pt] 
    {$2C\delta_{n}n^{-1/2}\log(n\delta_{n}^{2})$};

\fill[color=blue!30,opacity=0.2] (-2.6,0.3) rectangle (-1.4,-0.3);
\draw[color=blue!30] (-2.6,0.3) -- (-2.6,-0.3);
\draw[color=blue!30] (-1.4,0.3) -- (-1.4,-0.3);

\end{tikzpicture}
\caption{For $a \in I_{i}^{n}$ for some $i \in \{0,\dots,N^{n}\}$, we have $|a_{n}^{B} - a| \leq C\delta_{n}(\log(n\delta_{n}^{2})/n)^{1/2}$ 
for $n$ large enough, which is smaller than the length of an interval $I_{i}^{n}$, bounded by 
$\delta_{n}(n\delta_{n}^{2})^{-1/3}\log(n\delta_{n}^{2})$.}\label{fig:interval}
\end{figure}

\smallskip
\noindent
\textsc{Claim III: } $\cJ_{n,2} = \cJ_{n,3} + o_{\bP}(n^{-1/2})$, with 
$\displaystyle \cJ_{n,3} \defeq \int_{\lambda_{n}(0)}^{\lambda_{n}(1)}\frac{|\tilde{U}_{n}(a_{n}^{B}) 
										- \lambda_{n}^{-1}(a)|}{p_{X}(\Phi_{n}^{-1}(a))}da.$

\smallskip
\noindent
\textit{Proof of Claim III. } Let 
$\Omega_{n}\defeq \big\{ \sup_{u \in [0,1]}|B_{n}(u)| \leq \sqrt{\log(n\delta_{n}^{2})}\big\} \subset \Omega$ and note that 
$\bP(\Omega_{n}) \longrightarrow 1$ as $n \longrightarrow \infty$. Then, 
\[
\cJ_{n,2}\mathds{1}_{\Omega_{n}}
= \int_{J_{n}}\bigg|\tilde{U}_{n}(a)-\lambda_{n}^{-1}(a) - \frac{B_{n}(\lambda_{n}^{-1}(a))}{\sqrt{n}}\bigg|\mathds{1}_{\Omega_{n}}da + o_{\bP}(n^{-1/2}), 
\]
where we used Corollary~\ref{cor:tail bounds - 1} with $c_{n} = 0$. Note further that $i_{n}(a) = i$ for every 
$a \in I_{i}^{n}$ and so in this case, 
$a_{n} = \lambda_{n}(0) + i\delta_{n}(n\delta_{n}^{2})^{-1/3}\log(n\delta_{n}^{2}) = k_{i}^{n}$
on $I_{i}^{n}$. Thus, for $a \in I_{i}^{n}$, $a_{n}^{B}$ is just a translation of $a$ by 
\[
B_{i}^{n} \defeq \frac{B_{n}(\lambda_{n}^{-1}(k_{i}^{n}))}{\sqrt{n}(\lambda_{n}^{-1})'(k_{i}^{n})}.
\]
Let $I_{i}^{n,B} \defeq I_{i}^{n} + B_{i}^{n} \defeq \{x + B_{i}^{n} \mid x \in I_{i}^{n}\}$. 
Then, a change of variable 
inside the integral, where $a$ is replaced by $a_{n}^{B}$ on each interval $I_{i}^{n}$, proves that 
$\cJ_{n,2}\mathds{1}_{\Omega_{n}}$ is equal to 
\begin{align*}
&\sum_{i=1}^{N^{n}-2}\hspace*{-1mm}\int_{I_{i}^{n}}\bigg|\tilde{U}_{n}(a) - \lambda_{n}^{-1}(a) 
			- \frac{B_{n}(\lambda_{n}^{-1}(a))}{\sqrt{n}}\bigg|\frac{1}{p_{X}(\Phi_{n}^{-1}(a))}\mathds{1}_{\Omega_{n}}da + o_{\bP}(n^{-1/2}) \\
&= \hspace*{-1mm}\sum_{i=1}^{N^{n}-2}\hspace*{-1mm}\int_{I_{i}^{n,B}}\bigg|\tilde{U}_{n}(a_{n}^{B}) - \lambda_{n}^{-1}(a_{n}^{B}) 
			- \frac{B_{n}(\lambda_{n}^{-1}(a_{n}^{B}))}{\sqrt{n}}\bigg|\frac{1}{p_{X}(\Phi_{n}^{-1}(a_{n}^{B}))}\mathds{1}_{\Omega_{n}}da + o_{\bP}(n^{-1/2}) \\
&= \hspace*{-1mm}\sum_{i=1}^{N^{n}-2}\hspace*{-1mm}\int_{I_{i}^{n}}\bigg|\tilde{U}_{n}(a_{n}^{B}) - \lambda_{n}^{-1}(a_{n}^{B}) 
			- \frac{B_{n}(\lambda_{n}^{-1}(a_{n}^{B}))}{\sqrt{n}}\bigg|\frac{1}{p_{X}(\Phi_{n}^{-1}(a_{n}^{B}))}\mathds{1}_{\Omega_{n}}da 
			+ R_{n} + o_{\bP}(n^{-1/2}), 
\end{align*}
where, with $I_{i}^{n,B} \triangle I_{i}^{n}$ denoting the symmetric difference of the sets $I_{i}^{n,B}$ and $I_{i}^{n}$,
\begin{align}\label{eq: Rn3}
|R_{n}|
\leq \sum_{i=1}^{N^{n}-2}\int_{I_{i}^{n,B} \triangle I_{i}^{n}}
									\bigg|\tilde{U}_{n}(a_{n}^{B}) - \lambda_{n}^{-1}(a_{n}^{B}) 
			- \frac{B_{n}(\lambda_{n}^{-1}(a_{n}^{B}))}{\sqrt{n}}\bigg|\frac{1}{p_{X}(\Phi_{n}^{-1}(a_{n}^{B}))}\mathds{1}_{\Omega_{n}}da. 
\end{align}
By definition of $\Omega_{n}$, we have $|a_{n}^{B} - a| \leq C\delta_{n}\sqrt{\log(n\delta_{n}^{2})/n}$ on $\Omega_n$ for 
some constant $C>0$ that does not depend on $a\in[\lambda_n(0),\lambda_n(1)]$ and thus on $\Omega_{n}$, $a_{n}^{B}$ is in 
fact contained in $[\lambda_{n}(0),\lambda_{n}(1)]$ for $a \in J_{n}$ and $n$ large enough. Let 
$D_{i}^{n}$ denote the symmetric difference of $I_i^n$ and the union 
$(I_{i}^{n} + C\delta_{n}(\log(n\delta_{n}^{2})/n)^{1/2})\cup(I_{i}^{n} - C\delta_{n}(\log(n\delta_{n}^{2})/n)^{1/2})$. 
Then, $I_{i}^{n,B} \triangle I_{i}^{n} \subset D_{i}^{n}$ on $\Omega_n$ and we obtain with \eqref{eq: Rn3}, 
\begin{align*}
\bE[|R_n|]
&\leq K\sum_{i=1}^{N^{n}-2}\int_{D_{i}^{n}}
				\bE\big[\big|\tilde{U}_{n}(a_{n}^{B}) - \lambda_{n}^{-1}(a_{n}^{B})\big|\mathds{1}_{\Omega_{n}}\big] + \bE\bigg[\bigg|\frac{B_{n}(\lambda_{n}^{-1}(a_{n}^{B}))}{\sqrt{n}}\bigg|\mathds{1}_{\Omega_{n}}\bigg]da \\
&\leq K(\lambda_{n}(1)-\lambda_{n}(0))\frac{(n\delta_{n}^{2})^{1/3}}{\delta_{n}\log(n\delta_{n}^{2})}
					\big((n\delta_{n}^{2})^{-1/3} + n^{-1/2}\big)\delta_{n}\sqrt{\frac{\log(n\delta_{n}^{2})}{n}} 
\leq K\delta_{n}\frac{n^{-1/2}}{\sqrt{\log(n\delta_{n}^{2})}}, 
\end{align*}
where we used Corollary~\ref{cor:tail bounds - 1} with 
$c_{n} = \delta_{n}\sqrt{\log(n\delta_{n}^{2})/n}$. Thus, 
\[
\cJ_{n,2}
= \int_{J_{n}}\bigg|\tilde{U}_{n}(a_{n}^{B}) - \lambda_{n}^{-1}(a_{n}^{B}) 
			- \frac{B_{n}(\lambda_{n}^{-1}(a_{n}^{B}))}{\sqrt{n}}\bigg|\frac{1}{p_{X}(\Phi_{n}^{-1}(a_{n}^{B}))}da 
			+ o_{\bP}(n^{-1/2}). 
\]
Subsequently, we show that we can replace $a_{n}^{B}$ by $a$ in the argument of the Brownian bridge and the density 
$p_{X}$ in the previous expression. By a Taylor expansion of $1/p_{X}(\Phi_{n}^{-1}(a_{n}^{B}))$ around $a$, we find for 
some $\nu_{n}$ between $a$ and $a_{n}^{B}$, that 
\[
\frac{1}{p_{X}(\Phi_{n}^{-1}(a_{n}^{B}))} 
= \frac{1}{p_{X}(\Phi_{n}^{-1}(a))} 
	+ \frac{p_{X}'(\Phi_{n}^{-1}(\nu_{n}))}{(p_{X}(\Phi_{n}^{-1}(\nu_{n}))^{2}\Phi_{0}'(\delta_{n}\Phi_{n}^{-1}(\nu_{n}))}
						\frac{( a_{n}^{B}-a)}{\delta_{n}}.
\]
Similar as before and by the same application of Corollary~\ref{cor:tail bounds - 1}, 
\begin{align*}
&\bigg|\int_{J_{n}}\bE\bigg[\bigg|\tilde{U}_{n}(a_{n}^{B}) - \lambda_{n}^{-1}(a_{n}^{B}) 
		- \frac{B_{n}(\lambda_{n}^{-1}(a_{n}^{B}))}{\sqrt{n}}\bigg|
				\bigg(\frac{1}{p_{X}(\Phi_{n}^{-1}(a_{n}^{B}))}-\frac{1}{p_{X}(\Phi_{n}^{-1}(a))}\bigg)\mathds{1}_{\Omega_{n}}\bigg]da\bigg| \\
&\qquad\leq K(n\delta_{n}^{2})^{-1/3}\big(\lambda_{n}(1)-\lambda_{n}(0)\big)n^{-1/2}\log(n\delta_{n}^{2}), 
\end{align*}
which shows that 
\[
\cJ_{n,2} = \int_{J_{n}}\bigg|\tilde{U}_{n}(a_{n}^{B}) - \lambda_{n}^{-1}(a_{n}^{B}) 
			- \frac{B_{n}(\lambda_{n}^{-1}(a_{n}^{B}))}{\sqrt{n}}\bigg|\frac{1}{p_{X}(\Phi_{n}^{-1}(a))}da 
			+ o_{\bP}(n^{-1/2}). 
\]
Next, we observe 
\begin{align*}
&\bigg|\int_{J_{n}}\bE\bigg[\bigg(\bigg|\tilde{U}_{n}(a_{n}^{B}) - \lambda_{n}^{-1}(a_{n}^{B}) 
			- \frac{B_{n}(\lambda_{n}^{-1}(a_{n}^{B}))}{\sqrt{n}}\bigg| \\
&\qquad\qquad\qquad\qquad- \bigg|\tilde{U}_{n}(a_{n}^{B}) - \lambda_{n}^{-1}(a_{n}^{B}) 
			- \frac{B_{n}(\lambda_{n}^{-1}(a))}{\sqrt{n}}\bigg|\bigg)\frac{1}{p_{X}(\Phi_{n}^{-1}(a))}\mathds{1}_{\Omega_{n}}\bigg]da\bigg| \\
&\qquad\leq Kn^{-1/2}\int_{J_{n}}\bE\big[\big|B_{n}(\lambda_{n}^{-1}(a)) 
			- B_{n}(\lambda_{n}^{-1}(a_{n}^{B}))\big|\mathds{1}_{\Omega_{n}}\big]da
\end{align*}
and obtain by the classical bound on the expected modulus of continuity of the Brownian bridge, 
\[
\cJ_{n,2} = \int_{J_{n}}\bigg|\tilde{U}_{n}(a_{n}^{B}) - \lambda_{n}^{-1}(a_{n}^{B}) 
			- \frac{B_{n}(\lambda_{n}^{-1}(a))}{\sqrt{n}}\bigg|\frac{1}{p_{X}(\Phi_{n}^{-1}(a))}da 
			+ o_{\bP}(n^{-1/2}).
\]
To complete the proof of Claim III, it is sufficient to verify that
\begin{align}\label{eq: Claim3c}
\sqrt{n}(\lambda_n(0)-\lambda_n(1))\sup_{a \in J_{n}}\bigg|\lambda_{n}^{-1}(a_{n}^{B}) 
								+ \frac{B_{n}(\lambda_{n}^{-1}(a_{n}))}{\sqrt{n}}
								- \lambda_{n}^{-1}(a)\bigg| \longrightarrow_{\bP} 0 
\end{align}
as $n \longrightarrow \infty$. A Taylor expansion of $\lambda_{n}^{-1}$ around $a \in J_{n}$ reveals for some 
$\nu_{n} = \nu_{n}(a,a_{n}^{B})$ between $a_{n}^{B}$ and $a$ the identity 
\[
\lambda_{n}^{-1}(a_{n}^{B}) + \frac{B_{n}(\lambda_{n}^{-1}(a_{n}))}{\sqrt{n}}-\lambda_{n}^{-1}(a)
= \bigg(1-\frac{(\lambda_{n}^{-1})'(\nu_{n})}{(\lambda_{n}^{-1})'(a_{n})}\bigg)
								\frac{B_{n}(\lambda_{n}^{-1}(a_{n}))}{\sqrt{n}}.
\]
Evaluation of the right-hand side on $\Omega_n$ in terms of $\Phi_0$, $\delta_n$ and $p_X$ together with
\[
\sup_{a \in J_{n}}|\nu_{n}(a,a_{n}^{B}) - a_{n}| \mathds{1}_{\Omega_{n}}\leq 
\sup_{a \in J_{n}}|a_{n}^{B} - a|\mathds{1}_{\Omega_{n}} \leq K\delta_{n}(\log(n\delta_{n}^{2})/n)^{1/2}
\]
finally yields \eqref{eq: Claim3c} and Claim III follows.

\smallskip
\noindent
\textsc{Further notation: } Let $\displaystyle L_{n} \colon [0,1] \to \bR, 
L_{n}(t) \defeq \int_{0}^{t}\sigma_{n}^{2} \circ F_{X}^{-1}(u)du$ and define 
\[
U_{n}^{L} \colon [\lambda_{n}(0),\lambda_{n}(1)] \to [0,1], 
\quad 
U_{n}^{L}(a) \defeq L_{n}\big(\tilde{U}_{n}(a_{n}^{B})\big) - L_{n}\big(\lambda_{n}^{-1}(a)\big). 
\]

\smallskip
\noindent
\textsc{Claim IV: } $\cJ_{n,3} = \tilde{\cJ}_{n} + o_{\bP}(n^{-1/2})$, with 
$\displaystyle \tilde{\cJ}_{n} 
\defeq \int_{J_{n}}\bigg|\frac{U_{n}^{L}(a)}{L_{n}'(\lambda_{n}^{-1}(a))}\bigg|
																		\frac{1}{p_{X}(\Phi_{n}^{-1}(a))}da.$

\smallskip
\noindent
\textit{Proof of Claim IV. } It suffices to show that 
\[
\int_{J_{n}}\bigg(\big|\tilde{U}_{n}(a_{n}^{B}) - \lambda_{n}^{-1}(a)\big|
				- \bigg|\frac{U_{n}^{L}(a)}{L_{n}'(\lambda_{n}^{-1}(a))}\bigg|\bigg)
										\frac{1}{p_{X}(\Phi_{n}^{-1}(a))}da = o_{\bP}(n^{-1/2}). 
\]
As in the previous claim, we argue on $\Omega_{n}\defeq \big\{ \sup_{u \in [0,1]}|B_{n}(u)| \leq \sqrt{\log(n\delta_{n}^{2})}\big\} \subset \Omega$. A Taylor expansion of $L_{n}$ around $\lambda_{n}^{-1}(a)$ provides the equality 
\[
U_{n}^{L}(a)
= L_{n}'(\lambda_{n}^{-1}(a))\big(\tilde{U}_{n}(a_{n}^{B}) - \lambda_{n}^{-1}(a)\big) 
			+ \frac{1}{2}L_{n}''(\nu_{n})\big(\tilde{U}_{n}(a_{n}^{B}) - \lambda_{n}^{-1}(a)\big)^{2}
\]
for some $\nu_{n}$ between $\lambda_{n}^{-1}(a)$ and $\tilde{U}_{n}(a_{n}^{B})$. Recalling the definition
$\sigma_{n}^{2}(t) = \Phi_{n}(t)(1-\Phi_{n}(t))$, 
\[
L_{n}'(\lambda_{n}^{-1}(a)) 
= \Phi_{n}(\Phi_{n}^{-1}(a))\big(1-\Phi_{n}(\Phi_{n}^{-1}(a))\big) 
\geq \Phi_{n}(-T)(1-\Phi_{n}(T)) 
\geq K
\]
for all $a \in [\lambda_{n}(0),\lambda_{n}(1)]$, while
\[
|L_{n}''(\nu_{n})| 
= \big|(\sigma_{n}^{2} \circ F_{X}^{-1})'(\nu_{n})\big| 
= \bigg|\Phi_{n}'(F_{X}^{-1}(\nu_{n}))\frac{1-2\Phi_{n}(F_{X}^{-1}(\nu_{n}))}{p_{X}(F_{X}^{-1}(\nu_{n}))}\bigg|
\leq \delta_{n}K.
\]
Thus, by the reverse triangle inequality, 
\begin{align*}
\bigg|\big|\tilde{U}_{n}(a_{n}^{B}) &- \lambda_{n}^{-1}(a)\big| - \bigg|\frac{U_{n}^{L}(a)}{L_{n}'(\lambda_{n}^{-1}(a))}\bigg| \bigg| 
\leq \bigg|\tilde{U}_{n}(a_{n}^{B}) - \lambda_{n}^{-1}(a) - \frac{U_{n}^{L}(a)}{L_{n}'(\lambda_{n}^{-1}(a))}\bigg| \\
&= \bigg|\tilde{U}_{n}(a_{n}^{B}) - \lambda_{n}^{-1}(a) 
				- \big(\tilde{U}_{n}(a_{n}^{B}) - \lambda_{n}^{-1}(a)\big) - \frac{1}{2}\frac{L_{n}''(\nu_{n})}{L_{n}'(\lambda_{n}^{-1}(a))}\big(\tilde{U}_{n}(a_{n}^{B}) - \lambda_{n}^{-1}(a)\big)^{2}\bigg| \\
&\leq \delta_{n}K\big(\tilde{U}_{n}(a_{n}^{B}) - \lambda_{n}^{-1}(a)\big)^{2}
\end{align*}
for all $a \in J_{n}$. Consequently,
\[
\bE\bigg[\bigg| \big|\tilde{U}_{n}(a_{n}^{B}) - \lambda_{n}^{-1}(a)\big|
	- \bigg|\frac{U_{n}^{L}(a)}{L_{n}'(\lambda_{n}^{-1}(a))}\bigg| \bigg|\mathds{1}_{\Omega_{n}}\bigg] 
\leq \delta_{n}K\bE\Big[\big(\tilde{U}_{n}(a_{n}^{B}) - \lambda_{n}^{-1}(a)\big)^{2}\mathds{1}_{\Omega_{n}}\Big],
\]
which is bounded by $K(n\delta_{n}^{2})^{-2/3}\delta_{n}$ by Corollary~\ref{cor:tail bounds - 1}, applied with 
$c_{n} = \delta_{n}(\log(n\delta_{n}^{2})/n)^{1/2}$ and $Z_{2,n} = 0$. 
Markov's inequality, Fubini's theorem and $\lambda_{n}(1)-\lambda_{n}(0) = \cO(\delta_{n})$ then reveal
for any $\varepsilon > 0$ the bound
\[
\bP\bigg(\sqrt{n}\int_{J_{n}}\hspace{-1.5mm}\bigg(\big|\tilde{U}_{n}(a_{n}^{B}) - \lambda_{n}^{-1}(a)\big|
		- \bigg|\frac{U_{n}^{L}(a)}{L_{n}'(\lambda_{n}^{-1}(a))}\bigg|\bigg)
											\frac{da}{p_{X}(\Phi_{n}^{-1}(a))} > \varepsilon, \,\Omega_n\bigg) 
\leq \frac{K\delta_{n}}{\varepsilon(n\delta_{n}^{2})^{1/6}}.
\]

\medskip
\noindent
The goal of the next claim is to bring another strong Gaussian approximation into play, namely standard Brownian 
motions $W_{n}$ given $X_1,\dots, X_n$, conditionally independent of the Brownian bridges $B_{n}$ of the KMT approximation, 
that satisfy for some constant $A>0$
\begin{align}\label{eq: Sak85}
\bE\bigg[\sup_{t \in [0,1]}\bigg|\Upsilon_{n}(t) - \int_{0}^{t}\Phi_{n} \circ F_{n}^{-1}(u)du 
															- \frac{W_{n}(L^{n}(t))}{\sqrt{n}}\bigg|^{q}\bigg| X_1,\dots, X_n\bigg]
\leq An^{1-q}.
\end{align}
Noting that $Y_{i}^{n}-\Phi_{n}(X_{i})$, $i=1,\dots, n$, are bounded and conditionally centered and independent given $X_1,\dots, X_n$, existence of such $W_n$'s is guaranteed by \cite{Sakhanenko1985invariance}.

\smallskip
\noindent
\textsc{Further notation: } Define 
\[
L^{n}(t) \defeq \int_{0}^{t}\sigma_{n}^{2} \circ F_{n}^{-1}(u)du, 
\qquad 
\psi_{n}(t) \defeq \frac{L_{n}''(t)}{\sqrt{n}L_{n}'(t)}B_{n}(t), 
\qquad
d_{n}(t) \defeq \sqrt{\delta_{n}}\frac{|\lambda_{n}'(t)|}{2L_{n}'(t)^{2}} 
\]
for $t \in [0,1]$ and let $\bP^{|X}$ denote the conditional measure given $(X_{1},\dots,X_{n})$. 
For 
$n$ large enough to ensure the subsequent expression being well-defined for $|u| \leq (\frac{n}{\delta_{n}})^{1/3}L^{n}(t)$ 
and any $t \in (0,1)$, define for the $\bP^{|X}$-Brownian motions $W_n$ fulfilling \eqref{eq: Sak85}, 
\[
W_{t}^{n}(u) 
\defeq \frac{1}{\sqrt{1-\psi_{n}(t)}}\Big(\frac{n}{\delta_{n}}\Big)^{1/6}\Big(W_{n}\Big(L^{n}(t) 
							+ \Big(\frac{n}{\delta_{n}}\Big)^{-1/3}u(1-\psi_{n}(t))\Big) - W_{n}\big(L^{n}(t)\big)\Big).
\]
Note that $W_{t}^{n}$ is therefore 
distributed as a standard two-sided Brownian motion under $\bP^{|X}$ for every $t \in (0,1)$. In addition, define 
$\tilde{V}_{n}(t) 
\defeq \argmin_{|u| \leq \delta_{n}^{-1}\log(n\delta_{n}^{2})}\{W_{t}^{n}(u) + d_{n}(t)u^{2}\}$
and set 
\[
\tilde{\cJ}_{n,1} 
\defeq \int_{0}^{1}|\tilde{V}_{n}(t)|\frac{|\Phi_{n}' \circ F_{X}^{-1}(t)|}{(p_{X} \circ F_{X}^{-1}(t))^{2}|L_{n}'(t)|}dt.
\]

\smallskip
\noindent
\textsc{Claim V: } For the $\bP^{|X}$-standard Brownian motions $W_{n}$ from \eqref{eq: Sak85}, the distribution of both $(n\delta_{n}^{2})^{1/6}(\tilde{\cJ}_{n,1} - \mu_{n})$ and 
$(n\delta_{n}^{2})^{1/6}((\frac{n}{\delta_{n}})^{1/3}\tilde{\cJ}_{n} - \mu_{n})$ with respect to $\bP^{|X}$ have the same weak limit in probability. 

\smallskip
\noindent
\textit{Proof of Claim V. } 
Let us define 
\begin{align}\label{eq:Tn}
T_{n} \defeq \delta_{n}^{-1}(n\delta_{n}^{2})^\frac{1}{3(3q-5)}
\end{align}
for some $q \geq 12$ and let $\Omega_{n}' \subset \Omega$ denote the measurable set on which the following inequalities hold 
\begin{align*}
\sup_{u \in [0,1]}&|B_{n}(u)| 
\leq \log(n\delta_{n}^{2}), \quad
\sup_{u \in [0,1]}\bigg|F_{X}^{-1}(u) - F_{n}^{-1}(u) 
									- \frac{B_{n}(F_{X} \circ F_{n}^{-1}(u))}{\sqrt{n}p_{X}(F_{n}^{-1}(u))}\bigg| 
\leq \frac{\log(n)^{2}}{n}, \\
&\sup_{|u-v| \leq T_{n}(\frac{n}{\delta_{n}})^{-1/3}\sqrt{\log(n)}}|B_{n}(u) - B_{n}(v)| 
\leq \sqrt{T_{n}}\Big(\frac{n}{\delta_{n}}\Big)^{-1/6}\log(n),
\end{align*}
where $B_{n}$ denote the Brownian bridges from Claim II. Note that 
$\bP(\Omega_{n}') \longrightarrow 1$ for $n \longrightarrow \infty$, so w.l.o.g.~we prove the 
claim on $\Omega_{n}'$. For readability, the proof is divided into multiple steps. \\
$\bullet$ For every $a \in J_{n}$, let us introduce 
\[
I_{n}(a) 
\defeq \Big[\Big(\frac{n}{\delta_{n}}\Big)^{1/3}\big(L_{n}(0)-L_{n}(\lambda_{n}^{-1}(a))\big),
							\Big(\frac{n}{\delta_{n}}\Big)^{1/3}\big(L_{n}(1)-L_{n}(\lambda_{n}^{-1}(a))\big)\Big], 
\]
which is the subset over which the $\argmin$ in the definition of $U_{n}^{L}(a)$ is considered, as shown subsequently.
By using elementary properties of the $\argmin$, 
\begin{align*}
\Big(\frac{n}{\delta_{n}}\Big)^{1/3}U_{n}^{L}(a) 
&= \Big(\frac{n}{\delta_{n}}\Big)^{1/3}\bigg(L_{n}\bigg(\argmin_{u \in [0,1]}
				\{\Upsilon_{n}(u) - a_{n}^{B}u\}\bigg) - L_{n}(\lambda_{n}^{-1}(a))\bigg) \\
&= \Big(\frac{n}{\delta_{n}}\Big)^{1/3}\bigg(\argmin_{v \in [L_{n}(0),L_{n}(1)]}
				\big\{\big(\Upsilon_{n} \circ L_{n}^{-1} - a_{n}^{B}L_{n}^{-1}\big)(v)\big\} - L_{n}(\lambda_{n}^{-1}(a))\bigg) \\
&= \Big(\frac{n}{\delta_{n}}\Big)^{1/3}\hspace{-2mm}\argmin_{v: \, v + L_{n}(\lambda_{n}^{-1}(a)) \in [L_{n}(0),L_{n}(1)]}
				\big\{\big(\Upsilon_{n} \circ L_{n}^{-1} - a_{n}^{B}L_{n}^{-1}\big)\big(v + L_{n}(\lambda_{n}^{-1}(a))\big)\big\} \\
&= \argmin_{v \in I_{n}(a)}\Big\{\big(\Upsilon_{n} \circ L_{n}^{-1} 
							- a_{n}^{B}L_{n}^{-1}\big)\Big(\Big(\frac{n}{\delta_{n}}\Big)^{-1/3}v + L_{n}(\lambda_{n}^{-1}(a))\Big)\Big\} \\
&= \argmin_{v \in I_{n}(a)}\bigg\{\frac{n^{2/3}}{\delta_{n}^{1/6}}\big(\Upsilon_{n} \circ L_{n}^{-1}
							- a_{n}^{B}L_{n}^{-1}\big)\Big(\Big(\frac{n}{\delta_{n}}\Big)^{-1/3}v + L_{n}(\lambda_{n}^{-1}(a))\Big) \\
	&\qquad\qquad\qquad - \frac{n^{2/3}}{\delta_{n}^{1/6}}\big(\Lambda_{n}(\lambda_{n}^{-1}(a)) - a\lambda_{n}^{-1}(a)\big) 
											- \frac{n^{2/3}}{\delta_{n}^{1/6}}(a-a_{n}^{B})\lambda_{n}^{-1}(a)\bigg\}.
\end{align*}
Defining further for $a \in J_{n}$ and $u \in I_{n}(a)$, 
\begin{align*}
D_{n}(a,u) 
&\defeq \frac{n^{2/3}}{\delta_{n}^{1/6}}\big(\Lambda_{n} \circ L_{n}^{-1} - aL_{n}^{-1}\big)
									\Big(\Big(\frac{n}{\delta_{n}}\Big)^{-1/3}u + L_{n}(\lambda_{n}^{-1}(a))\Big) - \frac{n^{2/3}}{\delta_{n}^{1/6}}\big(\Lambda_{n}(\lambda_{n}^{-1}(a)) - a\lambda_{n}^{-1}(a)\big), \\
R_{n}(a,u) 
&\defeq \frac{n^{2/3}}{\delta_{n}^{1/6}}\int_{\lambda_{n}^{-1}(a)}^{L_{n}^{-1}((\frac{n}{\delta_{n}})^{-1/3}u 
				+ L_{n}(\lambda_{n}^{-1}(a)))}\Phi_{n} \circ F_{n}^{-1}(x) - \Phi_{n} \circ F_{X}^{-1}(x)dx \\
	&\quad + \frac{n^{2/3}}{\delta_{n}^{1/6}}(a-a_{n}^{B})
					\Big(L_{n}^{-1}\Big(\Big(\frac{n}{\delta_{n}}\Big)^{-1/3}u + L_{n}(\lambda_{n}^{-1}(a))\Big) - \lambda_{n}^{-1}(a)\Big), \\
\tilde{R}_{n}(a,u) 
&\defeq \frac{n^{2/3}}{\delta_{n}^{1/6}}\Upsilon_{n} \circ L_{n}^{-1}\Big(\Big(\frac{n}{\delta_{n}}\Big)^{-1/3}u 
				+ L_{n}(\lambda_{n}^{-1}(a))\Big) \\
	&\quad - \frac{n^{2/3}}{\delta_{n}^{1/6}}\int_{0}^{L_{n}^{-1}((\frac{n}{\delta_{n}})^{-1/3}u 
														+ L_{n}(\lambda_{n}^{-1}(a)))}\hspace{-0.35em}\Phi_{n} \circ F_{n}^{-1}(x)dx - W_{\lambda_{n}^{-1}(a)}^{n}(u) - \frac{n^{1/6}}{\delta_{n}^{1/6}}W_{n}(L_{n}(\lambda_{n}^{-1}(a))), 
\end{align*} 
we see for every $a \in J_{n}$, 
\[
\Big(\frac{n}{\delta_{n}}\Big)^{1/3}U_{n}^{L}(a) 
= \argmin_{u \in I_{n}(a)}\big\{D_{n}(a,u) + W_{\lambda_{n}^{-1}(a)}^{n}(u) + R_{n}(a,u) + \tilde{R}_{n}(a,u)\big\}, 
\]
where the expressions in the $\argmin$ on the right-hand side deviate from the expressions in the $\argmin$ on the 
left-hand side only by a term which does not depend on $u$. \\
$\bullet$ Before we show in the next step that both, $R_{n}$ and $\tilde{R}_{n}$, are negligible for the 
location of the $\argmin$, we localize. So let 
\[
\hat{U}_{n}^{L}(a) 
\defeq \argmin_{|u| \leq T_{n}}\big\{D_{n}(a,u) + W_{\lambda_{n}^{-1}(a)}^{n}(u) + R_{n}(a,u) + \tilde{R}_{n}(a,u)\big\}
\]
for $a \in J_{n}$ and note that $[-T_{n},T_{n}] \subset I_{n}(a)$ at least for $n$ large 
enough, with $T_{n}$ defined in \eqref{eq:Tn}. This follows from 
\[
T_{n} 
= n^{\frac{1}{3(3q-5)}}\delta_{n}^{-\frac{(9q-17)}{(9q-15)}} 
= n^{\frac{1}{3(3q-5)}}\delta_{n}^{-\frac{1}{3(3q-5)}}\delta_{n}^{-\frac{(3q-6)}{(3q-5)}} 
= \Big(\frac{n}{\delta_{n}}\Big)^{\frac{1}{3(3q-5)}}\delta_{n}^{-\frac{(3q-6)}{(3q-5)}} 
\]
and 
\begin{equation}\label{eq:Tn rate}
\Big(\frac{n}{\delta_{n}}\Big)^{-1/3}T_{n} 
= \Big(\frac{n}{\delta_{n}}\Big)^{-\frac{(3q-6)}{3(3q-5)}}\delta_{n}^{-\frac{(3q-6)}{(3q-5)}}
= (n\delta_{n}^{2})^{-\frac{(3q-6)}{3(3q-5)}}
= (n\delta_{n}^{2})^{-\frac{q-2}{3q-5}}. 
\end{equation}
Note further that $(\frac{n}{\delta_{n}})^{1/3}U_{n}^{L}(a)$ 
differs from $\hat{U}_{n}^{L}(a)$ if and only if $(\frac{n}{\delta_{n}})^{1/3}|U_{n}^{L}(a)| > T_{n}$. But then, by a Taylor expansion 
of $L_{n}$ around $\lambda_{n}^{-1}(a)$, Corollary~\ref{cor:tail bounds - 1} and by definition of $T_{n}$, 
\begin{align*}
\bP\Big(\Big(\frac{n}{\delta_{n}}\Big)^{1/3}U_{n}^{L}(a) \neq \hat{U}_{n}^{L}(a), \Omega_{n}'\Big) 
&= \bP\Big(|L_{n}(\tilde{U}_{n}(a_{n}^{B})) - L_{n}(\lambda_{n}^{-1}(a))| > T_{n}\Big(\frac{n}{\delta_{n}}\Big)^{-1/3}, \Omega_{n}'\Big) \\
&\leq K(\delta_{n}T_{n})^{-3q/2} 
= K(n\delta_{n}^{2})^{-\frac{q}{2(3q-5)}}.
\end{align*}
Using this inequality, we have for any $\varepsilon > 0$ and $n$ large enough, by Markov's inequality, Fubini's 
theorem, Hölder's inequality and Minkowski's inequality, that 
\begin{align*}
&\bP\bigg((n\delta_{n}^{2})^{1/6}\bigg|\int_{J_{n}}
						\bigg(\bigg|\frac{(n/\delta_{n})^{1/3}U_{n}^{L}(a)}{L_{n}'(\lambda_{n}^{-1}(a))}\bigg| 
								- \bigg|\frac{\hat{U}_{n}^{L}(a)}{L_{n}'(\lambda_{n}^{-1}(a))}\bigg|\bigg)
												\frac{1}{p_{X}(\Phi_{n}^{-1}(a))}da\bigg| > \varepsilon, \Omega_{n}'\bigg) \\
&\qquad\leq \frac{(n\delta_{n}^{2})^{1/6}}{\varepsilon}\int_{J_{n}}
						\bE\big[\mathds{1}_{\{(n/\delta_{n})^{1/3}U_{n}^{L}(a) \neq \hat{U}_{n}^{L}(a)\}}\mathds{1}_{\Omega_{n}'}\big]^{(r-1)/r} \\
&\qquad\qquad\qquad\cdot 
			\bigg(\bE\bigg[\bigg|\frac{(n/\delta_{n})^{1/3}U_{n}^{L}(a)}{L_{n}'(\lambda_{n}^{-1}(a))}\bigg|^{r}\mathds{1}_{\Omega_{n}'}\bigg]^{1/r} 
					+ \bE\bigg[\bigg|\frac{\hat{U}_{n}^{L}(a)}{L_{n}'(\lambda_{n}^{-1}(a))}\bigg|^{r}\mathds{1}_{\Omega_{n}'}\bigg]^{1/r}\bigg)
																		\frac{1}{p_{X}(\Phi_{n}^{-1}(a))}da, 
\end{align*}
which is bounded by $\frac{K}{\varepsilon}(n\delta_{n}^{2})^{-\frac{q(r-1)}{2(3q-5)r}}$ and 
where we used Corollary~\ref{cor:tail bounds - 1}, resulting in expectation bounds of 
respective order $\delta_{n}^{-1}$, compensated by an upper bound on the length of the integration domain 
$\lambda_{n}(1)-\lambda_{n}(0) = \cO(\delta_{n})$. Choosing $r$ smaller but sufficiently close to 
$3q/2$, this is bounded by $\frac{K}{\varepsilon}(n\delta_{n}^{2})^{-\frac{q(3q-2)}{4(3q-5)}}$, whence
\[
\Big(\frac{n}{\delta_{n}}\Big)^{1/3}\tilde{\cJ}_{n}
= \int_{J_{n}}\bigg|\frac{\hat{U}_{n}^{L}(a)}{L_{n}'(\lambda_{n}^{-1}(a))}\bigg|
												\frac{1}{p_{X}(\Phi_{n}^{-1}(a))}da + o_{\bP}((n\delta_{n}^{2})^{-1/6}). 
\]
$\bullet$ Now we show that $R_{n}$ and $\tilde{R}_{n}$ are actually negligible, i.e.~we prove that $\hat{U}_{n}^{L}$ can 
be replaced in the previous integral by the following process, where $S_{n} \defeq \delta_{n}^{-1}\log(n\delta_{n}^{2})$, 
\[
\hat{V}_{n} \colon [\lambda_{n}(0),\lambda_{n}(1)] \to \bR, 
\quad \hat{V}_{n}(a) \defeq \argmin_{|u| \leq S_{n}}\big\{D_{n}(a,u) + W_{\lambda_{n}^{-1}(a)}^{n}(u)\big\}. 
\]
For ease of notation, let us also introduce 
\[
\hat{V}_{n}^{*} \colon [\lambda_{n}(0),\lambda_{n}(1)] \to \bR, 
\quad \hat{V}_{n}^{*}(a) \defeq \argmin_{|u| \leq T_{n}}\big\{D_{n}(a,u) + W_{\lambda_{n}^{-1}(a)}^{n}(u)\big\}, 
\]
where \eqref{eq:Tn rate} guarantees $|u| \leq (\frac{n}{\delta_{n}})^{1/3}L^{n}(t)$ for all $|u| \leq T_{n}$.
Note that $\hat{V}_{n}^{*}(a)$ differs from $\hat{V}_{n}$ if and only if $|\hat{V}_{n}^{*}(a)| > S_{n}$ and it 
follows from Proposition~1 of \cite{Durot2002sharp} together with the comments just before this Proposition, that there 
exists $K > 0$, such that for every $(x,\alpha)$, satisfying 
$\alpha \in \big(0,S_{n}\big]$, $x > 0$ and $K\delta_{n}^{3}S_{n}^{2} \leq -(\alpha\log(2x\alpha))^{-1}$, 
\begin{equation}\label{eq:localized tails}
\begin{aligned}
\bP^{|X}\big(|&\hat{U}_{n}^{L}(a) - \hat{V}_{n}(a)| > \alpha, \Omega_{n}'\big) \\
&\leq \bP^{|X}\big(|\hat{U}_{n}^{L}(a) - \hat{V}_{n}^{*}(a)| > \alpha/2, \Omega_{n}'\big) 
					+ \bP^{|X}\big(|\hat{V}_{n}(a) - \hat{V}_{n}^{*}(a)| > \alpha/2, \Omega_{n}'\big) \\
&\leq \bP^{|X}\bigg(2\sup_{|u| \leq T_{n}}|R_{n}(a,u) + \tilde{R}_{n}(a,u)| > x(\alpha/2)^{3/2}, \Omega_{n}'\bigg) \\
	&\qquad\qquad+ KS_{n}x + \bP^{|X}\big(|\hat{V}_{n}^{*}(a)| > S_{n}, \Omega_{n}'\big) + \bP^{|X}\big(|\hat{V}_{n}^{*}(a)| > S_{n}, \Omega_{n}'\big) \\
&\leq K(x\alpha^{3/2})^{-q}\bE^{|X}\bigg[\sup_{|u| \leq T_{n}}|R_{n}(a,u) + \tilde{R}_{n}(a,u)|^{q}\mathds{1}_{\Omega_{n}'}\bigg] + KS_{n}x + 2\bP^{|X}\big(|\hat{V}_{n}^{*}(a)| > S_{n}, \Omega_{n}'\big), 
\end{aligned}
\end{equation}
where we also applied Markov's inequality in the last step. 
Before deriving an upper bound on the expectation involving $R_{n}$ and $\tilde{R}_{n}$, let us consider the 
probability involving $\hat{V}_{n}^{*}$. Noting that $D_{n}(a,0) = 0$ and that a Taylor expansion of 
$\Lambda_{n} \circ L_{n}^{-1} - aL_{n}^{-1}$ around $L_{n}(\lambda_{n}^{-1})$ reveals 
$|D_{n}(a,u)| \geq \delta_{n}^{3/2}\kappa u^{2}$ for some $\kappa > 0$ and $|u| \leq S_{n}$, using that the first 
expansion term vanishes, Theorem~4 of \cite{Durot2002sharp} yields 
\begin{align}\label{eq:exp inequality}
\bP^{|X}\big(|\hat{V}_{n}^{*}(a)| > S_{n},\Omega_{n}'\big) 
\leq K\exp(-\kappa^{2}\delta_{n}^{3}S_{n}^{3}/2) 
\leq K\exp(-\kappa^{2}\log(n\delta_{n}^{2})^{3}/2).
\end{align}
By Lemma~\ref{lem:uniform bound}, 
\[
\bE^{|X}\bigg[\sup_{|u| \leq T_{n}}|R_{n}(a,u) + \tilde{R}_{n}(a,u)|^{q}\mathds{1}_{\Omega_{n}'}\bigg] 
\leq Kn^{1-q/3}\delta_{n}^{-q/6} 
\]
and we obtain together with \eqref{eq:localized tails} and \eqref{eq:exp inequality}, 
\[
\bP^{|X}\big(|\hat{U}_{n}^{L}(a) - \hat{V}_{n}(a)| > \alpha, \Omega_{n}'\big) 
\leq K(x\alpha^{3/2})^{-q}n^{1-q/3}\delta_{n}^{-q/6} + KS_{n}x 
\]
for every $(x,\alpha)$, satisfying $\alpha \in \big(0,S_{n}\big]$, $x > 0$ and 
$K\delta_{n}^{3}S_{n}^{2} \leq -(\alpha\log(2x\alpha))^{-1}$. Now note that for any $\varepsilon > 0$, for every 
$\alpha \in ((n\delta_{n}^{2})^{-1/6}\delta_{n}^{-1}/\log(n\delta_{n}^{2}),(n\delta_{n}^{2})^{-\varepsilon}\delta_{n}^{-1}]$ 
and 
\[
x_{\alpha,n} \defeq S_{n}^{-1/(q+1)}\alpha^{-3q/(2(q+1))}n^{(3-q)/(3(q+1))}\delta_{n}^{-q/(6(q+1))}, 
\]
we have $(x_{\alpha,n}\alpha^{3/2})^{-q}n^{1-q/3}\delta_{n}^{-q/6} \leq S_{n}x_{\alpha,n}$ and 
$\alpha x_{\alpha,n} \longrightarrow 0$ for $n \longrightarrow \infty$ and so $(\alpha,x_{\alpha,n})$ does in fact satisfy 
$-(\alpha\log(2x_{\alpha,n}\alpha)^{-1} \geq K\delta_{n}^{3}S_{n}^{2}$. Thus, 
$\bP^{|X}\big(|\hat{U}_{n}^{L}(a) - \hat{V}_{n}(a)| > \alpha, \Omega_{n}'\big) 
\leq KS_{n}x_{\alpha,n}$. By definition, $|\hat{U}_{n}^{L}(a) - \hat{V}_{n}(a)|$ is bounded by $2T_{n}$ and thus, using that $q > 12$, 
\begin{align*}
\int_{J_{n}}\bE^{|X}&\big[\big|\hat{U}_{n}^{L}(a) - \hat{V}_{n}(a)\big|\mathds{1}_{\Omega_{n}'}\big]da
= \int_{J_{n}}\int_{0}^{2T_{n}}\bP^{|X}\big(|\hat{U}_{n}^{L}(a) - \hat{V}_{n}(a)| > \alpha,\Omega_{n}'\big)d\alpha da \\
&\leq K\delta_{n}\bigg((n\delta_{n}^{2})^{-1/6}\delta_{n}^{-1}/\log(n\delta_{n}^{2}) 
							 + KT_{n}S_{n}x_{(n\delta_{n}^{2})^{-\varepsilon}\delta_{n}^{-1}} + K\int_{(n\delta_{n}^{2})^{-1/6}\delta_{n}^{-1}/\log(n\delta_{n}^{2})}^{(n\delta_{n}^{2})^{-\varepsilon}\delta_{n}^{-1}}S_{n}x_{\alpha,n}d\alpha\bigg) \\
&\leq K(n\delta_{n}^{2})^{-1/6}/\log(n\delta_{n}^{2}). 
\end{align*} 
Consequently, for any $\varepsilon > 0$, by Markov's inequality and Fubini's theorem, 
\[
\bP^{|X}\bigg((n\delta_{n}^{2})^{1/6}\int_{J_{n}}\bigg|\frac{|\hat{U}_{n}^{L}(a)|-|\hat{V}_{n}(a)|}{L_{n}'(\lambda_{n}^{-1}(a))}\bigg|
													\frac{1}{p_{X}(\Phi_{n}^{-1}(a))}da > \varepsilon, \Omega_{n}'\bigg) 
= o_{\bP}(1).
\]
$\bullet$ In the last step, we approximate the integral over $\hat{V}_{n}$ by the integral over 
$\tilde{V}_{n} \circ \lambda_{n}^{-1}$, where first the integration domain $\cJ_{n}$ can be easily replaced by 
$[\lambda_{n}(0),\lambda_{n}(1)]$. As the remaining proof is very similar to the one in the previous step, we defer it to 
Lemma~\ref{lem:auxiliary}, showing that for any $\varepsilon > 0$,
\[
\bP^{|X}\bigg((n\delta_{n}^{2})^{1/6}\int_{\lambda_{n}(0)}^{\lambda_{n}(1)}\bigg|\frac{|\hat{V}_{n}(a)|-|\tilde{V}_{n}(\lambda_{n}^{-1}(a))|}{L_{n}'(\lambda_{n}^{-1}(a))}\bigg|\frac{1}{p_{X}(\Phi_{n}^{-1}(a))}da > \varepsilon, \Omega_{n}'\bigg) 
= o_{\bP}(1).
\]
A change of variable, where $a$ is replaced by $\lambda_{n}(a)$, then proves Claim V.

\medskip
\noindent
\textsc{Claim VI: } The distribution of $(n\delta_{n}^{2})^{1/6}\big(\tilde{\cJ}_{n} - \mu_{n}\big)$ under $\bP^{|X}$ converges weakly in probability to a normal distribution with mean zero and variance $\sigma^{2}$, defined in \eqref{eq:L1var}.

\smallskip
\noindent
\textit{Proof of Claim VI. } 
As in the proof of Claim V, we show the assertion without loss of generality on $\Omega_{n}'$, as 
$\bP(\Omega_{n}') \longrightarrow 1$ for $n \longrightarrow \infty$, with $\Omega_{n}'$ defined at the beginning of the proof 
of Claim V. Let 
\[
V_{n} \colon [0,1] \to \bR, \quad
V_{n}(t) \defeq \argmin_{u \in \bR}\{W_{t}^{n}(u) + d_{n}(t)u^{2}\}, 
\]
denote a variation of $\tilde{V}_{n}$ where the $\argmin$ is now considered over the whole real line instead of 
$[-S_{n},S_{n}]$, recalling $S_{n} = \delta_{n}^{-1}\log(n\delta_{n}^{2})$. Further, define 
\[
\eta_{n} \colon [0,1] \to [0,\infty), \quad 
\eta_{n}(t) \defeq \frac{|\Phi_{n}' \circ F_{X}^{-1}(t)|}{(p_{X} \circ F_{X}^{-1}(t))^{2}}
\]
and set 
\[
Y_{n}(t) 
\defeq \bigg(\bigg|\frac{\tilde{V}_{n}(t)}{L_{n}'(t)}\bigg| 
						- \bE^{|X}\bigg[\bigg|\frac{\tilde{V}_{n}(t)}{L_{n}'(t)}\bigg|\bigg]\bigg)\eta_{n}(t) 
\]
for $t \in [0,1]$. Note that $V_{n}(t)$ can differ from $\tilde{V}_{n}(t)$ only if 
$V_{n}(t) > S_{n}$ and so we have by Theorem~4 of \cite{Durot2002sharp} that there exists 
$\kappa > 0$, such that
\begin{align*}
\bP^{|X}\big(V_{n}(t) \neq \tilde{V}_{n}(t)\big) 
&\leq \bP^{|X}(V_{n}(t) > S_{n})
\leq 2\exp(-\kappa^{2}\delta_{n}^{3}S_{n}^{3}/2) 
= 2\exp(-\kappa^{2}\log(n\delta_{n}^{2})^{3}/2). 
\end{align*}
Note further that under $\bP^{|X}$, both $\tilde{V}_{n}(t)$ and $V_{n}(t)$
have finite second moments and that $\eta_{n}(t)$ is bounded. So by Fubini's theorem and the Cauchy-Schwarz inequality, 
\[
\bE^{|X}\bigg[\int_{0}^{1}\bigg(\bigg|\frac{\tilde{V}_{n}(t)}{L_{n}'(t)}\bigg| 
								- \bigg|\frac{V_{n}(t)}{L_{n}'(t)}\bigg|\bigg)\eta_{n}(t)dt\bigg] 
\leq K\int_{0}^{1}\bP^{|X}\big(V_{n}(t) \neq \tilde{V}_{n}(t)\big)^{1/2} dt
\leq K(n\delta_{n}^{2})^{-1/6}/\log(n\delta_{n}^{2}).
\]
Combining this with the fact that $d_{n}(t)^{2/3}V_{n}(t)$ is distributed as $X(0)$ for any $t$, we have shown that 
\begin{align*}
&\bE^{|X}\bigg[\int_{0}^{1}\bigg|\frac{\tilde{V}_{n}(t)}{L_{n}'(t)}\bigg|\eta_{n}(t)dt\bigg] \\
&\quad= \bE[|X(0)|]\int_{0}^{1}\delta_{n}^{-1/3}(L_{n}'(t))^{1/3}\bigg(\frac{2}{|\lambda_{n}'(t)|}\bigg)^{2/3}\eta_{n}(t)dt 
			+ o_{\bP}((n\delta_{n}^{2})^{-1/6}) \\
&\quad= \int_{0}^{1}\delta_{n}^{-1/3}\big(4\sigma_{n}^{2} \circ F_{X}^{-1}(t)\big)^{1/3}
						\bigg(\frac{p_{X} \circ F_{X}^{-1}(t)}{|\Phi_{n}' \circ F_{X}^{-1}(t)|}\bigg)^{2/3} 
									\frac{|\Phi_{n}' \circ F_{X}^{-1}(t)|}{(p_{X} \circ F_{X}^{-1}(t))^{2}}dt 
			+ o_{\bP}((n\delta_{n}^{2})^{-1/6}) \\
&\quad= \int_{-T}^{T}\delta_{n}^{-1/3}\big(4\sigma_{n}^{2}(t)\Phi_{n}'(t)\big)^{1/3}p_{X}(t)^{-1/3}dt 
			+ o_{\bP}((n\delta_{n}^{2})^{-1/6}) \\
&\quad= \mu_{n} + o_{\bP}((n\delta_{n}^{2})^{-1/6}).
\end{align*}
It remains to prove that the distribution of 
$(n\delta_{n}^{2})^{1/6}\int_{0}^{1}Y_{n}(t)dt$
under $\bP^{|X}$ converges weakly to $\cN(0,\sigma^{2})$ in probability, as $n \longrightarrow \infty$. For this, we introduce 
\[
v_{n} 
\defeq \Var^{|X}\bigg((n\delta_{n}^{2})^{1/6}\int_{0}^{1}Y_{n}(t)dt\bigg) 
= (n\delta_{n}^{2})^{1/3}\Var^{|X}\bigg((n\delta_{n}^{2})^{1/6}\int_{0}^{1}Y_{n}(t)dt\bigg) 
\]
and note that similar as in the calculation of $\mu_{n}$ in the previous display and by virtually the same arguments as in Step~5 of \cite{Durot2008monotone}, we obtain 
$
v_{n} = \sigma^{2} + o_{\bP}(1). 
$
Asymptotic normality of $(n\delta_{n}^{2})^{1/6}\int_{0}^{1}Y_{n}(t)dt$ can now be deduced as in Step~6 of 
\cite{Durot2007Lp} by Bernstein's method of big blocks and small blocks, where the only difference lies in the 
replacement of $n$ by $n\delta_{n}^{2}$. 

\medskip
\noindent
\textsc{Conclusion: } By combining Claims I -- VI, 
\begin{align*}
(n\delta_{n}^{2})^{1/6}\Big(\Big(\frac{n}{\delta_{n}}\Big)^{1/3}\tilde{\cJ}_{n} - \mu_{n}\Big) 
&= (n\delta_{n}^{2})^{1/6}\Big(\Big(\frac{n}{\delta_{n}}\Big)^{1/3}\big(\tilde{\cJ}_{n} + o_{\bP}(n^{-1/2})\big) - \mu_{n}\Big) \\
&= (n\delta_{n}^{2})^{1/6}\Big(\Big(\frac{n}{\delta_{n}}\Big)^{1/3}\tilde{\cJ}_{n} - \mu_{n}\Big ) + o_{\bP}(1) 
\longrightarrow_{\cL} \cN(0,\sigma^{2}) 
\end{align*}
for $n \longrightarrow \infty$, unconditionally under $\bP$. 
\end{proof}

\section{Proof of the auxiliary result of Section~\ref{proof:4.4ii}}
\begin{lemma}\label{lem: 4.5}
Let $A_{n} \colon [-T,T] \to \bR$ denote the continuous, piecewise linear process satisfying 
\[
A_{n}(X_{i}) 
= \frac{1}{\sqrt{n}}\sum_{\ell=1}^{n}(Y_{\ell}^{n}-\Phi_{0}(0))
						\big(1-2\mathds{1}_{\{X_{\ell} \leq X_{i}\}}\big)
\]
for $i \in \{1,\dots,n\}$. Then, as $n\delta_n^2\longrightarrow 0$ and $n\longrightarrow\infty$, 
\[
A_{n} \longrightarrow_{\cL} A \ \text{ in } \cC([-T,T]),
\] 
where $A$ is defined in Theorem~\ref{thm:l1 rate} (ii) and where $\cC([-T,T])$
denotes the space of continuous functions $\cC([-T,T])$ endowed with the topology of uniform convergence.
\end{lemma}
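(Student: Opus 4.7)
The plan is to reduce to a familiar empirical-process convergence problem by comparing $A_{n}$ with its piecewise constant sibling
\[
\tilde A_{n}(t) \defeq \frac{1}{\sqrt n}\sum_{\ell=1}^{n}(Y_{\ell}^{n}-\Phi_{0}(0))\big(1-2\mathds{1}_{\{X_{\ell}\leq t\}}\big), \quad t\in[-T,T],
\]
which coincides with $A_{n}$ at every $X_{(i)}$ and which is right-continuous with jumps only at the order statistics. Since $A_{n}$ interpolates $\tilde A_{n}$ linearly between consecutive jump points, the jump size bound $|\tilde A_{n}(X_{(i+1)})-\tilde A_{n}(X_{(i)})|=2|Y_{(i+1)}^{n}-\Phi_{0}(0)|/\sqrt n\leq 2/\sqrt n$ gives $\sup_{t\in[-T,T]}|A_{n}(t)-\tilde A_{n}(t)|\leq 2/\sqrt n$ almost surely. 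It therefore suffices to prove $\tilde A_{n}\longrightarrow_{\cL}A$ in $\ell^{\infty}([-T,T])$; the resulting $A$ has continuous paths (checked through Kolmogorov's criterion from $\Var(A(s)-A(t))=4\Phi_{0}(0)(1-\Phi_{0}(0))|F_{X}(s)-F_{X}(t)|$), so the convergence then automatically takes place in $\cC([-T,T])$.

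Next I would verify finite-dimensional convergence. By a Taylor expansion of $\Phi_{n}=\Phi_{0}(\delta_{n}\boldcdot)$ around zero,
\[
\bE[\tilde A_{n}(t)]=\sqrt n\,\bE\big[(\Phi_{n}(X)-\Phi_{0}(0))(1-2\mathds{1}_{\{X\leq t\}})\big]=\cO(\sqrt n\,\delta_{n}),
\]
uniformly in $t\in[-T,T]$, which tends to zero because $n\delta_{n}^{2}\longrightarrow 0$. Using the conditional variance identity $\bE[(Y^{n}-\Phi_{0}(0))^{2}| X]=\Phi_{n}(X)(1-\Phi_{n}(X))+(\Phi_{n}(X)-\Phi_{0}(0))^{2}$ together with independence, a direct calculation shows
\[
\Cov(\tilde A_{n}(s),\tilde A_{n}(t))\longrightarrow \Phi_{0}(0)(1-\Phi_{0}(0))\bE\big[(1-2\mathds{1}_{\{X\leq s\}})(1-2\mathds{1}_{\{X\leq t\}})\big]=\Cov(A(s),A(t)),
\]
where the last equality uses $\bE[\mathds{1}_{\{X\leq s\wedge t\}}]=\min\{F_{X}(s),F_{X}(t)\}$. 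Since each summand in $\tilde A_{n}$ is bounded by $2/\sqrt n$, the Lindeberg–Feller CLT yields weak convergence of any finite-dimensional marginal $(\tilde A_{n}(t_{1}),\dots,\tilde A_{n}(t_{k}))$ to the corresponding marginal of $A$.

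For asymptotic tightness I would argue along the lines of Claim~III in the proof of Lemma~\ref{lem:pointwise inside argmin - fast}. Writing $\tilde A_{n}(t)-\tilde A_{n}(s)=-2\sqrt n\,(P_{n}-P_{\Phi_{n}})[h_{s,t}]+\cO(\sqrt n\,\delta_{n})$ for $h_{s,t}(x,y)\defeq(y-\Phi_{0}(0))(\mathds{1}_{\{x\leq t\}}-\mathds{1}_{\{x\leq s\}})$, the class $\cH_{\eta}\defeq\{h_{s,t}: s,t\in[-T,T],|s-t|<\eta\}$ has envelope bounded by $1$ and $L^{2}(P_{\Phi_{n}})$-variance bounded by $\eta\|p_{X}\|_{\infty}$ uniformly in $n$, and its bracketing number satisfies $N_{[]}(\nu,\cH_{\eta},L^{2}(P_{\Phi_{n}}))\leq K/\nu^{4}$ uniformly in $n$ and $\eta$, by the same partition argument used for $\cH_{n,\eta}$ in the proof of Lemma~\ref{lem:pointwise inside argmin - fast}. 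An application of Theorem~2.14.17' of \cite{VaartWellner2023} then gives
\[
\bE\Big[\sup_{|s-t|<\eta}|\tilde A_{n}(s)-\tilde A_{n}(t)|\Big]\leq C\sqrt{\eta}\,\log(1/\eta)+o(1),
\]
uniformly in $n$, which together with Markov's inequality yields asymptotic uniform equicontinuity in probability. Combined with the finite-dimensional convergence, Theorem~1.5.4 of \cite{VaartWellner2023} delivers $\tilde A_{n}\longrightarrow_{\cL}A$ in $\ell^{\infty}([-T,T])$, and the interpolation bound of the first paragraph upgrades this to convergence in $\cC([-T,T])$. The only delicate point is the uniform control of the bias $\cO(\sqrt n\,\delta_{n})$ in the modulus estimate; this is harmless under the assumption $n\delta_{n}^{2}\longrightarrow 0$, which is precisely the regime of (ii) in Theorem~\ref{thm:l1 rate}.
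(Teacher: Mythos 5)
Your proposal is correct and follows essentially the same route as the paper's proof: reduce to the piecewise constant process (negligible interpolation error of order $n^{-1/2}$), establish finite-dimensional convergence via Lindeberg--Feller with the bias $\cO(\sqrt{n}\,\delta_{n})$ killed by $n\delta_{n}^{2}\to 0$, and obtain tightness through the bracketing bound $N_{[]}(\nu,\cH_{\eta},L^{2}(P_{\Phi_{n}}))\leq K/\nu^{4}$ and Theorem~2.14.17' of \cite{VaartWellner2023}. Your explicit centering of the empirical process and separate treatment of the $\cO(\sqrt{n}\,\delta_{n})$ bias in the modulus estimate is, if anything, slightly more careful than the paper's formulation, and the passage from $\ell^{\infty}([-T,T])$ to $\cC([-T,T])$ is equivalent to the paper's direct use of the Polish-space criterion.
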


\begin{proof}
As the processes $A_{n}$ are already continuous, we may rely on the classical theory of weak convergence on 
Polish spaces. To this aim, we need to prove convergence of finite-dimensional distributions to $A$, as well as tightness of the sequence 
$(A_{n})_{n\in\bN}$ in $\cC([-T,T])$ (cf. Theorem~7.3 in \cite{Billingsley1999}). But first, 
for any $s \in [-T,T]$, let $i(s) \in \{0,\dots,n\}$ be the random index that satisfies 
$X_{i(s)} \leq s < X_{i(s)+1}$. Then, 
\begin{align*}
A_{n}(s) 
&= \frac{1}{\sqrt{n}}\sum_{\ell=1}^{n}\Big\{(Y_{\ell}^{n}-\Phi_{0}(0))
						\big(1-2\mathds{1}_{\{X_{\ell} \leq X_{i(s)}\}}\big) + (s-X_{i(s)})\big(A_{n}(X_{i(s)+1}) - A_{n}(X_{i(s)})\big)\Big\} \\
&= \frac{1}{\sqrt{n}}\sum_{\ell=1}^{n}(Y_{\ell}^{n}-\Phi_{0}(0))
						\big(1-2\mathds{1}_{\{X_{\ell} \leq s\}}\big) \\
	&\qquad\qquad\qquad\qquad	+ 2\frac{(s-X_{i(s)})}{\sqrt{n}}\sum_{\ell=1}^{n}(Y_{\ell}^{n}-\Phi_{0}(0))
						\big(\mathds{1}_{\{X_{\ell} \leq X_{i(s)}\}} - \mathds{1}_{\{X_{\ell} \leq X_{i(s)+1}\}}\big).
\end{align*}
Note that 
\[
\sup_{s \in [-T,T]}\bigg|\frac{(s-X_{i(s)})}{\sqrt{n}}\sum_{\ell=1}^{n}(Y_{\ell}^{n}-\Phi_{0}(0))
					\big(\mathds{1}_{\{X_{\ell} \leq X_{i(s)}\}} - \mathds{1}_{\{X_{\ell} \leq X_{i(s)+1}\}}\big)\bigg| 
\leq \frac{2T}{\sqrt{n}} 
= o(1).
\]
Hence, it suffices to show convergence of the finite-dimensional distributions and uniform stochastic equicontinuity of the 
processes 
\[
\fA_{n}(\boldcdot) \defeq 
\frac{1}{\sqrt{n}}\sum_{\ell=1}^{n}(Y_{\ell}^{n}-\Phi_{0}(0))
						\big(1-2\mathds{1}_{\{X_{\ell} \leq \boldcdot\}}\big),
\]
implying convergence of the finite dimensional distributions and tightness of $A_{n}$. 

\medskip
\noindent
\textit{Convergence of finite-dimensional distributions. }
For any $k \in \bN$, let $\{s_{1},\dots,s_{k}\} \subset [-T,T]$ denote a subset of 
cardinality $k$, define
\[
f_{n} \colon [-T,T] \times \{0,1\} \times [-T,T] \to \bR, 
\quad f_{n}(x,y,s) \defeq (y-\Phi_{0}(0))(1-2\mathds{1}_{\{x \leq s\}})
\]
and note that 
\begin{align*}
\begin{pmatrix}
\fA_{n}(s_{1}) \\
\vdots \\
\fA_{n}(s_{k})
\end{pmatrix} 
= 
\sum_{i=1}^{n}\frac{1}{\sqrt{n}}
\begin{pmatrix}
f_{n}(X_{i},Y_{i}^{n},s_{1}) \\
\vdots \\
f_{n}(X_{i},Y_{i}^{n},s_{k})
\end{pmatrix}
,
\end{align*}
as well as that $\bE[f_{n}(X_{i},Y_{i}^{n},s)] = o(n^{-1/2})$. 
As a shorthand notation, let us introduce 
\begin{align*}
V_{i}^{n}
\defeq 
\frac{1}{\sqrt{n}}
\begin{pmatrix}
f_{n}(X_{i},Y_{i}^{n},s_{1}) \\
\vdots \\
f_{n}(X_{i},Y_{i}^{n},s_{k})
\end{pmatrix}
\end{align*}
for $i=1,\dots,n$. Note that $\|V_{i}^{n}\|_2^{2} \leq k/n$ by definition of $f_{n}$ 
and $b_{n}$ and so for every $\varepsilon > 0$, 
\[
\sum_{i=1}^{n}\bE[\|V_{i}^{n}\|_2^{2}\mathds{1}_{\{\|V_{i}^{n}\|_2 > \varepsilon\}}] 
\leq \frac{k}{n}\sum_{i=1}^{n}\bE[\mathds{1}_{\{\|V_{i}^{n}\|_2^{2} > \varepsilon^{2}\}}] 
\leq \frac{k}{n}\sum_{i=1}^{n}\bE[\mathds{1}_{\{k > n\varepsilon^{2}\}}] 
= k\mathds{1}_{\{k > n\varepsilon^{2}\}} 
\longrightarrow 0
\]
as $n \longrightarrow \infty$. Next, for
$j,\ell \in \{1,\dots,k\}$, we evaluate 
\begin{align*}
\bigg(\sum_{i=1}^{n}\Cov(&V_{i}^{n})\bigg)_{j\ell} 
= \bE\big[(Y^{n}-\Phi_{0}(0))^{2}(1-2\mathds{1}_{\{X \leq s_{j}\}}\mathds{1}_{\{X \leq s_{\ell}\}}\big] + o(1) \\
&= \Phi_{0}(0)(1-\Phi_{0}(0))\bE[1-2\mathds{1}_{\{X \leq s_{j}\}}-2\mathds{1}_{\{X \leq s_{\ell}\}} 
										+4\mathds{1}_{\{X \leq \min\{s_{j},s_{\ell}\}\}}] + o(1) \\
&= \Phi_{0}(0)(1-\Phi_{0}(0))(1 + 4F_{X}(\min\{s_{j},s_{\ell}\}) - 2F_{X}(s_{j}) - 2F_{X}(s_{\ell})) + o(1) \\
&= \Phi_{0}(0)(1-\Phi_{0}(0))(1 + 2F_{X}(\min\{s_{j},s_{\ell}\}) - 2F_{X}(\max\{s_{j},s_{\ell}\})) + o(1) \\
&= \Phi_{0}(0)(1-\Phi_{0}(0))(1 - 2|F_{X}(s_{\ell})-F_{X}(s_{j})|) + o(1) \\
&= \Cov(A(s_{j}),A(s_{\ell})) + o(1).
\end{align*}
The Lindeberg-Feller central limit theorem then implies
\[
(\fA_{n}(s_{1}),\dots,\fA_{n}(s_{k})) 
\longrightarrow_{\cL} (\fA(s_{1}),\dots,\fA(s_{k})) \quad \text{as } n \longrightarrow \infty.
\]

\medskip
\noindent
\textit{Uniform stochastic equicontinuity of $(\fA_{n})_{n\in\bN}$. }
Note that convergence of finite dimensional distributions implies tightness of $(\fA_{n}(s))_{n\in\bN}$ for every 
$s \in [-T,T]$. 
To show that $(\fA_{n})_{n\in\bN}$ is asymptotically uniformly equicontinuous in probability, let $\varepsilon > 0$ and $\eta > 0$ 
and note that by Markov's inequality, 
\begin{align}\label{eq:markov}
\bP\bigg(\sup_{|s-t|<\eta}|\fA_{n}(s)-\fA_{n}(t)| > \Delta\bigg) 
\leq \frac{1}{\Delta}\bE\bigg[\sup_{|s-t|<\eta}|\fA_{n}(s)-\fA_{n}(t)|\bigg].
\end{align}
Defining 
\[
h_{n,s,t} \colon [-T,T] \times \{0,1\} \to \bR, 
\qquad 
h_{n,s,t}(x,y) \defeq 2(y-\Phi_{n}(x_{0}))(\mathds{1}_{\{x \leq s\}}-\mathds{1}_{\{x \leq t\}})
\]
for $s,t \in [-T,T]$, setting $\cH_{n,\eta} \defeq \{h_{n,s,t} \mid s,t \in [-T,T], |s-t| < \eta\}$ and choosing 
$\varepsilon_{n} > 0$ and $M_{n} > 0$, such that $\bE[h^{2}] < \varepsilon_{n}^{2}$ and $\|h\|_{\infty} \leq M_{n}$ for every 
$h \in \cH_{n,\eta}$, we have by Theorem~2.14.17' of \cite{VaartWellner2023}
\begin{align*}
\bE\bigg[\sup_{|s-t|<\eta}|\fA_{n}(s)-\fA_{n}(t)|\bigg] 
&= \bE\bigg[\sup_{h_{n} \in \cH_{n,\eta}}\bigg|\frac{1}{\sqrt{n}}\sum_{i=1}^{n}h_{n}(X_{i},Y_{i}^{n}) 
																	- \bE[h_{n}(X_{i},Y_{i}^{n})] \bigg|\bigg] \\
&\leq J_{[]}\big(\varepsilon_{n},\cH_{n,\eta},L^{2}(P_{\Phi_{n}})\big)
		\bigg(1 + \frac{J_{[]}\big(\varepsilon_{n},\cH_{n,\eta},L^{2}(P_{\Phi_{n}})\big)}
																	{\varepsilon_{n}^{2}n^{1/2}}M_{n}\bigg).
\end{align*}
For arbitrary $h \in \cH_{n,\eta}$, there exists $s,t \in [-T,T]$, satisfying $|s-t| < \eta$, such that 
\begin{align*}
\bE[h(X,Y^{n})^{2}] 
&= \bE[h_{n,s,t}(X,Y^{n})^{2}] 
\leq \bE[(\mathds{1}_{\{X \leq s\}}-\mathds{1}_{\{X \leq t\}})^{2}] \\
&= \bE[\mathds{1}_{\{\min\{s,t\} < X \leq \max\{s,t\}\}}] \\
&= F_{X}(\max\{s,t\}) - F_{X}(\min\{s,t\}) \\
&\leq \|p_{X}\|_{\infty}(\max\{s,t\} - \min\{s,t\}) \\
&= \|p_{X}\|_{\infty}|s - t| < 
\|p_{X}\|_{\infty}\eta 
\end{align*}
and $\|h\|_{\infty} = \|h_{n,s,t}\|_{\infty} \leq 1$. Thus, by choosing $\varepsilon_{n} = \sqrt{\eta}$ and $M_{n} = 1$, we 
have 
\[
\bE\bigg[\sup_{|s-t|<\eta}|\fA_{n}(s)-\fA_{n}(t)|\bigg] 
\leq J_{[]}\big(\eta^{1/2},\cH_{n,\eta},L^{2}(P_{\Phi_{n}})\big)
		\bigg(1 + \frac{J_{[]}\big(\sqrt{\eta},\cH_{n,\eta},L^{2}(P_{\Phi_{n}})\big)}{\eta \sqrt{n}}\bigg). 
\]
By similar arguments as in Lemma~\ref{lem:bracketing} (iv), it follows that for some constant $K > 0$, which may change from 
line to line, 
\[
N_{[]}(\nu,\cH_{n,\eta},L^{2}(P_{\Phi_{n}})) 
\leq \frac{K}{\nu^{4}}.
\]
Thus, by utilizing that $\frac{d}{d x}x(\log(K/x^{4}) + 4) = \log(K/x^{4})$, 
\[
J_{[]}\big(\sqrt{\eta},\cH_{n,\eta},L^{2}(P_{\Phi_{n}})\big) 
\leq K\int_{0}^{\sqrt{\eta}}\log\Big(\frac{K}{\nu^{4}}\Big)d\nu 
\leq K\sqrt{\eta}\log\Big(\frac{1}{\eta^{2}}\Big).
\]
Therefore, the assertion follows from \eqref{eq:markov} combined with 
\[
\limsup_{n \to \infty}\bE\bigg[\sup_{|s-t|<\eta}|\fA_{n}(s)-\fA_{n}(t)|\bigg] 
\leq K\sqrt{\eta}\log\Big(\frac{1}{\eta^{2}}\Big).
\]
\end{proof}

\section{Proofs of Section~\ref{sec:inverse process}}\label{proofs:inverse process}
This section contains the proofs of Lemma~\ref{lem:tail bounds - 1} and Corollary~\ref{cor:tail bounds - 1}.

\subsection{Proof of Lemma~\ref{lem:tail bounds - 1}} \label{proof:tail bound - 1}
The result follows immediately for the case $n\delta_{n}^{2} \longrightarrow 0$, as in this case, for $n$ large enough, 
the right-hand side is greater than $1$. For $n\delta_{n}^{2} \longrightarrow c \in (0,\infty]$, the proof follows
the route of Theorem~1 in \cite{Durot2007Lp} but incorporates the explicit dependence on $\Phi_n'$ and thus 
reveals the convergence rate of the inverse process in the weak-feature-impact scenario. 
The first and last inequalities in both cases are obviously true. So for the proof of (i), let us consider 
$x \in [(n\delta_{n}^{2})^{-1/3},1]$ and let $K > 0$ denote a constant which may changes from line to line. Let us define 
$M_{n} \colon [0,1] \to \bR$, $M_{n}(t) \defeq \Upsilon_{n}(t) - \Lambda_{n}(t)$, set 
$\varepsilon_{(j)}^{n} \defeq Y_{j}^{n} - \Phi_{n}(X_{(j)})$ and note that by definition of $\Upsilon_{n}$, we have 
\[
\Upsilon_{n}(u) 
= \Upsilon_{n}\bigg(\frac{\lfloor nu \rfloor}{n}\bigg) 
	+ \bigg(u - \frac{\lfloor nu \rfloor}{n}\bigg)
		\bigg(\Upsilon_{n}\bigg(\frac{\lfloor nu \rfloor + 1}{n}\bigg) 
				- \Upsilon_{n}\bigg(\frac{\lfloor nu \rfloor}{n}\bigg)\bigg), 
\]
where 
\[
\Upsilon_{n}(i/n) 
= \frac{1}{n}\sum_{j=1}^{i}Y_{(j)}^{n}
= \frac{1}{n}\sum_{j=1}^{i}\varepsilon_{(j)}^{n} + \int_{0}^{i/n}\Phi_{n}(F_{n}^{-1}(u))du, \quad i=1,\dots,n. 
\]
Now fix $a \in \bR$ and note that by definition of $\tilde{U}_{n}$, 
\[
\big\{|\tilde{U}_{n}(a) - \lambda_{n}^{-1}(a)| \geq x\big\} 
			\subset \bigg\{\inf_{|u - \lambda_{n}^{-1}(a)| \geq x}\Upsilon_{n}(u) - au 
								\leq \Upsilon_{n}(\lambda_{n}^{-1}(a)) - a\lambda_{n}^{-1}(a)\bigg\}.
\]
Consequently, 
\begin{align*}
&\bP\big(|\tilde{U}_{n}(a) - \lambda_{n}^{-1}(a)| > x\big) \\
&\leq \bP\bigg(\inf_{|u - \lambda_{n}^{-1}(a)| \geq x}\Upsilon_{n}(u) - au 
									\leq \Upsilon_{n}(\lambda_{n}^{-1}(a)) - a\lambda_{n}^{-1}(a)\bigg) \\
&= \bP\bigg(\inf_{|u - \lambda_{n}^{-1}(a)| \geq x}\Upsilon_{n}(u) - \Upsilon_{n}(\lambda_{n}^{-1}(a)) 
													+ a\lambda_{n}^{-1}(a) - au \leq 0\bigg) \\
&= \bP\bigg(\sup_{|u - \lambda_{n}^{-1}(a)| \geq x}\Upsilon_{n}(\lambda_{n}^{-1}(a)) - \Upsilon_{n}(u)
													+ au - a\lambda_{n}^{-1}(a) \geq 0\bigg) \\
&= \bP\bigg(\sup_{|u - \lambda_{n}^{-1}(a)| \geq x}M_{n}(\lambda_{n}^{-1}(a)) - M_{n}(u) 
							+ \Lambda_{n}(\lambda_{n}^{-1}(a)) - \Lambda_{n}(u) + au - a\lambda_{n}^{-1}(a) \geq 0\bigg).
\end{align*}
From a Taylor expansion of $\Lambda_{n}(u)$ around $\lambda_{n}^{-1}(a)$ with Lagrange remainder, we obtain 
\[
\Lambda_{n}(u) 
= \Lambda_{n}(\lambda_{n}^{-1}(a)) 
		+ \lambda_{n}(\lambda_{n}^{-1}(a))(u-\lambda_{n}^{-1}(a)) 
		+ \frac{1}{2}\lambda_{n}'(\xi_{n})(u-\lambda_{n}^{-1}(a))^{2}
\]
for some $\xi_{n}$ between $u$ and $\lambda_{n}^{-1}(a)$ and by assumption, we know that at least for $n$ large enough, 
\[
\lambda_{n}'(t) = \delta_{n}\Phi_{0}'(\delta_{n}F_{X}^{-1}(t))(F_{X}^{-1})'(t) > \delta_{n}K. 
\]
Thus, 
\begin{align*}
\Lambda_{n}(\lambda_{n}^{-1}(a)) - \Lambda_{n}(u) 
&= -\lambda_{n}(\lambda_{n}^{-1}(a))(u-\lambda_{n}^{-1}(a)) 
								- \frac{1}{2}\lambda_{n}'(\xi_{n})(u-\lambda_{n}^{-1}(a))^{2} \\
&\leq -\lambda_{n}(\lambda_{n}^{-1}(a))(u-\lambda_{n}^{-1}(a)) 
								- K\delta_{n}(u-\lambda_{n}^{-1}(a))^{2}.
\end{align*}
Now note that if $\lambda_{n}(\lambda_{n}^{-1}(a)) \neq a$, then either 
\[
a < \lambda_{n}(\lambda_{n}^{-1}(a)) 
\quad \text{and} \quad 
\lambda_{n}^{-1}(a) = F_{X}(-T) = 0, 
\]
or
\[
a > \lambda_{n}(\lambda_{n}^{-1}(a)) 
\quad \text{and} \quad 
\lambda_{n}^{-1}(a) = F_{X}(T) = 1. 
\]
Thus, $(a-\lambda_{n}(\lambda_{n}^{-1}(a)))(u-\lambda_{n}^{-1}(a)) \leq 0$ for every $a$ and we obtain 
\begin{align*}
&\Lambda_{n}(\lambda_{n}^{-1}(a)) - \Lambda_{n}(u) + au - a\lambda_{n}^{-1}(a) \\
&\qquad\qquad\leq -\lambda_{n}(\lambda_{n}^{-1}(a))(u-\lambda_{n}^{-1}(a)) 
					- K\delta_{n}(u-\lambda_{n}^{-1}(a))^{2} + a(u - \lambda_{n}^{-1}(a)) \\
&\qquad\qquad\leq - K\delta_{n}(u-\lambda_{n}^{-1}(a))^{2}.
\end{align*}
Consequently, by a slicing argument, the union bound and Markov's inequality, 
\begin{align*}
\bP\big(|\tilde{U}_{n}(a) - \lambda_{n}^{-1}(a)| \geq x\big) 
&\leq \bP\bigg(\sup_{|u - \lambda_{n}^{-1}(a)| \geq x}M_{n}(\lambda_{n}^{-1}(a)) - M_{n}(u) 
											- K\delta_{n}(u-\lambda_{n}^{-1}(a))^{2} \geq 0\bigg) \\
&\leq \sum_{k \geq 0}\bP\bigg(\sup_{|u - \lambda_{n}^{-1}(a)| \in [x2^{k},x2^{k+1}]}
										M_{n}(\lambda_{n}^{-1}(a)) - M_{n}(u) \geq K\delta_{n}(x2^{k})^{2}\bigg) \\
&\leq K(\delta_{n}x^{2})^{-q}\sum_{k \geq 0}2^{-2kq}
									\bE\bigg[\sup_{|u - \lambda_{n}^{-1}(a)| \in [x2^{k},x2^{k+1}]}
															|M_{n}(\lambda_{n}^{-1}(a)) - M_{n}(u)|^{q}\bigg].
\end{align*}
Now we want to determine an upper bound for the expectation in the previous inequality. For $u \in [0,1]$ and without loss 
of generality for $t \in [0,1]$, we have 
\begin{align*}
\Upsilon_{n}(t+u) - \Upsilon_{n}(u) 
&= \Upsilon_{n}\bigg(\frac{\lfloor n(t+u) \rfloor}{n}\bigg) - \Upsilon_{n}\bigg(\frac{\lfloor nu \rfloor}{n}\bigg) \\
	&\qquad\quad+ \bigg(t+u - \frac{\lfloor n(t+u) \rfloor}{n}\bigg)
		\bigg(\Upsilon_{n}\bigg(\frac{\lfloor n(t+u) \rfloor + 1}{n}\bigg) 
				- \Upsilon_{n}\bigg(\frac{\lfloor n(t+u) \rfloor}{n}\bigg)\bigg) \\
	&\qquad\quad- \bigg(u - \frac{\lfloor nu \rfloor}{n}\bigg)
		\bigg(\Upsilon_{n}\bigg(\frac{\lfloor nu \rfloor + 1}{n}\bigg) 
				- \Upsilon_{n}\bigg(\frac{\lfloor nu \rfloor}{n}\bigg)\bigg).
\end{align*}
As an immediate consequence, we see 
\[
\bigg(t+u - \frac{\lfloor n(t+u) \rfloor}{n}\bigg) \leq \frac{1}{n} 
\quad \text{and} \quad 
\bigg(u - \frac{\lfloor nu \rfloor}{n}\bigg) \leq \frac{1}{n}.
\]
By definition of $\Upsilon_{n}$, we find 
\[
\Upsilon_{n}\bigg(\frac{\lfloor n(t+u) \rfloor}{n}\bigg) - \Upsilon_{n}\bigg(\frac{\lfloor nu \rfloor}{n}\bigg) 
= \frac{1}{n}\sum_{j=\lfloor nu \rfloor + 1}^{\lfloor n(t+u) \rfloor} \varepsilon_{(j)}^{n} 
		+ \int_{\lfloor nu \rfloor/n}^{\lfloor n(t+u) \rfloor/n}\Phi_{n}(F_{n}^{-1}(s))ds 
\]
and in particular, 
\[
\Upsilon_{n}\bigg(\frac{\lfloor n(t+u) \rfloor + 1}{n}\bigg) - \Upsilon_{n}\bigg(\frac{\lfloor n(t+u) \rfloor}{n}\bigg) 
= \frac{1}{n}Y_{(\lfloor n(t+u) \rfloor + 1)}^{n}
\]
and 
\[
\Upsilon_{n}\bigg(\frac{\lfloor nu \rfloor + 1}{n}\bigg) - \Upsilon_{n}\bigg(\frac{\lfloor nu \rfloor}{n}\bigg) 
= \frac{1}{n}Y_{(\lfloor nu \rfloor + 1)}^{n}.
\]
Putting all of this together, we have by definition of $\Lambda_{n}$ and the mean value theorem, 
\begin{align*}
|M_{n}(t+u) - M_{n}(u)| 
&= |\Upsilon_{n}(t+u) - \Upsilon_{n}(u) - (\Lambda_{n}(t+u) - \Lambda_{n}(u))| \\
&\leq \Bigg|\frac{1}{n}\sum_{j=\lfloor nu \rfloor + 1}^{\lfloor n(t+u) \rfloor} \varepsilon_{(j)}^{n} 
		+ \int_{\lfloor nu \rfloor/n}^{\lfloor n(t+u) \rfloor/n}\Phi_{n}(F_{n}^{-1}(s))ds
		- \int_{u}^{t+u}\lambda_{n}(s)ds \Bigg| \\
	&\qquad\qquad+ \frac{1}{n^{2}}(Y_{(\lfloor n(t+u) \rfloor + 1)}^{n} + Y_{(\lfloor nu \rfloor + 1)}^{n}) \\
&\leq \frac{1}{n}\Bigg|\sum_{j=\lfloor nu \rfloor + 1}^{\lfloor n(t+u) \rfloor} \varepsilon_{(j)}^{n}\Bigg|
		+ \bigg|\int_{u}^{t+u}\Phi_{n}(F_{n}^{-1}(s)) - \Phi_{n}(F_{X}^{-1}(s))ds\bigg| + \frac{2}{n} + \frac{2}{n^{2}} \\
&\leq \frac{1}{n}\Bigg|\sum_{j=\lfloor nu \rfloor + 1}^{\lfloor n(t+u) \rfloor} \varepsilon_{(j)}^{n}\Bigg|
		+ \delta_{n}t\sup_{s \in [-T,T]}|\Phi_{0}'(\delta_{n}s)|
							\sup_{v \in [0,1]}|F_{n}^{-1}(v) - F_{X}^{-1}(v)| + \frac{4}{n}.
\end{align*}
For $x \geq 1/n$, which follows from $x \geq (n\delta_{n}^{2})^{-1/3}$, we observe from the previous inequality, 
\begin{align*}
\sup_{t \in [0,x]}|M_{n}(t+u) - M_{n}(u)| 
\leq \frac{1}{n}\sup_{t \in [0,x]}
					\Bigg|\sum_{j=\lfloor nu \rfloor + 1}^{\lfloor n(t+u) \rfloor}\varepsilon_{(j)}^{n}\Bigg|
		+ K\delta_{n}x\sup_{v \in [0,1]}|F_{n}^{-1}(v) - F_{X}^{-1}(v)| + \frac{4}{n}.
\end{align*}
From \cite{DKW1956}, 
\[
\bE\bigg[\sup_{v \in [0,1]}|F_{n}^{-1}(v) - F_{X}^{-1}(v)|^{q}\bigg] 
\leq Kn^{-q/2}.
\]
The following bound 
\[
\bE\Bigg[\sup_{t \in [0,x]}\Bigg|\sum_{j=\lfloor nu \rfloor + 1}^{\lfloor n(t+u) \rfloor}\varepsilon_{(j)}^{n}\Bigg|^{q}\Bigg] 
\leq K(nx)^{q/2} 
\]
is virtually the same as on p.~333 in \cite{Durot2008monotone} and can be likewise deduced by Doob's inequality together 
with Theorem~3 of \cite{Rosenthal1973Sub}, noting that the arguments do not involve the level of feature impact 
$\delta_{n}$. Finally, this shows 
\[
\bE\bigg[\sup_{t \in [0,x]}\big|M_{n}(t+u) - M_{n}(u)\big|^{q}\bigg]^{1/q}
\leq K\frac{x^{1/2}}{\sqrt{n}} + K\frac{x\delta_{n}}{\sqrt{n}} + \frac{4}{n}
\]
and for $n$ sufficiently large, we have $1/n \leq \delta_{n}/\sqrt{n} \leq 1/\sqrt{n}$ and by utilizing 
$x \geq 1/n$, 
\[
\bE\bigg[\sup_{t \in [0,x]}\big|M_{n}(t+u) - M_{n}(u)\big|^{q}\bigg]
\leq K\Big(\frac{x}{n}\Big)^{q/2}.
\]
By using the same arguments, this also holds for $t \in [-1,0]$ and so we have 
\begin{align*}
&\bE\bigg[\sup_{|u-\lambda_{n}^{-1}(a)| \in [x2^{k},x2^{k+1}]}
								\big|M_{n}(\lambda_{n}^{-1}(a)) - M_{n}(u)\big|^{q}\bigg] \\ 
&\qquad\qquad\leq \bE\bigg[\sup_{|\lambda_{n}^{-1}(a)-u| \leq x2^{k+1}}
								\big|M_{n}(\lambda_{n}^{-1}(a)-u+u) - M_{n}(u)\big|^{q}\bigg] 
\leq K\bigg(\frac{x2^{k+1}}{n}\bigg)^{q/2}.
\end{align*}
Combining this with the previous results, we obtain 
\begin{align*}
\bP\big(|\tilde{U}_{n}(a) - \lambda_{n}^{-1}(a)| \geq x\big) 
&\leq K(\delta_{n}x^{2})^{-q}\sum_{k \geq 0}2^{-2kq}\bigg(\frac{x2^{k+1}}{n}\bigg)^{q/2} 
\leq K(n\delta_{n}^{2}x^{3})^{-q/2}\sum_{k \geq 0}2^{-3kq/2} \\
&\leq K(n\delta_{n}^{2}x^{3})^{-q/2}
\end{align*}
and statement (i) follows. \par
For the proof of statement (ii), let $x \in [(n\delta_{n}^{2})^{-1/3},2T]$ and let again $K > 0$ denote a constant that may 
changes from line to line. Note further that a Taylor expansion with Lagrange remainder of $F_{X}^{-1}$ around 
$\tilde{U}_{n}(a)$ yields 
\[
F_{X}^{-1}(\lambda_{n}^{-1}(a)) 
= F_{X}^{-1}(\tilde{U}_{n}(a)) + (F_{X}^{-1})'(\xi_{n})(\lambda_{n}^{-1}(a)-\tilde{U}_{n}(a)) 
\]
for some $\xi_{n}$ between $\lambda_{n}^{-1}(a)$ and $\tilde{U}_{n}(a)$. Consequently, 
\begin{align*}
|F_{n}^{-1}(\tilde{U}_{n}(a)) - F_{X}^{-1}(\lambda_{n}^{-1}(a))| 
&\leq |F_{n}^{-1}(\tilde{U}_{n}(a)) - F_{X}^{-1}(\tilde{U}_{n}(a))|
					+ |(F_{X}^{-1})'(\xi_{n})(\lambda_{n}^{-1}(a)-\tilde{U}_{n}(a))| \\
&\leq \sup_{u \in [0,1]}|F_{n}^{-1}(u) - F_{X}^{-1}(u)|
					+ \frac{1}{p_{X}(F_{X}^{-1}(\xi_{n}))}|\lambda_{n}^{-1}(a)-\tilde{U}_{n}(a))| \\
&\leq \sup_{u \in [0,1]}|F_{n}^{-1}(u) - F_{X}^{-1}(u)| 
					+ \Big(\inf_{t \in [-T,T]}p_{X}(t)\Big)^{-1}|\lambda_{n}^{-1}(a)-\tilde{U}_{n}(a)| 
\end{align*}
and we obtain from statement (i), Markov's inequality and \cite{DKW1956}
\begin{align*}
&\bP\big(|F_{n}^{-1}(\tilde{U}_{n}(a)) - \Phi_{n}^{-1}(a)| \geq x\big) \\
&\qquad= \bP\big(|F_{n}^{-1}(\tilde{U}_{n}(a)) - F_{X}^{-1}(\lambda_{n}^{-1}(a))| \geq x\big) \\
&\qquad\leq \bP\bigg(\sup_{u \in [0,1]}\big|F_{n}^{-1}(u) - F_{X}^{-1}(u)\big| \geq x/2\bigg) 
		+ \bP\big(\big|F_{X}(\Phi_{n}^{-1}(a))-\tilde{U}_{n}(a)\big| \geq Kx\big) \\
&\qquad\leq Kx^{-3q/2}\bE\bigg[\sup_{u \in [0,1]}\big|F_{n}^{-1}(u) - F_{X}^{-1}(u)\big|^{3q/2}\bigg] 
		+ K(n\delta_{n}^{2}x^{3})^{-q/2} \\
&\qquad\leq Kx^{-3q/2}\big(n^{-3q/4} + (n\delta_{n}^{2})^{-q/2}\big) \\ 
&\qquad\leq K(n\delta_{n}^{2}x^{3})^{-q/2}, 
\end{align*}
which proves statement (ii). \hfill \qed

\, \\
The next result is a variation of Lemma~\ref{lem:tail bounds - 1} for the case that 
$a \in [0,1] \setminus \Phi_{n}([-T,T])$. The proof follows exactly the lines of Lemma~2 in \cite{Durot2008monotone} and 
is therefore omitted.

\begin{lemma} \label{lem:tail bounds - 2}
There exist constants $C = C(\Phi_{0},F_{X}) > 0$ and $N_{0} = N_{0}(\Phi_{0}) \in \bN$, such 
that for every $n \geq N_{0}$, $a \in [0,1] \setminus \Phi_{n}([-T,T])$ and $x > 0$, 
\begin{itemize}
\item[(i)] $\bP^{|X}\big(\big|\tilde{U}_{n}(a) - \lambda_{n}^{-1}(a)\big| \geq x\big) 
					\leq K(nx)^{-1}(\Phi_{n} \circ \Phi_{n}^{-1}(a) - a)^{-2}$,

\item[(ii)] $\bP\big(\big|F_{n}^{-1} \circ \tilde{U}_{n}(a) - \Phi_{n}^{-1}(a)\big| \geq x\big) 
					\leq K(nx)^{-1}(\Phi_{n} \circ \Phi_{n}^{-1}(a) - a)^{-2}$.
\end{itemize}
\end{lemma}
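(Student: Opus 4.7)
The plan is to imitate the proof of Lemma~\ref{lem:tail bounds - 1} above, with the crucial modification that the quadratic drift $-K\delta_n (u-\lambda_n^{-1}(a))^2$ coming from a Taylor expansion of $\Lambda_n$ at an interior minimizer is replaced by a \emph{linear} drift forced by the boundary. Concretely, for $a\in[0,1]\setminus\Phi_n([-T,T])$ one has either $\lambda_n^{-1}(a)=0$ and $\lambda_n(\lambda_n^{-1}(a))=\Phi_n(-T)>a$, or $\lambda_n^{-1}(a)=1$ and $\lambda_n(\lambda_n^{-1}(a))=\Phi_n(T)<a$. Writing $\kappa_n(a)\defeq |\Phi_n\circ\Phi_n^{-1}(a)-a|>0$ and using monotonicity of $\lambda_n$, in both cases one obtains
\[
\Lambda_n(\lambda_n^{-1}(a))-\Lambda_n(u)+a(u-\lambda_n^{-1}(a))
= -\int_{\lambda_n^{-1}(a)}^{u}(\lambda_n(s)-a)\,ds\,\mathrm{sgn}(u-\lambda_n^{-1}(a))
\leq -\kappa_n(a)\,|u-\lambda_n^{-1}(a)|
\]
for every admissible $u\in[0,1]$.

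First I would combine this linear lower bound with the defining minimization property of $\tilde U_n$ and the decomposition $\Upsilon_n = \Lambda_n+M_n$ (with $M_n$ as in the proof of Lemma~\ref{lem:tail bounds - 1}) to conclude
\[
\bP\big(|\tilde U_n(a)-\lambda_n^{-1}(a)|\geq x\big)
\leq \bP\Big(\sup_{|u-\lambda_n^{-1}(a)|\geq x}M_n(\lambda_n^{-1}(a))-M_n(u)\geq \kappa_n(a)|u-\lambda_n^{-1}(a)|\Big).
\]
Next, I would apply the same dyadic slicing in $\{|u-\lambda_n^{-1}(a)|\in[x2^k,x2^{k+1}]\}$ together with Markov's inequality at exponent $q=2$, and reuse \emph{without modification} the moment bound for the fluctuations of $M_n$ derived in the proof of Lemma~\ref{lem:tail bounds - 1}, namely $\bE[\sup_{|t|\leq x2^{k+1}}|M_n(u+t)-M_n(u)|^2]\leq Kx2^{k+1}/n$ (this is precisely the $q=2$ instance of the bound derived there, which does not depend on $a$). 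Summing the resulting geometric series $\sum_{k\geq 0}2^{-k}$ yields (i): $\bP(|\tilde U_n(a)-\lambda_n^{-1}(a)|\geq x)\leq K(nx\kappa_n(a)^2)^{-1}$.

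For part (ii) I would argue as in the final paragraph of the proof of Lemma~\ref{lem:tail bounds - 1}: a Taylor expansion of $F_X^{-1}$, together with the lower bound on $p_X$ on $[-T,T]$, gives
\[
|F_n^{-1}(\tilde U_n(a))-\Phi_n^{-1}(a)|\leq \|F_n^{-1}-F_X^{-1}\|_\infty + K\,|\tilde U_n(a)-\lambda_n^{-1}(a)|,
\]
so (ii) follows from (i) on $\{K|\tilde U_n(a)-\lambda_n^{-1}(a)|\geq x/2\}$ and from the \cite{DKW1956} inequality (in the form $\bE[\|F_n^{-1}-F_X^{-1}\|_\infty^r]\leq Kn^{-r/2}$ for any $r\geq 2$, followed by Markov) on the complement. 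The point I expect to be most delicate is the DKW step in~(ii): the naive $q=2$ Markov bound $K(nx^2)^{-1}$ is not always dominated by $K(nx\kappa_n(a)^2)^{-1}$ when $\kappa_n(a)$ is very small, so one must use Markov at a slightly higher exponent $r>2$ and exploit the $a$-free boundedness $|F_n^{-1}(\tilde U_n(a))-\Phi_n^{-1}(a)|\leq 2T$ to absorb the small-$x$ regime; the other technical nuisance is the slicing at the boundary (where only one side of $\lambda_n^{-1}(a)$ is admissible), which only shortens the sum and causes no real difficulty.
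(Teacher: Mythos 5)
You should know that the paper itself gives no written proof here: it defers to Lemma~2 of Durot (2008), whose mechanism for part (i) is exactly the one you reconstruct (linear drift of size $\kappa_n(a)=|\Phi_n\circ\Phi_n^{-1}(a)-a|$ forced by the boundary, plus a maximal inequality), so your overall plan for (i) is the intended one and the drift inequality you derive is correct (the extra $\mathrm{sgn}$ factor in your display is a harmless notational slip). However, statement (i) is a \emph{conditional} bound under $\bP^{|X}$ with constants depending only on $(\Phi_0,F_X)$, and your argument only delivers the unconditional version: the moment bound for $M_n=\Upsilon_n-\Lambda_n$ that you reuse controls the term $K\delta_n x\sup_{v}|F_n^{-1}(v)-F_X^{-1}(v)|$ through the unconditional DKW moment bound, but given $X_1,\dots,X_n$ this quantity is a fixed number and cannot be bounded by its expectation. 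The clean repair, and what a Durot-style proof does, is to center by the conditional compensator $t\mapsto\int_0^t\Phi_n\circ F_n^{-1}(u)\,du$ (the same centering used in \eqref{eq: Sak85}) instead of $\Lambda_n$: for $a\notin\Phi_n([-T,T])$ the gap $\pm(\Phi_n\circ F_n^{-1}(u)-a)\ge\kappa_n(a)$ holds surely for every design, and Doob's $L^2$ inequality for the conditional martingale $\frac1n\sum_{j\le i}(Y_{(j)}^n-\Phi_n(X_{(j)}))$ then gives (i) with design-free constants and no quantile coupling at all.

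For part (ii) there is a genuine gap. Your triangle-inequality step reduces the problem to $\bP(\|F_n^{-1}-F_X^{-1}\|_\infty\ge x/2)$, but this sup is of exact order $n^{-1/2}$ (it dominates, e.g., the median deviation), so for $\kappa_n(a)$ bounded away from zero and $n^{-1}\ll x\ll n^{-1/2}$ the left-hand side is of constant order while the claimed bound $K(nx)^{-1}\kappa_n(a)^{-2}$ tends to zero; since the lemma asserts the bound for every $x>0$ (and it is indeed applied in Lemma~\ref{lem:rate expectation} with thresholds $T-t$ that may be far below $n^{-1/2}$), this regime is not vacuous. Your proposed patch does not close it: raising the Markov exponent still gives $x^{-r}n^{-r/2}\gg1$ in that range, and the a.s.\ bound by $2T$ only disposes of $x>2T$ — no moment argument can make an event of non-vanishing probability small. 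The statement is nevertheless true because only the quantile fluctuation near the relevant endpoint ($v$ near $0$ or $1$, where $\lambda_n^{-1}(a)$ sits) matters, and there $|F_n^{-1}(v)-F_X^{-1}(v)|$ is of order $n^{-1}$, not $n^{-1/2}$. Concretely, either run the drift argument directly for $U_n(a)=F_n^{-1}\circ\tilde U_n(a)$ on the $x$-scale, where the empirical drift $\frac1n\sum_i(\Phi_n(X_i)-a)\mathds{1}_{\{\cdot\}}$ again has a sure gap $\ge\kappa_n(a)$ per observation and the number of design points in the boundary window of length $x$ is controlled by a binomial tail (of order $e^{-cnx}\le K(nx)^{-1}$), or combine (i) with such a local quantile/order-statistic bound near the endpoint; what must not enter is the global sup $\|F_n^{-1}-F_X^{-1}\|_\infty$ at threshold $x$.
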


\subsection{Proof of Corollary~\ref{cor:tail bounds - 1}}\label{proof:cor tail bounds - 1}
Note first that by monotonicity of $\tilde{U}_{n}$ and $\lambda_{n}^{-1}$, 
\begin{align*}
\big|\tilde{U}_{n}(a+Z_{1,n}) - \lambda_{n}^{-1}(a+Z_{2,n})\big| 
&= \max\big\{\tilde{U}_{n}(a+Z_{1,n}) - \lambda_{n}^{-1}(a+Z_{2,n}),
					\lambda_{n}^{-1}(a+Z_{2,n}) - \tilde{U}_{n}(a+Z_{1,n})\big\} \\
&\leq \max\big\{\tilde{U}_{n}(a+c_{n}) - \lambda_{n}^{-1}(a-c_{n}),
					\lambda_{n}^{-1}(a+c_{n}) - \tilde{U}_{n}(a-c_{n})\big\}.
\end{align*}
Thus, for any $x > 0$, 
\begin{align*}
&\bP\big(\big|\tilde{U}_{n}(a+Z_{1,n}) - \lambda_{n}^{-1}(a+Z_{2,n})\big| > x\big) \\
&\qquad\leq \bP\big(\big|\tilde{U}_{n}(a+c_{n}) - \lambda_{n}^{-1}(a-c_{n})\big| > x/2\big) 
				+ \bP\big(\big|\tilde{U}_{n}(a-c_{n}) - \lambda_{n}^{-1}(a+c_{n})\big| > x/2\big).
\end{align*}
Note further that by a Taylor expansion of $\lambda_{n}^{-1}$ around $a-c_{n}$, 
\[
|\lambda_{n}^{-1}(a+c_{n}) - \lambda_{n}^{-1}(a-c_{n})| 
\leq Kc_{n}\delta_{n}^{-1}
\]
for some $K > 0$, depending only on the bounds on $\Phi_{0}'(0)$ and $p_{X}$. Thus, by a suitable redefinition of $K$, 
\begin{align*}
&\bP\big(\big|\tilde{U}_{n}(a+c_{n}) - \lambda_{n}^{-1}(a-c_{n})\big| > x/2\big) \\
&\qquad\leq \bP\big(\big|\tilde{U}_{n}(a+c_{n}) - \lambda_{n}^{-1}(a-c_{n})\big| > x/4\big) 
				+ \bP\big(\big|\lambda_{n}^{-1}(a+c_{n}) - \lambda_{n}^{-1}(a-c_{n})\big| > x/4\big) \\
&\qquad\leq \mathds{1}_{\{x \in [0,4(n\delta_{n}^{2})^{-1/3})\}} 
			+ K(n\delta_{n}^{2}x^{3})^{-q/2}\mathds{1}_{\{x \in [4(n\delta_{n}^{2})^{-1/3},1]\}} 
			+ \mathds{1}_{\{x \in [0,Kc_{n}\delta_{n}^{-1}]\}}, 
\end{align*}
by Lemma~\ref{lem:tail bounds - 1} (i) and similarly, 
\begin{align*}
&\bP\big(\big|\tilde{U}_{n}(a-c_{n}) - \lambda_{n}^{-1}(a+c_{n})\big| > x/2\big) \\
&\qquad\leq \mathds{1}_{\{x \in [0,4(n\delta_{n}^{2})^{-1/3})\}} 
			+ K(n\delta_{n}^{2}x^{3})^{-q/2}\mathds{1}_{\{x \in [4(n\delta_{n}^{2})^{-1/3},1]\}} 
			+ \mathds{1}_{\{x \in [0,Kc_{n}\delta_{n}^{-1}]\}}. 
\end{align*}
Integrating $\bP(|\tilde{U}_{n}(a+Z_{1,n}) - \lambda_{n}^{-1}(a+Z_{2,n})| > x)$ over $x$ now yields, again for a redefined 
$K$, 
\[
\bE\big[\big|\tilde{U}_{n}(a+Z_{1,n}) - \lambda_{n}^{-1}(a+Z_{2,n})\big|^{r}\big] 
\leq K\min\Big\{(n\delta_{n}^{2})^{-r/3} + \Big(\frac{c_{n}}{\delta_{n}}\Big)^{r},1\Big\}. 
\] \hfill \qed

\section{Proofs of the auxiliary results of Section~\ref{proof:4.4i}} \label{sec:auxiliary7}

\begin{lemma}\label{lem:uniform bound}
Under the same assumptions as in Theorem~\ref{thm:l1 rate} (i) and by using the same notations as in Section~\ref{proof:4.4i}, we have 
\[
\bE^{|X}\bigg[\sup_{|u| \leq T_{n}}|R_{n}(a,u) + \tilde{R}_{n}(a,u)|^{q}\mathds{1}_{\Omega_{n}'}\bigg] 
\leq Kn^{1-q/3}\delta_{n}^{-q/6}.
\]
\end{lemma}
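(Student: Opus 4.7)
The plan is to decompose $R_n(a,u)+\tilde R_n(a,u)$ into a sum whose dominant contribution comes from the Sakhanenko strong approximation \eqref{eq: Sak85}, and to show that all other pieces are of strictly smaller order in $L^q(\bP^{|X})$, uniformly in $|u|\le T_n$ on $\Omega_n'$. The key numerical observation that motivates the decomposition is that the multiplier $n^{2/3}/\delta_n^{1/6}$ raised to the $q$th power against the Sakhanenko bound $An^{1-q}$ produces exactly $An^{1-q/3}\delta_n^{-q/6}$, so the target rate is precisely what Sakhanenko delivers and the remaining task is to verify negligibility of the other terms.

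I would treat $\tilde R_n$ first. With $\tau\defeq (n/\delta_n)^{-1/3}u+L_n(\lambda_n^{-1}(a))$, apply \eqref{eq: Sak85} to replace $\Upsilon_n(L_n^{-1}(\tau))-\int_0^{L_n^{-1}(\tau)}\Phi_n\circ F_n^{-1}$ by $W_n(L^n(L_n^{-1}(\tau)))/\sqrt n$; the resulting residual, multiplied by $n^{2/3}/\delta_n^{1/6}$, has $q$th $\bP^{|X}$-moment bounded by $An^{1-q/3}/\delta_n^{q/6}$ via Minkowski and \eqref{eq: Sak85}. What remains of $\tilde R_n$ is the Brownian discrepancy
\[
\tfrac{n^{1/6}}{\delta_n^{1/6}}\bigl[W_n(L^n(L_n^{-1}(\tau)))-W_n(L_n(\lambda_n^{-1}(a)))\bigr] - W^n_{\lambda_n^{-1}(a)}(u).
\]
Both terms are Brownian-motion increments; comparing the two time displacements and writing their difference as $\bigl(L^n(L_n^{-1}(\tau))-\tau\bigr)+(n/\delta_n)^{-1/3}u\,\psi_n(\lambda_n^{-1}(a))$, the first contribution is $O(n^{-1/2}\delta_n)$ by KMT on $\Omega_n'$ combined with a Taylor expansion of $L_n^{-1}$ (since $L^n-L_n$ is controlled by $F_n^{-1}-F_X^{-1}$), and the second is of order $\delta_n(n/\delta_n)^{-1/3}|u|\log(n\delta_n^2)/\sqrt n$ by the bound on $\sup|B_n|$ that defines $\Omega_n'$. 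In both cases $|u|\le T_n$, and inserting the explicit form of $T_n$ one sees that the total time displacement is majorised by $T_n(n/\delta_n)^{-1/3}\sqrt{\log n}$, which is precisely the window of the third defining property of $\Omega_n'$; hence the Brownian modulus inequality yields $|W_n(\cdot)-W_n(\cdot)|\le \sqrt{T_n}(n/\delta_n)^{-1/6}\log n$. Multiplying by $n^{1/6}/\delta_n^{1/6}$ and raising to the $q$th power gives a term of lower order than $n^{1-q/3}\delta_n^{-q/6}$, using $q\ge 12$ and the definition of $T_n$.

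For $R_n$, a Taylor expansion writes $\Phi_n\circ F_n^{-1}-\Phi_n\circ F_X^{-1}=\Phi_n'(\xi)(F_n^{-1}-F_X^{-1})$ with $\|\Phi_n'\|_\infty\le K\delta_n$; on $\Omega_n'$ the KMT bound gives $\sup|F_n^{-1}-F_X^{-1}|=O(\log(n)/\sqrt n)$. Integrating over the interval of length $|L_n^{-1}(\tau)-\lambda_n^{-1}(a)|\le K(n/\delta_n)^{-1/3}|u|$ and multiplying by $n^{2/3}/\delta_n^{1/6}$ produces a contribution of order $T_n\delta_n^{5/6}n^{-1/6}\log n$, which is again dominated by $n^{1-q/3}\delta_n^{-q/6}$ after raising to the $q$th power thanks to the choice of $T_n$. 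The second piece of $R_n$ is controlled using $|a-a_n^B|\le K\delta_n\log(n\delta_n^2)/\sqrt n$ (definition of $a_n^B$ via \eqref{eq:anB} and $\sup|B_n|\le\log(n\delta_n^2)$ on $\Omega_n'$) together with $|L_n^{-1}(\tau)-\lambda_n^{-1}(a)|\le K(n/\delta_n)^{-1/3}|u|$, giving the same negligible order.

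The principal obstacle is the careful bookkeeping: with five or six distinct error sources — Sakhanenko residual, two Brownian-time displacements, KMT residual, $a_n^B$ residual, and the quadratic expansion remainders — each must be shown to fit under $n^{1-q/3}\delta_n^{-q/6}$ uniformly in $|u|\le T_n$, and the exponent $T_n=\delta_n^{-1}(n\delta_n^2)^{1/(3(3q-5))}$ is precisely calibrated so that the Brownian-modulus contribution over a window of width $T_n(n/\delta_n)^{-1/3}\sqrt{\log n}$, scaled by $n^{1/6}/\delta_n^{1/6}$, just balances the Sakhanenko order. Once every piece has been individually bounded on $\Omega_n'$ in $L^q(\bP^{|X})$, an application of Minkowski's inequality assembles the pieces into the claimed bound.
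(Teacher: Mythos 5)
Your treatment of $\tilde R_n$ follows the paper's route: the Sakhanenko approximation \eqref{eq: Sak85}, scaled by $n^{2/3}\delta_n^{-1/6}$, produces exactly the claimed order $n^{1-q/3}\delta_n^{-q/6}$, and the remaining Brownian discrepancies (time displacement of order controlled by the KMT bound, the $\psi_n$-correction of order $\delta_n\log(n\delta_n^2)/\sqrt n$, modulus of continuity over windows of width $\asymp T_n(n/\delta_n)^{-1/3}$) are of lower order. One small imprecision there: the third defining inequality of $\Omega_n'$ constrains the Brownian \emph{bridges} $B_n$, not the conditional Brownian motions $W_n$; since you are bounding a conditional $L^q(\bP^{|X})$-moment you should, as the paper does, invoke the expected modulus of continuity of $W_n$ directly rather than the event $\Omega_n'$.

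The genuine gap is in your bound for $R_n$. You bound its two constituents \emph{separately}: the integral term via $\sup_u|F_n^{-1}(u)-F_X^{-1}(u)|=O(\log n/\sqrt n)$ on $\Omega_n'$, and the shift term via $|a-a_n^B|\le K\delta_n\log(n\delta_n^2)/\sqrt n$. Each of these, after multiplying by $\|\Phi_n'\|_\infty\le K\delta_n$ (respectively by the interval length $\le K(n/\delta_n)^{-1/3}T_n$) and by the prefactor $n^{2/3}\delta_n^{-1/6}$, is of order $n^{-1/6}\delta_n^{7/6}T_n\log n \asymp n^{-1/6}\delta_n^{1/6}(n\delta_n^2)^{1/(3(3q-5))}\log n$, whereas the target after taking $q$th roots is $Kn^{1/q-1/3}\delta_n^{-1/6}$. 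The ratio of your bound to the target is of order $(n\delta_n^2)^{1/6}\,n^{-1/q}$ up to the small power of $n\delta_n^2$ and logarithms, which diverges whenever $\delta_n\to 0$ slowly (e.g.\ $\delta_n=1/\log n$ with $q\ge 12$), i.e.\ in part of the slow regime the lemma covers. The point you are missing is that $R_n$ was assembled precisely so that its two pieces cancel to first order: the leading Brownian-bridge part of $F_n^{-1}-F_X^{-1}$, after multiplication by $\Phi_n'/p_X\approx 1/(\lambda_n^{-1})'$, matches $(a-a_n^B)=B_n(\lambda_n^{-1}(a_n))/(\sqrt n(\lambda_n^{-1})'(a_n))$. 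The correct argument (the paper's) adds and subtracts $\Phi_n'(x)B_n(F_X\circ F_n^{-1}(x))/(\sqrt n\,p_X\circ F_n^{-1}(x))$ inside the integral, so that what remains is (i) the KMT \emph{residual}, of order $\log(n)^2/n$ by the second defining inequality of $\Omega_n'$, and (ii) the increment $B_n(F_X\circ F_n^{-1}(x))-B_n(\lambda_n^{-1}(a_n))$ over a window of width $O(T_n(n/\delta_n)^{-1/3}\sqrt{\log n})$, controlled by the third defining inequality of $\Omega_n'$; both come weighted by $\delta_n$ and the interval length and only then fit under $Kn^{1/q-1/3}\delta_n^{-1/6}$. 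Without exploiting this cancellation your estimate of $R_n$ is too crude and the proof of the lemma does not close.
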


\begin{proof}
By definition of $\tilde{R}_{n}$, by definition of $\Omega_{n}'$, by the Minkowski's inequality and by the classical bound 
on the expected modulus of continuity of Brownian motion (e.g.~formula (2) in \cite{Fischer2010}), 
\begin{align*}
&\bE^{|X}\bigg[\sup_{|u| \leq T_{n}}|\tilde{R}_{n}(a,u)|^{q}\mathds{1}_{\Omega_{n}'}\bigg]^{1/q} \\
&\leq \frac{n^{2/3}}{\delta_{n}^{1/6}}\bE^{|X}\bigg[\sup_{t \in [0,1]}\bigg|\Upsilon_{n}(t) 
						- \int_{0}^{t}\Phi_{n} \circ F_{n}^{-1}(x)dx 
						- \frac{W_{n}(L^{n}(t))}{\sqrt{n}}\bigg|^{q}\bigg]^{1/q} \\ 
				&\qquad + \bE^{|X}\bigg[\sup_{|u| \leq T_{n}}
				\bigg|\frac{n^{1/6}}{\delta_{n}^{1/6}}W_{n}\Big(L^{n}\Big(L_{n}^{-1}\Big(\Big(\frac{n}{\delta_{n}}\Big)^{-1/3}u 
																			+ L_{n}(\lambda_{n}^{-1}(a))\Big)\Big)\Big) \\
	&\qquad\qquad\qquad\qquad\qquad - \frac{n^{1/6}}{\delta_{n}^{1/6}}W_{n}\big(L_{n}(\lambda_{n}^{-1}(a))\big) 
				 - \big(1-\psi_{n}(\lambda_{n}^{-1}(a))\big)^{1/2}W_{\lambda_{n}^{-1}(a)}^{n}(u) \\
	&\qquad\qquad\qquad\qquad\qquad - \big(1 - (1-\psi_{n}(\lambda_{n}^{-1}(a)))^{1/2}\big)
													W_{\lambda_{n}^{-1}(a)}^{n}(u)\bigg|^{q}\mathds{1}_{\Omega_{n}'}\bigg]^{1/q} \\
&\leq A\frac{n^{2/3}}{\delta_{n}^{1/6}}n^{(1-q)/q} 
		+ \bE^{|X}\bigg[\sup_{|u| \leq T_{n}}
				\bigg|\frac{n^{1/6}}{\delta_{n}^{1/6}}W_{n}\Big(L^{n}\Big(L_{n}^{-1}\Big(\Big(\frac{n}{\delta_{n}}\Big)^{-1/3}u 
																			+ L_{n}(\lambda_{n}^{-1}(a))\Big)\Big)\Big) \\ 
	&\qquad\qquad\qquad\qquad - \frac{n^{1/6}}{\delta_{n}^{1/6}}W_{n}\Big(L^{n}(\lambda_{n}^{-1}(a)) 
									+ \Big(\frac{n}{\delta_{n}}\Big)^{-1/3}u\big(1-\psi_{n}(\lambda_{n}^{-1}(a))\big)\Big)\bigg|^{q}\mathds{1}_{\Omega_{n}'}\bigg]^{1/q} \\ 
	&\qquad\qquad\qquad + \bE^{|X}\bigg[\sup_{|u| \leq T_{n}}\bigg|\psi_{n}(\lambda_{n}^{-1}(a))
													W_{\lambda_{n}^{-1}(a)}^{n}(u)\bigg|^{q}\mathds{1}_{\Omega_{n}'}\bigg]^{1/q} \\
&\leq A\frac{n^{1/q-1/3}}{\delta_{n}^{1/6}} 
	+ \frac{n^{1/6}}{\delta_{n}^{1/6}}\bE^{|X}\bigg[\sup_{|u-v| \leq (n/\delta_{n})^{-1/3}T_{n}(\log(n)/\sqrt{n})\delta_{n}}
											\big|W_{n}(v) - W_{n}(u)\big|^{q}\mathds{1}_{\Omega_{n}'}\bigg]^{1/q} \\ 
	&\qquad\qquad\qquad + \bE^{|X}\bigg[\sup_{u \in [0,1]}\big|W_{\lambda_{n}^{-1}(a)}^{n}(u)\big|^{q}\mathds{1}_{\Omega_{n}'}\bigg]^{1/q}
											\frac{K\log(n)}{n^{1/2}}\delta_{n} \\
&\leq K\frac{n^{1/q-1/3}}{\delta_{n}^{1/6}} + T_{n}^{1/2}\frac{\log(n)}{n^{1/2}}\delta_{n} 
			+ \frac{K\log(n)}{n^{1/2}}\delta_{n} \\ 
&\leq Kn^{1/q-1/3}\delta_{n}^{-1/6}
\end{align*}
and by definition of $a_{n}^{B}$ in \eqref{eq:anB} and Minkowski's inequality, 
\begin{align*}
&\bE^{|X}\bigg[\sup_{|u| \leq T_{n}}|R_{n}(a,u)|^{q}\mathds{1}_{\Omega_{n}'}\bigg]^{1/q} \\
&\qquad= \frac{n^{2/3}}{\delta_{n}^{1/6}}\bE^{|X}\bigg[\sup_{|u| \leq T_{n}}
						\bigg|\int_{\lambda_{n}^{-1}(a)}^{L_{n}^{-1}((\frac{n}{\delta_{n}})^{-1/3}u + L_{n}(\lambda_{n}^{-1}(a)))}
							\Phi_{n} \circ F_{X}^{-1}(x) - \Phi_{n} \circ F_{n}^{-1}(x) \\
	&\qquad\qquad\qquad\qquad\qquad\qquad\qquad- \Phi_{n}'(x)\frac{B_{n}(F_{X} \circ F_{n}^{-1}(x))}
																			{\sqrt{n}p_{X} \circ F_{n}^{-1}(x)} 
							+ \Phi_{n}'(x)\frac{B_{n}(F_{X} \circ F_{n}^{-1}(x))}{\sqrt{n}p_{X} \circ F_{n}^{-1}(x)} \\
	&\qquad\qquad\qquad\qquad\qquad\qquad\qquad- \frac{B_{n}(\lambda_{n}^{-1}(a_{n}))}
																{\sqrt{n}(\lambda_{n}^{-1})'(a_{n})}dx\bigg|^{q}\mathds{1}_{\Omega_{n}'}\bigg]^{1/q} \\
&\qquad\leq \frac{n^{1/3}}{\delta_{n}^{1/6}}\sup_{|u| \leq T_{n}}\Big|L_{n}^{-1}\Big(\Big(\frac{n}{\delta_{n}}\Big)^{-1/3}u 
														+ L_{n}(\lambda_{n}^{-1}(a))\Big) - \lambda_{n}^{-1}(a)\Big| \\
	&\qquad\qquad\qquad\cdot 
		\bigg(\bE^{|X}\bigg[K\delta_{n}^{q}\sup_{t \in [0,1]}
					\bigg|F_{X}^{-1}(t) - F_{n}^{-1}(t) 
				- \frac{B_{n}(F_{X} \circ F_{n}^{-1}(t))}{\sqrt{n}p_{X} \circ F_{n}^{-1}(t)}\bigg|^{q}\mathds{1}_{\Omega_{n}'}\bigg]^{1/q} \\
	&\qquad\qquad\qquad\qquad\quad+ \bE^{|X}\bigg[K\delta_{n}^{q}n^{-q/2}\sup_{|v-w| \leq (n/\delta_{n})^{-1/3}T_{n}}
																			\big|B_{n}(w)-B_{n}(v)\big|^{q}\mathds{1}_{\Omega_{n}'}\bigg]^{1/q}\bigg).
\end{align*}
By a Taylor expansion of $L_{n}^{-1}$ around $L_{n}(\lambda_{n}^{-1}(a))$ and the definition of $\Omega_{n}'$, the 
right-hand side of the previous display is bounded by
\[
Kn^{1/3}\delta_{n}^{1/6}\delta_{n}T_{n}\Big(n^{-1}\log(n)^{2} 
				+ n^{-1/2}\Big(\frac{n}{\delta_{n}}\Big)^{-1/6}T_{n}^{1/2}\log(n\delta_{n}^{2})^{1/2}\Big) 
\leq Kn^{1/q-1/3}\delta_{n}^{-1/6}.
\]
\end{proof}

\begin{lemma}\label{lem:auxiliary}
Under the same assumptions as in Theorem~\ref{thm:l1 rate} (i) and by using the same notations as in Section~\ref{proof:4.4i}, we have for any $\varepsilon > 0$, 
\[
\bP^{|X}\bigg((n\delta_{n}^{2})^{1/6}\int_{\lambda_{n}(0)}^{\lambda_{n}(1)}\bigg|\frac{|\hat{V}_{n}(a)|-|\tilde{V}_{n}(\lambda_{n}^{-1}(a))|}{L_{n}'(\lambda_{n}^{-1}(a))}\bigg|\frac{1}{p_{X}(\Phi_{n}^{-1}(a))}da > \varepsilon, \Omega_{n}'\bigg) 
= o_{\bP}(1).
\]
\end{lemma}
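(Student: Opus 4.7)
The plan is to mimic the localization-and-comparison argument already used in Claim V of the proof of Theorem~\ref{thm:l1 rate} (i). Both $\hat{V}_n(a)$ and $\tilde{V}_n(\lambda_n^{-1}(a))$ are $\argmin$'s over the same window $|u|\le S_n=\delta_n^{-1}\log(n\delta_n^2)$ of objectives driven by the \emph{same} conditional Brownian motion $W_{\lambda_n^{-1}(a)}^n$, the only difference being the deterministic drift: $D_n(a,u)$ for $\hat V_n$ versus the parabola $d_n(\lambda_n^{-1}(a))u^2$ for $\tilde V_n$. The key step, therefore, is to quantify how close $D_n(a,\cdot)$ is to $d_n(\lambda_n^{-1}(a))(\cdot)^2$ uniformly on $|u|\le S_n$.

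First I would Taylor expand the drift. Writing $t=\lambda_n^{-1}(a)$ and $s=L_n^{-1}((n/\delta_n)^{-1/3}u+L_n(t))$, the identity $\lambda_n(t)=a$ turns the first-order term to zero, so
\[
D_n(a,u)=\tfrac{n^{2/3}}{\delta_n^{1/6}}\!\int_t^s\big(\lambda_n(r)-a\big)dr
=d_n(t)u^2+R_n(a,u),
\]
where the deterministic remainder $R_n(a,u)$ contains two sources of error: the second-order Taylor remainder of $L_n^{-1}$ (contributing a $|s-t|^3$-term via $L_n''$, which is bounded since $p_X$ is $C^1$) and the remainder of $\lambda_n$ beyond first order (controlled via the Hölder continuity of $\Phi_0'$, hence of $\lambda_n'$, with some exponent $\eta\in(0,1]$). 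Using $|s-t|\le K(n\delta_n^2)^{-1/3}\delta_n|u|$ on $\Omega_n'$, these estimates combine to a uniform bound of the form
\[
B_n(a):=\sup_{|u|\le S_n}|R_n(a,u)|\;\le\; K\,\delta_n^{3/2+c\eta}\,S_n^{2+\eta}\cdot n^{-\eta/3}
\]
(for an explicit constant $c>0$), which is of \emph{lower} order than the parabolic drift evaluated at the relevant scale $|u|\sim\delta_n^{-1/3}$.

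Second, I would invoke Proposition 1 of \cite{Durot2002sharp} exactly as in display (\ref{eq:localized tails}), but now with a deterministic remainder. For $\alpha\in(0,S_n]$ and any $x>0$ satisfying the compatibility condition $K\delta_n^3S_n^2\le-(\alpha\log(2x\alpha))^{-1}$, this yields
\[
\bP^{|X}\big(|\hat V_n(a)-\tilde V_n(t)|>\alpha,\,\Omega_n'\big)
\;\le\;\mathds{1}\big\{2B_n(a)>x\alpha^{3/2}\big\}+KS_n x,
\]
the boundary terms $\bP^{|X}(|\hat V_n^*|>S_n)$ being exponentially small by Theorem 4 of \cite{Durot2002sharp} in view of the parabolicity $|d_n(t)u^2|\ge K\sqrt{\delta_n}u^2$. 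Choosing $x=x_{\alpha,n}:=2B_n(a)/\alpha^{3/2}$ on the range where the indicator is zero, and verifying the compatibility condition by checking that $\alpha x_{\alpha,n}\to0$ in the relevant window $\alpha\in[(n\delta_n^2)^{-1/6}\delta_n^{-1}/\log(n\delta_n^2),(n\delta_n^2)^{-\varepsilon}\delta_n^{-1}]$, the layer-cake formula then gives
\[
\int_{\lambda_n(0)}^{\lambda_n(1)}\!\bE^{|X}\big[|\hat V_n(a)-\tilde V_n(\lambda_n^{-1}(a))|\mathds{1}_{\Omega_n'}\big]\,da
\;\le\;K(n\delta_n^2)^{-1/6}/\log(n\delta_n^2)
\]
after splitting into the three $\alpha$-ranges (tiny, intermediate, large) exactly as at the end of the fifth bullet of Claim V. Dividing by $L_n'\ge K$ and $p_X\ge K$, multiplying by $(n\delta_n^2)^{1/6}$ and applying Markov's inequality yields the claim; the integral over the two end-pieces $[\lambda_n(0),\lambda_n(0)+\delta_n S_n^{-1}]$ and the analogous piece at $\lambda_n(1)$, where one cannot localize on $[-S_n,S_n]$ for both minimizers simultaneously, is handled by the crude bound $|\hat V_n|+|\tilde V_n|\le 2S_n$ combined with the smallness of the end-piece length.

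The main obstacle, as in Claim V, is verifying the compatibility condition on $(\alpha,x)$ for every relevant $\alpha$ while keeping the resulting integrated tail bounded by $o((n\delta_n^2)^{-1/6})$. This forces a careful interplay between the Hölder exponent $\eta$ driving the size of $B_n$, the log-localization $S_n$, and the exponent $q\ge 12$ used in the moment estimates; once $\eta$ is fixed, $q$ has to be taken sufficiently large (exactly as in Claim V) so that the terms $KT_nS_nx_{\alpha,n}$ arising at the upper cut-off remain negligible relative to the target rate $(n\delta_n^2)^{-1/6}$.
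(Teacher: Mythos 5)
Your proposal follows essentially the same route as the paper's proof: bound the deterministic discrepancy $\sup_{|u|\le S_n}|D_n(a,u)-d_n(\lambda_n^{-1}(a))u^2|$ by a Taylor expansion, feed it into Proposition~1 and Theorem~4 of \cite{Durot2002sharp} with the $\alpha$-dependent choice of $x$, verify the compatibility condition on the window $\alpha\in((n\delta_{n}^{2})^{-1/6}\delta_{n}^{-1}/\log(n\delta_{n}^{2}),(n\delta_{n}^{2})^{-\varepsilon}\delta_{n}^{-1}]$, and conclude via the layer-cake formula, the crude bound $2S_n$, and Markov's inequality. One small remark: since both $\hat V_n(a)$ and $\tilde V_n(\lambda_n^{-1}(a))$ are already localized to $[-S_n,S_n]$ and the drift error here is deterministic, no interplay with $T_n$ or the moment parameter $q$ is actually needed in this step, unlike in the preceding comparison of $\hat U_n^L$ with $\hat V_n$.
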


\begin{proof}
By a Taylor expansion, there exists $K>0$, such that for all $|u| \leq S_{n}$, 
\[
|D_{n}(a,u) - d_{n}(\lambda_{n}^{-1}(a))u^{2}| 
\leq Kn^{-1/3}\delta_{n}^{-1/6}\delta_{n}^{2}S_{n}^{3}. 
\]
By similar arguments as in the third step of the proof of Claim~V in the proof of Theorem~\ref{thm:l1 rate} (i), we have by Proposition~1 of \cite{Durot2002sharp} and Theorem~4 of \cite{Durot2002sharp}, 
for every $(x,\alpha)$, satisfying $\alpha \in \big(0,S_{n}\big]$, $x > 0$ and 
$K\delta_{n}^{3}S_{n}^{2} \leq -(\alpha\log(2x\alpha))^{-1}$, that 
\begin{align*}
\bP^{|X}\big(|\hat{V}_{n}(a) &- \tilde{V}_{n}(\lambda_{n}^{-1}(a))| > \alpha, \Omega_{n}'\big) \\
&\leq \bP^{|X}\bigg(2\sup_{|u| \leq S_{n}}|D_{n}(a,u) - d_{n}(\lambda_{n}^{-1}(a))u^{2}| > x\alpha^{3/2}, \Omega_{n}'\bigg) \\
	&\qquad\qquad\qquad\qquad+ KS_{n}x + \bP^{|X}\big(|\hat{V}_{n}(a)| > S_{n}, \Omega_{n}'\big) \\
&\leq \mathds{1}_{\{Kn^{-1/3}\delta_{n}^{-1/6}\delta_{n}^{2}S_{n}^{3} > x\alpha^{3/2}\}} + KS_{n}x 
				+ K\exp(-\kappa^{2}\delta_{n}^{3}S_{n}^{3}/2).
\end{align*}
For any $\varepsilon > 0$, every 
$\alpha \in ((n\delta_{n}^{2})^{-1/6}\delta_{n}^{-1}/\log(n\delta_{n}^{2}),(n\delta_{n}^{2})^{-\varepsilon}\delta_{n}^{-1}]$ 
and 
\[
x_{\alpha,n} \defeq 2K\alpha^{-3/2}n^{-1/3}\delta_{n}^{-1/6}\delta_{n}^{2}S_{n}^{3}, 
\]
we have $\alpha x_{\alpha,n} \longrightarrow 0$ for $n \longrightarrow \infty$ and so $(\alpha,x_{\alpha,n})$ satisfies 
$-(\alpha\log(2x_{\alpha,n}\alpha))^{-1} \geq K\delta_{n}^{3}S_{n}^{2}$
for $n$ large enough. Thus, again for $n$ large enough, 
\[
\bP^{|X}\big(|\hat{V}_{n}(a) - \tilde{V}_{n}(\lambda_{n}^{-1}(a))| > \alpha, \Omega_{n}'\big) 
\leq KS_{n}x_{\alpha,n} 
\]
for every 
$\alpha \in ((n\delta_{n}^{2})^{-1/6}\delta_{n}^{-1}/\log(n\delta_{n}^{2}),(n\delta_{n}^{2})^{-\varepsilon}\delta_{n}^{-1}]$. 
By definition, $|\hat{V}_{n}(a) - \tilde{V}_{n}(\lambda_{n}^{-1}(a))|$ is bounded by $2S_{n}$ and so we obtain, 
\begin{align*}
\bE^{|X}\big[|\hat{V}_{n}(a) - \tilde{V}_{n}(\lambda_{n}^{-1}(a))|\mathds{1}_{\Omega_{n}'}\big] 
&= \int_{0}^{2S_{n}}\bP^{|X}\big(|\hat{V}_{n}(a) - \tilde{V}_{n}(\lambda_{n}^{-1}(a))| > \alpha, \Omega_{n}'\big)d\alpha \\
&\leq K(n\delta_{n}^{2})^{-1/6}\delta_{n}^{-1}/\log(n\delta_{n}^{2}) 
				+ KS_{n}x_{(n\delta_{n}^{2})^{-\varepsilon}\delta_{n}^{-1}} \\
	&\qquad+ K\int_{(n\delta_{n}^{2})^{-1/6}\delta_{n}^{-1}/\log(n\delta_{n}^{2})}^{(n\delta_{n}^{2})^{-\varepsilon}
																			\delta_{n}^{-1}}S_{n}x_{\alpha,n}d\alpha \\
&\leq K(n\delta_{n}^{2})^{-1/6}\delta_{n}^{-1}/\log(n\delta_{n}^{2}). 
\end{align*}
Thus, 
\begin{align*}
(n\delta_{n}^{2})^{1/6}\int_{\lambda_{n}(0)}^{\lambda_{n}(1)}
										\bE^{|X}\big[|\hat{V}_{n}(a) &- \tilde{V}_{n}(\lambda_{n}^{-1}(a))|\mathds{1}_{\Omega_{n}'}\big]da 
\leq K(n\delta_{n}^{2})^{1/6}\delta_{n}(n\delta_{n}^{2})^{-1/6}\delta_{n}^{-1}/\log(n\delta_{n}^{2}), 
\end{align*}
which is bounded by $K\log(n\delta_{n}^{2})^{-1}$ and, as desired for any $\varepsilon > 0$, 
\[
\bP^{|X}\bigg((n\delta_{n}^{2})^{1/6}\int_{\lambda_{n}(0)}^{\lambda_{n}(1)}\bigg|\frac{|\hat{V}_{n}(a)|-|\tilde{V}_{n}(\lambda_{n}^{-1}(a))|}{L_{n}'(\lambda_{n}^{-1}(a))}\bigg|\frac{1}{p_{X}(\Phi_{n}^{-1}(a))}da > \varepsilon, \Omega_{n}'\bigg) 
= o_{\bP}(1).
\]
\end{proof}

\section{Auxiliary results} \label{app:auxiliary}
In this section, we summarize some necessary technical auxiliary results. Throughout, we use the notations introduced in Sections~\ref{sec:setting}--\ref{sec:auxiliary7}.

\begin{lemma} \label{lem:hellinger inequalities}
For $a > 0$ and $b \geq 0$, we have
\[
\frac{\sqrt{a} - \sqrt{b}}{\sqrt{\frac{a+b}{2}} - \sqrt{b}} 
= 2\frac{\sqrt{\frac{a+b}{2}} + \sqrt{b}}{\sqrt{a} + \sqrt{b}}, \quad 
\frac{\sqrt{\frac{a+b}{2}} + \sqrt{b}}{\sqrt{a} + \sqrt{b}} \leq 2, \quad 
\big|\sqrt{a} - \sqrt{b}\big|^{2} \leq 16\Bigg|\sqrt{\frac{a+b}{2}} - \sqrt{b}\Bigg|^{2}.
\]
\end{lemma}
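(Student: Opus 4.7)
The three assertions are purely algebraic and the strategy is simply to rationalize numerators, then chain the identities. Throughout, I would separately dispose of the trivial case $a=b$, in which both sides of (1) are of the form $0/0$ (so (1) is vacuous) and both sides of (3) vanish; from now on assume $a\neq b$.

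For identity (1), I would multiply the numerator $\sqrt{a}-\sqrt{b}$ by its conjugate to obtain $(a-b)/(\sqrt{a}+\sqrt{b})$, and likewise rewrite the denominator $\sqrt{(a+b)/2}-\sqrt{b}$ as
\[
\frac{(a+b)/2 - b}{\sqrt{(a+b)/2}+\sqrt{b}} = \frac{(a-b)/2}{\sqrt{(a+b)/2}+\sqrt{b}}.
\]
Dividing these two expressions, the common factor $(a-b)$ cancels and one reads off
\[
\frac{\sqrt{a}-\sqrt{b}}{\sqrt{(a+b)/2}-\sqrt{b}} \;=\; 2\,\frac{\sqrt{(a+b)/2}+\sqrt{b}}{\sqrt{a}+\sqrt{b}},
\]
which is exactly (1).

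For the estimate (2), I would use the elementary inequality $\sqrt{a+b}\leq \sqrt{a}+\sqrt{b}$ (which holds upon squaring both sides). This yields
\[
\sqrt{(a+b)/2} \;\leq\; \sqrt{a+b} \;\leq\; \sqrt{a}+\sqrt{b},
\]
so that $\sqrt{(a+b)/2}+\sqrt{b} \leq \sqrt{a}+2\sqrt{b} \leq 2(\sqrt{a}+\sqrt{b})$, which is (2) after dividing. Finally, (3) drops out by combining (1) and (2): taking absolute values in (1) and inserting the bound from (2) gives $|\sqrt{a}-\sqrt{b}| \leq 4\,|\sqrt{(a+b)/2}-\sqrt{b}|$, and squaring produces the factor $16$.

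There is no genuine obstacle here; the only minor care point is the degenerate case $a=b$, which must be handled separately before invoking the rationalization in (1).
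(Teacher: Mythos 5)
Your proposal is correct and follows essentially the same route as the paper: the identity (1) via conjugate multiplication (both sides reduce to $a-b$), the bound (2) via subadditivity of the square root, and (3) by chaining (1) and (2) and squaring. Your explicit treatment of the degenerate case $a=b$ is a minor extra care the paper omits, but it does not change the argument.
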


\begin{proof}
The first statement follows from $(\sqrt{a} - \sqrt{b})(\sqrt{a} + \sqrt{b}) = a-b$ and 
\[
2\Bigg(\sqrt{\frac{a+b}{2}} + \sqrt{b}\Bigg)\Bigg(\sqrt{\frac{a+b}{2}} - \sqrt{b}\Bigg) 
= 2\Big(\frac{a+b}{2} - b\Big) 
= a + b - 2b 
= a - b.
\]
For the second statement, note that 
\[
\sqrt{\frac{a+b}{2}} \leq \sqrt{\frac{a}{2}} + \sqrt{\frac{b}{2}} \leq \sqrt{a} + \sqrt{b}.
\]
Thus, 
\[
\frac{\sqrt{\frac{a+b}{2}} + \sqrt{b}}{\sqrt{a} + \sqrt{b}} 
\leq \frac{\sqrt{a} + 2\sqrt{b}}{\sqrt{a} + \sqrt{b}} 
\leq 2\frac{\sqrt{a} + \sqrt{b}}{\sqrt{a} + \sqrt{b}} 
= 2.
\]
By a combination of the first two statements, we obtain 
\[
\big|\sqrt{a} - \sqrt{b}\big| 
= 2\Bigg|\sqrt{\frac{a+b}{2}} - \sqrt{b}\Bigg|\Bigg(\frac{\sqrt{\frac{a+b}{2}} + \sqrt{b}}{\sqrt{a} + \sqrt{b}}\Bigg) 
\leq 4\Bigg|\sqrt{\frac{a+b}{2}} - \sqrt{b}\Bigg|
\]
and the third statement follows from taking squares on both sides.
\end{proof}

\begin{lemma} \label{lem:log inequalities}
For $a,b \in [0,\infty)$, we have 
\begin{itemize}
\item[(i)] $| \log(1/2 + b) - \log(1/2 + a) | \leq 2|b-a|$, 
\item[(ii)] For $a \in (0,\infty)$, we have $\log(a) \leq 2(\sqrt{a} - 1)$ and for $a \geq 1$, we have $\log(a)^{2} \leq 4(\sqrt{a} - 1)^{2}$, 
\item[(iii)] For $0 < a \leq 1$, we have $\log(a)^{2} \leq (1-\frac{1}{a})^{2}$.
\end{itemize}
\end{lemma}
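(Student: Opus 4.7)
All three inequalities are elementary one-variable estimates, and my plan is to derive each by reducing it to the standard bound $\log(x) \leq x-1$ valid for all $x > 0$.

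For (i), I would apply the mean value theorem to $t \mapsto \log(1/2+t)$ on the interval with endpoints $a$ and $b$. Its derivative $1/(1/2+t)$ is bounded above by $2$ uniformly in $t \in [0,\infty)$, so $|\log(1/2+b)-\log(1/2+a)| \leq 2|b-a|$ follows immediately.

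For (ii), I would first write $\log(a) = 2\log(\sqrt{a})$ and apply $\log(x) \leq x-1$ with $x = \sqrt{a}$, giving $\log(a) \leq 2(\sqrt{a}-1)$. For $a \geq 1$ both sides are nonnegative, so squaring preserves the inequality and yields $\log(a)^2 \leq 4(\sqrt{a}-1)^2$.

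For (iii), I would substitute $b \defeq 1/a \geq 1$ so that $\log(a) = -\log(b)$ and $(1-1/a)^2 = (b-1)^2$. The claim thus reduces to $\log(b)^2 \leq (b-1)^2$ for $b \geq 1$, which is immediate from $0 \leq \log(b) \leq b-1$ by squaring.

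There is no real obstacle here; the only minor point worth flagging is that in (ii) and (iii) the squaring step relies on the sign information provided by the respective restriction on $a$, which is why the two regimes $a \geq 1$ and $a \leq 1$ must be treated separately rather than in a single unified bound.
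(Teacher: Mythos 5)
Your argument is correct, and it is a bit more streamlined than the paper's, though in the same elementary spirit. For (i) the paper does not use the mean value theorem but instead writes the difference as $\log\bigl(1+\bigl(\tfrac{1/2+b}{1/2+a}-1\bigr)\bigr)$ and applies $\log(1+x)\leq x$, which amounts to the same derivative bound $1/(1/2+a)\leq 2$; the two routes are interchangeable. For (ii) and (iii) the paper argues via the auxiliary functions $g(a)=\log(a)-2(\sqrt{a}-1)$ and $g(a)=\log(a)-1+\tfrac{1}{a}$, checking $g(1)=0$ and the sign of $g'$ on $[1,\infty)$ and $(0,1]$ respectively, whereas you reduce both parts to the single standard inequality $\log(x)\leq x-1$ (applied at $x=\sqrt{a}$, and at $x=b=1/a$ after the substitution $b=1/a$). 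Your reduction buys brevity and makes explicit that all three estimates hinge on one inequality; the paper's monotonicity arguments are self-contained and avoid the change of variables. One small hygiene point you implicitly handle correctly: in (iii) one needs $a>0$ for $\log(a)$ and $1/a$ to make sense (the paper's proof restricts to $(0,1]$ as well), and your squaring steps correctly use the sign information from the respective restrictions on $a$, exactly as you flag.
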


\begin{proof}
\begin{itemize}
\item[(i)] Without loss of generality, we assume $a \leq b$. Then, 
\begin{align*}
&|\log(1/2 + b) - \log(1/2 + a)| 
= \log\Big( \frac{1/2 + b}{1/2 + a} \Big) 
= \log\Big(1 + \Big( \frac{1/2 + b}{1/2 + a} - 1 \Big) \Big) \\
&\qquad\qquad\leq \frac{1/2 + b}{1/2 + a} - 1 
= \frac{1}{1/2 + a}(1/2 + b - (1/2 + a)) 
= \frac{1}{1+2a}2(b-a) \\
&\qquad\qquad\leq 2|b-a|,
\end{align*}
where we used $\log(1+x) \leq x$ for $x \in [0,\infty)$. 

\item[(ii)] Let $g \colon (0,\infty) \to \bR$, $g(a) \defeq \log(a) - 2(\sqrt{a} - 1)$. Then, $g(1) = 0$ and 
\[
g'(a) 
= \frac{1}{a} - 2\frac{1}{2\sqrt{a}} 
= \frac{1 - \sqrt{a}}{a}.
\]
Thus, $g'(a) > 0$ for $a \in (0,1)$, $g'(1) = 0$ and $g'(a) < 0$ for $a > 1$, implying 
\[
\log(a) \leq 2(\sqrt{a} - 1).
\]
The assertion now follows from the fact that $\log(a) \geq 0$ and $2(\sqrt{a} - 1) \geq 0$ for all $a \geq 1$.

\item[(iii)] Let $g \colon (0,1] \to \bR$, $g(a) \defeq \log(a) - 1 + \frac{1}{a}$. Then, $g(1) = 0$ and 
\[
g'(a) 
= \frac{1}{a} - \frac{1}{a^{2}} 
= \frac{a - 1}{a^{2}}
\leq 0.
\]
Thus, $g(a) \geq 0$ for all $a \in (0,1]$, implying 
\[
\log(a) \geq 1 - \frac{1}{a}.
\]
The assertion now follows from the fact that $\log(a) \leq 0$ and $2(\sqrt{a} - 1) \leq 0$ for all $a \leq 1$.
\end{itemize}
\end{proof}

\begin{lemma} \label{lem:jumping point}
Let $G \colon \bR \to \bR$ and assume there exists $T \in \bR$ with $G|_{(-\infty,T)} < 0$ and $G|_{[T,\infty)} \geq 0$. 
Then, for every $s \in \bR$, 
\[
\int_{s}^{\infty}G(x)dx - \int_{-\infty}^{s}G(x)dx 
\leq \int_{T}^{\infty}G(x)dx - \int_{-\infty}^{T}G(x)dx. 
\]
In particular, 
\[
\max_{s \in \bR} \Big\{\int_{s}^{\infty}G(x)dx - \int_{-\infty}^{s}G(x)dx\Big\} 
= \int_{T}^{\infty}G(x)dx - \int_{-\infty}^{T}G(x)dx.
\]
\end{lemma}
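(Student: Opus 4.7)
Define the auxiliary function
\[
H(s)\;\defeq\;\int_{s}^{\infty}G(x)\,dx-\int_{-\infty}^{s}G(x)\,dx,\qquad s\in\bR,
\]
so that the claim is equivalent to the inequality $H(s)\le H(T)$ for every $s\in\bR$. The strategy is simply to compute $H(s)-H(T)$ directly and use the sign hypothesis on $G$.

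First I would split into the two natural cases $s<T$ and $s\ge T$. In the first case, one writes
\[
\int_{s}^{\infty}G-\int_{T}^{\infty}G=\int_{s}^{T}G\quad\text{and}\quad\int_{-\infty}^{s}G-\int_{-\infty}^{T}G=-\int_{s}^{T}G,
\]
which immediately gives $H(s)-H(T)=2\int_{s}^{T}G(x)\,dx$. Since $G<0$ on $(-\infty,T)\supset[s,T)$, this quantity is $\le 0$. In the second case $s\ge T$, an analogous telescoping yields $H(s)-H(T)=-2\int_{T}^{s}G(x)\,dx$, which is $\le 0$ because $G\ge 0$ on $[T,\infty)\supset[T,s)$. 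Combining both cases proves $H(s)\le H(T)$ for all $s\in\bR$, which is exactly the asserted inequality, and the ``in particular'' statement follows by evaluating the right-hand side at $s=T$.

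There is no genuine obstacle here: the only subtlety is bookkeeping of the signs when relating $\int_s^\infty-\int_T^\infty$ (and its counterpart on $(-\infty,\cdot]$) to $\int_s^T$ or $-\int_T^s$, which is handled cleanly by the case split above. Note that integrability of $G$ near $\pm\infty$ is tacitly used; in the application in Section~\ref{proof:4.4ii}, $G=(\hat\Phi_n-\Phi_0(0))\mathbf{1}_{[-T,T]}$ with respect to the discrete measure $P_n$, so the integrals are finite sums and no additional assumption is required.
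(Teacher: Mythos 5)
Your proof is correct and follows essentially the same route as the paper: a direct computation of the difference $H(s)-H(T)$ as $\pm 2\int$ of $G$ over the interval between $s$ and $T$, whose sign is determined by the hypothesis on $G$. The only difference is that you write out both cases explicitly (and sensibly flag the integrability point), whereas the paper treats $s\geq T$ and dismisses $s<T$ as analogous.
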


\begin{proof}
Consider $s \geq T$. Then, 
\[
\int_{T}^{\infty}G(x)dx - \int_{s}^{\infty}G(x)dx + \int_{-\infty}^{s}G(x)dx - \int_{-\infty}^{T}G(x)dx 
= \int_{T}^{s}G(x)dx + \int_{T}^{s}G(x)dx, 
\]
which is greater than or equal to zero. The case $s < T$ follows similarly.
\end{proof}

The following result is stated as an exercise in \cite{VaartWellner2023} (cf.~Problem~3.2.5). For completeness, we 
decided to give the proof as well.

\begin{lemma}\label{lem:argmin brownian motion transformation}
Let $(Z(s))_{s \in \bR}$ be a standard (two-sided) Brownian motion and let $a,b \in (0,\infty)$ and $c \in \bR$. Then, 
\[
\argmin_{s \in \bR}\{aZ(s) + bs^{2} - cs\}
=_{\cL} \Big(\frac{a}{b}\Big)^{2/3}\argmin_{s \in \bR}\{Z(s) + s^{2}\} + \frac{c}{2b}
\]
\end{lemma}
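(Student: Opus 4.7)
The plan is to reduce the stated identity to the base case $\argmin_s\{Z(s)+s^2\}$ by two successive changes of variable: a translation to absorb the linear term $-cs$, and a Brownian scaling to absorb the coefficients $a$ and $b$. All identities are in distribution, and all manipulations will use only two standard facts: (i) adding a constant (possibly random but independent of $s$) to the objective does not change the argmin, and (ii) multiplying the objective by a positive constant does not change the argmin.

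First I would complete the square, writing $bs^{2}-cs = b\bigl(s-\tfrac{c}{2b}\bigr)^{2} - \tfrac{c^{2}}{4b}$, and substitute $s = t + c/(2b)$. The constant $-c^{2}/(4b)$ is irrelevant for the argmin, so
\[
\argmin_{s\in\bR}\{aZ(s)+bs^{2}-cs\}
= \frac{c}{2b} + \argmin_{t\in\bR}\bigl\{aZ\bigl(t+\tfrac{c}{2b}\bigr)+bt^{2}\bigr\}.
\]
Now $\widetilde Z(t) \defeq Z(t+c/(2b)) - Z(c/(2b))$ is again a standard two-sided Brownian motion by stationarity of increments, and $aZ(c/(2b))$ is a constant in $t$, so
\[
\argmin_{t\in\bR}\bigl\{aZ\bigl(t+\tfrac{c}{2b}\bigr)+bt^{2}\bigr\}
\;=_{\cL}\; \argmin_{t\in\bR}\{a\widetilde Z(t)+bt^{2}\}.
\]

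Second, I would apply Brownian scaling: for any $\mu>0$, $\widetilde Z(\mu u)$ is equal in law, as a process in $u$, to $\sqrt{\mu}\,\widetilde Z(u)$. Setting $t=\mu u$ with $\mu \defeq (a/b)^{2/3}$, I get
\[
\argmin_{t\in\bR}\{a\widetilde Z(t)+bt^{2}\}
= \mu \cdot \argmin_{u\in\bR}\{a\widetilde Z(\mu u)+b\mu^{2}u^{2}\}
\;=_{\cL}\; \mu\cdot \argmin_{u\in\bR}\bigl\{a\sqrt{\mu}\,\widetilde Z(u)+b\mu^{2}u^{2}\bigr\}.
\]
With $\mu=(a/b)^{2/3}$ one checks $a\sqrt{\mu}=a^{4/3}b^{-1/3}=b\mu^{2}$, so the common positive prefactor can be pulled out of the $\argmin$, leaving $\argmin_{u\in\bR}\{\widetilde Z(u)+u^{2}\} =_{\cL} \argmin_{u\in\bR}\{Z(u)+u^{2}\}$.

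Chaining the two reductions yields the claim. There is no genuine obstacle here; the only points requiring mild care are that the two-sided Brownian motion must be used throughout (so that translation in $t$ preserves the law), and that uniqueness of the argmin for $Z(s)+s^{2}$ (a well-known fact, see e.g.\ \cite{Kim1990cube}) is what makes ``$=_{\cL}$'' unambiguous on both sides of the identity.
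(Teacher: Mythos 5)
Your proof is correct and follows essentially the same route as the paper's: an affine change of variables combined with stationarity of increments and Brownian scaling, using that additive constants and positive multiplicative factors do not move the $\argmin$. The only cosmetic difference is that you perform the translation by $c/(2b)$ and the scaling by $(a/b)^{2/3}$ in two separate steps, whereas the paper substitutes $h(s)=(a/b)^{2/3}s+c/(2b)$ in one stroke.
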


\begin{proof}
By replacing $s$ with $h(s) \defeq (a/b)^{2/3}s + c/2b$, we obtain 
\begin{align*}
\argmin_{s \in \bR}\{aZ(s) + bs^{2} - cs\} 
&= \argmin_{h(s) \in \bR}\{aZ(h(s)) + bh(s)^{2} - ch(s)\} \\
&= \Big(\frac{a}{b}\Big)^{2/3}\argmin_{s \in \bR}\{aZ(h(s)) + bh(s)^{2} - ch(s)\} + \frac{c}{2b}.
\end{align*}
Using the properties of Brownian motion, we have 
\[
aZ(h(s)) 
=_{\cL} a\Big(\frac{a}{b}\Big)^{1/3}Z(s) + aZ\Big(\frac{c}{2b}\Big)
=_{\cL} \frac{a^{4/3}}{b^{1/3}}Z(s) + aZ\Big(\frac{c}{2b}\Big)
\]
and simple calculations yield
\begin{align*}
bh(s)^{2} - ch(s) 
&= b\Big(\Big(\frac{a}{b}\Big)^{4/3}s^{2} + \frac{c}{b}\Big(\frac{a}{b}\Big)^{2/3}s + \frac{c^{2}}{4b^{2}}\Big) 
																- c\Big(\frac{a}{b}\Big)^{2/3}s - \frac{c^{2}}{2b} \\
&= \frac{a^{4/3}}{b^{1/3}}s^{2} + c\Big(\frac{a}{b}\Big)^{2/3}s + \frac{c^{2}}{4b} 
																- c\Big(\frac{a}{b}\Big)^{2/3}s - \frac{c^{2}}{2b} \\
&= \frac{a^{4/3}}{b^{1/3}}s^{2} + \frac{c^{2}}{4b} - \frac{c^{2}}{2b}.
\end{align*}
By a combination of these results, 
\begin{align*}
\argmin_{s \in \bR}\big\{aZ(h(s)) + bh(s)^{2} - ch(s)\big\} 
&=_{\cL} \argmin_{s \in \bR}\Big\{\frac{a^{4/3}}{b^{1/3}}Z(s) + aZ\Big(\frac{c}{2b}\Big) 
											+ \frac{a^{4/3}}{b^{1/3}}s^{2} + \frac{c^{2}}{4b} - \frac{c^{2}}{2b}\Big\} \\
&=_{\cL} \argmin_{s \in \bR}\Big\{\frac{a^{4/3}}{b^{1/3}}Z(s) + \frac{a^{4/3}}{b^{1/3}}s^{2}\Big\} \\
&=_{\cL} \argmin_{s \in \bR}\big\{Z(s) + s^{2}\big\}
\end{align*}
and the assertion follows.
\end{proof}

\begin{lemma} \label{lem:pointwise convergence}
Let $\Phi_{n}$ be defined as in Section~\ref{subsec: 1.2} and $\Phi_{0}$ continuous. 
Then, for every $\varepsilon > 0$ and for every $x, y \in \bR$, there exists $N \in \bN$, such that 
\[
|\Phi_{n}(y)-\Phi_{n}(x)| < \varepsilon \quad \text{for all } n > N.
\]
\end{lemma}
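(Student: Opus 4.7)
The plan is to exploit directly the defining relation $\Phi_n(\boldcdot) = \Phi_0(\delta_n\boldcdot)$ from Section~\ref{subsec: 1.2}, together with continuity of $\Phi_0$ at $0$ (which follows from the continuity assumption, since $0$ lies in the neighborhood where $\Phi_0$ is assumed continuous).

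First I would fix $x,y \in \bR$ and $\varepsilon > 0$. By continuity of $\Phi_0$ at the point $0$, there exists $\eta > 0$ such that $|\Phi_0(u) - \Phi_0(0)| < \varepsilon/2$ whenever $|u| < \eta$. Next, since $\delta_n \searrow 0$, I can choose $N \in \bN$ large enough so that $\delta_n \max\{|x|,|y|\} < \eta$ for all $n > N$. Then both $\delta_n x$ and $\delta_n y$ lie in the chosen neighborhood of $0$, hence
\[
|\Phi_n(y) - \Phi_n(x)| = |\Phi_0(\delta_n y) - \Phi_0(\delta_n x)| \leq |\Phi_0(\delta_n y) - \Phi_0(0)| + |\Phi_0(0) - \Phi_0(\delta_n x)| < \varepsilon,
\]
which gives the desired bound.

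There is no real obstacle here: the statement is essentially a one-line consequence of continuity of $\Phi_0$ at the origin combined with $\delta_n \searrow 0$, and the only thing to be a bit careful about is that we only need continuity in a neighborhood of $0$ (rather than on all of $\bR$), which suffices precisely because $\delta_n x, \delta_n y \to 0$ for every fixed $x,y$.
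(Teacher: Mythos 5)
Your proof is correct and follows exactly the same argument as the paper: continuity of $\Phi_{0}$ at zero gives a neighborhood on which $\Phi_{0}$ varies by less than $\varepsilon/2$, the condition $\delta_{n}\searrow 0$ places $\delta_{n}x$ and $\delta_{n}y$ in that neighborhood for $n$ large, and the triangle inequality through $\Phi_{0}(0)$ concludes. Nothing to add.
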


\begin{proof}
For every $\varepsilon > 0$, we know from continuity of $\Phi_{0}$ in a neighborhood of zero that there exists $\delta > 0$, such that 
\[
|\Phi_{0}(z) - \Phi_{0}(0)| < \frac{\varepsilon}{2} \quad \text{for all } |z| < \delta.
\]
Now for arbitrary $x, y \in \bR$, choose $N \in \bN$ such that both, $|\delta_{n}y| < \delta$ and $|\delta_{n}x| < \delta$ for 
all $n > N$. Then, 
\[
|\Phi_{n}(y)-\Phi_{n}(x)| 
= |\Phi_{0}(\delta_{n}y)-\Phi_{0}(\delta_{n}x)|
\leq |\Phi_{0}(\delta_{n}y) - \Phi_{0}(0)| + |\Phi_{0}(0) - \Phi_{0}(\delta_{n}x)|
< \varepsilon
\]
and the assertion follows.
\end{proof}

We conclude this section with bounds on bracketing numbers for various function classes. Recall $\Phi_n(\boldcdot)=\Phi_0(\delta_n\boldcdot)$ for some strictly increasing, continuous $\Phi_0:[-T,T]\rightarrow[0,1]$, where $\delta_n\searrow 0$ denotes the level of feature impact.

\begin{lemma}\label{lem:bracketing monotone functions}
For $\eta > 0$ and $a, b \in \bR$ with $-T \leq a < b \leq T$, let 
\[
\cG_{n,\eta} \defeq \big\{g \colon [a,b] \to [0,1] \mid g=|f-\Phi_{n}| \text{ for } f \in \cF, \ \|g\|_{[a,b]} \leq \eta\big\}. 
\]
Then, there exist universal constants $L > 0$ and $C > 0$, such that for any $\nu > 0$, 
\[
N_{[]}(\nu,\cG_{n,\eta},L^{2}(P_{X})) 
\leq L^{C(\eta + \delta_{n})/\nu}.
\] 
\end{lemma}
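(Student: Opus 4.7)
The plan is to reduce the problem to the classical Birman--Solomyak type bracketing bound for monotone functions with bounded range, and then transfer brackets via the $1$-Lipschitz map $f \mapsto |f - \Phi_n|$.

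First I would observe that if $g = |f - \Phi_n| \in \cG_{n,\eta}$, then the underlying $f \in \cF$ satisfies
\[
f(b) - f(a) \leq \bigl(\Phi_n(b) + \eta\bigr) - \bigl(\Phi_n(a) - \eta\bigr) = \Phi_n(b) - \Phi_n(a) + 2\eta.
\]
In the regime where this Lemma is invoked, $\Phi_0$ is (at least locally) Lipschitz at $0$, so $\Phi_n(b) - \Phi_n(a) = \Phi_0(\delta_n b) - \Phi_0(\delta_n a) \leq K_0 \delta_n$ for some constant $K_0$ depending only on $\Phi_0$ and $T$, at least for $n$ large. Hence each candidate $f$ lies in the class $\cF^{[a,b]}_{V}$ of monotone $[0,1]$-valued functions on $[a,b]$ with range at most $V := K_0 \delta_n + 2\eta$.

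Next I would apply the standard bracketing bound for monotone classes (Theorem~2.7.5 in \cite{VaartWellner2023}, or Birman--Solomyak): since $p_X$ is bounded on $[-T,T]$, there exist $L^2(P_X)$-brackets $\{[f_j^L, f_j^U]\}_{j=1}^{N}$ for $\cF^{[a,b]}_V$ with $\|f_j^U - f_j^L\|_{L^2(P_X)} \leq \nu$ and
\[
\log N \leq K \frac{V}{\nu} = K \frac{K_0 \delta_n + 2\eta}{\nu},
\]
with $K$ depending only on $\|p_X\|_\infty$.

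Finally I would transfer each bracket to $\cG_{n,\eta}$ by setting, for each $j$,
\[
g_j^U(x) := \max\bigl(f_j^U(x) - \Phi_n(x),\; \Phi_n(x) - f_j^L(x)\bigr),
\]
\[
g_j^L(x) := \max\bigl(0,\; f_j^L(x) - \Phi_n(x),\; \Phi_n(x) - f_j^U(x)\bigr).
\]
A brief three-case analysis on the sign of $\Phi_n - f_j^L$ and $f_j^U - \Phi_n$ shows that, whenever $f_j^L \leq f \leq f_j^U$, one has both $g_j^L \leq |f - \Phi_n| \leq g_j^U$ pointwise and the key pointwise contraction $g_j^U(x) - g_j^L(x) \leq f_j^U(x) - f_j^L(x)$. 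Consequently $\|g_j^U - g_j^L\|_{L^2(P_X)} \leq \nu$, so the $N$ brackets $\{[g_j^L, g_j^U]\}$ cover $\cG_{n,\eta}$, and absorbing constants into a single universal $L > 1$ gives the claimed bound $N_{[]}(\nu, \cG_{n,\eta}, L^2(P_X)) \leq L^{C(\eta+\delta_n)/\nu}$.

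The only non-mechanical step is the bracket-transfer contraction $g_j^U - g_j^L \leq f_j^U - f_j^L$; it is the sole place where the position of the tube $[f_j^L, f_j^U]$ relative to $\Phi_n$ matters, and a short case split is needed because of the absolute value. Everything else is either the standard monotone-class bracketing lemma or the trivial range bound coming from the tube constraint $\|f - \Phi_n\|_{[a,b]} \leq \eta$ combined with the local Lipschitz control of $\Phi_0$ that yields the $\cO(\delta_n)$ contribution.
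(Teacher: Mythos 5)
Your proposal is correct and follows essentially the same route as the paper's proof: bound the total increase of $f$ by $K(\eta+\delta_n)$ via the tube constraint and the $\delta_n$-scaled Lipschitz control of $\Phi_0$, invoke the standard bracketing bound for bounded monotone classes with this range factored in, and transfer the brackets through $f\mapsto|f-\Phi_n|$ without increasing the $L^2(P_X)$-width. The only difference is cosmetic: you build the transferred brackets as $\max$-combinations and verify the width contraction by a three-case check, whereas the paper uses the decomposition into positive and negative parts $(f_L-\Phi_n)_{\pm}$, $(f^U-\Phi_n)_{\pm}$, for which the bracket width is preserved exactly.
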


\begin{proof}
Let $g = |f-\Phi_{n}| \in \cG_{n,\eta}$ and $[f_{L},f^{U}]$ denote a $\nu$-bracket for $f \in \cF$ with respect to the 
$L^{2}(P_{X})$-distance, i.e.~$f_{L}(x) \leq f(x) \leq f^{U}(x)$ for all $x \in [a,b]$ and $\|f^{U}-f_{L}\|_{L^2(P_X)} \leq \nu$. 
Let $K > 0$ denote a universal constant which may changes from line to line and note that 
\begin{align*}
f(b)-f(a) 
&= f(b)-\Phi_{n}(b)+\Phi_{n}(b)-\Phi_{n}(a)+\Phi_{n}(a)-f(a) \\
&\leq 2\|g\|_{[a,b]} + K\delta_{n} 
\leq K(\eta + \delta_{n}).
\end{align*}
Then, 
\begin{align*}
(f_{L}(x)-\Phi_{n}(x))_{+} 
&\leq (f(x)-\Phi_{n}(x))_{+} 
\leq (f^{U}(x)-\Phi_{n}(x))_{+}, \\
(f^{U}(x)-\Phi_{n}(x))_{-} 
&\leq (f(x)-\Phi_{n}(x))_{-} 
\leq (f_{L}(x)-\Phi_{n}(x))_{-}
\end{align*}
and 
$
g(x) = (f(x)-\Phi_{n}(x))_{+} + (f(x)-\Phi_{n}(x))_{-} 
$, whence
\begin{align*}
g_{L} \colon [a,b] \to [0,1], \quad &g_{L}(x) \defeq (f_{L}(x)-\Phi_{n}(x))_{+} + (f^{U}(x)-\Phi_{n}(x))_{-}\ \text{ and}\\
g^{U} \colon [a,b] \to [0,1], \quad &g^{U}(x) \defeq (f_{L}(x)-\Phi_{n}(x))_{-} + (f^{U}(x)-\Phi_{n}(x))_{+} 
\end{align*}
satisfy $
g_{L}(x) \leq g(x) \leq g^{U}(x) 
$ for every $x \in [a,b]$. 
Furthermore,
\begin{align*}
&g^{U}(x) - g_{L}(x) \\
&= (f_{L}(x)-\Phi_{n}(x))_{-} + (f^{U}(x)-\Phi_{n}(x))_{+} 
	- \big((f_{L}(x)-\Phi_{n}(x))_{+} + (f^{U}(x)-\Phi_{n}(x))_{-}\big) \\
&= (f^{U}(x)-\Phi_{n}(x))_{+} - (f^{U}(x)-\Phi_{n}(x))_{-} 
	- \big((f_{L}(x)-\Phi_{n}(x))_{+} - (f_{L}(x)-\Phi_{n}(x))_{-}\big) \\
&= f^{U}(x) - \Phi_{n}(x) - (f_{L}(x) - \Phi_{n}(x)) \\
&= f^{U}(x) - f_{L}(x)
\end{align*}
and consequently, for $\cF_{n,\eta} \defeq \{f \in \cF \mid f(b) - f(a) \leq K(\eta + \delta_{n})\}$, 
\[
N_{[]}(\nu,\cG_{n,\eta},L^{2}(P_{X})) 
\leq N_{[]}(\nu,\cF_{n,\eta},L^{2}(P_{X})) 
= N_{[]}\Big(\frac{\nu}{K(\eta + \delta_{n})},\cF,L^{2}(P_{X})\Big).
\]
By Theorem~2.7.9 of \cite{VaartWellner2023}, we obtain the existence of universal constants $L > 0$ and $C > 0$, such that 
the $L^{2}(P_X)$-bracketing number of the class of monotone functions is bounded by 
$L^{C(\eta+\delta_{n})/\nu}$ and the assertion follows.
\end{proof}

\begin{lemma}\label{lem:bracketing}
Let $S > 0$.
\begin{itemize}
\item[(i)] Let $\cF_{n} \defeq \{f_{n,s,t} \mid s,t \in [-S,S]\}$, where 
\[
f_{n,s,t} \colon [-S,S] \times \{0,1\} \to \bR, 
\quad 
f_{n,s,t}(x,y) \defeq (y-\Phi_{n}(x_{0}))\big(\mathds{1}_{\{x \leq x_{0}+a_{n}s\}}-\mathds{1}_{\{x \leq x_{0}+a_{n}t\}}\big)
\]
for $s,t \in [-S,S]$. Then, there exists a universal constant $K > 0$, such that for any $\nu > 0$, 
\[
N_{[]}\big(\nu,\cF_{n},L^{2}(P_{\Phi_n})\big) 
\leq a_{n}^{2}\frac{K}{\nu^{4}}.
\]

\item[(ii)] For $j \in \bN$, let 
$\cF_{n,j}^{\beta} \defeq \{f_{n,s} \mid s \in \bR, 2^{j} < |s|^{\beta+1} \leq 2^{j+1}\}$, where 
\[
f_{n,s} \colon [-S,S] \times \{0,1\} \to \bR, 
\quad 
f_{n,s}(x,y) \defeq (y-\Phi_{n}(x_{0}))\big(\mathds{1}_{\{x \leq x_{0}+a_{n}s\}}-\mathds{1}_{\{x \leq x_{0}\}}\big)
\]
for $s \in \bR$. Then, there exists a universal constant $K > 0$, such that for any $\nu > 0$, 
\[
N_{[]}\big(\nu,\cF_{n,j}^{\beta},L^{2}(P_{\Phi_n})\big) 
\leq a_{n}\frac{K}{\nu^{2}}2^{\frac{j+1}{\beta+1}}.
\]

\item[(iii)] Let $\cH_{n} \defeq \{h_{n,s,t} \mid s,t \in [0,1]\}$, where 
\[
h_{n,s,t} \colon [0,1] \times \{0,1\} \to \bR, 
\quad 
h_{n,s,t}(x,y) \defeq (y-\Phi_{n}(x_{0}))\big(\mathds{1}_{\{x \leq F_{X}^{-1}(s)\}}-\mathds{1}_{\{x \leq F_{X}^{-1}(t)\}}\big)
\]
for $s,t \in [0,1]$. Then, there exists a universal constant $K > 0$, such that for any $\nu > 0$, 
\[
N_{[]}(\nu,\cH_{n},L^{2}(P_{\Phi_n})) 
\leq \frac{K}{\nu^{4}}.
\]

\item[(iv)] Let $\cH_{n} \defeq \{h_{n,s,t} \mid s,t \in [0,1]\}$, where 
\[
h_{n,s,t} \colon [0,1] \times \{0,1\} \to \bR, 
\quad 
h_{n,s,t}(x,y) \defeq (y-\Phi_{n}(x_{0}))\big(\mathds{1}_{\{x \leq s\}}-\mathds{1}_{\{x \leq t\}}\big)
\]
for $s,t \in [0,1]$. Then, there exists a universal constant $K > 0$, such that for any $\nu > 0$, 
\[
N_{[]}(\nu,\cH_{n},L^{2}(P_{\Phi_n})) 
\leq \frac{K}{\nu^{4}}.
\]
\end{itemize}
\end{lemma}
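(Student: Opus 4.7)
All four bounds follow from the same discretization strategy, so the plan is to describe the unified construction and then comment on the per-item bookkeeping. The key observation is that for any fixed $c\in\bR$ the indicator $x\mapsto\mathds{1}_{\{x\leq c\}}$ is monotone in the threshold $c$, whereas the factor $y-\Phi_{n}(x_{0})$ is nonnegative for $y=1$ and nonpositive for $y=0$. Accordingly, I will split each function into its two pieces in $y$ and use the appropriate one-sided monotonicity separately.

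For part (i), the plan is to lay down a uniform grid $-S=s_{0}<s_{1}<\dots<s_{N}=S$ with spacing $\eta>0$. For every $(s,t)\in[-S,S]^{2}$ there are unique indices $i,j$ with $s\in[s_{i},s_{i+1}]$ and $t\in[s_{j},s_{j+1}]$, and I define brackets $[f^{L}_{i,j},f^{U}_{i,j}]$ by taking, on the set $\{y=1\}$, the minimum/maximum of $(1-\Phi_{n}(x_{0}))(\mathds{1}_{\{x\leq x_{0}+a_{n}s\}}-\mathds{1}_{\{x\leq x_{0}+a_{n}t\}})$ using $s_{i},s_{i+1},s_{j},s_{j+1}$ in the order dictated by the sign, and analogously on $\{y=0\}$ with the opposite sign. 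A direct computation shows that the difference $f^{U}_{i,j}-f^{L}_{i,j}$ is bounded pointwise by $\mathds{1}_{\{x_{0}+a_{n}s_{i}<x\leq x_{0}+a_{n}s_{i+1}\}}+\mathds{1}_{\{x_{0}+a_{n}s_{j}<x\leq x_{0}+a_{n}s_{j+1}\}}$, so using $\int dP_{\Phi_{n}}\leq\|p_{X}\|_{\infty}dx$ its squared $L^{2}(P_{\Phi_{n}})$-norm is bounded by $4a_{n}\eta\|p_{X}\|_{\infty}$. Choosing $\eta$ of order $\nu^{2}/a_{n}$ makes each bracket of size $\nu$, and there are $(2S/\eta+1)^{2}\lesssim a_{n}^{2}/\nu^{4}$ of them, which is the claimed bound.

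Parts (ii)--(iv) will be handled by straightforward adaptations of the same scheme. For (ii) the parameter ranges over a set of length $\lesssim 2^{(j+1)/(\beta+1)}$, so a one-dimensional grid with spacing $\eta\sim\nu^{2}/a_{n}$ yields $\lesssim a_{n}2^{(j+1)/(\beta+1)}/\nu^{2}$ brackets of the required size. For (iii) the change of variable $u=F_{X}(x)$ gives $\int(\mathds{1}_{\{x\leq F_{X}^{-1}(s)\}}-\mathds{1}_{\{x\leq F_{X}^{-1}(t)\}})^{2}dP_{X}(x)=|s-t|$, so a uniform grid of spacing $\eta\sim\nu^{2}$ on $[0,1]^{2}$ already delivers $\nu$-brackets and gives $\lesssim 1/\nu^{4}$ of them. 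For (iv) the very same two-dimensional grid on $[0,1]^{2}$ works, now using $\int(\mathds{1}_{\{x\leq s\}}-\mathds{1}_{\{x\leq t\}})^{2}dP_{X}(x)\leq\|p_{X}\|_{\infty}|s-t|$.

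I do not expect a genuine obstacle here; the only place requiring some care is the $y$-splitting in the definition of $f^{L}_{i,j},f^{U}_{i,j}$ in (i) (and analogously in (ii)--(iv)), since the correct lower/upper envelopes on the two cells $\{y=0\}$ and $\{y=1\}$ involve different combinations of $s_{i},s_{i+1}$ and $s_{j},s_{j+1}$. Once that bookkeeping is set up correctly, all four inequalities reduce to the standard length-times-density estimate described above, and the constants $K$ can be absorbed into a universal one depending only on $\|p_{X}\|_{\infty}$ and $S$.
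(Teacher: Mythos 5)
Your plan is correct and essentially the same as the paper's proof: the paper first removes the factor $(y-\Phi_{n}(x_{0}))$ by a bracket-transfer argument that is exactly your $y$-splitting (using $-(1-y)\Phi_{n}(x_{0})$ and $y(1-\Phi_{n}(x_{0}))$ combined with the lower/upper envelopes of the indicator part), and then brackets the indicator class via the same uniform grid with spacing of order $\nu^{2}/(a_{n}\|p_{X}\|_{\infty})$ (respectively $\nu^{2}$ in (iii)--(iv)), yielding the same counts.
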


\begin{proof}
Note that for deriving an upper bound on the bracketing number, we can omit the factor $(y-\Phi_{n}(x_{0}))$ in the 
definition of each function, as shown now exemplary for (i). For this, define 
$
\cG_{n} \defeq \{g_{n,s,t} \mid s,t \in [-S,S]\}$, 
where 
\[
g_{n,s,t} \colon [-S,S] \to \bR, 
\quad 
g_{n,s,t}(x) \defeq \mathds{1}_{\{x \leq x_{0}+a_{n}s\}}-\mathds{1}_{\{x \leq x_{0}+a_{n}t\}}.
\]
Considering a function $f \in \cF_{n}$, there exists $g \in \cG_{n}$, such that 
$f(x,y) = (y-\Phi_{n}(x))g(x)$. Now let $[g_{L},g^{U}]$ denote a $\nu$-bracket for $g$ in $L^{2}(P_{\Phi_n})$, i.e.~for 
every $x \in [-S,S]$, we have $g_{L}(x) \leq g(x) \leq g^{U}(x)$ and $\bE[|g^{U}(X)-g_{L}(X)|^{2}]^{1/2} \leq \nu$. Defining 
\begin{align*}
f_{L} \colon [-S,S] \times \{0,1\} \to \bR, \quad 
&f_{L}(x,y) \defeq -(1-y)\Phi_{n}(x_{0})g^{U}(x) + y(1-\Phi_{n}(x_{0}))g_{L}(x) \\
f^{U} \colon [-S,S] \times \{0,1\} \to \bR, \quad 
&f^{U}(x,y) \defeq -(1-y)\Phi_{n}(x_{0})g_{L}(x) + y(1-\Phi_{n}(x_{0}))g^{U}(x), 
\end{align*}
note that 
\begin{align*}
f_{L}(x,0) = -\Phi_{n}(x_{0})g^{U}(x) &\leq -\Phi_{n}(x_{0})g(x) = f(x,0), \\
f_{L}(x,1) = (1-\Phi_{n}(x_{0}))g_{L}(x) &\leq (1-\Phi_{n}(x_{0}))g(x) = f(x,1)
\end{align*}
and similarly, 
\begin{align*}
f_{U}(x,0) = -\Phi_{n}(x_{0})g_{L}(x) &\geq -\Phi_{n}(x_{0})g(x) = f(x,0), \\
f_{U}(x,1) = (1-\Phi_{n}(x_{0}))g^{U}(x) &\geq (1-\Phi_{n}(x_{0}))g(x) = f(x,1).
\end{align*}
Further, we have 
\begin{align*}
f^{U}(x,y) - f_{L}(x,y) 
&= -(1-y)\Phi_{n}(x_{0})g_{L}(x) + y(1-\Phi_{n}(x_{0}))g^{U}(x) \\
	&\qquad\qquad\qquad+ (1-y)\Phi_{n}(x_{0})g^{U}(x) - y(1-\Phi_{n}(x_{0}))g_{L}(x) \\
&= (1-y)\Phi_{n}(x_{0})(g^{U}(x)-g_{L}(x)) + y(1-\Phi_{n}(x_{0}))(g^{U}(x)-g_{L}(x)) \\
&= (g^{U}(x)-g_{L}(x))(\Phi_{n}(x_{0}) - y\Phi_{n}(x_{0}) + y - \Phi_{n}(x_{0})y) \\
&= (g^{U}(x)-g_{L}(x))(\Phi_{n}(x_{0}) - 2y\Phi_{n}(x_{0}) + y).
\end{align*}
Thus, 
\begin{align*}
\bE\big[|f^{U}(X,Y^{n}) - f_{L}(X,Y^{n})|^{2}\big]^{1/2} 
&= \bE\big[|(g^{U}(X)-g_{L}(X))(\Phi_{n}(x_{0}) - 2Y^{n}\Phi_{n}(x_{0}) + Y^{n})|^{2}\big]^{1/2} \\
&\leq \bE\big[|g^{U}(X)-g_{L}(X)|^{2}|\Phi_{n}(x_{0}) - 2Y^{n}\Phi_{n}(x_{0}) + Y^{n}|^{2}\big]^{1/2} \\
&\leq \bE\big[|g^{U}(X)-g_{L}(X)|^{2}\big]^{1/2} \leq \nu 
\end{align*}
and so we have 
\[
N_{[]}\big(\nu,\cF_{n},L^{2}(P_{\Phi_n})\big) 
\leq N_{[]}\big(\nu,\cG_{n},L^{2}(P_{\Phi_n})\big).
\]
Analogously, this also follows for (ii), (iii) and (iv). \par 
To construct the brackets for statement (i), note that by the previous result, it suffices to construct brackets for the function class $\cF_{n}' \defeq \{f_{n,s,t} | s,t \in [-S,S]\}$, where 
$f_{n,s,t}(x) \defeq \mathds{1}_{\{x \leq x_{0}+a_{n}s\}}-\mathds{1}_{\{x \leq x_{0}+a_{n}t\}}$ for 
$x \in [-S,S]$. For this, let $\nu > 0$, set $N(\nu) \defeq \frac{2Sa_{n}}{\nu^{2}}4\|p_{X}\|_{\infty}$ 
and define for $i=1,\dots,\lfloor N(\nu) \rfloor$, 
\[
s_{0}^{n} \defeq -S, 
\qquad 
s_{i}^{n} \defeq s_{i-1}^{n} + \frac{\nu^{2}}{4\|p_{X}\|_{\infty}a_{n}}, 
\qquad 
s_{\lfloor N(\nu) \rfloor + 1}^{n} \defeq S.
\]
Then, $-S = s_{0}^{n} < s_{1}^{n} < \cdots < s_{\lfloor N(\nu) \rfloor + 1}^{n} = S$, 
$s_{i}^{n}-s_{i-1}^{n} \leq \frac{\nu^{2}}{4\|p_{X}\|_{\infty}a_{n}}$ for $1 \leq i \leq \lfloor N(\nu) \rfloor + 1$, and for 
every $s,t \in [-S,S]$, there exists $i,j \in \{1,\dots,\lfloor N(\nu) \rfloor + 1\}$, such that 
$s_{i-1}^{n} \leq s \leq s_{i}^{n}$ and $s_{j-1}^{n} \leq t \leq s_{j}^{n}$. Hence, 
$f_{n,s_{i-1}^{n},s_{j}^{n}}(x) \leq f_{n,s,t}(x) \leq f_{n,s_{i}^{n},s_{j-1}^{n}}(x)$ for every $x \in \bR$ and 
\begin{align*}
&\bigg(\int_{\bR}|f_{n,s_{i}^{n},s_{j-1}^{n}}(x) - f_{n,s_{i-1}^{n},s_{j}^{n}}(x)|^{2}dP_{X}(x)\bigg)^{1/2} \\
&=\bigg(\int_{\bR}|\mathds{1}_{\{x \leq x_{0} + a_{n}s_{i}^{n}\}} 
								- \mathds{1}_{\{x \leq x_{0} + a_{n}s_{i-1}^{n}\}} 
		+ \mathds{1}_{\{x \leq x_{0} + a_{n}s_{j-1}^{n}\}} 
								- \mathds{1}_{\{x \leq x_{0} + a_{n}s_{j}^{n}\}}|^{2}dP_{X}(x)\bigg)^{1/2} \\
&\leq (a_{n}(s_{i}^{n}-s_{i-1}^{n})\|p_{X}\|_{\infty})^{1/2} + (a_{n}(s_{j}^{n}-s_{j-1}^{n})\|p_{X}\|_{\infty})^{1/2} \\
&\leq 2\Big(\|p_{X}\|_{\infty}a_{n}\frac{\nu^{2}}{4\|p_{X}\|_{\infty}a_{n}}\Big)^{1/2}
= \nu, 
\end{align*}
whence $[f_{n,s_{i-1}^{n},s_{j}^{n}},f_{n,s_{i}^{n},s_{j-1}^{n}}]_{i,j=1,\dots,\lfloor N(\nu) \rfloor + 1}$ define $\nu$-brackets for $\cG_{n}$ in $L^{2}(P_{X})$ and 
\[
N_{[]}(\nu,\cG_{n},L^{2}(P_{X})) 
\leq (\lfloor N(\nu) \rfloor + 1)^{2}
\leq \Big(1 + \frac{2Sa_{n}}{\nu^{2}}4\|p_{X}\|_{\infty}\Big)^{2}.
\]
Analogously, the brackets for the classes in (ii), (iii) and (iv) can be obtained.
\end{proof}

\bibliographystyle{apalike} 
\bibliography{literature.bib}

\end{document}